\title{\ttitle} 
\newcommand{\para}[1]{#1}
\newcommand{\adele}{\mathbb}
\newcommand{\places}{\mathds}
\newcommand{\adelegroup}[2]{{#1}(\mathbb{#2} ) }
\newcommand{\weyl}[{1}]{\mathtt{W}_{#1}}
\newcommand{\weylelement}[1]{w_{#1}}
\newcommand{\fundamental}[1]{\bar{\omega}_{#1}}
\newcommand{\automorphicspace}[2]{\mathcal{A}(#1(#2)\backslash #1(\adele{A}))}
\newcommand{\quoientspace}[3]{#1(#2)\backslash #1(\adele{#3})}
\newcommand{\leadingterm}[1]{\Lambda_{#1}}
\newcommand{\lie}{\mathfrak}
\newcommand{\inner}[1]{\langle #1 \rangle}
\theoremstyle{plain}
\newtheorem{thm}{Theorem}[chapter] 
\newtheorem{prop}{Proposition}[thm]
\newtheorem{lemm}{Lemma}[thm]
\theoremstyle{definition}
\newtheorem{defn}[thm]{Definition} 
\newtheorem{remk}{Remark}[thm]
\newtheorem{ques}{Question}
\newtheorem{asmp}{Assumption}
\newtheorem*{prop*}{Proposition}
\newtheorem*{thm*}{Theorem}
\newtheorem*{cor*}{Corollary}
\newtheorem*{obv}{Observation}
\newtheorem{cor}[thm]{Corollary}
\theoremstyle{Que}
\definecolor{kugray5}{RGB}{224,224,224}
 \newcolumntype{H}{>{\setbox0=\hbox\bgroup}c<{\egroup}@{}}
 \newcommand{\cmark}{\ding{51}}
 \newcommand{\xmark}{\ding{55}}
 \newcolumntype{K}[1]{>{\centering\arraybackslash}p{2cm}}
\begin{document}
\frontmatter
\thispagestyle{empty}

\noindent
\begin{center}
    \Large
    Ben-Gurion University of the Negev\\
    The Faculty of Natural Sciences\\
    Department of Mathematics\\
\end{center}

\vfill
\begin{center}
    \Huge\bfseries
Poles of degenerate Eisenstein series and Siegel-Weil identities for exceptional split groups  
\end{center}

\vfill\vfill
\begin{center}
    \large
    Thesis Submitted in Partial Fulfillment of the Requirements 		for the Master of Sciences Degree\\
\end{center}

\vfill
\begin{center}
    \huge\bfseries
    By: Hezi Halawi
\end{center}

\vfill\vfill\vfill
\begin{center}
    \Large
    Under the Supervision of: Dr Nadya Gurevich
\end{center}

\vfill
\begin{center}
\large
    Beer Sheva, August 2016
\end{center}

\newpage
\thispagestyle{empty}
\vspace*{\fill}
{\hfill\sffamily\itshape $ $}
\cleardoublepage

\rmfamily
\normalfont

\newpage
\thispagestyle{empty}
\noindent
\begin{center}
    \Large
    Ben-Gurion University of the Negev\\
    The Faculty of Natural Sciences\\
    Department of Mathematics\\
\end{center}

\vfill
\begin{center}
    \Huge\bfseries
Poles of degenerate Eisenstein series and Siegel-Weil identities for exceptional split groups  
\end{center}

\vfill\vfill
\begin{center}
    \large
    Thesis Submitted in Partial Fulfillment of the Requirements 		for the Master of Sciences Degree\\
\end{center}

\vfill
\begin{center}
    \huge\bfseries
    By: Hezi Halawi
\end{center}

\vfill\vfill\vfill
\begin{center}
    \Large
    Under the Supervision of: Dr Nadya Gurevich
\end{center}

\vfill
\begin{center}
    \large
    Signature of Student: \underline{\qquad\qquad\qquad\qquad}  \qquad   Date: \underline{\qquad\qquad}\\
    Signature of Supervisor: \underline{\qquad\qquad\qquad\qquad} \quad       Date: \underline{\qquad\qquad} \\
    Signature of Chairperson 
of the Committee for Graduate Studies: \underline{\qquad\qquad\qquad\qquad} \quad Date: \underline{\qquad\qquad}
\end{center}

\vfill
\begin{center}
\large
    Beer Sheva, August 2016
\end{center}

\newpage

\thispagestyle{empty}

\newpage

\chapter*{\centering Abstract}
\addcontentsline{toc}{chapter}{Abstract}
Let $G$ be a linear split algebraic group.
The degenerate Eisenstein series associated to a maximal parabolic subgroup $E_{\para{P}}(f^{0},s,g)$  with the spherical section $f^{0}$  is studied in the first part of the thesis. In this part, we study the poles of $E_{\para{P}}(f^{0},s,g)$ in the region $\operatorname{Re}  s >0$. We  determine when the leading term in the Laurent expansion of 
 $E_{\para{P}}(f^{0},s,g)$ around $s=s_0$  is square integrable.
The second part is devoted to finding identities between the leading terms of various Eisenstein series at different points.
We present an algorithm to find this data and implement it on \textit{SAGE}.
While both parts can be applied to a general algebraic group, we restrict ourself to the case where $G$ is split exceptional group of type 
$F_4,E_6,E_7$, and obtain new results.
\newpage

\thispagestyle{empty}
$ $

\newpage

\thispagestyle{empty}

\tableofcontents
\addtocontents{toc}{\protect\setcounter{tocdepth}{1}}

\mainmatter 

\pagestyle{fancy} 

\label{ch:Preface} 

\lhead{Chapter 0 . \emph{Preface}} 
\chapter*{Preface}
\addcontentsline{toc}{chapter}{Preface}
\subsection*{Classical Eisenstein series}
Let $G$ be a linear Lie group with a  Lie algebra $\mathfrak{g}$. Let $Z(\mathfrak{g})$ be the center of  the universal enveloping algebra  $\mathcal U (\mathfrak{g})$.
Let $K$ be a  maximal compact subgroup of $G$.  There is a  faithful representation
 $G \hookrightarrow GL_{n}$ that allows to define a norm  $||\; ||$ on $G$. 
Finally let  $\Gamma$ be a discrete subgroup of $G$.

Let $\mathcal A(\Gamma\backslash G)$ be the space of
automorphic forms, i.e  all functions $f : G \rightarrow \mathbb{C}$ that  satisfy the following conditions:
\begin{enumerate}
\item
$f$ is smooth.
\item
$f$ is invariant under the action of $\Gamma$  i.e. for every $\gamma \in \Gamma  $ $ f(\gamma g)=f(g)$.
\item
$f$  is $K$ -- finite, i.e. the space spanned  by the translations of $f$ under elements of $K$ is finite dimensional.  
\item
$f$ is $Z(\mathfrak{g})$ finite.
\item
$f$ is of moderate growth i.e. there exists  $C>0$ and $n \in \mathbb{N}$ such that for all $g \in G$ it holds that 
$ ||f(g)|| \leq C ||g||^{n}$.
\end{enumerate}  
An automorphic form $f$ is called  \textbf{spherical} if it is in addition right invariant under the action of the maximal compact subgroup $K$.
In this case the automorphic function  can be consider as  function on the space of cosets
 $\Gamma\backslash G\slash K$.
The classical example is as follows:\\ 
Let $G=SL_2(\mathbb{R})$ whose maximal compact subgroup is   $K=SO_2({\mathbb{R}})$. 
The space $G/K$ can be identified with the upper half plane  
$\mathbb{H}=\{x+iy \in \mathbb{C} \: : \: y >0\}$ via $g\rightarrow g.i$ 
where $G$ acts on $ \mathbb H$ by Mobius transformations.  
One has $Z(\mathfrak{g}) \simeq \mathbb{C}[\Delta]$ where 
$\Delta=y^{2}(\frac{\partial^{2}}{\partial^{2}{x}} + \frac{\partial^{2}}{\partial^{2}{y}}).$ 
Finally let  $\Gamma =SL_{2}(\mathbb{Z}).$

For a complex number $s$ define $h_s: \mathbb{H} \rightarrow \mathbb{C}$  to be 
\begin{equation}\label{First_auto_form}
h_s(z)=\sum_{\tiny{
\begin{aligned}
(c,d)\in \mathbb{Z}^{2}\\
(0,0)\neq (c,d)
\end{aligned}}}\frac{y^{s+\frac{1}{2}}}{|cz+d|^{2s+1}}
\quad z=x+iy\in \mathbb{H}
\end{equation}
The sum converges absolutely for $Re \: (s) > \frac{1}{2}$. For fix $z \in \mathbb{H}$ the function $h_{s}(z)$ admits a meromorphic continuation to the whole complex plane.

 As a function of $z\in \mathbb{H}$ the series is non-holomorphic but satisfies a differential equation:
$$\Delta h_s(z)= (s-\frac{1}{2})(s+\frac{1}{2})h_s(z).$$ 

By re-arranging the sum in (\ref{First_auto_form}) we get 
$$h_s(z)=2 \zeta(2s+1) \sum_{\tiny{
\begin{aligned} 
(c,d)\in \mathbb{Z}^{2}\\
gcd(c,d)=1
\end{aligned}}}\frac{y^{s+\frac{1}{2}}}{|cz+d|^{2s+1}}=
2\zeta(2s+1) \sum_{\gamma \in \para{B}(\mathbb{Z})\backslash \Gamma}[{Im\:\gamma z}]^{s+\frac{1}{2}} $$

where $\para{B}(\mathbb{Z})$ is the group of  upper triangular matrices in $\Gamma$. Therefore by dropping the $\zeta$ factor we obtain the first example of an Eisenstein series on $G$
$$E(z,s)=\sum_{\gamma \in \para{B}(\mathbb{Z}) \backslash \Gamma} \chi_s(\gamma.z), \quad \quad Re \: s >\frac{1}{2}$$
when $\chi_s(z)=[Im\: z]^{s+\frac{1}{2}}$.
Notice that $K$ leaves the point $i\in \mathbb{H}$ invariant. By abuse of notation we denote by $\chi_s$ the pullback of $\chi_s$ to $G$ as well. Therefore,
the Eisenstein series $E(z,s)$ gives rise to 
an automorphic function 
$$E(\cdot,s):  \Gamma\backslash  SL_2(\mathbb{R})/SO(2,\mathbb{R})\rightarrow \mathbb{C}$$ defined by 
$$E(g,s)=\sum_{B(\mathbb{Z})\backslash \Gamma} \chi_s(\gamma g), \quad Re(s)>\frac{1}{2}.$$ 
 
Let us reinterpret this function from the perspective of representation theory. 
Recall the Iwasawa decomposition $G=N\cdot T \cdot K$ where 
$N$ is the group of  upper triangular unipotent matrices and $T$ is the group 
of invertible diagonal matrices of $G$.   
The function $\chi_s:G\rightarrow \mathbb C$ satisfies: 
$$\chi_s(ntk)=\delta_B(t)^{s+1/2}$$ and hence can be regarded as a spherical 
vector in $\operatorname{Ind}^{SL_2(\mathbb{R})}_{\para{B}(\mathbb{R})} \delta_\para{B}^s$ (see Definition (\ref{ind_def}) for more details). 

More generally we define $E(\cdot,\cdot,s)$   an operator
from  $\operatorname{Ind}^{SL_2(\mathbb{R})}_{\para{B}(\mathbb{R})} \delta_\para{B}^s$ to the space of automorphic forms 
$\mathcal A(\Gamma\backslash SL_2(\mathbb{R}))$  by 
$$E(g,\phi,s)=\sum_{\para{B}(\mathbb{Z})\backslash \Gamma} \phi(\gamma g,s)$$
where $\phi(g,s)$ is any flat section  in the induced representation
 i.e. section $\phi(g,s)$ whose restriction to  $K$ it independent on $s$.

\subsection*{Motivation}
The modern theory of automorphic forms concerns  functions on adelic groups. Let $F$ be a number field and $\adele{A}$ be its ring of adeles. 
Let $G$ be a split algebraic group. 
The group $G(F)$ is a discrete subgroup in $G(\adele{A})$.
\\
From perspective of
representation theory one can think of an Eisenstein series as $G(\adele{A})$-equivariant
map from induced representations  into the space of automorphic forms $\mathcal{A}(G(F) \backslash G(\adele{A}))$. \\
Precisely, let $\para{P}=MN$ be a standard parabolic subgroup of a reductive group $G$ and $\sigma$ be an automorphic representation of $M$, i.e. $\sigma$ is realized in the space of automorphic functions on $M$ 
(see section \cite{auoto_section} for more details). Let $X^\ast(M)=Hom(M,G_m)$ be the set of algebraic characters of $M$. We denote by 
$I_{\para{P}}(\lambda\otimes \sigma) $ the induced representation. The family of operators
$$ E(\cdot,\lambda): I_{\para{P}}(\sigma \otimes \lambda) \rightarrow \mathcal{A}(G(F) \backslash G(\adele{A})) \quad \lambda \in X^{\ast}(M) \otimes \mathbb{C}$$
is defined by 
$$E(f,g,\lambda)=\sum_{\gamma \in \para{P}(F) \backslash G(F)}f(\gamma g,\lambda)(1)$$
for $\lambda$ in a shifted dominant chamber where the series converges. By a fundamental theorem of Langlands the operator $E$ admits a meromorphic continuation to the space $X^{\ast}(M)\otimes \mathbb{C}$.        
\\
 One of the main goals of the theory of automorphic forms is 
to decompose $L^2(G(F)\backslash G(\adele{A}))$ into irreducible representations and for this,  Eisenstein series is an indispensable tool.
 
 The space $L^2(G(F)\backslash G(\adele{A}))$ can be written as sum of two orthogonal subspaces  
$$L^2(G(F)\backslash G(\adele{A}))=
L^2_{disc}(G(F)\backslash G(\adele{A}))
\oplus
L^2_{cont}(G(F) \backslash G(\adele{A}))
$$
corresponding respectively to the discrete and continuous parts of the spectrum.  Moreover, Langlands showed that $L^2_{disc}(G(F)\backslash G(\adele{A}))$ can be decomposed as 
$$L^2_{disc}(G(F)\backslash G(\adele{A}))=
L^2_{res}(G(F)\backslash G(\adele{A})) 
\oplus
L^2_{cusp}(G(F)\backslash G(\adele{A}))
$$
where $L^2_{res}$ is spanned by iterated residues of Eisenstein series associated to automorphic cuspidal representations and $L^2_{cusp}$ is the space of cusp forms.
In this thesis we  will concentrate of a different type 
of Eisenstein series, the ones associated to a degenerate principal series 
$I_\para{P}(s)=Ind^{G}_{\para{P}} (\delta_\para{P}^{s})$, 
where $\para{P}$ is a maximal parabolic subgroup.
Let us give a partial list of applications of 
degenerate Eisenstein series. 
 
\begin{enumerate}
\item 
The famous Siegel-Weil formula relates a period of 
the theta function to a special values of a degenerate 
Eisenstein series.  
\item 
The degenerate Eisenstein series occur in  
Rankin-Selberg integral representations of L-functions 
and hence its poles determine the poles of L-functions. 
See (\cite{GPSR}, \cite{PSR}, \cite{GS}). 
\item 
The minimal representation of $G$, when exists, 
can be realized as a residue of degenerate Eisenstein series. See \cite{Ginzburg1997}.
\end{enumerate}

In this thesis we study the poles of  degenerate
Eisenstein  series in the right half plane  and
the  relation between their leading terms. Previous studies on this area can be viewed for example in  \cite{G2_Muic},\cite{GL_n},\cite{Dih1}. 
We obtain new results for 
split exceptional groups where $G=F_4,E_6,E_7$. 

\subsection*{Strategy to studied poles of Eisenstein series}
By the fundamental theorem of Langlands see[],  
the poles of the degenerate Eisenstein series $E_{\para{P}}(f,g,s)$
coincide with  the poles of its constant term along the the Borel subgroup
$$E_{\para{P}}^{0}(f,g,s)=\int_{U(F)\backslash U(\adele{A})} E_{\para{P}}(f,ug,s)du $$
where $U$ is the unipotent radical of the Borel.
\begin{thm*}
The constant term $E_{\para{P}}^{0}(f,g,s)= \sum _{\weylelement{}\in \weyl{G}\slash \weyl{\para{P}}} M_{\weylelement{}}(s) f^{0}_{s}$
where $M_{\weylelement{}}$ is the intertwining operator
\end{thm*}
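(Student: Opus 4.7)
The plan is to unfold the constant term via the Bruhat decomposition and identify each resulting cell contribution with a standard intertwining operator. Starting from
$$E_\para{P}^{0}(f,g,s)=\int_{U(F)\backslash U(\adele{A})} \sum_{\gamma \in \para{P}(F)\backslash G(F)} f(\gamma u g, s)\,du,$$
I first interchange the sum with the integral; this is justified for $\operatorname{Re}(s)$ in the cone of absolute convergence of the Eisenstein series, after which the general statement follows by meromorphic continuation of both sides.

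Next I invoke the Bruhat decomposition $G(F)=\bigsqcup_{w} \para{P}(F)\, w\, \para{B}(F)$, indexed by Kostant (shortest) representatives $w$ for $\weyl{\para{P}}\backslash \weyl{G}$ (equivalently for $\weyl{G}/\weyl{\para{P}}$, via $w\mapsto w^{-1}$). For each such $w$, the orbit $\para{P}(F)\backslash \para{P}(F)\, w\, \para{B}(F)$ is parameterized by $U_{w}(F)$, where $U_{w} := U\cap w^{-1} U^{-} w$ is the unipotent subgroup of $U$ carried outside $\para{P}$ by $w$; explicitly, the map $u_{0}\mapsto \para{P}(F)\, w\, u_{0}$ is a bijection onto the cell. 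After this substitution the sum over $u_{0}\in U_{w}(F)$ combines with the outer adelic integral over $U(F)\backslash U(\adele{A})$ via the standard unfolding identity
$$\int_{U(F)\backslash U(\adele{A})} \sum_{u_{0}\in U_{w}(F)} \phi(u_{0} u)\,du \;=\; \int_{U_{w}(\adele{A})} \phi(u)\,du,$$
applied to $\phi(u)=f(wug,s)$. The complement of $U_{w}$ inside $U$ lies in $w^{-1} \para{P} w$, so the corresponding integration is absorbed by the left $\para{P}(F)$-equivariance property of $f$.

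Each Weyl cell therefore contributes the single integral
$$\int_{U_{w}(\adele{A})} f(wug,s)\,du,$$
which is by definition the standard intertwining operator $M_{w}(s) f$ evaluated at $g$. Summing over $w\in \weyl{G}/\weyl{\para{P}}$ yields the formula (the spherical case $f=f^{0}$ being a particular instance).

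The main obstacle is the combinatorial bookkeeping of the Bruhat cells: verifying that Kostant representatives give a genuine set of double coset representatives, checking the bijectivity of the parameterization of $\para{P}(F)\backslash \para{P}(F)\, w\, \para{B}(F)$ by $U_{w}(F)$, and confirming that the complement of $U_{w}$ inside $U$ really is the part whose contribution collapses against the $\para{P}(F)$-invariance of the integrand. Once $\operatorname{Re}(s)$ is sufficiently large all integrals converge absolutely and the manipulations are rigorous; the identity then extends to the whole parameter space by meromorphic continuation.
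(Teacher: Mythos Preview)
Your proposal is correct and follows essentially the same route as the paper's proof (Proposition~\ref{Constant_term_along_borel}): Bruhat decomposition of $P(F)\backslash G(F)$ over $W_G/W_P$, followed by the standard unfolding of the $U(F)\backslash U(\adele{A})$-integral against the sum over each cell to produce the intertwining integral $M_w$. The only cosmetic difference is that the paper phrases the unfolding via the subgroup $U^{\gamma}=\gamma^{-1}P\gamma\cap U$ and splits the adelic integral in two stages, whereas you parameterize the cell directly by $U_w=U\cap w^{-1}U^{-}w$; these are complementary descriptions of the same manipulation.
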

The definition and the properties of $M_{\weylelement{}}(s)$ are summarized in Theorem (\ref{intertwiningProp}).
Thus, the union set of poles of various intertwining operators contains
the set of poles of Eisenstein series, but by no means is equal to it. 
The reason is  possible cancellations of the poles from various 
summands. Moreover, even if a pole is not canceled its order 
can be lower then the order of the pole of each summand as in the case that it is canceled.

Although many examples have been worked out the  nature
of these cancellations remains a mystery.

 The degenerate induced representation associated to the trivial 
 character contains unique spherical section $f^0_{s}(g)$ 
 normalized such that $f^0_{s}(1)=1$.
 Using  Gindikin-Karpelevich formula, Theorem (\ref{gid}), for the
 action of $M_w(s)$ on the spherical function we can 
 evaluate the sum of intertwining operators applied
 to the vector $f^0_{s}(g)$ and witness the cancellations.

 The approach is computational. We have produced an algorithm
 that being realized in the {\sl SAGE}  computes 
 the  poles and their order of all spherical degenerate Eisenstein
 series for split groups of small rank.
 
 \subsection*{ Our Results:  for the spherical Eisenstein series $E_{\para{P}}(f^0,g,s)$}
 We have
 obtained new results for
 the split exceptional groups  of type $F_4, E_6,E_7$.
 \begin{thm*}
 Let $G=E_7$ and $\para{P}=\para{P}_4$ then the Eisenstein series $E_{\para{P}}(f^{0},s,g)$ admits a pole of order $4$ at $s=\frac{1}{8} $.
 \end{thm*}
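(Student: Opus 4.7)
The plan follows the computational strategy outlined in the introduction. By Langlands' constant term theorem cited above, the poles of $E_\para{P}(f^0, s, g)$ coincide with those of
\[ E_\para{P}^0(f^0, g, s) = \sum_{w \in \weyl{G}/\weyl{\para{P}}} M_w(s) f^0_s, \]
so the problem reduces to controlling the Laurent expansion of the right-hand side at $s = 1/8$. For $G = E_7$ and $\para{P} = \para{P}_4$ the index set is finite of cardinality $|\weyl{E_7}|/|\weyl{\para{P}_4}| = 10080$, well within the reach of a computer.

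First I would apply the Gindikin--Karpelevich formula (Theorem \ref{gid}) to rewrite each summand, on the spherical vector, as a product over the positive roots $\alpha$ with $w\alpha < 0$ of ratios of the form $\zeta(\ell_\alpha(s))/\zeta(\ell_\alpha(s)+1)$, where $\ell_\alpha(s)$ is the affine polynomial in $s$ obtained by pairing the character $\delta_\para{P}^s$ with $\alpha^\vee$. Since the completed Riemann zeta function has a unique pole, simple, at $1$, the pole contribution of the $w$-summand at $s = 1/8$ equals the number of inverted roots $\alpha$ with $\ell_\alpha(1/8) = 1$, corrected by any zeros of $\zeta(\ell_\alpha(1/8)+1)$ in the denominator. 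A finite inspection confirms that for $s_0 = 1/8$ no such denominator zero occurs, so the naive count governs each local contribution.

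Next, I would enumerate in \emph{SAGE} the minimal-length coset representatives of $\weyl{E_7}/\weyl{\para{P}_4}$ together with their inversion sets; for each representative $w$ the algorithm records the multiset $\{\ell_\alpha(s) : w\alpha < 0\}$, extracts the order $n_w$ of the pole at $s = 1/8$, and assembles the truncated Laurent expansion of the associated product of zeta ratios to sufficient order. Summing these $10080$ Laurent series at $s = 1/8$ yields the Laurent expansion of $E_\para{P}^0(f^0, g, s)$, whose pole order and leading coefficient can then be read off directly.

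The main obstacle, as stressed in the introduction, is the cancellation phenomenon: the maximum of $n_w$ over the coset space is substantially larger than $4$, and a priori nothing prevents the top Laurent coefficients from collapsing upon summation. What turns the assertion into a theorem is the explicit computer-algebraic verification that, after adding all $10080$ contributions, the coefficient of $(s - 1/8)^{-k}$ vanishes for every $k \geq 5$ while the coefficient of $(s - 1/8)^{-4}$ is non-zero. This is a finite linear computation over a ring of rational expressions in completed zeta values, and is exactly what the \emph{SAGE} algorithm described in the preface is designed to carry out.
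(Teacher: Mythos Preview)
Your overall strategy—reduce to the constant term and apply Gindikin--Karpelevich—matches the paper, but the final step has a real gap. You propose to sum all $10080$ scalar Laurent series $C_w(s)$ and read off the pole order. That sum is $E_{\para{P}}^{0}(f^{0},1,s)$, the constant term evaluated at the identity; it is not the Laurent expansion of $E_{\para{P}}^{0}(f^{0},g,s)$ as a function of $g$. The constant term is $\sum_{w} C_{w}(s)\, f^{0}_{w\chi_s}(g)$, and on the torus the sections satisfy $f^{0}_{w\chi_s}(t)=(w\chi_s)(t)$, which are distinct characters for generic $s$. Cancellation in the scalar sum at $g=1$ does not force cancellation for all $g$, so your computation furnishes only a \emph{lower} bound: it shows the pole has order at least $4$ but cannot by itself rule out order $5$ or higher at some $g\neq 1$.

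The paper handles the upper bound by first partitioning $W(G,\para{P})$ into equivalence classes $[w]_{s_0}=\{u: u\chi_{s_0}=w\chi_{s_0}\}$ and forming $M^{\#}_{w}(s)=\sum_{u\in[w]}C_u(s)$. Within a class all sections $f^{0}_{u\chi_{s_0}}$ agree at $s=s_0$, so summing the scalars there is legitimate; between classes the characters $w\chi_{s_0}$ are pairwise distinct, hence linearly independent on the torus, and no further cancellation is possible. The pole order of the Eisenstein series is therefore exactly $\max_{[w]}\operatorname{ord}_{s=s_0}M^{\#}_{w}(s)$, and the paper's algorithm verifies that this maximum equals $4$ (and exhibits a class that attains it). Your scheme, as written, computes a different and strictly weaker quantity.

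One smaller correction: the completed zeta function with functional equation $\zeta(s)=\zeta(1-s)$ has simple poles at both $s=0$ and $s=1$, not only at $s=1$. Thus inverted roots $\alpha$ with $\langle \chi_{1/8},\check{\alpha}\rangle=0$ also contribute to the numerator pole of each $C_w(s)$ and must be included in your count $n_w$.
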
 
 To the best of our knowledge this is the first examole of Eisenstein series with order equal to $4$.
 The results we obtain for $G_2, GL_n, n\le 8$ and $Sp_n, n<8$ 
 coincide with the results obtained earlier in \cite{G2_Ginz},\cite{GL_n},\cite{Sp4}.  \\
 We find out that all the poles for $\operatorname{Re} s >0$ are real.\\
 \newpage
 \begin{thm*}(Theorem (\ref{Upper}))
 If $F=\mathbb{Q}$ and $s_0 \in \mathbb{R}$  then the order of a pole of 
 $E_{\para{P}}(f^0,g,s)$  at a point $s_0$ is bounded by
 $d_{\para{P}}(\chi_{s_0})$ where 
  $$d_{\para{P}}(\chi_{s_0})=\left|\alpha>0, \langle \alpha^\vee, \chi_{s_0} \rangle=1\right|-
 \left|\alpha>0, \langle \alpha^\vee, \chi_{s_0} \rangle=0\right|-(n-1)$$
 and $\chi_{s_0}=\delta_{\para{P}}^{s_0-\frac{1}{2}}\otimes \delta_{\para{B}}^{\frac{1}{2}}$.
 . 
 \end{thm*}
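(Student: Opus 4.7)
The plan is to combine Langlands' criterion with the Gindikin--Karpelevich factorization and a global cancellation argument.

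By Langlands' theorem, the poles of $E_\para{P}(f^{0},g,s)$ in $\operatorname{Re} s>0$ coincide with those of the constant term
$$E_\para{P}^{0}(f^{0},g,s)=\sum_{w\in \weyl{G}/\weyl{\para{P}}}M_{w}(s)f^{0}_{s},$$
where $w$ ranges over minimal-length coset representatives. Applying the Gindikin--Karpelevich formula (Theorem~\ref{gid}) to each term gives
$$M_{w}(s)f^{0}_{s}=c_{w}(s)\cdot f^{0}_{w\chi_{s}},\qquad c_{w}(s)=\prod_{\alpha\in\Phi(w)}\frac{\zeta(\langle\alpha^{\vee},\chi_{s}\rangle)}{\zeta(\langle\alpha^{\vee},\chi_{s}\rangle+1)},$$
where $\Phi(w)=\{\alpha>0:w\alpha<0\}$. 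Since $F=\mathbb{Q}$, the Riemann $\zeta$ has only the simple pole at $s=1$ and no zeros on the positive real axis, so for real $s_0$ each factor has a simple pole exactly when $\langle\alpha^{\vee},\chi_{s_{0}}\rangle=1$ and a simple zero exactly when $\langle\alpha^{\vee},\chi_{s_{0}}\rangle=0$. Writing
$$\Phi^{(i)}=\{\alpha>0:\langle\alpha^{\vee},\chi_{s_{0}}\rangle=i\},\qquad i=0,1,$$
the pole order of $c_w(s)$ at $s_0$ is exactly $|\Phi(w)\cap\Phi^{(1)}|-|\Phi(w)\cap\Phi^{(0)}|$.

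The subtracted $-(n-1)$ is accounted for by the $n-1$ simple roots $\alpha_1,\ldots,\alpha_{n-1}$ of the Levi $M$. A short computation using the $\weyl{M}$-invariance of $\delta_\para{P}$ (so that $\langle\alpha_i^\vee,\delta_\para{P}\rangle=0$) together with $\langle\alpha_i^\vee,\delta_\para{B}^{1/2}\rangle=1$ shows $\langle\alpha_i^\vee,\chi_s\rangle\equiv 1$ identically in $s$, so these $n-1$ roots always lie in $\Phi^{(1)}$. On the other hand, for every minimal-length $w$ the inversion set $\Phi(w)$ is disjoint from $\Phi_M^+$, hence none of them ever contributes to a pole of any summand. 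This alone bounds each individual pole order by $|\Phi^{(1)}|-(n-1)$.

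The main obstacle is producing the further $-|\Phi^{(0)}|$. A term-by-term estimate is insufficient because each $c_w(s)$ only acquires $|\Phi(w)\cap\Phi^{(0)}|$ zeros, which may be far less than $|\Phi^{(0)}|$; the missing zeros must therefore come from cancellations across $w$. My strategy is to multiply through by the common denominator $\prod_{\alpha\in\Phi^+\setminus\Phi_M^+}\zeta(\langle\alpha^{\vee},\chi_{s}\rangle+1)$, rewrite $E_\para{P}^{0}(f^{0},1,s)=\sum_w c_w(s)$ as a single rational expression in $\zeta$-values, and show the resulting numerator vanishes at $s_0$ to order at least $|\Phi^{(0)}|$. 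The expected mechanism is a reflection pairing $w\leftrightarrow s_\alpha w$ for each $\alpha\in\Phi^{(0)}$, so that the two paired summands combine into a difference carrying the zero of $\zeta(\langle\alpha^{\vee},\chi_{s}\rangle)$ as a common factor. Making this pairing compatible with the coset structure $\weyl{G}/\weyl{\para{P}}$ and with the minimal-length normalization, and verifying that iterated pairings compose to force the full order-$|\Phi^{(0)}|$ vanishing, is the delicate combinatorial heart of the argument.
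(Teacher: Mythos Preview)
Your approach diverges from the paper's and has a genuine gap at the step you yourself flag as ``the delicate combinatorial heart.''

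The paper does not work with the constant term of $E_{\para{P}}$ directly. Instead it introduces the normalized Borel Eisenstein series
\[
E_{\para{B}}^{\#}(\chi,g)=\prod_{\alpha>0}\zeta(\langle\alpha^{\vee},\chi\rangle+1)\,(\langle\alpha^{\vee},\chi\rangle+1)(\langle\alpha^{\vee},\chi\rangle-1)\cdot E_{\para{B}}(f^{0},\chi,g),
\]
proves it is entire (Theorem~\ref{Ikeda_Series}), and then relates it to $E_{\para{P}}$ via an iterated residue (Proposition~\ref{ReducetoParabolic}, Corollary~\ref{Lemma1}): $E_{\para{B}}^{\#}(\chi_s,g)=\text{(nonzero constant)}\cdot G_{\para{P}}(\chi_s)\cdot E_{\para{P}}(f^{0},s,g)$. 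The bound then follows by computing the vanishing order of $G_{\para{P}}(\chi_s)$ at $s_0$, which is exactly $|N_1(\chi_{s_0})|-|N_0(\chi_{s_0})|-(n-1)$. The cancellation you are trying to manufacture is absorbed into the entireness proof, where Hartog's theorem reduces the problem to generic points on a \emph{single} hyperplane $\langle\alpha^{\vee},\chi\rangle=0$; there only one root is in play, and the pairing $w\leftrightarrow w\weylelement{\alpha}$ on the \emph{full} Weyl group (not on $\weyl{G}/\weyl{\para{P}}$) is clean.

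Your direct attack runs into real obstacles. First, the pairing you write, $w\leftrightarrow s_\alpha w$, is the wrong side: $(s_\alpha w)\chi_{s_0}=s_\alpha(w\chi_{s_0})$, which need not equal $w\chi_{s_0}$; for $\alpha\in\Phi^{(0)}$ the relevant involution is $w\leftrightarrow ws_\alpha$. Second, even corrected, $ws_\alpha$ is generally not a shortest representative in $\weyl{G}/\weyl{\para{P}}$, so the pairing does not act on your index set. Third, the roots in $\Phi^{(0)}$ are typically not simple, their reflections do not commute, and there is no evident way to iterate the pairing to force vanishing to order $|\Phi^{(0)}|$ simultaneously. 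Your final paragraph describes a hope, not an argument; the paper's route via $E_{\para{B}}^{\#}$ and Hartog is precisely what makes this step tractable.
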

\begin{obv}
If $F=\mathbb{Q}$ then 
for all the cases that we studied the order of the spherical Eisenstein series
is actually equal to $d_{\para{P}}(\chi_{s_0})$. 
\end{obv}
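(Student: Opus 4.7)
The Observation is an empirical sharpness claim: the upper bound $d_{\para{P}}(\chi_{s_0})$ from the preceding theorem is asserted to be \emph{attained} for every case studied. Because this is a finite case-check rather than a general theorem, my plan is an algorithmic verification. For each group $G \in \{G_2, GL_n, Sp_n, F_4, E_6, E_7\}$ within the stated rank ranges, each maximal parabolic $\para{P}$, and each candidate real $s_0 \in (0,\infty)$ where some $\langle \alpha^\vee, \chi_{s_0}\rangle$ lies in $\{0,1\}$, one computes (i) $d_{\para{P}}(\chi_{s_0})$ combinatorially from the root data of $G$, and (ii) the actual pole order of $E_{\para{P}}(f^0, g, s)$ at $s_0$. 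The upper-bound theorem guarantees $\mathrm{(ii)} \le \mathrm{(i)}$, so the content of the Observation is the reverse inequality.

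The computation of (ii) goes through the constant-term reduction quoted earlier: the pole order of $E_{\para{P}}(f^0, g, s)$ at $s_0$ equals the pole order of the meromorphic function
$$F(s) \;=\; \sum_{w \in W_G/W_{\para{P}}} \; \prod_{\alpha \in \Phi^+\setminus \Phi_M^+,\; w\alpha<0} \frac{\zeta(\langle\alpha^\vee,\chi_s\rangle)}{\zeta(\langle\alpha^\vee,\chi_s\rangle+1)},$$
obtained by applying Gindikin–Karpelevich (Theorem (\ref{gid})) to each summand. The algorithm I would implement in \textit{SAGE} enumerates coset representatives for $W_G/W_{\para{P}}$, forms each summand symbolically as a rational function in $s$ (with explicit zeta-pole/zero locations prescribed by the $\langle \alpha^\vee, \chi_s\rangle$), combines the sum over a common denominator, and extracts the Laurent expansion of the numerator at the candidate $s_0$. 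Non-vanishing of the coefficient of $(s-s_0)^{-d_{\para{P}}(\chi_{s_0})}$ in $F(s)$ then certifies the equality $\mathrm{(ii)} = \mathrm{(i)}$.

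The main obstacle is computational scale rather than conceptual depth. For the largest cases, notably $E_7$ with its big parabolics, $|W_G/W_{\para{P}}|$ reaches the thousands, so the symbolic sum produces a very large rational function whose leading Laurent coefficient must be computed in \emph{exact} arithmetic; floating-point would risk either missing genuine cancellations established in the proof of the upper bound or, conversely, manufacturing spurious ones that would erroneously indicate strict inequality. The strategy is to carry the $c_\alpha(s)$ as exact rational functions throughout, handle the few required zeta values ($\zeta(2),\zeta(3),\ldots$) formally, and run the procedure over every $(G, \para{P}, s_0)$ triple in the stated list. The Observation is established by verifying the asserted equality case by case.
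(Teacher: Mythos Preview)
Your proposal contains a genuine gap. The constant term along the Borel is not the scalar function $F(s)=\sum_{w} C_w(s)$ you write down, but rather $\sum_{w} C_w(s)\, f^0_{w\chi_s}(g)$; the spherical vectors $f^0_{w\chi_s}$ are different functions of $g$ for different $w$, and dropping them is only legitimate at $g=1$. At $g=1$ your $F(s)$ is indeed the value of the constant term, so a pole of $F(s)$ of order $d_{\para{P}}(\chi_{s_0})$ would certify the lower bound. But there is no reason your $F(s)$ must have that pole order: terms coming from $w,u$ with $w\chi_{s_0}\neq u\chi_{s_0}$ have genuinely independent exponents as functions of $g$ on the torus, so their poles cannot cancel in the Eisenstein series, yet in your scalar sum $\sum_w C_w(s)$ they may very well cancel. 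Thus your claimed equality between the pole order of $E_{\para{P}}(f^0,g,s)$ and that of $F(s)$ is false in general, and your verification could return a false negative.

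The paper's algorithm (Chapter~\ref{ch4:algo}) fixes exactly this: it first partitions $W(G,\para{P})$ into equivalence classes $[w]_{s_0}=\{u: u\chi_{s_0}=w\chi_{s_0}\}$, then computes $M_w^{\#}(s)=\sum_{u\in[w]_{s_0}} C_u(s)$ for each class separately. Within a class the spherical vectors coincide at $s_0$, so summing the scalars is legitimate; between classes the exponents differ, so the Proposition following the algorithm shows no further cancellation is possible. The actual pole order is then $\max_{[w]} \operatorname{ord}_{s=s_0} M_w^{\#}(s)$, and the Observation is confirmed by exhibiting, in each case, a single equivalence class whose $M_w^{\#}$ has a pole of order exactly $d_{\para{P}}(\chi_{s_0})$. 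Your approach can be salvaged by inserting this equivalence-class decomposition before forming any Laurent expansion.
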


 \subsection*{ The residual representation}
 Suppose that $E_{\para{P}}(\cdot,g,s)$ admits a pole of order $m$ at $s=s_0$ . Consider its  Laurent expansion 
 at $s=s_0$
 $$E_{\para{P}}(\cdot,s,g) = \sum_{i=-m}^{\infty} \leadingterm{-i}^{\para{P}}(\cdot,s_0,g)(s-s_0)^{i}.$$ 
 
\begin{prop*}
The leading term in the Laurent expansion 
 $\leadingterm{-m}^{\para{P}}(\cdot,s_0,g)$
 , i.e. the first term that it is not zero,
  defines an intertwining map between 
 $I_{\para{P}}(s_0)$ and  the space of automorphic
 forms.
\end{prop*}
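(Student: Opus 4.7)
The plan is to realize the leading coefficient as an explicit limit (or contour integral), and then transfer the automorphic and equivariance properties of the Eisenstein series at generic $s$ to that leading coefficient at $s_0$. Given $f \in I_{\para{P}}(s_0)$, I would extend it to a flat section $\tilde{f}(s) \in I_{\para{P}}(s)$, uniquely determined by the requirement $\tilde{f}(s)|_{K} = f|_{K}$. Define
$$\Lambda(f)(g) := \lim_{s \to s_0}(s-s_0)^{m} E_{\para{P}}(\tilde{f},s,g),$$
equivalently written as $\frac{1}{2\pi i}\oint (s-s_0)^{m-1} E_{\para{P}}(\tilde{f},s,g)\,ds$ over a small circle about $s_0$ avoiding other poles. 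One first checks this limit is independent of the chosen flat extension, because two flat sections agreeing at $s=s_0$ differ by a section vanishing at $s_0$ and thus contribute nothing to the leading coefficient of order $m$.

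Next I would verify that $\Lambda(f) \in \automorphicspace{G}{F}$. Left $G(F)$-invariance is inherited coefficient-by-coefficient from the $G(F)$-invariance of $E_{\para{P}}(\tilde{f},s,\cdot)$ wherever it is holomorphic. $K$-finiteness follows because $\tilde{f}(s)|_K$ is chosen independent of $s$ and sits inside the fixed finite-dimensional $K$-type generated by $f|_K$. Smoothness is manifest from the contour-integral formula. $Z(\lie{g})$-finiteness follows because $I_{\para{P}}(s_0)$ has a well-defined infinitesimal character and the $Z(\lie{g})$-action commutes with the residue operation. Moderate growth is obtained from a uniform moderate-growth bound on $E_{\para{P}}(\tilde{f},s,g)$ for $s$ in a compact punctured neighborhood of $s_0$, combined with the Cauchy-type contour-integral representation of the Laurent coefficient.

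For the intertwining property, I would use that for every $h \in \adelegroup{G}{A}$
$$E_{\para{P}}(R(h)\tilde{f},s,g) = E_{\para{P}}(\tilde{f},s,gh),$$
which holds in the region of absolute convergence by construction and persists under meromorphic continuation. The section $R(h)\tilde{f}(s)$ and the flat extension of $\pi_{s_0}(h)f$ agree at $s=s_0$, hence differ by a section of order $O(s-s_0)$; by the independence observed above, the two Laurent expansions have the same coefficient of $(s-s_0)^{-m}$. Equating those coefficients yields $\Lambda(\pi_{s_0}(h)f)(g) = \Lambda(f)(gh)$, which is the desired $\adelegroup{G}{A}$-equivariance. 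Linearity of $\Lambda$ is clear from its definition.

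The main obstacle I expect is the moderate growth bound. As emphasized in the text, the pole of order $m$ typically arises from subtle cancellations among the intertwining operators $M_w(s)$, each of which may have a pole at $s_0$ of order strictly greater than $m$. Consequently, one cannot bound $\Lambda(f)$ term by term through the constant-term decomposition; instead one needs a uniform moderate-growth estimate on $E_{\para{P}}(\tilde{f},s,g)$ across a full neighborhood of $s_0$, a standard but nontrivial input from the general theory of Eisenstein series. Once this estimate is in place, the remaining verifications are essentially formal consequences of meromorphic continuation together with the equivariance of $E_{\para{P}}$ at generic $s$.
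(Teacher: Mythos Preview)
Your proposal is correct and follows the standard argument for this type of statement. However, the paper does not actually give a proof of this proposition: in the preface it is stated without proof, and when it reappears in Section~\ref{Ch1:section3} it is simply cited from the literature (the reference \cite{WeeTeck}). So there is no ``paper's own proof'' to compare against beyond that citation.

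Your sketch is in fact more detailed than anything the paper provides, and the ingredients you list---flat extension, contour-integral realization of the Laurent coefficient, independence of the extension, inheritance of $G(F)$-invariance and $K$-finiteness, and the equivariance check via meromorphic continuation---are exactly the ones underlying the cited result. Your identification of the uniform moderate-growth estimate as the one genuinely nontrivial input is also on point; this is precisely the content supplied by the general theory (Langlands, Moeglin--Waldspurger \cite{EisenstienSeries}) rather than something one proves directly from the term-by-term constant-term expansion.
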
  The image is called \textbf{residual representation}
 and  is denoted by $\Pi$.
 
 The spherical vector in $I_{\para{P}}(s)$ 
 generates a subrepresentation $I^0_{\para{P}}(s)$. 
 If $E_{\para{P}}(f^0,g,s)$ admits a pole of order $m$ at $s=s_0$ 
 then the leading term in the Laurent expansion 
 $\leadingterm{-m}^{\para{P}}(f^{0},s_0,g) $
 defines an intertwining map between 
 $I^0_{\para{P}}(s_0)$ and  the space of automorphic
 forms. The image is called \textbf{spherical residual representation}
 and is denoted by $\Pi^0$.
 
 It is expected that for any $s$ such that $\operatorname{Re} s >0 $
 the highest order of the pole 
 $E_{\para{P}}(\cdot,g,s)$ will be attained by the spherical section. 
 While the analogous statement is known for the 
 intertwining operators $M_w(s)$ the expectation is not proven 
 in general. Indeed the cancellation occurring for the spherical 
 vector might not occur for an arbitrary section.
 
 \subsection*{Square-integrability}
 
 To describe the residual representation 
 $\Pi^0$ it is useful to know whether the function 
 $\leadingterm{-m}^{\para{P}}(f^0,s_0,g)$ is contained in  $L^2(G(F)\backslash G(\adele{A})).$
 To find this out we use  Langlands' criterion Theorem (\ref{square_init}) . 
  \\
  If this is the case the spherical residual representation 
  is necessary a direct sum of irreducible representations. In particular, 
  $\Pi^0$ is isomorphic to the unique spherical irreducible 
  quotient of $I_{\para{P}}(s_0)$.
  
  In case $I_{\para{P}}(s_0)$ has unique irreducible quotient and 
  $\leadingterm{-m}^{
  \para{P}}(f^{0},s_0,g)$ is square integrable the residual representation 
  is isomorphic to the unique spherical quotient of $I_{\para{P}}(s_0)$.

 All square integrable residual representations comming from degenerate Eisenstein series have cuspidal support $[T,1]$. In the recent  paper  \cite{op}, the space \\$L^{2} (G(F) \backslash G(\adele{A}))_{[T,1]}^{K}$,
 consisting of square integrable functions
  having cuspidal support $[T,1]$ and as representation of $G(\adele{A})$  generated by the spherical section, is studied. 
 It discrete summands correspond to distinguished nilpotent orbits of the Lie algebra $^{L}\lie{g}$ of the dual group.    We write explicitly the distinguished orbits 
 corresponding to the residual representations that are square integrable see subsections (\ref{G2Orbirts}),(\ref{F4Orbirts}),(\ref{E6Orbirts}),(\ref{E7Orbirts}).

\subsection*{Identities}

Some automorphic representations can be realized in several ways as leading terms 
of degenerate Eisenstein series. 
 For example the trivial function can be realized as the residue of $E_{\para{P}}(f^0,s,g)$ at $s = \frac{1}{2}$
for any maximal parabolic subgroup $P$. This is an  analogue of  the statement that a representation
can be a common quotient of several degenerate principal series. 

In the second part of the thesis we explore identities involving the leading terms
 of various Eisenstein series.
Generally speaking, we would like to realize a given 
representation as residues of various Eisenstein series
in a hope to gain additional information about it.

For example, 
the minimal representation of the exceptional group $E_7$ 
has been realized as a residue of a degenerate Eisenstein 
series associated to a non-Heisenberg parabolic subgroup in \cite{Ginzburg1997}
and later as  a residue at a different point for 
Eisenstein series associated to a Heisenberg parabolic subgroup. 
The second realization proved to be useful for certain exceptional 
theta lift \cite{NadiaTheis}.

A necessary condition for the identity between leading terms of Eisenstein series is the induced representations $I_{\para{P}}^{0}(s_0)$, $I_{\para{Q}}^{0}(t_0)$  having a common quotient.
These representations are subquotients of 
$I_{\para{B}}(\chi_{s_0})$ 
$I_{\para{B}}(\chi_{t_0})$ respectively
 where $\chi_{s_0}= \delta_{\para{P}}^{s_0-\frac{1}{2}} \otimes 
\delta_{\para{B}}^{\frac{1}{2}}$ and
$\chi_{t_0}= \delta_{\para{Q}}^{t_0-\frac{1}{2}} \otimes 
\delta_{\para{B}}^{\frac{1}{2}}$. This leads to the following definition
 \begin{defn}
 The quintuple $(\para{P},s_0,\para{Q},t_0,\weylelement{})$ where $\para{P},\para{Q}$ are maximal parabolic subgroups, 
 $s_0,t_0 \in \mathbb{R}$ and $\weylelement{}\in \weyl{G}$ is called  \textbf{admissible data} if 
$$\weylelement{} ( \delta_{\para{P}}^{s_0-\frac{1}{2}} \otimes 
 \delta_{\para{B}}^{\frac{1}{2}}) = \delta_{\para{Q}}^{t_0-\frac{1}{2}} \otimes 
 \delta_{\para{B}}^{\frac{1}{2}}.$$
 If also $s_0,t_0$ are non negative numbers it is called  \textbf{positive admissible data}.
 \end{defn}
The (positive) admissible  data  can be found by direct computation as explained in Chapter \ref{ch3:ids}. In this chapter we also list  all the positive admissible data 
for the exceptional groups $G_2, F_4, E_6, E_7$

Note the curious chains of the admissible data. We have pairs 
of the form $(\para{P}_1,s_1,\para{P}_2,s_2,w_1)$ and $(\para{P}_2,s_2,\para{P}_3,s_3,w_2)$ leading to 
the chains of identities. See section \ref{section:E6}.

Let us  state  our main theorem regarding the spherical 
Eisenstein series.

\begin{thm}\label{intomain}
 Let $F=\mathbb{Q}$ and  $(\para{P},s_0,\para{Q},t_0,\weylelement{})$ be an  admissible data.
Let $f_\para{P}^0, f^0_\para{Q}$ be  spherical sections in $I_{\para{P}}(s_0),
 I_{\para{Q}}(t_0)$
 respectively.

There exists a constant $C \in \mathbb{C}^{\ast}$ 
that depends on the admissible data
such that
$$\leadingterm{-d_P(\chi_{s_0})}^{\para{P}}(f_\para{P}^0,s_0,g)=
C \leadingterm{-d_Q(\chi_{t_0})}^{\para{Q}}(f_\para{Q}^0,t_0,g).$$
\end{thm}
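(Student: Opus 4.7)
The plan is to reduce the asserted global identity to an identity of constant terms along the Borel and then verify the latter by explicit computation via the Gindikin--Karpelevich formula. By Langlands' fundamental theorem, an Eisenstein series, and hence every coefficient in its Laurent expansion at a point, is determined by its constant term along $\para{B}$. Applying the constant-term functor to both sides of the desired equality reduces the theorem to an identity between the leading Laurent coefficients of $E_\para{P}^0(f^0_\para{P},g,s)$ at $s=s_0$ and of $E_\para{Q}^0(f^0_\para{Q},g,t)$ at $t=t_0$.

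Using the decomposition $E_\para{P}^0(f^0,g,s)=\sum_{\weylelement{}\in \weyl{G}/\weyl{\para{P}}} M_{\weylelement{}}(s) f^0_s$ together with Gindikin--Karpelevich (Theorem~\ref{gid}), each summand evaluates on the spherical vector as $M_{\weylelement{}}(s) f^0_s = c_{\weylelement{}}(\chi_s)\, f^0_{\weylelement{}\chi_s}$, where $c_{\weylelement{}}(\chi_s)$ is an explicit product of completed Riemann zeta ratios indexed by the positive roots flipped by $\weylelement{}^{-1}$. The analogous formula holds for $E_\para{Q}^0$. The crucial consequence of the admissibility hypothesis $\weylelement{}\chi_{s_0}=\chi_{t_0}$ is that the Weyl orbits $\weyl{G}\chi_{s_0}$ and $\weyl{G}\chi_{t_0}$ coincide, so after specialisation at $s_0$ and $t_0$ both constant terms are supported on the \emph{same} finite set of spherical functions $\{f^0_\mu(g) : \mu \in \weyl{G}\chi_{s_0}\}$.

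To extract the leading Laurent coefficient on the $\para{P}$-side, I would group the sum by the image $\mu=w\chi_{s_0}$: the coefficient of $f^0_\mu$ in the leading term is
$$\lim_{s\to s_0}(s-s_0)^{d_\para{P}(\chi_{s_0})}\sum_{w\chi_{s_0}=\mu}c_w(\chi_s).$$
Because the actual order of pole attains the combinatorial bound $d_\para{P}(\chi_{s_0})$ of Theorem~\ref{Upper}, this residue is a nonzero product of special $\zeta$-values. The admissible element $\weylelement{}$ induces a bijection $w\mapsto w\weylelement{}^{-1}$ between the indexing sets on the two sides, and under this bijection the positive roots giving rise to singular factors on the $\para{P}$-side match those on the $\para{Q}$-side. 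Using $\weylelement{}\chi_{s_0}=\chi_{t_0}$ together with the functional equation $\zeta(1-x)=\zeta(x)$, available because $F=\mathbb{Q}$ and the admissible points are real, the ratio of the two $\mu$-residues collapses to a root-by-root product of zeta values. The remaining task is to show that this product is the same for every $\mu$ in the common orbit, and that it is nonzero: that common value is the sought constant $C\in\mathbb{C}^\ast$.

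The main obstacle is precisely the $\mu$-independence of this ratio. A priori the set of $w$ with $w\chi_{s_0}=\mu$ varies with $\mu$, the singular root systems entering $c_w$ vary as well, and the bookkeeping of factors under the functional equation is delicate. One must prove that the non-singular factors reorganise, after applying $w\mapsto w\weylelement{}^{-1}$, into a single overall factor independent of $\mu$, and that no accidental vanishing forces $C=0$. Non-vanishing is ensured because Gindikin--Karpelevich only produces values of $\zeta$ at integer arguments in the region of convergence, away from the zeros of $\zeta$. Once the matching is checked $\mu$ by $\mu$, Langlands' reconstruction of the Eisenstein series from its constant term along $\para{B}$ yields the asserted global identity.
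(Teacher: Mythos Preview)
Your approach—direct comparison of constant terms exponent by exponent—differs from the paper's, which instead introduces a \emph{normalized} Borel Eisenstein series
\[
E_{\para{B}}^{\#}(\chi,g)=\prod_{\alpha>0}\zeta(l_\alpha^+(\chi))\,l_\alpha^+(\chi)\,l_\alpha^-(\chi)\cdot E_{\para{B}}(f^0,\chi,g),
\]
proves once that it is entire and $\weyl{G}$-invariant (following Ikeda), and then relates $E_{\para{B}}^{\#}(\chi_{s},g)$ to $E_{\para{P}}(f^0,s,g)$ via an iterated residue along the hyperplanes $\langle\chi,\check\alpha_j\rangle=1$, $j\ne i$. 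Admissibility $\weylelement{}\chi_{s_0}=\chi_{t_0}$ then gives $E_{\para{B}}^{\#}(\chi_{s_0},g)=E_{\para{B}}^{\#}(\chi_{t_0},g)$ in one stroke, and the nonzero constants on each side come from the explicitly computable normalising factor, whose zero order at $s_0$ is exactly $d_{\para{P}}(\chi_{s_0})$ when $F=\mathbb{Q}$.

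Your proposal has a genuine gap at precisely the step you flag as the main obstacle. The claimed bijection $w\mapsto w\weylelement{}^{-1}$ between $\{w\in W(G,\para{P}):w\chi_{s_0}=\mu\}$ and $\{u\in W(G,\para{Q}):u\chi_{t_0}=\mu\}$ is not well-defined: $w\weylelement{}^{-1}$ need not lie in the set $W(G,\para{Q})$ of shortest coset representatives (indeed $|W(G,\para{P})|\ne|W(G,\para{Q})|$ in general, so no global bijection exists), and even after adjusting by a $\weyl{\para{Q}}$-element there is no reason for the two fibres over a fixed $\mu$ to have the same cardinality. Hence the ``root-by-root'' matching of Gindikin--Karpelevich factors cannot be set up as described, and the $\mu$-independence of the ratio remains unproved. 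Your non-vanishing argument is also too quick: the factors $\zeta(\langle\chi_{s_0},\check\alpha\rangle)$ are \emph{not} at integer arguments for general admissible $s_0$—the paper in fact needs hypotheses like $\zeta(\tfrac12)\ne0$ at certain points. The paper's normalisation device sidesteps all of this: $\weyl{G}$-invariance of $E_{\para{B}}^{\#}$ follows from the Eisenstein and zeta functional equations applied to a single simple reflection, and it packages the $\mu$-independence you are after without ever comparing individual equivalence classes.
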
 
\begin{remk}
We assume that $F =\mathbb{Q}$ since in that case we know that $\zeta$ does not have any zeros in the real line.
\end{remk}
For $F=\mathbb{Q}$ we observe if $E_{\para{P}}(f^{0},s,g)$ admits a pole of order $d$ at $s=s_0$ for $\operatorname{Re} s_0>0$ then $d=d_{\para{P}}(\chi_{s_0})$.
Hence Theorem (\ref{intomain}) is an identity between the leading terms.

In Chapter \ref{ch:Technical lemmas} we give an explicit formula for the constant in Theorem (\ref{intomain}).
For the groups $G=F_4,E_6,E_7$ we list all the positive admissible data and determine  the constant. Using this we write the identities explicitly.

The identities of this type has been explored before by 
Dihua Jiang for the symplectic groups in \cite{Dih1}.
\newpage
\subsection*{Thesis structure}
Below is the outline of the thesis.

Chapter \ref{Ch:Introduction} is introductory. 
In section \ref{Ch1:section1}  we set all the notations for an algebraic
group $G$ defined over a field $F$, such as root datum and Weyl group. 

In section \ref{Ch1:section2} we recall the standard definitions and facts
from the representation theory of split reductive group $G(F)$,  where $F$ is a local field. 
In particular we define induced representations and intertwining operators
between them and recall their properties. 

Section \ref{Ch1:section3} refers  to the space of automorphic forms of $G$.
We define a degenerate Eisenstein  series, global intertwining operators
and recall their properties. 

In Chapter \ref{ch4:algo} we explain our basic algorithm that allows 
us to compute all the poles and their orders
of all degenerate spherical Eisenstein series $E_P(f^0,g,s)$  
in the right-half plane for the groups $G=G_2,F_4,E_6,E_7$ .
\\
Our algorithm works for any group whose rank is  not too big. 
In particular the results we obtain for $G_2$ and $GL_n, n\le 8$ 
agree with the results of \cite{G2_Ginz} and \cite{GL_n}.

In Chapter \ref{ch2:Poles} we present the results for the groups of type $G_2,F_4,E_6,E_7$.

In  Chapter \ref{ch2:NormalizedEise} 
we prove the preliminary version of our main theorem. 
Our approach follows Ikeda \cite{Ike1}. 

Using the $W$ invariance and entireness of the normalized Eisenstein series 
we prove the identity between two leading terms of degenerate Eisenstein series at an admissible data 
up to a non-zero constant. 

We compute this constant in Chapter \ref{ch:Technical lemmas} .

The Chapter \ref{ch3:ids} is devoted to the search of positive admissible data.  
We list all of them for the groups $G=F_4,E_6,E_7$.
Note  the curious chains of length three for the group $E_6$. 
This phenomenon seems to be new.  

The sample of computation
for certain parabolic of $G=F_4$ at a fixed point is attached
in the appendix \ref{Ap1:Code_example}

\chapter{Introduction} 

\label{Ch:Introduction} 

\lhead{Chapter \ref{Ch:Introduction}. \emph{Introduction}} 


\section{Algebraic groups and their structure}\label{Ch1:section1}
Let $F$ be a field, and let $G$ be an algebraic reductive split group over $F$. We denote by $G(F)$ the group of $F$ points of $G$. 
For every algebraic group we can define its Lie algebra by the following procedure.

Let $A=F[G]$, we consider the Lie algebra $Der(A)$ of all $F$-- derivations $f : A \rightarrow A$ with the Lie bracket 
$[f,g] = f \circ g -g \circ f$.
On $A$ we can define a left action of $G$ as follows: for every $g \in G$  we denote by $\lambda_g$ the action 
$\lambda_g(f)(h) =f(g^{-1}h)$ for every $h \in G$.

\begin{defn}
A derivation $\delta\in Der(A)$ is called a left--invariant if it commutes with left translations i.e. for every $g \in G$ it holds $\delta \circ \lambda_g=\lambda_g \circ \delta$.  
\end{defn}
The left invariant derivations of $A$ form a Lie subalgebra of $Der(A)$, called the Lie algebra of $G$ and denoted by $\mathfrak{g}$.

We fix a maximal split torus $T$ of $G$.
Let $X(T)=Hom(T,\mathbb{G}_m)$ be the character group of $T$, and let $Y(T)=\operatorname{Hom}(\mathbb{G}_m,T)$ be the cocharacter
group. Both $X(T)$ and $Y(T)$ are free abelian groups of rank $\operatorname{dim}\: T$.
Moreover since the $\operatorname{Aut}(\mathbb{G}_m)=\mathbb{Z}$ there is a pairing 
$$  \left \langle \; , \: \right \rangle: \: X(T) \times Y(T) \rightarrow \mathbb{Z}.$$

The maximal torus  $T$ acts on the Lie algebra $\mathfrak{g}$ of $G$ via the adjoint action. Since $G$ is reductive - the zero  eigenspace of $\mathfrak{g}$ with respect to $T$ is exactly the Lie algebra  $\mathfrak{t}$ of $T$ itself. Therefore we can decompose $\mathfrak{g}= \mathfrak{t} \oplus \bigoplus_{\alpha \in \Phi_{G}} \mathfrak{g}_{\alpha}$,
where $\Phi_{G}$ is the set of all $0 \neq \alpha \in X(T)$ such that $\mathfrak{g}_{\alpha}=\{x \in \mathfrak{g} \: | t.x=\alpha(t)x \:\: \forall t\in T\} \neq 0$. The set $\Phi_{G}$ is called the set of roots of $G$.

\begin{defn}
A subgroup $B$ of $G$ is called a \textbf{Borel subgroup} if it is a maximal Zariski closed, connected and solvable. 
\end{defn}
\begin{remk}
 All Borel subgroups of $G$ are conjugate in $G$.
\end{remk}
From now we shall assume that $G$ is a linear group. So we fix $n \in \mathbb{N}$ and an embedding $\iota : G \hookrightarrow GL_{n}$. We identify $G$ with $\iota(G)$.
\begin{defn}
An element $g\in G$ is called an \textbf{unipotent} element of $G$, if $g-I_{n}$ is nilpotent, where $I_{n}$ is the identity of $GL_{n} $ .
\end{defn}
\begin{defn}
Let $G$ be a connected algebraic group. The \textbf{radical} of $G$, denoted by
$R(G)$, is the maximal connected solvable normal subgroup of $G$. The \textbf{unipotent radical}
$R_u(G)$ of $G$ is the subgroup of unipotent elements of $R(G)$.
\end{defn}
We choose a Borel subgroup $\para{B}$ such that $T\subset \para{B}$, and denote its Lie algebra  by $\mathfrak{b}$.
Then 
there exists a closed connected normal subgroup of $\para{B}$ denoted by $U$ such that $\para{B}=TU$ is semi--direct product (see \cite{ParabolicSturctre}, section 6) . 
Therefore, since $T$ acts on $\para{B}$, it also acts on $\mathfrak{b}$.  Hence
by choosing $\para{B}$, we also get a choice of positive roots $\Phi_{G}^{+}$ defined by $\alpha \in \Phi_{G}^{+}$ iff $\mathfrak{g}_{\alpha} \subset \mathfrak{b}$. We define the negative roots to be $\Phi_{G}^{-}=\Phi_{G} \setminus \Phi_{G}^{+}$. 
\begin{defn}
A positive root $\alpha$ is called a \textbf{simple root} if it can not be written as the sum of two positive roots. We  denote the set of simple roots of $G$ by $\Delta_{G}$.
\end{defn}

For every root $\alpha \in \Phi_{G}$ we can associate  a homomorphism  $x_{\alpha} \: : \mathbb{G}_a \rightarrow G$ such that for every $c \in \mathbb{G}_a$ and $t \in T$ it holds $tx_{\alpha}(c)t^{-1}=x_{\alpha}(\alpha(t)c)$. Moreover, we can define a morphism $\phi_{\alpha} : SL_{2} \rightarrow G$ such that
$$\phi_{\alpha} \left(\begin{array}{rr}
1 & c \\
0 & 1
\end{array}\right) =x_{\alpha}(c), \quad \phi_{\alpha} \left(\begin{array}{rr}
1 & 0 \\
c & 1
\end{array}\right) =x_{-\alpha}(c).$$

Define $\check{\alpha} :\: \mathbb{G}_m \rightarrow T$ by $\check{\alpha}(c)=\phi_{\alpha}\left(\begin{array}{rr}
c & 0 \\
0 & \frac{1}{c}
\end{array}\right)$. The element $\check{\alpha} \in Y(T)$ and is called the coroot associated to the root $\alpha$. We denote by $\Phi_{G}^{\check{}}$ the set of all coroots of $G$.

By the above construction we associate for every algebraic split reductive group a quadruple $(X(T),\Phi_{G},Y(T),\Phi_{G}^{\check{}})$ with respect to the torus $T$, that is called a \textbf{root datum} . 

 Let $\Omega=\{ \fundamental{i}\}_{i=1}^{n}$ be elements of $\mathfrak{t}^{*}$ such that $\left \langle \fundamental{j},\check{\alpha_i}\right \rangle =\delta_{ij}$. These elements are called fundamental weights. Obviously, $X(T) \subset \operatorname{Span}_{\mathbb{Z}}\{\Omega\}$. Moreover, the quotient is finite. 
 
 For every root $\alpha \in \Phi_{G}$ we define the simple reflection $\weylelement{\alpha}\in Aut(X(T)) , \weylelement{\check{\alpha}}\in Aut(Y(T))$) to be 
$$\weylelement{\alpha}(x)= x- \left \langle \; x , \check{\alpha} \: \right \rangle \alpha \quad \weylelement{\check{\alpha}}(y)= y - \left \langle \; \alpha, y\: \right \rangle \check{\alpha} .$$
 
 Let $Norm_{G}(T)$ be the normalizer of $T$ in $G$. The \textbf{Weyl group} of $G$ is defined to be $\weyl{G}=: Norm_{G}(T) \slash T$.
 
 \begin{remk}
 Since $\weyl{G}$  acts on $T$ it also acts on $X(T)$. 
 \end{remk}
 
 \begin{prop} \cite{ParabolicSturctre}
 $\weyl{G} \simeq <w_{\alpha} \: | \: \alpha \in \Delta_{G}>$, and this is a finite group.
 \end{prop}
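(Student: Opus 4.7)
The plan splits the statement into finiteness and generation by simple reflections. For finiteness, I would exhibit a faithful action of $\weyl{G}$ on the finite set $\Phi_G$. Given $w \in \weyl{G}$ with representative $n \in N_G(T)$, the identity $t x_\alpha(c) t^{-1} = x_\alpha(\alpha(t)c)$ together with $\alpha(n^{-1} t n) = (w\alpha)(t)$ shows that $n\, x_\alpha(\mathbb{G}_a)\, n^{-1} = x_{w\alpha}(\mathbb{G}_a)$, yielding a homomorphism $\weyl{G} \to \operatorname{Sym}(\Phi_G)$. Its kernel is trivial because $G$ is generated by $T$ together with the root subgroups, so any $n \in N_G(T)$ stabilizing every $x_\alpha(\mathbb{G}_a)$ actually centralizes $G$; combined with $C_G(T) = T$ this forces $w = 1$. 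Hence $\weyl{G}$ embeds into $\operatorname{Sym}(\Phi_G)$ and so is finite.

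For generation by simple reflections, set $W' = \langle \weylelement{\alpha} : \alpha \in \Delta_G \rangle \subseteq \weyl{G}$. I would study the action of $\weyl{G}$ on the set $\mathcal{B}$ of Borel subgroups containing $T$, equivalently on the set of positive systems in $\Phi_G$. Two facts together imply $W' = \weyl{G}$: (a) $\weyl{G}$ acts simply transitively on $\mathcal{B}$; and (b) the subgroup $W'$ alone already acts transitively on $\mathcal{B}$. For (a), any two Borels containing $T$ are $G$-conjugate, and the conjugator can be chosen to lie in $N_G(T)$ because any two maximal tori inside a single Borel are conjugate within that Borel; simple transitivity follows from $N_G(T) \cap \para{B} = T$.

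For (b) I would induct on $d(\Phi^+) := |\Phi_G^+ \cap (-\Phi^+)|$, which measures how far a positive system $\Phi^+$ is from our fixed $\Phi_G^+$. If $d(\Phi^+) > 0$ then some simple root $\alpha \in \Delta_G$ must lie in $-\Phi^+$; the standard fact that $\weylelement{\alpha}$ sends $\alpha \mapsto -\alpha$ and permutes $\Phi_G^+ \setminus \{\alpha\}$ yields $d(\weylelement{\alpha} \Phi^+) = d(\Phi^+) - 1$, so induction eventually carries $\Phi^+$ to $\Phi_G^+$. Combining (a) and (b): for any $w \in \weyl{G}$ choose $w' \in W'$ with $w'(w \Phi_G^+) = \Phi_G^+$; simple transitivity forces $w'w = 1$, hence $w = (w')^{-1} \in W'$. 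The main obstacle in this plan is the refinement in (a) that a conjugator between two Borels sharing $T$ may be chosen to normalize $T$; once this is in hand the rest is either root-system bookkeeping or a direct consequence of $T$ being its own centralizer in the reductive group $G$.
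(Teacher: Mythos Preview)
The paper does not supply its own proof of this proposition; it is simply quoted with a citation to \cite{ParabolicSturctre}. So there is nothing to compare against, and your task is really to give a self-contained argument. Your overall plan is the standard one and the generation half, via simple transitivity of $\weyl{G}$ on Borels through $T$ combined with the length-reduction induction for $W'$, is correct as sketched; the ``main obstacle'' you isolate is indeed the only nontrivial input there.

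There is, however, a genuine gap in your finiteness argument. From $w\alpha=\alpha$ for all $\alpha$ you conclude that a representative $n$ \emph{normalizes} each root subgroup $U_\alpha$, and then assert that $n$ ``actually centralizes $G$''. Normalizing $U_\alpha$ is not the same as centralizing it: conjugation by $n$ could act on $U_\alpha\cong\mathbb{G}_a$ by a nontrivial scalar, so the step ``$n$ centralizes $G$'' does not follow and the appeal to $C_G(T)=T$ is not yet justified. The clean fix is to avoid root subgroups individually and argue with the Borel: if $w$ fixes every root then in particular $w\Phi_G^{+}=\Phi_G^{+}$, so $n$ normalizes $\para{B}$; since Borel subgroups are self-normalizing one gets $n\in\para{B}$, and then $n\in N_G(T)\cap\para{B}=T$ (using $N_G(T)\cap U=\{1\}$), whence $w=1$. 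Note also that faithfulness of the $\weyl{G}$-action on $\Phi_G$ uses that the roots span $X(T)\otimes\mathbb{Q}$, which holds for semisimple $G$ but not for arbitrary reductive $G$ (think of $GL_n$); in the general reductive case you should either pass to the derived group or run the Borel argument above, which does not require the spanning hypothesis.
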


\subsection*{Parabolic subgroups}
Let $G,\para{B},T,\Delta_{G}$ be as above. For simplicity we denote by $\Delta=\Delta_{G}$.
\begin{defn}
A subgroup $P$ of $G$ is called a \textbf{parabolic subgroup} (resp. standard) if it contains a Borel subgroup (resp. $\para{B}$).
\end{defn}
\begin{prop}\cite{ParabolicSturctre}
Let $I \subset \Delta$, and let $\weyl{I}$ be the subgroup of $\weyl{G}$ generated by the subset $S_{I}=\{ \weylelement{\alpha} \: | \alpha \in I \}$. Denote by $\para{P}_{I}= \para{B}\weyl{I}\para{B}$. Then $\para{P}_{I}$ is a standard parabolic subgroup of $G$.
\end{prop}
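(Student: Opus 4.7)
The plan is to verify the two defining properties of a standard parabolic: that $\para{P}_I = \para{B}\weyl{I}\para{B}$ is a closed subgroup of $G$, and that it contains the Borel $\para{B}$. The latter is immediate: since $\weyl{I}$ is a group it contains the identity, so $\para{B} = \para{B}\cdot e\cdot \para{B} \subset \para{P}_I$. The content of the proposition is therefore the subgroup and closedness statements.

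Closure under inversion is the easy half. Because $\weyl{I}$ is a subgroup of $\weyl{G}$, for every $w \in \weyl{I}$ one has $w^{-1}\in \weyl{I}$, and any representative of $w$ in $Norm_G(T)$ has inverse representing $w^{-1}$. Thus $(\para{B}w\para{B})^{-1} = \para{B}w^{-1}\para{B} \subset \para{P}_I$, so $\para{P}_I^{-1} = \para{P}_I$.

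The main step is closure under multiplication. Here I would invoke the Bruhat-type relation
$$\para{B}\weylelement{\alpha}\para{B}\cdot \para{B}w\para{B} \; \subset \; \para{B}\weylelement{\alpha}w\para{B}\,\cup\,\para{B}w\para{B},$$
valid for any simple reflection $\weylelement{\alpha}$ and any $w\in \weyl{G}$; this is the basic $(BN)$-axiom, proved by a direct rank-one computation using the homomorphisms $\phi_\alpha : SL_2 \to G$ already introduced. When $\alpha \in I$ and $w\in \weyl{I}$, both $w$ and $\weylelement{\alpha}w$ belong to $\weyl{I}$, so the right-hand side lies in $\para{P}_I$. Writing an arbitrary $w_1 \in \weyl{I}$ as a reduced product $\weylelement{\alpha_1}\cdots \weylelement{\alpha_k}$ in the generators $S_I$ and inducting on $k$, one deduces $\para{B}w_1\para{B}\cdot \para{B}w_2\para{B} \subset \para{P}_I$ for all $w_1,w_2\in \weyl{I}$. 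Combined with the inversion step, $\para{P}_I$ is a subgroup.

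It remains to see that $\para{P}_I$ is Zariski-closed. Each Bruhat cell $\para{B}w\para{B}$ is a locally closed subvariety of $G$, hence constructible; since $\weyl{G}$ is finite, $\weyl{I}$ is finite, and $\para{P}_I$ is a finite union of constructible sets and therefore constructible. A standard fact from the theory of algebraic groups says that any constructible subgroup of an algebraic group is closed, which finishes the proof. The main obstacle is securing the Bruhat relation used in the multiplicative closure step; once that rank-one calculation is in place, everything else is formal induction and general principles about constructible subgroups.
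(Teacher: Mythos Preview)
Your argument is correct and is essentially the standard $(B,N)$-pair proof of this fact. Note, however, that the paper does not supply its own proof of this proposition: it is simply quoted from the reference \cite{ParabolicSturctre} as background material, with no argument given in the thesis itself. There is therefore nothing to compare against; your write-up is a faithful version of the classical proof one would find in that reference.
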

\begin{prop}\cite{ParabolicSturctre}\label{corres}
The correspondence  $I \mapsto \para{P}_{I}$ defines a bijection between subsets of simple roots and standard parabolic subgroups of $G$.
\end{prop}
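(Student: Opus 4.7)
The plan is to prove the bijection via the Bruhat decomposition $G = \bigsqcup_{w \in \weyl{G}} \para{B} w \para{B}$, which provides both the existence of a combinatorial index for every double coset and the disjointness needed for uniqueness. I would proceed in four steps: (i) verify $\para{P}_I$ is a subgroup, (ii) verify it is standard parabolic, (iii) prove injectivity of $I \mapsto \para{P}_I$, (iv) prove surjectivity.

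For step (i), the subgroup property is not automatic from the definition $\para{P}_I = \para{B} \weyl{I} \para{B}$. First I would record the standard multiplication rule: for a simple reflection $\weylelement{\alpha}$ with $\alpha \in \Delta$ and any $w \in \weyl{G}$,
\[
(\para{B} \weylelement{\alpha} \para{B})(\para{B} w \para{B}) \subseteq \para{B} \weylelement{\alpha} w \para{B} \cup \para{B} w \para{B}.
\]
Iterating this along a reduced expression shows that $(\para{B} w_1 \para{B})(\para{B} w_2 \para{B}) \subseteq \para{B} \weyl{I} \para{B}$ whenever $w_1, w_2 \in \weyl{I}$, so $\para{P}_I$ is closed under multiplication. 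Since each double coset $\para{B} w \para{B}$ is preserved under inversion via $w \mapsto w^{-1}$ and $\weyl{I}$ is closed under inversion, $\para{P}_I$ is a subgroup. Step (ii) is immediate: $1 \in \weyl{I}$ gives $\para{B} \subseteq \para{P}_I$.

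For step (iii), if $\para{P}_I = \para{P}_J$ then by Bruhat disjointness the indexing sets of double cosets must coincide, so $\weyl{I} = \weyl{J}$. Since the set of simple reflections contained in the Coxeter subgroup $\weyl{I}$ is exactly $\{\weylelement{\alpha} : \alpha \in I\}$ (a fact proven using the length function on $\weyl{G}$, or equivalently the fact that $(\weyl{G}, S)$ is a Coxeter system and parabolic Coxeter subgroups recover their generators), we conclude $I = J$.

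For step (iv), let $\para{P}$ be an arbitrary standard parabolic subgroup. Since $\para{B} \subseteq \para{P}$, applying the Bruhat decomposition to $G$ and intersecting with $\para{P}$ yields
\[
\para{P} = \bigsqcup_{w \in W_\para{P}} \para{B} w \para{B}, \qquad W_\para{P} := \{ w \in \weyl{G} : \para{B} w \para{B} \subseteq \para{P}\}.
\]
The closure of $\para{P}$ under products and inverses, together with the multiplication rule above, forces $W_\para{P}$ to be a subgroup of $\weyl{G}$ that is closed under taking subwords of reduced expressions. Setting $I := \{\alpha \in \Delta : \weylelement{\alpha} \in W_\para{P}\}$, one checks $\weyl{I} \subseteq W_\para{P}$ trivially, while the reverse inclusion follows from the fact that any $w \in W_\para{P}$, written as a reduced word in simple reflections, uses only generators from $I$ (here one uses that if $\weylelement{\alpha}$ appears in some reduced expression of an element of $W_\para{P}$, then $\weylelement{\alpha}$ itself lies in $W_\para{P}$, a consequence of the subword property in Coxeter groups). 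Hence $\para{P} = \para{P}_I$. The main obstacle is this last subword-closure argument; it is not purely formal and requires either invoking the standard Coxeter-theoretic fact about parabolic subgroups of $(\weyl{G},S)$, or arguing directly that any parabolic subgroup of $G$ stable under conjugation by $T$ and containing $\para{B}$ is determined by which simple root subgroups $U_{-\alpha}$ it contains.
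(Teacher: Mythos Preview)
The paper does not supply its own proof of this proposition; it is simply quoted with a citation to \cite{ParabolicSturctre}. Your outline via the Bruhat decomposition is the standard textbook argument and is correct in structure.

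One point deserves sharpening. In step (iv) you assert that $W_\para{P}$ is closed under taking subwords of reduced expressions, and you attribute this to ``the subword property in Coxeter groups.'' That attribution is wrong: an arbitrary subgroup of a Coxeter group need not be standard parabolic (for instance, in $\weyl{G}=S_3$ the cyclic group $\langle s_1 s_2\rangle$ is a subgroup containing neither $s_1$ nor $s_2$). The subword property concerns the Bruhat order, not subgroups. What actually forces $W_\para{P}$ to be generated by simple reflections is a fact about the algebraic group $G$, not about $\weyl{G}$ alone. You can argue either via Zariski closure---$\para{P}$ is closed, so $\para{B} w\para{B}\subseteq\para{P}$ implies $\overline{\para{B} w\para{B}}=\bigcup_{w'\le w}\para{B} w'\para{B}\subseteq\para{P}$, and every simple reflection in a reduced word for $w$ is $\le w$---or via the root-subgroup route you mention at the end: if $w\in W_\para{P}$ and $\ell(s_\alpha w)<\ell(w)$ for $\alpha\in\Delta$, then $w^{-1}\alpha<0$, so $U_{-\alpha}=w\,U_{-w^{-1}\alpha}\,w^{-1}\subseteq w\para{B} w^{-1}\subseteq\para{P}$, and the copy of $SL_2$ attached to $\alpha$ then puts a representative of $s_\alpha$ inside $\para{P}$. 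Since you already name the second option as a fallback, the gap is minor; just drop the Coxeter justification and use one of these two.
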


\begin{defn}
A parabolic subgroup $\para{P}$ will be called a \textbf{maximal parabolic subgroup} if the only subgroup $H$ of $G$ such that $\para{P} \subset H$ is $H=G$.
\end{defn}
\begin{remk}
According to Proposition  (\ref{corres}) a maximal parabolic subgroup $\para{P}$ corresponds to the set of simple roots $\Theta= \Delta \setminus \{\alpha_{i}\}$ for some $\alpha_{i} \in \Delta$. In that case we say that $\para{P}$ corresponds to $\alpha_{i}$ and we denote it by $\para{P}_{i}$.
\end{remk}

Let $\para{P}$ be a parabolic subgroup of $G$ corresponding to $I \subset \Delta$. The set $\Phi_{I}$, which is the set of all $\alpha \in \Phi$ that are integral linear combinations of elements of $I$, forms a root system, with the  Weyl group $\weyl{I}$. Furthermore, the set of roots of $\para{P}$ equals  to $\Phi^{+} \cup (\Phi^{-} \cap \Phi_{I})$. We denote by $N_{I}=R_{u}(\para{P})$. 
\begin{prop}\cite{ParabolicSturctre}
$N_{I}=<x_{\alpha}(r) \: | \: \alpha \in \Phi^{+} \setminus \Phi_{I}, \:\:  r \in F >$.
\end{prop}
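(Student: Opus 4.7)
The plan is to set $U' := \langle x_\alpha(r) : \alpha \in \Phi^{+} \setminus \Phi_{I},\, r \in F \rangle$ and show $U' = N_{I} = R_u(\para{P}_{I})$ in two movements: first establish that $U'$ is a closed, connected, normal, unipotent subgroup of $\para{P}_{I}$, and then check that $\para{P}_{I}/U'$ is reductive. The first gives $U' \subseteq R_u(\para{P}_{I})$ automatically, and the second forces $R_u(\para{P}_{I}) \subseteq U'$ since $R_u$ of a reductive group is trivial.

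First I would give $U'$ a manageable structure. After fixing a total order on $\Phi^{+} \setminus \Phi_{I}$, I would argue that the product map $\prod_\alpha x_\alpha : \mathbb{G}_a^{|\Phi^{+} \setminus \Phi_{I}|} \to U'$ is a bijection of varieties, using the Chevalley commutator relations $[x_\alpha(r), x_\beta(s)] = \prod_{i,j \geq 1} x_{i\alpha + j\beta}(c_{ij}\, r^{i} s^{j})$ to reorder arbitrary products into the chosen order. This simultaneously shows $U'$ is closed, connected, and unipotent, since it is contained in $U = R_u(\para{B})$.

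Next I would verify normality in $\para{P}_{I}$. Since $\para{P}_{I}$ is generated by $T$, $U$, and the subgroups $x_{-\beta}(F)$ for $\beta \in I$, it suffices to check that $U'$ is stable under conjugation by these. The torus acts diagonally via $t\, x_\alpha(r)\, t^{-1} = x_\alpha(\alpha(t) r)$, and the remaining cases reduce through the commutator formula to the combinatorial claim: for $\alpha \in \Phi^{+} \setminus \Phi_{I}$, $\beta \in \Phi_{I}$, and integers $i, j \geq 1$ with $\gamma := i\alpha + j\beta \in \Phi$, one has $\gamma \in \Phi^{+} \setminus \Phi_{I}$. Expanding in simple roots, the coefficient of every $\alpha_k \notin I$ in $\gamma$ equals $i$ times its coefficient in $\alpha$ (since $\beta$ has zero coefficient on such $\alpha_k$); these are nonnegative and strictly positive for at least one $k$, because $\alpha \notin \Phi_{I}$. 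Since a root is either totally positive or totally negative, $\gamma$ must lie in $\Phi^{+}$, and having a nonzero coefficient outside $I$ it cannot be in $\Phi_{I}$.

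Finally, letting $M_{I} := \langle T, x_\alpha(F) : \alpha \in \Phi_{I} \rangle$ be the standard Levi, I would conclude by writing $\para{P}_{I} = M_{I} \cdot U'$ with $M_{I} \cap U' = \{1\}$ — which follows from the root-space decomposition of the Lie algebra of $\para{P}_{I}$ — so that $\para{P}_{I}/U' \cong M_{I}$ is reductive. The main obstacle I anticipate is the technical verification that the product decomposition of $U'$ is well-defined and that iterated commutators terminate correctly; this is particularly sensitive in non-simply-laced types such as $F_4$, where the coefficients $c_{ij}$ in the commutator formula are nontrivial. The plan is to quote these structural facts from the standard Chevalley group references rather than redo the bookkeeping here.
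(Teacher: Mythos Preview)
Your proof outline is essentially the standard argument and is correct in broad strokes. However, the paper does not actually prove this proposition: it is stated with a citation to an external reference and no proof is given in the text. There is therefore nothing in the paper to compare your approach against.

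For what it is worth, your plan is the usual one found in the literature: identify $U'$ with a product of root subgroups via the Chevalley commutator relations, verify normality in $\para{P}_I$ by the combinatorial closure of $\Phi^{+}\setminus\Phi_I$ under addition with roots from $\Phi_I$, and then use the Levi decomposition to see that $\para{P}_I/U'\cong M_I$ is reductive, forcing $U'=R_u(\para{P}_I)$. The only delicate step you flagged --- termination and well-definedness of the product decomposition in $U'$ --- is standard and handled in any treatment of Chevalley groups; citing it is the right move.
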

Let $T_{I}=(\cap_{\alpha \in \Phi_{I}} ker \: \alpha)^{0}$, and $M_{I}=Z_{G}(T_{I})$. The group $M_{I}$ normalizes $N_{I}$, and it holds that $\para{P}=M_{I} N_{I}$ is a semi--direct product. This decomposition is known as \textbf{Levi decomposition} of $\para{P}$. The group $M_{I}$ is called the \textbf{Levi factor} of $\para{P}_{I}$.

Let $\para{P}=MN$ be a parabolic subgroup of $G$ with its Levi decomposition. We define $\delta_{\para{P}} \in X(T) \otimes \mathbb{Q}$ to be  $\delta_{\para{P}}=\sum_{\alpha \in \Phi^{+}_{G} \setminus \Phi_{I}} \alpha$ .

We denote by $X(M)$ (resp. $Y(M)$) the character (resp. cocharacter) group of $M$.

Let $\para{P}=MN$ be a parabolic subgroup of $G$.
Set
\begin{align*}
\mathfrak{a}_{M} =& \operatorname{Hom}(X(M),\mathbb{R}), \\
\operatorname{Re} \: \mathfrak{a}_{M}^{*} =& X(M) \otimes_{\mathbb{Z}} \mathbb{R}, \\
\mathfrak{a}_{M}^{*} =& X(M) \otimes_{\mathbb{Z}} \mathbb{C}.
\end{align*}
\begin{remk}
If $\para{P}=\para{B}$ we set $\mathfrak{a}=\mathfrak{a}_{T}, \quad Re \: \mathfrak{a}^{*}=Re \: \mathfrak{a}_{T}^{*}, \quad 
\mathfrak{a}^{*}=\mathfrak{a}_{T}^{*}$.
\end{remk}
We denote by $\para{F}=\sum_{\alpha_i \in \Delta_{G} } \mathbb{C} \alpha_i$ and by 
$Re \: \para{F}=\sum_{\alpha_i \in \Delta_{G} } \mathbb{R} \alpha_i $. Then it holds 
$\mathfrak{a}^{*}=\mathfrak{z}^{*} \oplus \para{F}$
where $\mathfrak{z}^{*}= \{ x \in \mathfrak{a}^{*} \: : \: \left \langle x,\check{\alpha} \right \rangle =0 \quad \forall \alpha \in \Delta_{G} \}$.
\begin{remk} \label{latticedim}
Let $\para{P}$ be a parabolic subgroup of $G$, corresponds to  the set of simple roots $\Theta=\Delta \setminus \{\alpha_{i_1}, \dots \alpha_{i_l}\} $. Then
$X(M) \simeq \mathbb{Z}^{l}$. Moreover, the vector space $\mathfrak{a}^{*}_{M}$ is spanned  by $\{\fundamental{i_1}, \dots \fundamental{i_l}\}$. 
\end{remk}

 \begin{defn}
 Two parabolic subgroup of $G$ are called \textbf{opposite} if their intersection
 is the Levi component of each of them.
 \end{defn}
\begin{defn}
We denote by $\bar{U}$ unipotent radical of the opposite Borel $\para{B}'$.
\end{defn}

\newpage 

\section{Representation theory of reductive groups over local fields}\label{Ch1:section2}
Let $G,T,\para{B},\Delta$ be as above. 
\subsection{Non--archimedean  fields}
Let $F$ be a non--archimedean local field.  Let $\para{O}$ be the ring of integers of $F$, and $q$ the cardinality of the residue field.
By abuse of notation we write $G$ as the group of $F$-- points of $G$.
 Let $K=G(\para{O})$, be a maximal open subgroup of $G$. We fix a Haar measure $\mu$ on $G$, such that $\mu(K)=1$.

\begin{defn}
A pair $(\pi,V)$ is called a representation of $G$ if $V$ is a $\mathbb{C}$ vector space and $\pi: \: G \rightarrow GL(V)$ is a group homomorphism.
\end{defn} 

\begin{defn}
A representation is called \textbf{irreducible} if it does not have a  non--trivial invariant subspaces. 
\end{defn}

\begin{defn}
The representation $(\pi,V)$ of $G$ is \textbf{smooth} if for any $v\in V$, the stabilizer $Stab_{G}(v)=\{g \in G \: | \: \pi(g)(v) = v\}$ of $v$ in $G$ is an open subgroup of $G$.
\end{defn}

\begin{defn}
A smooth representation $(\pi,V)$ of $G$ is called \textbf{admissible} if for any compact open subgroup $C$ of $G$ the space $V^{C} = \{v \in V \: | \: \pi(c)v=v \:\: \forall c \in C \}$ is finite dimensional. 
\end{defn}

\begin{defn}
Let $(\pi,V)$ be a smooth irreducible  representation of $G$. The representation $(\pi,V)$ is called \textbf{spherical} (unramified) if  there is a non-zero vector $v_0 \in V$ which is a $K$--fixed vector, i.e.
$\pi(k)v_0= v_0$ for all $k \in K$. A non--zero $v \in V^{K}$ is called a spherical vector.
\end{defn}
Example:  one dimensional representation $\chi : G(F) \rightarrow \mathbb{C}^{\times}$ is spherical if $\chi_{|_{G(\para{O})}}=1$. Such $\chi$ is called an \textbf{unramified character}. 
\begin{prop}[\cite{Bump},Proposition 4.6.2]
Let $(\pi,V)$ be a smooth irreducible representation of $G$. Then $\operatorname{dim} V^{K} \leq 1$.
\end{prop}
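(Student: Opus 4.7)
The plan is to invoke Gelfand's trick for the spherical Hecke algebra $\mathcal{H}(G,K)=C_c^\infty(K\backslash G/K)$: I would realize $V^K$ as a finite-dimensional irreducible module over $\mathcal{H}(G,K)$, prove that $\mathcal{H}(G,K)$ is commutative, and then conclude by Schur's lemma that $\dim V^K\le 1$. First I would equip $V^K$ with the convolution action $\pi(f)v=\int_G f(g)\pi(g)v\,dg$. Smoothness of $\pi$ makes the integral a finite sum (since any $v$ has an open stabilizer while $f$ has compact support), and bi-$K$-invariance of $f$ guarantees $\pi(f)v\in V^K$. The admissibility of $V$ — which for smooth irreducible representations of reductive $p$-adic groups follows from Jacquet's theorem — ensures $\dim V^K<\infty$.

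To see that $V^K$ is irreducible as an $\mathcal{H}(G,K)$-module, I would take $0\ne v\in V^K$ and use irreducibility of $V$: any $w\in V$ can be written as $\sum c_i\,\pi(g_i)v$, and when $w\in V^K$ applying the idempotent $e_K=\operatorname{vol}(K)^{-1}\mathbf{1}_K$ re-expresses $w$ as $\pi(f)v$ with $f\in\mathcal{H}(G,K)$. The central step is then to prove that $\mathcal{H}(G,K)$ is commutative. For this I would use Gelfand's criterion: exhibit an anti-involution $\iota$ of $G$ that preserves $K$ and satisfies $\iota(g)\in KgK$ for every $g\in G$. The Cartan decomposition $G=K\Lambda K$, with $\Lambda$ a set of representatives coming from the dominant cocharacter lattice of $T$, reduces the verification to elements of $\Lambda$; the Chevalley involution sends each $\lambda$ to a Weyl-conjugate, hence to the same double coset. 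Then $f\mapsto f\circ\iota$ is simultaneously the identity map and an anti-automorphism of $(\mathcal{H}(G,K),*)$, forcing $f_1*f_2=f_2*f_1$. A finite-dimensional irreducible module over a commutative $\mathbb{C}$-algebra is one-dimensional by Schur's lemma, so $\dim V^K\le 1$.

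The main obstacle is constructing the anti-involution and verifying it preserves every double coset — this is the structural core of the Gelfand pair property for $(G,K)$ and genuinely uses the Cartan decomposition for split reductive $p$-adic groups. By contrast, the convolution action, the admissibility input, and the passage from irreducibility-for-$G$ to irreducibility-for-$\mathcal{H}(G,K)$ are essentially formal once the correct framework is in place.
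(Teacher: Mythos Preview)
The paper does not supply a proof of this proposition at all; it simply quotes the result with a reference to \cite{Bump}, Proposition~4.6.2. Your proposal via the Gelfand--pair argument (realize $V^{K}$ as an irreducible module over $\mathcal{H}(G,K)$, prove $\mathcal{H}(G,K)$ commutative by an anti-involution compatible with the Cartan decomposition, and conclude by Schur) is exactly the classical proof that appears in the cited source, and the sketch is sound.
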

Let $\para{P}$ be a parabolic subgroup of $G$ corresponding to $I \subset \Delta$. We fix a left Haar measure $\mu$ on $\para{P}$. Since $\para{P}$ is a locally compact topological group $\mu$ is unique up to a positive  scalar multiple. Since the right translation on $\mu$ by an element $p \in \para{P}$ is also a left Haar measure $\mu_{p}$, there exists a function $\delta_{\para{P}}: \: \para{P} \rightarrow \mathbb{R}_{>0}$
called \textbf{modular character} such that $\mu_p=\delta_{\para{P}}(p)\mu$.

\begin{remk}
Let $\para{P}=MN$ be a parabolic subgroup with Levi decomposition. Hence it holds that $\mathfrak{p}= \mathfrak{m} \oplus \mathfrak{n}$, where $\mathfrak{p}, \mathfrak{m},\mathfrak{n}$ is the Lie algebra of $\para{P},M,N$. Then $$\delta_{\para{P}}= \sum_{\alpha : \mathfrak{g}_{\alpha} \subset \mathfrak{n}}\alpha.$$
\end{remk}
\begin{defn}
Let $H$ be a closed subgroup of $G$. Let $(\pi,V)$ be a smooth representation of $H$. We define the \textbf{induced representation}, denote by $Ind_{H}^{G}(\pi)$, to be the space of all functions $f : G \rightarrow V$ such that: 
\begin{enumerate}
\item
$f(hg)= \pi(h)f(g)$ for all $h\in H, g \in G$.
\item
There exists a open compact subgroup $K_1$ of $G$ such that $f(gk)=f(g)$ for all $k \in K_1$ .
\end{enumerate}
\end{defn}

\begin{prop}[\cite{Cass},Proposition 2.4.5]
\label{induction_stage_close}
Let $H_1 \subset H_2$ be  closed subgroups of $G$. Let $(\pi,V)$ be a smooth representation of $H_1$. Then $$Ind_{H_2}^{G} ( Ind_{H_1}^{H_2}(\pi)) \simeq Ind_{H_1}^{G} (\pi).$$
\end{prop}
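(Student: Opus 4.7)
The plan is to exhibit mutually inverse $G$-equivariant maps between the two induced representations, with the only nontrivial point being the smoothness hypothesis built into the definition of $\operatorname{Ind}$.

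First I would define $\Phi \colon \operatorname{Ind}_{H_2}^G(\operatorname{Ind}_{H_1}^{H_2}(\pi)) \to \operatorname{Ind}_{H_1}^G(\pi)$ by evaluation at the identity of $H_2$, namely $\Phi(f)(g) = f(g)(1_{H_2})$. The $H_1$-equivariance of $\Phi(f)$ follows by chaining two applications of equivariance: for $h_1 \in H_1 \subset H_2$, first use that $f(h_1 g)$ equals the $H_2$-translate of $f(g)$ by $h_1$, and then that $f(g) \in \operatorname{Ind}_{H_1}^{H_2}(\pi)$ transforms by $\pi(h_1)$ under the left action of $h_1 \in H_1$ on its argument.

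Conversely I would define $\Psi \colon \operatorname{Ind}_{H_1}^G(\pi) \to \operatorname{Ind}_{H_2}^G(\operatorname{Ind}_{H_1}^{H_2}(\pi))$ by $\Psi(F)(g)(h) = F(hg)$, so that $\Psi(F)(g)$ is the restriction of the $G$-function $F$ to the $H_2$-coset through $g$. The verifications that $\Psi(F)(g)$ lies in $\operatorname{Ind}_{H_1}^{H_2}(\pi)$ and that $g \mapsto \Psi(F)(g)$ satisfies the correct $H_2$-equivariance are immediate unwindings of definitions, as are the identities $\Phi \circ \Psi = \operatorname{id}$ and $\Psi \circ \Phi = \operatorname{id}$, and the compatibility of both $\Phi$ and $\Psi$ with the right-translation $G$-actions on the two sides.

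The main obstacle will be the smoothness condition: an element of $\operatorname{Ind}_{H_2}^G(\operatorname{Ind}_{H_1}^{H_2}(\pi))$ is required to be right-invariant under a compact open subgroup of $G$ while taking values in an inner induced representation that is itself smooth, so one must carefully match these two levels of smoothness into a single compact open subgroup stabilizing the image function. For $\Psi$, the smoothness of $F$ under some $K_1 \subset G$ immediately passes to $\Psi(F)$ through the same witness. For $\Phi$, the delicate direction, one combines a compact open subgroup $K_1 \subset G$ fixing $f$ with a compact open subgroup $K_2 \subset H_2$ fixing the single value $f(1) \in \operatorname{Ind}_{H_1}^{H_2}(\pi)$, noting that the map $g \mapsto f(g)$ lands in an explicit finite union of fixed-vector subspaces once $K_1$ is fixed; intersecting these data yields a compact open subgroup of $G$ under which $\Phi(f)$ is right-invariant. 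This bookkeeping, though routine, is the essential point where the definition of the smooth induced representation enters, and it is what forces one to work with $\operatorname{Ind}$ rather than the algebraic induction functor.
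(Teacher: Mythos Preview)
The paper does not supply its own proof of this proposition; it is simply quoted from Casselman's notes. Your approach---exhibiting the mutually inverse $G$-maps $\Phi(f)(g)=f(g)(1_{H_2})$ and $\Psi(F)(g)(h)=F(hg)$ and then checking smoothness---is exactly the standard argument and is correct in outline.

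One small point worth straightening out: you have the ``delicate'' direction reversed. For $\Phi$, right-$K_1$-invariance of $f$ gives $\Phi(f)(gk)=f(gk)(1)=f(g)(1)=\Phi(f)(g)$ immediately, so the same $K_1$ works with no further bookkeeping. The place where closedness of $H_2$ actually enters is in $\Psi$: for each fixed $g$ you must verify that $\Psi(F)(g)$ lies in $\operatorname{Ind}_{H_1}^{H_2}(\pi)$, i.e.\ is right-invariant under some compact open subgroup of $H_2$. Since $\Psi(F)(g)(hk')=F(hk'g)=F(hg\cdot g^{-1}k'g)$, this holds for $k'\in H_2\cap gK_1g^{-1}$, which is compact open in $H_2$ precisely because $H_2$ is closed in $G$. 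Your sentence ``the smoothness of $F$ \ldots\ immediately passes to $\Psi(F)$ through the same witness'' covers the $G$-smoothness of $g\mapsto\Psi(F)(g)$ but skips this inner check.
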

Induction from parabolic subgroup, which are closed, is an important class of smooth representations.

\begin{defn}\label{ind_def}
Let $\para{P}$ be a parabolic subgroup of $G$ with Levi decomposition \\ $\para{P}=MN$. Let $(\pi,V)$ be a smooth representation of $M$. We define the \textbf{normalized parabolic induction}, denote by $Ind_{\para{P}}^{G}(\pi)$, to be the space of all functions \\ $f : G \rightarrow V$ such that: 
\begin{enumerate}
\item
$f(mng)=\delta_{\para{P}}^{\frac{1}{2}}(m) \pi(m)f(g)$ for all $m\in M, n \in N, g \in G$.
\item
There exists a open compact subgroup $K_1$ of $G$ such that $f(gk)=f(g)$ for all $k \in K_1$ .
\end{enumerate}
\end{defn}
\begin{remk}
In our thesis we will restrict ourself only to the case where $\pi$ is an unramified character of $M$.
\end{remk}
\begin{remk}\label{induction_stage_parabolic}
Let $\para{P}_1 \subset \para{P}_2$ are parabolic subgroups of $G$, with Levi decomposition $\para{P}_i=M_i\times N_i$. Let $\chi$ be representation of $M_1$. Then by proposition (\ref{induction_stage_close}) it holds that $Ind_{\para{P}_2}^{G}(Ind_{\para{P}_1}^{\para{P}_2}(\chi)) \simeq Ind_{\para{P}_1}^{G}(\chi)$.
\end{remk}
\begin{thm} \cite{Cass}
Let $(\pi,V)$ be a spherical representation of $G$. Then $(\pi,V)$ is a constituent of $Ind_{\para{B}}^{G} (\chi)$, where $\chi$ is an unramified character of $T$.
\end{thm}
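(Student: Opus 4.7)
The plan is to exploit the Jacquet functor together with Frobenius reciprocity, which are the standard tools for locating an irreducible smooth representation as a subquotient of a principal series.

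First I would set up the Jacquet module $V_U$ of $V$ with respect to the unipotent radical $U$ of $\para{B}$, namely $V_U = V / V(U)$ where $V(U)$ is the subspace spanned by vectors $\pi(u)v - v$ for $u \in U$, $v \in V$. Since $(\pi, V)$ is irreducible and admissible, a theorem of Jacquet--Casselman guarantees that $V_U$ is an admissible, finitely generated (in fact finite length) smooth representation of $T = M$. Because $T$ is abelian, $V_U$ is then a finite-dimensional $T$-module, and consequently admits an irreducible $T$-subquotient, which is a character $\mu$ of $T$.

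The crucial input is Casselman's theorem on spherical vectors: if $v_0 \in V^K$ is a nonzero spherical vector, then its image $\bar v_0 \in V_U$ is nonzero and is fixed by $T(\para{O})$. This tells me two things. First, $V_U \neq 0$, so the subquotient argument above is nonempty. Second, the $T(\para{O})$-fixed subspace $V_U^{T(\para{O})}$ is nonzero, so among the $T$-subquotients of $V_U$ I may choose one, call it $\mu$, on which $T(\para{O})$ acts trivially, i.e.\ $\mu$ is an unramified character of $T$. I expect this step to be the main obstacle, since Casselman's non-vanishing result (sometimes phrased via the asymptotics of matrix coefficients, or via the Iwahori--Hecke algebra action) is the nontrivial ingredient underpinning the entire argument; I would invoke it as a black box from \cite{Cass}.

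Finally I would apply Frobenius reciprocity (in its normalized form appropriate to Definition \ref{ind_def}): a nonzero $T$-equivariant map $V_U \twoheadrightarrow \mu$ corresponds to a nonzero $G$-equivariant map
\[
V \longrightarrow \operatorname{Ind}_{\para{B}}^{G}(\chi),
\]
where $\chi = \mu \cdot \delta_{\para{B}}^{1/2}$ (or its inverse, depending on convention) is again an unramified character of $T$. Since $V$ is irreducible, this map is either zero or an embedding; it is nonzero by construction, so $V$ embeds in $\operatorname{Ind}_{\para{B}}^{G}(\chi)$ and is in particular one of its constituents, proving the theorem. The only delicate bookkeeping is tracking the $\delta_{\para{B}}^{1/2}$ shift between the normalized and unnormalized induction conventions used above.
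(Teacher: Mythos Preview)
The paper does not supply its own proof of this statement; it is simply quoted from Casselman's notes \cite{Cass} as background. Your outline is the standard argument and is essentially correct.

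One small point worth tightening: to apply Frobenius reciprocity in the direction you use (obtaining an embedding $V \hookrightarrow \operatorname{Ind}_{\para{B}}^{G}(\chi)$), you need an unramified character occurring as a \emph{quotient} of $V_U$, not merely as a subquotient. From the nonvanishing of $\bar v_0 \in V_U^{T(\para{O})}$ you immediately get nonzero $T(\para{O})$-invariants; to pass to a quotient, use that $T(\para{O})$ is compact and $V_U$ is finite-dimensional, so the coinvariants $(V_U)_{T(\para{O})}$ are also nonzero, and any irreducible $T$-quotient of that space is an unramified character. Alternatively, since the theorem only asserts that $\pi$ is a \emph{constituent}, you could equally well use the other adjointness to realize $\pi$ as a quotient of a principal series. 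Either way the conclusion follows.
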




Let $\chi\in X(T) \otimes \mathbb{C}$ be an unramified character of $T$, $w\in \weyl{G}$. We define, formally, the \textbf{local intertwining operator} 
$$M_{w}(\chi): Ind_{\para{B}}^{G}(\chi) \rightarrow Ind_{\para{B}}^{G}(\weylelement{}\chi)$$
$$(M_{w}(\chi))(f)(g)=\int_{U \cap w \bar{U}  w^{-1}\backslash U  }f(w^{-1}ug) du.$$

\begin{thm}\label{intertwiningProp}\cite{IntertwiningProp}
The local intertwining operator $M_{w}(\chi)$ converges absolutely and uniformly as $g$ varies in a compact set for $Re \: \left \langle  \chi,\check{a} \right \rangle \gg 0$ for every $\alpha \in \Phi^{+}_{G}$. Moreover, it  admits an analytic continuation as meromorphic function of $\chi \in X(T) \otimes \mathbb{C}$.
\end{thm}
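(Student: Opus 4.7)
The plan is to prove this in two stages: first establish convergence in the cone $\operatorname{Re}\langle \chi,\check{\alpha}\rangle \gg 0$ for all $\alpha \in \Phi_G^{+}$, then obtain the meromorphic continuation by reducing to rank one.

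For convergence, I would use the Iwasawa decomposition $G = UTK$ combined with the bound $|f(g)| \leq C_f \cdot \delta_\para{B}^{1/2}(t)|\chi(t)|$ coming from the definition of $\operatorname{Ind}_\para{B}^G(\chi)$ when writing $g = utk$. The unipotent $U \cap w\bar{U}w^{-1}\backslash U$ can be parametrized by the positive root subgroups sent to negative roots by $w^{-1}$. After substituting $w^{-1}u = u'(w^{-1}u)_T (w^{-1}u)_K$ via Iwasawa, the integrand reduces to a product, over the positive roots $\alpha$ inverted by $w^{-1}$, of integrals of the form $\int_F |\chi(\check{\alpha}(t_\alpha(u)))|\,du$ where $t_\alpha(u)$ is the torus part of $w^{-1}x_\alpha(u)$. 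Each such one-variable integral converges provided $\operatorname{Re}\langle\chi,\check{\alpha}\rangle$ is sufficiently large, giving the stated domain of absolute convergence. Uniformity in $g$ on compacta then follows since the bound on $f$ is uniform on compacta.

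For the meromorphic continuation, the key is the cocycle identity
\[
M_{w_1 w_2}(\chi) = M_{w_1}(w_2 \chi)\circ M_{w_2}(\chi),
\]
valid in the common domain of convergence and proved by an easy change of variable. Choosing a reduced decomposition $w = w_{\alpha_{i_1}}\cdots w_{\alpha_{i_k}}$ in terms of simple reflections, the cocycle reduces the problem to showing that $M_{w_\alpha}(\chi)$ is meromorphic for every simple root $\alpha$. For a simple reflection, $U\cap w_\alpha \bar U w_\alpha^{-1}\backslash U = U_\alpha \cong \mathbb{G}_a$, and using the homomorphism $\phi_\alpha : SL_2 \to G$ one reduces the rank-one intertwining integral to an explicit integral on $SL_2$. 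The resulting integral can be computed (for the spherical vector this is the Gindikin--Karpelevich formula, Theorem \ref{gid}), and for a general $K$-finite vector one expands along a basis of the finite-dimensional space $\operatorname{Ind}_\para{B}^G(\chi)^{K_1}$ for an appropriate open compact $K_1$, reducing to finitely many rank-one scalar integrals of standard rational-in-$q^{-s}$ type. Each such integral equals a rational function of $q^{-\langle\chi,\check{\alpha}\rangle}$, which is meromorphic in $\chi$, and the product/composition along the reduced expression yields the desired meromorphic continuation of $M_w(\chi)$.

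The main obstacle is the second step for non-spherical sections: one must verify that the rank-one reduction produces integrals whose dependence on $\chi$ is uniformly rational in $q^{-\langle \chi,\check\alpha\rangle}$, so that the cocycle composition is well-defined as a meromorphic family and not merely pointwise. This is handled by choosing $K_1$ small enough that $f$ is $K_1$-invariant throughout the argument and exploiting that $\operatorname{Ind}_\para{B}^G(\chi)^{K_1}$ has dimension independent of $\chi$, so $M_{w_\alpha}(\chi)$ is represented on this finite-dimensional space by a matrix of rational functions. The convergence step then identifies this meromorphic extension with the convergent integral on the original cone, as asserted.
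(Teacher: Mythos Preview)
The paper does not prove this theorem; it is stated with a citation to \cite{IntertwiningProp} and used as a black box. So there is no in-paper argument to compare against.

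That said, your sketch is the standard proof one finds in the literature (e.g.\ Casselman's notes or Shahidi): absolute convergence via Iwasawa decomposition and factorization of the integral over the root subgroups $\{\alpha>0: w\alpha<0\}$, then meromorphic continuation by the cocycle relation $M_{w_1w_2}(\chi)=M_{w_1}(w_2\chi)\circ M_{w_2}(\chi)$ (which the paper records separately as Proposition~\ref{operatorInstage}) reducing to the rank-one $SL_2$ computation via $\phi_\alpha$. The finite-dimensionality argument for $K_1$-fixed vectors is exactly how one promotes the spherical Gindikin--Karpelevich computation to arbitrary smooth sections in the non-archimedean case, and the rationality in $q^{-\langle\chi,\check\alpha\rangle}$ is correct there.

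One small caveat: your convergence paragraph asserts that the integrand ``reduces to a product'' of one-variable integrals. Strictly speaking the full integral does not factor as a product for arbitrary $f$; what is true is that one can bound it by such a product (using $|f|$ and the Iwasawa estimate), which suffices for absolute convergence. The exact factorization holds only for the spherical vector, where it yields the Gindikin--Karpelevich formula. This does not affect the validity of your argument, only the phrasing.
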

\begin{prop}\cite{Ike1} \label{operatorInstage}
Let $\weylelement{}=\weylelement{1},\weylelement{2}
\in \weyl{G}$, such that $l(w)=l(w_1)+l(w_2)$ then for every $\chi \in X(T) \otimes \mathbb{C}$ it holds that
$M_{\weylelement{}}(\chi)=M_{\weylelement{1}}(\weylelement{2}\chi) \circ M_{\weylelement{2}}(\chi)$.
\end{prop}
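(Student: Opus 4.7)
The plan is to reduce the identity to a factorization of the integration domain that is governed entirely by the combinatorics of positive roots inverted by $w$. Recall that $U \cap w\bar{U}w^{-1}$ is the unipotent subgroup corresponding to the set
\[
\Phi(w) = \{\alpha \in \Phi_G^+ : w^{-1}\alpha \in \Phi_G^-\},
\]
and that $|\Phi(w)| = l(w)$. So I first want to translate the condition $l(w) = l(w_1) + l(w_2)$ into a disjoint decomposition $\Phi(w) = \Phi(w_1) \sqcup w_1 \Phi(w_2)$. This follows formally: any $\alpha \in \Phi(w)$ either already lies in $\Phi(w_1)$, or else $w_1^{-1}\alpha$ is still positive but is sent to a negative root by $w_2^{-1}$, hence lies in $\Phi(w_2)$; length-additivity prevents any cancellation and ensures these pieces are disjoint and exhaust $\Phi(w)$.

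Next, I would translate this root-theoretic decomposition into a group-theoretic one. Ordering the positive roots compatibly with the decomposition and using the standard fact that the product map from the ordered product of root subgroups is a bijection onto $U_w := U \cap w\bar{U}w^{-1}$, I obtain a direct product decomposition
\[
U_w \;\cong\; U_{w_1} \times (w_1 U_{w_2} w_1^{-1}),
\]
and the Haar measure factors accordingly. Write the integral defining $M_w(\chi)$ over $U_w$ as an iterated integral: first over $u_1 \in U_{w_1}$, then over $w_1 u_2 w_1^{-1}$ with $u_2 \in U_{w_2}$.

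With this in hand, the proof is a direct manipulation. Substitute $u = u_1 \cdot w_1 u_2 w_1^{-1}$ and note that $w^{-1} u = w_2^{-1} w_1^{-1} u_1 \cdot u_2 w_1^{-1}$ — wait, more carefully, $w^{-1} u g = w_2^{-1} (w_1^{-1} u_1 w_1) \cdot u_2 \cdot w_1^{-1} g$ after regrouping, which is exactly the form one integrates against inside $M_{w_1}(w_2\chi) \circ M_{w_2}(\chi)$: the inner integral over $u_2 \in U_{w_2}$ produces $(M_{w_2}(\chi)f)(w_1^{-1} g')$ evaluated at an appropriate translate, and the outer integral over $u_1 \in U_{w_1}$ is then exactly $M_{w_1}(w_2\chi)$ applied to this. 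The identity of the two sides follows for $\chi$ in the domain of absolute convergence for all three operators, and extends to all $\chi \in X(T)\otimes\mathbb{C}$ by the meromorphic continuation in Theorem \ref{intertwiningProp}.

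The main obstacle, and the only real content beyond bookkeeping, is the root-combinatorial step: verifying that length-additivity gives a disjoint decomposition $\Phi(w) = \Phi(w_1) \sqcup w_1\Phi(w_2)$ and that the product map respects this decomposition as a group isomorphism with compatible Haar measures. Once that is in place, the rest is a change of variables and Fubini, followed by meromorphic continuation.
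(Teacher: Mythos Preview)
The paper does not prove this proposition; it is simply cited from \cite{Ike1} without argument. Your approach is the standard one and is essentially correct: the root decomposition $\Phi(w)=\Phi(w_1)\sqcup w_1\Phi(w_2)$ under length additivity, the induced factorization of $U_w$, Fubini, and meromorphic continuation.

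One bookkeeping point to fix: the order in your product decomposition should be $u=(w_1 u_2 w_1^{-1})\cdot u_1$ rather than $u=u_1\cdot(w_1 u_2 w_1^{-1})$. With the correct order,
\[
w^{-1}ug \;=\; w_2^{-1}w_1^{-1}\,(w_1 u_2 w_1^{-1})\,u_1\,g \;=\; w_2^{-1}\,u_2\,(w_1^{-1}u_1 g),
\]
so the inner integral over $u_2\in U_{w_2}$ is exactly $(M_{w_2}(\chi)f)(w_1^{-1}u_1 g)$ and the outer integral over $u_1\in U_{w_1}$ then gives $M_{w_1}(w_2\chi)$ applied to it. With your ordering you instead get a factor $w_1^{-1}u_1 w_1\in\bar{U}$ trapped between $w_2^{-1}$ and $u_2$, which does not match the integrand of $M_{w_2}$; your mid-computation ``wait, more carefully'' flags exactly this. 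Swap the order and the rest of your outline goes through verbatim.
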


\begin{thm} \label{gid}\cite{Ike1}
Let $f^{0} \in Ind_{\para{B}}^{G}(\chi)$ be the normalized spherical vector. Then for every $w\in \weyl{G}$ it holds that 
$M_{w}(\chi)f^{0}=C_{w}(\chi)f^{0}_{\weylelement{}\chi}$, where $$C_{w}(\chi)= \prod_{\tiny \begin{aligned}
\alpha\in \Phi^{+}_{G}\\ 
 w\alpha <0 \\
\end{aligned}
}{\frac{\zeta(\left \langle \chi,\check{\alpha}\right \rangle)}{\zeta(\left \langle \chi,\check{\alpha}\right \rangle+1)}},$$
where $\zeta$ is the local $\zeta$ function.
\end{thm}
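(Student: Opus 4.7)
The plan is to proceed by induction on the length $\ell(w)$, reducing the general case to a rank-one ($SL_2$) computation via the cocycle property of intertwining operators stated in Proposition \ref{operatorInstage}.

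First I would handle the base case where $w = \weylelement{\alpha}$ is a simple reflection. Here $U \cap \weylelement{\alpha} \bar{U} \weylelement{\alpha}^{-1} \backslash U$ can be identified with the one-parameter root subgroup $U_\alpha \cong \mathbb{G}_a$ attached to $\alpha$, so the intertwining integral collapses to
\[
M_{\weylelement{\alpha}}(\chi)f^0(1) = \int_{F} f^0(\weylelement{\alpha}^{-1} x_\alpha(u))\, du.
\]
Via the embedding $\phi_\alpha : SL_2 \to G$ this reduces to a rank-one computation. I would split the integral into $|u| \leq 1$ and $|u| > 1$; on $|u|\leq 1$ the integrand is $1$, and on $|u|>1$ the Iwasawa decomposition of $\weylelement{\alpha}^{-1} x_\alpha(u)$ in $SL_2$ produces a diagonal factor whose character value under $\chi$ is $|u|^{-\langle\chi,\check{\alpha}\rangle-1}$. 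A geometric series in $q^{-\langle\chi,\check{\alpha}\rangle}$ then collects into
\[
\frac{1 - q^{-\langle\chi,\check{\alpha}\rangle-1}}{1 - q^{-\langle\chi,\check{\alpha}\rangle}} \;=\; \frac{\zeta(\langle\chi,\check{\alpha}\rangle)}{\zeta(\langle\chi,\check{\alpha}\rangle+1)},
\]
matching the desired factor, since $\weylelement{\alpha}$ has inversion set $\{\alpha\}$.

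For the inductive step I would fix a reduced expression $w = \weylelement{\alpha_{i_1}} \cdots \weylelement{\alpha_{i_k}}$ and write $w = w_1 w_2$ with $\ell(w) = \ell(w_1) + \ell(w_2)$ and $\ell(w_2) < \ell(w)$. By Proposition \ref{operatorInstage},
\[
M_w(\chi) f^0 = M_{w_1}(w_2 \chi)\bigl(M_{w_2}(\chi) f^0\bigr).
\]
By induction $M_{w_2}(\chi)f^0 = C_{w_2}(\chi)\, f^0_{w_2\chi}$ and $M_{w_1}(w_2\chi) f^0_{w_2\chi} = C_{w_1}(w_2\chi)\, f^0_{w\chi}$, so
\[
M_w(\chi)f^0 \;=\; C_{w_1}(w_2\chi)\, C_{w_2}(\chi)\, f^0_{w\chi}.
\]
It remains to identify $C_{w_1}(w_2\chi)\, C_{w_2}(\chi)$ with $C_w(\chi)$. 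The key combinatorial input is the disjoint decomposition of inversion sets
\[
\{\alpha>0 : w\alpha<0\} \;=\; \{\alpha>0 : w_2\alpha<0\} \;\sqcup\; w_2^{-1}\{\beta>0 : w_1\beta<0\},
\]
a standard fact from Coxeter combinatorics which is precisely what the identity $\ell(w)=\ell(w_1)+\ell(w_2)$ guarantees. Reparametrizing the second product by $\alpha = w_2^{-1}\beta$ and using $\langle w_2\chi, \check{\beta}\rangle = \langle\chi, w_2^{-1}\check{\beta}\rangle = \langle\chi,\check{\alpha}\rangle$ converts it into the corresponding factor for $\alpha$ in $C_w(\chi)$.

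The main technical obstacle is the base case: justifying the identification of the quotient space with $U_\alpha$, verifying the correct normalization of measures so that the resulting local factor appears with exactly the right exponent, and carrying out the Iwasawa decomposition in $SL_2$ to extract $\delta_\para{B}^{1/2}\chi$ on the diagonal factor. Once the rank-one computation yields the clean ratio of local zeta values, the inductive step is a purely combinatorial bookkeeping argument using the cocycle relation and the length-additivity decomposition of inversion sets.
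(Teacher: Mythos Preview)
Your approach is correct and is in fact the standard proof of the Gindikin--Karpelevich formula: reduce to the rank-one $SL_2$ computation for a simple reflection, then induct on length via the cocycle relation of Proposition~\ref{operatorInstage} together with the additive decomposition of inversion sets. Note, however, that the paper does not supply its own proof of this theorem at all --- it is stated with a citation to \cite{Ike1} and used as a black box throughout --- so there is no in-paper argument to compare against. Your write-up is exactly the argument one would expect to find behind that citation, and the identity $C_w(\chi)=C_{w_1}(w_2\chi)C_{w_2}(\chi)$ that you derive is precisely what the paper records separately as Corollary~\ref{gid_cor}.
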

As a corollary we get 
\begin{cor} \label{gid_cor}
Let $\weylelement{}=\weylelement{1}\weylelement{2}$ be a reduced word. Then 
for any $\chi \in X(T)$ it holds that 
$$C_{\weylelement{}}(\chi)=C_{\weylelement{1}}(\weylelement{2}\chi)C_{\weylelement{2}}(\chi).$$
\end{cor}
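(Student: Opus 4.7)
The plan is to deduce the corollary by evaluating both sides of the factorization in Proposition \ref{operatorInstage} on the normalized spherical vector and invoking the Gindikin--Karpelevich formula (Theorem \ref{gid}) at each stage. Since $w = w_1 w_2$ is a reduced decomposition, i.e.\ $l(w) = l(w_1) + l(w_2)$, Proposition \ref{operatorInstage} gives the operator identity
$$M_{\weylelement{}}(\chi) = M_{\weylelement{1}}(\weylelement{2}\chi) \circ M_{\weylelement{2}}(\chi).$$
I would apply both sides to the normalized spherical vector $f^0 \in \operatorname{Ind}_{\para{B}}^{G}(\chi)$.

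First I would compute the left-hand side directly by Theorem \ref{gid}, obtaining
$$M_{\weylelement{}}(\chi) f^0 = C_{\weylelement{}}(\chi)\, f^0_{\weylelement{}\chi},$$
where $f^0_{\weylelement{}\chi}$ denotes the normalized spherical vector in $\operatorname{Ind}_{\para{B}}^{G}(\weylelement{}\chi)$. Then I would compute the right-hand side in two steps: applying Theorem \ref{gid} to $M_{\weylelement{2}}(\chi)$ yields $C_{\weylelement{2}}(\chi)\, f^0_{\weylelement{2}\chi}$, and then applying Theorem \ref{gid} to $M_{\weylelement{1}}(\weylelement{2}\chi)$ (which takes $\operatorname{Ind}_{\para{B}}^{G}(\weylelement{2}\chi)$ to $\operatorname{Ind}_{\para{B}}^{G}(\weylelement{1}\weylelement{2}\chi)$) yields the further factor $C_{\weylelement{1}}(\weylelement{2}\chi)$, so altogether
$$M_{\weylelement{1}}(\weylelement{2}\chi) \circ M_{\weylelement{2}}(\chi)\, f^0 = C_{\weylelement{1}}(\weylelement{2}\chi)\, C_{\weylelement{2}}(\chi)\, f^0_{\weylelement{}\chi}.$$
Equating the two expressions, and observing that $f^0_{\weylelement{}\chi}$ is nonzero so can be cancelled, yields the claimed identity $C_{\weylelement{}}(\chi) = C_{\weylelement{1}}(\weylelement{2}\chi)\, C_{\weylelement{2}}(\chi)$.

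There is no real obstacle here: the statement is a pure formal consequence of the cocycle-type composition law for intertwining operators together with their explicit values on spherical vectors. The only point that needs a moment of care is that the hypothesis of Proposition \ref{operatorInstage} (namely $l(w) = l(w_1) + l(w_2)$) is exactly what is encoded in the phrase \emph{reduced word}, which is why the factorization of operators is valid in the first place; without it, one would introduce spurious poles and the identity could fail. Finally, although the identity is stated for $\chi \in X(T)$, both $C_w(\chi)$ sides are meromorphic in $\chi \in X(T)\otimes \mathbb{C}$, so the identity extends automatically to all $\chi$ where the $\zeta$-ratios are defined, by the uniqueness of meromorphic continuation.
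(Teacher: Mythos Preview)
Your proof is correct and is precisely the argument the paper has in mind: the corollary is stated immediately after Theorem~\ref{gid} with the phrase ``As a corollary we get'' and no further proof, so the intended derivation is exactly to apply the operator factorization of Proposition~\ref{operatorInstage} to the spherical vector and read off the scalar identity via Theorem~\ref{gid}.
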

\newpage
\begin{prop} \label{trivial_indced}
$ $
\begin{enumerate}
\item
 The induced representation $Ind_{\para{B}}^{G}(\delta_{\para{B}}^{-\frac{1}{2}})$ has a unique irreducible sub-representation $\pi$. Moreover, $\pi$ is the trivial representation.
\item
 The induced representation $Ind_{\para{B}}^{G}(\delta_{\para{B}}^{\frac{1}{2}})$ has a unique irreducible quotient $\pi$. Moreover, $\pi$ is the trivial representation. In this case  the trivial representation is the image of $M_{w_0}$ where $w_0 \in \weyl{G}$ is the longest element.
\end{enumerate}
\end{prop}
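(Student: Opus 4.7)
For the existence half of (1), I note that at $\chi = \delta_{\para{B}}^{-1/2}$ the transformation law of Definition \ref{ind_def} collapses to $f(tug) = \delta_{\para{B}}^{1/2}(t)\,\delta_{\para{B}}^{-1/2}(t)\,f(g) = f(g)$, so $Ind_{\para{B}}^{G}(\delta_{\para{B}}^{-1/2})$ is precisely the space of smooth functions on $\para{B} \backslash G$. The constant function $f \equiv 1$ lies in this space and is $G$-fixed, spanning an embedded copy of the trivial representation. Dually, for the existence half of (2), apply the contragredient functor: since $Ind_{\para{B}}^{G}(\chi)^{\vee} \cong Ind_{\para{B}}^{G}(\chi^{-1})$ and the trivial representation is self-dual, the sub of part (1) dualizes to a surjection $Ind_{\para{B}}^{G}(\delta_{\para{B}}^{1/2}) \twoheadrightarrow \mathbf{1}$.

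For uniqueness in (1), take an arbitrary irreducible subrepresentation $\pi \hookrightarrow Ind_{\para{B}}^{G}(\delta_{\para{B}}^{-1/2})$. By Frobenius reciprocity, the normalized Jacquet module $J(\pi)$ admits a nonzero $T$-intertwiner to the character $\delta_{\para{B}}^{-1/2}$. The exponents of $J(\pi)$ lie in the Weyl orbit of the Langlands parameter of $\pi$, and since $\delta_{\para{B}}^{-1/2} = -\rho$ is strictly anti-dominant in $\operatorname{Re}\mathfrak{a}^{*}$, Casselman's submodule theorem identifies $\pi$ as the unique irreducible subrepresentation of $Ind_{\para{B}}^{G}(-\rho)$, which is the trivial representation. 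Uniqueness in (2) is then the contragredient of uniqueness in (1).

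For the \emph{moreover}, the strategy is a Hom-space count combined with Theorem \ref{gid}. Since $\rho$ is strictly regular and $\weylelement{0}\rho = -\rho$, the only $w \in \weyl{G}$ with $w\,\delta_{\para{B}}^{1/2} = \delta_{\para{B}}^{-1/2}$ is $w = \weylelement{0}$, so the space $\operatorname{Hom}_{G}(Ind_{\para{B}}^{G}(\delta_{\para{B}}^{1/2}),\,Ind_{\para{B}}^{G}(\delta_{\para{B}}^{-1/2}))$ is one-dimensional, spanned by $M_{\weylelement{0}}$ (or, at the specialization, by the leading Laurent coefficient of $M_{\weylelement{0}}(\chi)$ along a generic line through $\delta_{\para{B}}^{1/2}$, since by Theorem \ref{gid} the factor $C_{\weylelement{0}}(\delta_{\para{B}}^{1/2})$ carries a $\zeta(1)$ pole from every simple coroot). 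The composition $Ind_{\para{B}}^{G}(\delta_{\para{B}}^{1/2}) \twoheadrightarrow \mathbf{1} \hookrightarrow Ind_{\para{B}}^{G}(\delta_{\para{B}}^{-1/2})$, built from parts (1) and (2), is a nonzero element of this Hom-space whose image is exactly the trivial subrep. Hence $M_{\weylelement{0}}$ is a nonzero scalar multiple of it, and $\operatorname{Im}(M_{\weylelement{0}})$ is the trivial representation.

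The main obstacles I anticipate are (a) invoking Casselman's submodule theorem and the Langlands classification precisely enough to pin down the Langlands sub of $Ind_{\para{B}}^{G}(-\rho)$ as the trivial representation, and (b) controlling the regularization of $M_{\weylelement{0}}$ at the singular point $\chi = \delta_{\para{B}}^{1/2}$: one must verify that the $\operatorname{Hom}$-space does not jump at this point, so that the leading coefficient of $M_{\weylelement{0}}$ genuinely factors through $\mathbf{1}$ and not through a larger subrepresentation. This second point reduces to a finite check on the composition series of $Ind_{\para{B}}^{G}(\delta_{\para{B}}^{\pm 1/2})$ near this parameter.
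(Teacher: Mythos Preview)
The paper states this proposition without proof; it sits in the background Section~\ref{Ch1:section2} on local representation theory and is quoted as a standard fact, so there is nothing in the paper to compare your argument against.

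Your strategy is sound but contains one concrete error. You assert that $C_{\weylelement{0}}(\delta_{\para{B}}^{1/2})$ carries a $\zeta(1)$-pole from each simple coroot, and therefore that $M_{\weylelement{0}}$ must be regularized at this point. This confuses the local and global zeta functions: Section~\ref{Ch1:section2} is over a non-archimedean local field, where $\zeta(s)=(1-q^{-s})^{-1}$ has poles only at $s\in\frac{2\pi i}{\log q}\mathbb{Z}$, so $\zeta(1)$ is a finite nonzero number. Since $\langle\delta_{\para{B}}^{1/2},\check\alpha\rangle\geq 1$ for every $\alpha>0$, the Gindikin--Karpelevich factor $C_{\weylelement{0}}(\delta_{\para{B}}^{1/2})$ is finite and nonzero, and indeed $\delta_{\para{B}}^{1/2}$ lies in the open cone where the integral defining $M_{\weylelement{0}}$ converges absolutely. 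Thus $M_{\weylelement{0}}(\delta_{\para{B}}^{1/2})$ is an honest nonzero intertwiner, your obstacle~(b) evaporates, and your one-dimensional $\operatorname{Hom}$-space argument goes through cleanly: the Jacquet module $r_{T,G}\bigl(Ind_{\para{B}}^{G}(\delta_{\para{B}}^{1/2})\bigr)$ is semisimple with the $|\weyl{G}|$ distinct characters $w\cdot\delta_{\para{B}}^{1/2}$ (regularity of $\delta_{\para{B}}^{1/2}$), so $\delta_{\para{B}}^{-1/2}=\weylelement{0}\cdot\delta_{\para{B}}^{1/2}$ appears exactly once as a quotient and the $\operatorname{Hom}$-space does not jump.

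Two smaller points. For uniqueness in~(2) you can bypass Frobenius and Casselman entirely by invoking the paper's own Theorem~\ref{langlands_data}: the triple $(\para{B},\mathbf{1},\delta_{\para{B}}^{1/2})$ is Langlands data because $\langle\delta_{\para{B}}^{1/2},\check\alpha_i\rangle=1>0$ for every simple $\alpha_i$, so the unique irreducible quotient exists and is forced to be the trivial representation by your existence step; uniqueness in~(1) then follows by duality. Also, what you call Casselman's submodule theorem is the statement that every irreducible embeds into \emph{some} principal series; the result you actually invoke is the subrepresentation form of the Langlands classification, i.e.\ the contragredient of Theorem~\ref{langlands_data}.
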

\begin{remk}
Hence, using Proposition (\ref{operatorInstage}) we deduce that  the trivial representation is a  quotient of 
$I_{\para{P}}(\delta_{\para{P}}^{\frac{1}{2}})$.
\end{remk}
\begin{remk}
Let $\para{P}=MN$ be a parabolic subgroup of $G$. Let $\tilde{\chi}$ be an unramified character of $M$. Then,
$$Ind_{\para{P}}^{G} (Ind_{\para{B}}^{\para{P}}(\delta_{\para{B}\cap M}^{-\frac{1}{2}}) \tilde{\chi})=Ind_{\para{B}}^{G}(\tilde{\chi} \otimes \delta_{\para{P}}^{\frac{1}{2}} \otimes \delta_{\para{B}}^{-\frac{1}{2}}). $$
Let $\chi=\tilde{\chi} \otimes \delta_{\para{P}}^{\frac{1}{2}} \otimes \delta_{\para{B}}^{-\frac{1}{2}} $
 it holds that $Ind_{\para{P}}^{G}(\tilde{\chi}) \hookrightarrow Ind_{\para{B}}^{G}(\chi)$. Thus, it is possible to restrict $M_{\weylelement{}}(\chi)$ to $Ind_{\para{P}}^{G}(\tilde{\chi})$. 
\end{remk}
\begin{defn}
Let $\para{P}=MN$ be a parabolic subgroup of $G$. Let $(\pi,V)$ be an admissible  representation of $G$. Denote by $V(N)=\operatorname{Span} \{\pi(n)v -v\: : n\in N, \quad v \in V  \}$. 
Let $V_{N}=V \slash V(N)$ and $(\pi_N,V_N)$  the corresponding $M$-representation.  This is called the
\textbf{Jacquet module} of $V$ (with respect to $\para{P}$), and $V \mapsto V_N$ is called the \textbf{Jacquet functor} (denoted by $r_{M,G}(\pi)$).
\end{defn}

Let $\para{P}=MN$ be a parabolic subgroup of $G$, corresponding to $I= \Delta_{G} \setminus J $, where  $J=\{\alpha_{i_1} \dots \alpha_{i_l}\} \subset \Delta $. We set 
\begin{align*}
(\mathfrak{a}_{M})^{*}_{+}=&\{
x \in Re \:\mathfrak{a}_{M}^{*} \: : \left \langle  x ,\check{\alpha_i} \right \rangle >0 \: \: \forall \alpha_i  \in  J
 \},
 \\
 ^{+}\bar{\mathfrak{a}}_{M}^{*}=&\{
 x \in Re \:\mathfrak{a}_{M}^{*} \: : x=\sum_{\alpha \in J}{c_{\alpha}\alpha} \quad 
 c_{\alpha} \geq 0
  \}.
 \end{align*}
 
 \begin{defn}  Let $\pi$ be an irreducible admissible representation of $G$. Let 
 \begin{align*}
\para{M}_{min}&=\{ \text{$L$ standard Levi} \: : \text{$r_{L,G}(\pi)\neq 0$ but $r_{N,G}(\pi)=0$ for all $N \subset L$} \}, \\
\para{E}xp(\pi)&=\left\{ 
  \begin{aligned}
& x \in Re \:\mathfrak{a}^{*} \: : \text{ $x \otimes \rho \leq r_{L,G}(\pi)$ for some  representation $\rho$ of  $L$}\\
 &\quad \quad \quad \quad \quad \quad \quad \text{ with unitary central character , $L \in \para{M}_{min}(\pi)$} 
  \end{aligned}
\right\} 
 \end{align*}

 \end{defn}
 \newpage
\begin{defn}\cite{ban2008}
Let $(\pi,V) $ be an irreducible representation of $G$,  with a unitary central character. Then 
\begin{enumerate}
\item
$\pi$ is called \textbf{square integrable} iff $\para{E}xp(\pi) \subset \: \mathfrak{a}^{*}_{+}$.
\item
$\pi$ is called \textbf{tempered}  iff $\para{E}xp(\pi) \subset \: ^{+}\bar{\mathfrak{a}}_{}^{*}$.
\end{enumerate} 
\end{defn}

\begin{defn}
Let $\para{P}=MN$  be a parabolic subgroup of $G$ corresponds to $\Delta \setminus J$ where $J \subset \Delta$. Let $\lambda \in (\mathfrak{a}_{M})^{*}_{+}$  and $\sigma$ be an irreducible tempered representation of $M$.   
The triple $(\para{P},\sigma,\lambda)$ will be called \textbf{Langlands data}.
\end{defn}
\begin{thm}\cite{Konno1} 
 \label{langlands_data} 
Let $(\para{P},\sigma,\lambda)$ be a Langlands data then $Ind_{\para{P}}^{G}(\sigma \otimes \lambda)$ has a unique irreducible quotient denote by $\para{J}(\para{P},\sigma,\lambda)$. Conversely, if $\pi$ is an irreducible admissible representation of $G$, then there exists a unique $(\para{P},\sigma,\lambda)$ such that $\pi \simeq   \para{J}(\para{P},\sigma,\lambda)$.
\end{thm}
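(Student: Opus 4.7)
The plan is to prove the two halves (existence/uniqueness of the irreducible quotient, and existence/uniqueness of the Langlands data for an arbitrary irreducible $\pi$) as separate arguments, held together by a careful analysis of the long intertwining operator and the exponents of Jacquet modules.

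First I would establish the existence of a unique irreducible quotient of $Ind_{\para{P}}^{G}(\sigma \otimes \lambda)$ for Langlands data $(\para{P},\sigma,\lambda)$. Let $\bar{\para{P}}=M\bar{N}$ be the opposite parabolic and consider the standard intertwining operator
\[
A(\lambda)\colon Ind_{\para{P}}^{G}(\sigma \otimes \lambda) \longrightarrow Ind_{\bar{\para{P}}}^{G}(\sigma \otimes \lambda), \qquad A(\lambda)f(g)=\int_{N}f(\bar{n}g)\,d\bar n,
\]
which by the same analysis as in Theorem (\ref{intertwiningProp}) converges because $\lambda \in (\mathfrak{a}_M)^{*}_{+}$. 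The claim is that $\operatorname{Im} A(\lambda)$ is irreducible and that every proper sub-representation of $Ind_{\para{P}}^{G}(\sigma \otimes \lambda)$ is contained in $\ker A(\lambda)$. Both assertions are obtained by comparing exponents: applying $r_{M,G}$ to the source and using the geometric lemma, the exponent $\lambda$ itself appears with multiplicity one in the Jacquet module of $Ind_{\para{P}}^{G}(\sigma \otimes \lambda)$ and is the unique maximal one with respect to the partial order induced by the positive cone; this singles out $\lambda$ as the only exponent which survives passage to the image of $A(\lambda)$, forcing the quotient $\para{J}(\para{P},\sigma,\lambda):=\operatorname{Im}A(\lambda)$ to be the unique irreducible quotient.

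For the converse, I would reconstruct the Langlands data from a given irreducible admissible $\pi$. Pick $L \in \para{M}_{\min}(\pi)$, which is nonempty by finiteness of the poset of standard Levis; by minimality, every irreducible subquotient $\tau$ of $r_{L,G}(\pi)$ is cuspidal. Decompose its central character on $Z(L)$ as unitary times a real part, i.e. write $\tau = \sigma \otimes \lambda$ with $\sigma$ tempered (in fact cuspidal) and $\lambda \in Re\,\mathfrak{a}_L^{*}$. Among the finitely many such $\lambda$ occurring in $\para{E}xp(\pi)$, select the one that is maximal with respect to the order $\mu \le \nu \iff \nu - \mu \in \,^{+}\bar{\mathfrak{a}}^{*}$; Langlands' geometric lemma, together with the temperedness of $\sigma$, shows that this maximal $\lambda$ automatically lies in the open positive chamber $(\mathfrak{a}_L)^{*}_{+}$. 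Frobenius reciprocity then produces a nonzero map $\pi \to Ind_{\para{P}}^{G}(\sigma \otimes \lambda)$, whose image, being irreducible, must coincide with the unique irreducible quotient, and dualizing gives $\pi \simeq \para{J}(\para{P},\sigma,\lambda)$.

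Uniqueness of the triple would be handled by reversing the construction: from $\pi = \para{J}(\para{P},\sigma,\lambda)$ one reads off $(\para{P},\sigma,\lambda)$ as the Levi and the distinguished maximal exponent of $r_{M,G}(\pi)$ (a second triple would produce a second maximal exponent, contradicting the uniqueness established in the previous paragraph). The main obstacle, in my opinion, is the careful bookkeeping of exponents: both the irreducibility of $\operatorname{Im}A(\lambda)$ and the fact that the maximal exponent $\lambda$ lies strictly inside the positive chamber rest on a precise form of the geometric lemma combined with the inequalities satisfied by exponents of tempered representations. These are exactly the technical inputs supplied by the references \cite{Konno1} and the general machinery of \cite{Cass}, and once they are in place the rest of the argument is bookkeeping.
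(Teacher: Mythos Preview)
The paper does not prove this theorem; it is stated with a citation to \cite{Konno1} (the Langlands classification, following Konno) and no argument is given in the text. So there is nothing in the paper to compare your proposal against.

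That said, your sketch is the standard route to the Langlands classification and is broadly correct in spirit. Two places deserve tightening. First, the direction of the Frobenius map: from an exponent $\sigma\otimes\lambda$ occurring in $r_{M,G}(\pi)$, Frobenius reciprocity (or rather second adjointness) yields a nonzero $G$-map $Ind_{\para{P}}^{G}(\sigma\otimes\lambda)\to\pi$, exhibiting $\pi$ directly as a quotient; your formulation produces an embedding $\pi\hookrightarrow Ind_{\para{P}}^{G}(\sigma\otimes\lambda)$ and then appeals to an ad hoc ``dualizing'' step, which is unnecessary and slightly misleading. Second, the assertion that the maximal exponent automatically lies in the \emph{open} chamber $(\mathfrak a_{M})^{*}_{+}$ is not a direct consequence of maximality alone; in the standard proofs one first passes to a possibly larger Levi on which the non-strict coordinates are absorbed into a tempered inducing datum, and only then obtains a strictly positive $\lambda$ relative to the new parabolic. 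This is exactly the step where the references you cite do the real work, so you are right to flag it as the main obstacle.
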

\newpage
\section{Automorphic forms}\label{Ch1:section3}
In this section $F$ is a number field and $\adele{A}$ its ring of adeles. Let $\places{P}$ be the set of places of $F$. For each $\nu \in \places{P}$ we denote by $F_{\nu}$ the completion of $F$ with respect to $\nu$. Let $G$ be an algebraic reductive split group over $F$. For every $\nu \in \places{P}$, we fix a maximal compact subgroup $K_{\nu} \subset G(F_{\nu})$. Denote by $K=\prod_{\nu \in \places{P}}K_{\nu}$, the maximal compact subgroup of $G(\adele{A})$. We fix a Haar measure $\mu$ on $G(\adele{A})$ such that $\mu(K)=1$. 

In our thesis we restrict ourself to very special representations called the automorphic representations. The definition can be read in \cite{Auto_def}. The automorphic representations are realized in the space of automorphic functions, in the literature they are known as automorphic forms. The definition of such functions is in \cite{EisenstienSeries} ($I.2.17$). The space of automorphic forms of $G$ is denote by $\automorphicspace{G}{F}$.
\begin{defn}
Let $\para{P}(\adele{A})$ be a parabolic subgroup of $G(\adele{A})$ with Levi decomposition \\ $\para{P}(\adele{A})=M(\adele{A})N(\adele{A})$. Let $(\pi,V)$ be an automorphic representation of $M(\adele{A})$. We define the \textbf{normalized parabolic induction}, denote by $I_{\para{P}}(\pi)=Ind_{\para{P(\adele{A})}}^{G(\adele{A})}(\pi)$, to be the space of all functions $f : G(\adele{A}) \rightarrow V$ such that: 
\begin{enumerate}
\item
$f(mng)=\delta_{\para{P}}^{\frac{1}{2}}(m) \pi(m)f(g)$ for all $m\in M(\adele{A}), n \in N(\adele{A}), g \in G(\adele{A})$.
\item
$f$ is right $K$ --finite.
\end{enumerate}
\end{defn}



\begin{remk}
Let $\para{P}$ be a parabolic subgroup of $G$ correspond to $I=\Delta_{G} \setminus \{\alpha_{i_1},\dots , \alpha_{i_n}\}$. Then by remark (\ref{latticedim}), the set $X^{un}(M)$ of unramified characters of $M$ is isomorphic $\mathbb{C}^{n}$ by
$(s_1, \dots ,s_n) \longleftrightarrow \sum_{l=1}^{n} s_{i_l}\fundamental{i_l}$.
\end{remk}
\begin{remk}
When $\para{P}$ is a maximal parabolic subgroup we define $I_{\para{P}}(s) = Ind_{\para{P}}^{G} (\delta_{\para{P}}^{s})$.
\end{remk}
\begin{defn}
Let $\para{B}(\adele{A})=T(\adele{A})U(\adele{A})$ be a Borel subgroup of $G(\adele{A})$. Let $\chi$ be an automorphic character of $T(\adele{A})$.
For every $\weylelement{}\in \weyl{G}$ we define the \textbf{global intertwining operator} to be:
$$M_{\weylelement{}}(\chi): \: I_{\para{B}}(\chi) \rightarrow I_{\para{B}}(\weylelement{}\chi)$$   
$$(M_{\weylelement{}}(\chi)f)(g)=\int_{\adelegroup{U}{A}\cap \weylelement{}\overline{\adelegroup{U}{A}}\weylelement{}^{-1}\backslash \adelegroup{U}{A} }{f(\weylelement{}^{-1}ug)du}=\otimes_{\nu}(M_{\weylelement{}}(\chi_{\nu})f_{\nu})(g_{\nu}).$$
\end{defn}

\begin{prop} \cite{EisenstienSeries}
The global intertwining operator converges absolutely and uniformly as $g$ varies in a compact set if $Re \: \left \langle  \:\chi,\check{\alpha}\right \rangle \gg 0$ for every $\alpha \in \Phi_{G}^{+}$ such that $w\alpha<0$. Moreover it admits a meromorphic continuation for all $\chi \in \mathfrak{a}^{*} $.      
\end{prop}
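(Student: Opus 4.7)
The plan is to establish both the convergence and meromorphic continuation by reducing the global problem to a local one via the Euler product, handling a finite set of ``bad'' places with Theorem \ref{intertwiningProp}, and controlling the remaining infinite product through Gindikin--Karpelevich. First I would use the factorization $M_{w}(\chi) = \otimes_{\nu} M_{w}(\chi_{\nu})$ already appearing in the definition. Since any section $f \in I_{\para{B}}(\chi)$ is $K$-finite, it is a finite linear combination of pure tensors, each of which agrees with the normalized spherical section $f^{0}_{\nu}$ at all but finitely many places $\nu$. Consequently, it suffices to analyze a pure tensor that coincides with $f^{0}_{\nu}$ outside a finite set $S$ of places (taken to contain all archimedean places).

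At each $\nu \in S$, Theorem \ref{intertwiningProp} guarantees local absolute and uniform convergence on compacta for $\operatorname{Re}\langle \chi_{\nu}, \check{\alpha}\rangle \gg 0$ (for all $\alpha > 0$ with $w\alpha < 0$), along with meromorphic continuation to $\mathfrak{a}^{*}$. Because $S$ is finite, the product of these finitely many local contributions inherits both properties. The essential work is therefore to control the infinite product
$$\prod_{\nu \notin S} M_{w}(\chi_{\nu}) f^{0}_{\nu}.$$
Here Theorem \ref{gid} applies pointwise: for each $\nu \notin S$,
$$M_{w}(\chi_{\nu}) f^{0}_{\nu} \;=\; \prod_{\substack{\alpha > 0 \\ w\alpha < 0}} \frac{\zeta_{\nu}(\langle \chi_{\nu}, \check{\alpha}\rangle)}{\zeta_{\nu}(\langle \chi_{\nu}, \check{\alpha}\rangle + 1)}\, f^{0}_{w\chi, \nu}.$$

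Taking the product over $\nu \notin S$ collapses the scalars into a ratio of partial completed Dedekind zeta functions:
$$\prod_{\substack{\alpha > 0 \\ w\alpha < 0}} \frac{\zeta^{S}(\langle \chi, \check{\alpha}\rangle)}{\zeta^{S}(\langle \chi, \check{\alpha}\rangle + 1)}.$$
Since each Euler product $\zeta^{S}(s)$ converges absolutely for $\operatorname{Re} s > 1$, and the denominators have no zero in that region, absolute and uniform convergence of the global operator on compacta follows whenever $\operatorname{Re}\langle \chi, \check{\alpha}\rangle$ is large enough for every $\alpha > 0$ with $w\alpha < 0$. Meromorphic continuation to all of $\mathfrak{a}^{*}$ is then an immediate consequence of Hecke's meromorphic continuation of $\zeta^{S}(s)$ together with the meromorphic continuation of the finitely many $M_{w}(\chi_{\nu})$, $\nu \in S$.

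The main obstacle I anticipate is the uniformity in $g$ when patching together infinitely many local contributions: one must bound $|f^{0}_{\nu}(g_{\nu})|$ uniformly as $g$ ranges over a compact set and $\nu$ varies, which is handled via the Iwasawa decomposition (the spherical vector is bounded on $K_{\nu}$ and controlled on the torus by $\delta_{\para{B}}^{1/2}\chi_{\nu}$). Once this boundedness is established, the classical convergence criterion for Euler products of ratios of zeta values closes the argument without further analytic input.
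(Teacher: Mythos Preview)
The paper does not prove this proposition; it is simply quoted from \cite{EisenstienSeries} (Moeglin--Waldspurger) as a known fact, so there is no argument in the paper to compare against. Your sketch is the standard reduction and is essentially correct: factor over places, dispose of the finite set $S$ via Theorem~\ref{intertwiningProp}, and collapse the unramified tail via Theorem~\ref{gid} into a ratio of partial Dedekind zeta functions, whose convergence for $\operatorname{Re}\,s>1$ and meromorphic continuation are classical. One small remark: for the meromorphic continuation you should note that the zeros of the denominator $\zeta^{S}(\langle\chi,\check\alpha\rangle+1)$ in the critical strip may introduce poles of the scalar factor that are not poles of the full operator, but this does not affect the \emph{meromorphy} claim, only the location of actual singularities; since the proposition asserts nothing finer, your argument suffices.
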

Let $\para{P}=MN$ be a parabolic subgroup of $G$. By Iwasawa decomposition \\ $\adelegroup{G}{A}=\adelegroup{P}{A}K$  we conclude that every $f \in I_P(\chi _{\underbar{s}})$
is determined  by its restriction to $K$.
Moreover, every smooth function $f \in \{f :K \rightarrow C : f(mk)=f(k) \quad \forall m\in \adelegroup{M}{A}\cap K \}$ can be extended uniquely to $f_{\underbar{s}} \in I_P(\chi_{\underbar{s}}) \quad \forall \underbar{s} \in \mathbb{C}^{n} $. This family  $\{f_{\underbar{s}} : \underbar{s} \in \mathbb {C}^{n}\}$ is called a \textbf{flat section}.

\begin{defn}
Let $\para{P}$ be a parabolic subgroup of $G$. Given a flat section $f_{\underbar{s}} \in I_{\para{P}}(\chi_{\underbar{s}})$
  such that $f_{{\underbar{s}}_ {|_\adelegroup{K}{A}}}=f$
 we define the \textbf{Eisenstein series} on $\adelegroup{G}{A}$ to be 
$$E_{\para{P}}(f,g,\underbar{s})=\sum_{\gamma \in \para{P}(F)\backslash G(F)}f_{\underbar{s}}(\gamma g).$$
Whenever this series converges it defines an automorphic form.
\end{defn}
Recall the properties of Eisenstein series
\begin{thm}\cite{WeeTeck}
[Eisenstein series properties]\label{EisensteinProperties}
$ $
\begin{enumerate}
\item
The Eisenstein series is left $G(F)$--invariant when it is defined.
\item
There exists an open cone of $ \mathfrak{a}_M^{*}$ such that the series  converges absolutely and uniformly when $g$ varies in a compact set.
\item
For any $f$ and $g$ the function
$\underbar{s} \mapsto E_{\para{P}}(f,g,\underbar{s})$
admits a meromorphic continuation to all of $\mathbb{C}^{n}$.
\item
At a point $s_0$ where $E_{\para{P}}(f,g,s_0)$ is holomorphic for all $f$ and $g$, the function
$E_{\para{P}}(f,s_0,\cdot )$ is an automorphic form on $G$. Also, the map
$f \mapsto E_{\para{P}}(f,s_0,-)$
is a $\adelegroup{G}{A}$--equivariant map of $I_{\para{P}}(\chi_{\underbar{s}_0})$ to $\automorphicspace{G}{F}$.
\item 
Let $\weylelement{}\in \weyl{G}$. Then
$$E_{\para{B}}(f,g,\underbar{s})=E_{\para{B}}(M_{\weylelement{}}(\underbar{s})f,g,\weylelement{}\underbar{s}).$$
\end{enumerate}
\end{thm}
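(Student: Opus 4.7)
The plan is to prove the five assertions in order, since parts (4) and (5) depend crucially on the meromorphic continuation asserted in (3), while (1) and (2) are elementary preliminaries. For (1), I would simply note that for $\gamma_0 \in G(F)$, the substitution $g \mapsto \gamma_0 g$ in the defining sum followed by the reindexing $\gamma \mapsto \gamma \gamma_0^{-1}$ of the coset representatives $\para{P}(F)\backslash G(F)$ leaves the expression unchanged, and the manipulation is legitimate wherever the series converges absolutely. For (2), the strategy is reduction-theoretic: it suffices to handle the case $\para{P}=\para{B}$, since $E_\para{P}(f,g,\underbar{s})$ can be dominated by an Eisenstein series along the Borel for a flat section lifted via $\para{B}\subset \para{P}$, using Iwasawa decomposition to write the $\para{P}$-sum as a sum of $\para{B}$-sums. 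For the Borel case, partition the sum according to a Siegel fundamental domain and estimate each term using the modular character, reducing convergence to the standard fact that $\sum_{\alpha \in \Phi^+} e^{-\langle \operatorname{Re}\chi - \rho, H \rangle}$ summed over suitable lattice points converges provided $\operatorname{Re}\langle \chi,\check{\alpha}\rangle$ is sufficiently large for every simple $\alpha$.

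The hard step is (3), the meromorphic continuation, which is Langlands' fundamental theorem. My plan would follow the constant-term approach: compute the constant term of $E_\para{B}(f,g,\underbar{s})$ along the unipotent radical $\adelegroup{U}{A}$, obtaining by unfolding the Bruhat decomposition the finite sum $\sum_{\weylelement{}\in \weyl{G}} M_\weylelement{}(\underbar{s}) f_{\underbar{s}}$; by Theorem \ref{intertwiningProp} (applied place by place together with the Gindikin--Karpelevich formula of Theorem \ref{gid} at almost all finite places to control the Euler product), each summand admits a meromorphic continuation. One then leverages Bernstein's continuation principle, or the Maass--Selberg relations combined with truncation, to upgrade meromorphic continuation of the constant term to meromorphic continuation of $E_\para{P}$ itself. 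Finally, descent from $\para{B}$ to an arbitrary parabolic $\para{P}$ is handled by the induction-in-stages remark (\ref{induction_stage_parabolic}): a section in $I_\para{P}(\chi_{\underbar{s}})$ extends to a section in $I_\para{B}$ of a suitable character, and the two Eisenstein series coincide. This step is the main obstacle, as it is the deepest input of the theory and cannot be bypassed by soft arguments.

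Parts (4) and (5) are then short corollaries. For (4), at a holomorphic point $\underbar{s}_0$ the value $E_\para{P}(f,\cdot,\underbar{s}_0)$ is left $G(F)$-invariant by (1), smooth and $K$-finite because flat sections are, $Z(\lie{g})$-finite since the induction produces an infinitesimal character, and of moderate growth by the same dominating estimate that proved (2) together with a Cauchy-integral bound that converts the meromorphic continuation into a growth estimate; $\adelegroup{G}{A}$-equivariance of $f \mapsto E_\para{P}(f,\underbar{s}_0,\cdot)$ is immediate from the definition since right translation of $f$ by $g_0$ corresponds to right translation of the sum. For (5), I would first establish the functional equation in the region of absolute convergence by substituting the integral definition of $M_\weylelement{}(\underbar{s})$ into $E_\para{B}(M_\weylelement{}(\underbar{s}) f,g,\weylelement{}\underbar{s})$, interchanging summation and integration, and using the Bruhat decomposition to reindex; by (3) and the uniqueness of meromorphic continuation, the identity then propagates to all of $\mathbb{C}^n$.
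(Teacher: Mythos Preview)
The paper does not prove this theorem at all: it is stated with the citation \cite{WeeTeck} and taken as input from the literature (ultimately Langlands, as exposed for instance in \cite{EisenstienSeries}). So there is no ``paper's own proof'' to compare against; your outline is a reasonable sketch of the classical arguments, and for the purposes of this thesis the correct move is simply to cite the result.

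That said, one point in your sketch deserves correction. In your treatment of (5) you propose to verify the functional equation first in the region of absolute convergence and then continue. This does not work as stated: the cone of absolute convergence for $E_{\para{B}}(f,g,\underbar{s})$ and that for $E_{\para{B}}(M_{\weylelement{}}(\underbar{s})f,g,\weylelement{}\underbar{s})$ are images of one another under $\weylelement{}$ and have empty common interior, so the two series are never simultaneously given by their defining sums. The actual argument (as in \cite{EisenstienSeries}) derives the functional equation \emph{after} meromorphic continuation, by showing that both sides have the same constant term (using the cocycle relation of Proposition~\ref{operatorInstage} for the intertwining operators) and invoking the injectivity of the constant-term map on automorphic forms with a given cuspidal support. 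Your remaining sketch for (1)--(4) is essentially the standard line, with the honest acknowledgment that (3) is the deep input.
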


\begin{defn}
Let $\para{P}$ be a maximal parabolic subgroup of $G$. We say that $s_0 \in \mathbb{C}$ is a \textbf{pole} of order $d$ of the Eisenstein series if $-d=inf_{f,g} \: ord_{s=s_0}\{E_{\para{P}}(f,g,s)\}$.
\end{defn}
At the point $s_0$ where $E_{\para{P}}$ has a pole we consider the Laurent expansion of $E_{\para{P}}(f,g,s)$ at $s=s_0$
$$E_{\para{P}}(f,g,s)=\frac{\leadingterm{-d}^{\para{P}}(f,g,s_0)}{(s-s_0)^d}+\frac{\leadingterm{-d+1}^{\para{P}}(f,g,s_0)}{(s-s_0)^{d-1}}+\dots$$
where $d$ is the order of the pole attained at $s_0$ and  $\leadingterm{-d}^{\para{P}}(f,g,s_0)$ is the first non-zero term for at least one holomorphic section $f_{s_0}\in I_{\para{P}}(s_0)$.
\begin{prop} \cite{WeeTeck}
The function $f \mapsto \leadingterm{-d}^{\para{P}}(f,g,s_0)$ is a $\adelegroup{G}{A}$--equivariant map of $I_{\para{P}}(s_0)$ to $\automorphicspace{G}{F}$.
\end{prop}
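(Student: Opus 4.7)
The plan is to reduce the proposition to the known properties of $E_{\para{P}}(f,g,s)$ (Theorem \ref{EisensteinProperties}) combined with the fact that extracting a Laurent coefficient is a linear operation that commutes with right translation in $g$.

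First I would make the leading coefficient concrete by setting
$$H_{\para{P}}(f,g,s) = (s-s_0)^{d}\, E_{\para{P}}(f,g,s).$$
By the very definition of $d = -\inf_{f,g} \operatorname{ord}_{s=s_0} E_{\para{P}}(f,g,s)$, the function $H_{\para{P}}$ is holomorphic at $s=s_0$ for every flat section $f$ and every $g\in G(\adele{A})$, and by construction $H_{\para{P}}(f,g,s_0) = \leadingterm{-d}^{\para{P}}(f,g,s_0)$. Equivalently, one can use the Cauchy integral
$$\leadingterm{-d}^{\para{P}}(f,g,s_0) = \frac{1}{2\pi i}\oint_{|s-s_0|=\varepsilon} (s-s_0)^{d-1} E_{\para{P}}(f,g,s)\,ds,$$
which will be convenient for the growth and finiteness estimates.

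Next I would check that the assignment descends to a well-defined map out of $I_{\para{P}}(s_0)$, not merely out of flat sections. If $f^{1}_{s}$ and $f^{2}_{s}$ are two flat sections whose values at $s_0$ agree, then $f^{1}_{s}-f^{2}_{s}$ vanishes to order at least one at $s_0$, so it has the form $(s-s_0)\tilde{f}_{s}$ for a flat section $\tilde{f}_{s}$. Consequently $E_{\para{P}}(f^{1},g,s)-E_{\para{P}}(f^{2},g,s) = (s-s_0)\,E_{\para{P}}(\tilde{f},g,s)$ has a pole of order at most $d-1$ at $s_0$, so its coefficient of $(s-s_0)^{-d}$ vanishes. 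Hence $\leadingterm{-d}^{\para{P}}(f,g,s_0)$ depends only on $f_{s_0}\in I_{\para{P}}(s_0)$. $G(\adele{A})$-equivariance is immediate: for every $h\in G(\adele{A})$ and every $s$ at which the series is holomorphic one has $E_{\para{P}}(f,gh,s)=E_{\para{P}}(R(h)f,g,s)$ by part (4) of Theorem \ref{EisensteinProperties}; multiplying by $(s-s_0)^{d}$ and evaluating at $s=s_0$ gives $\leadingterm{-d}^{\para{P}}(f,gh,s_0)=\leadingterm{-d}^{\para{P}}(R(h)f,g,s_0)$.

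It remains to verify that $g\mapsto \leadingterm{-d}^{\para{P}}(f,g,s_0)$ actually lies in $\automorphicspace{G}{F}$. Left $G(F)$-invariance is inherited from the left $G(F)$-invariance of $E_{\para{P}}(f,\cdot,s)$ for generic $s$, and passing to the Laurent coefficient preserves it. Smoothness in $g$, $K$-finiteness, and $Z(\lie{g})$-finiteness all pass to Laurent coefficients: the first because $H_{\para{P}}(f,g,s_0)$ is a limit in $s$ of smooth functions in $g$ uniformly on compacta; the second because the finite-dimensional $K$-span of the flat section $f$ gives a finite-dimensional space of Eisenstein series, whose Laurent coefficients at $s_0$ span a finite-dimensional space as well; and the third by the same argument applied to $Z(\lie{g})$, using that the $Z(\lie{g})$-action commutes with the Eisenstein operator and with extraction of Laurent coefficients.

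The main obstacle is the moderate growth condition, which does not follow from a purely formal argument. The approach I would take is to use the contour integral formula above: since $E_{\para{P}}(f,g,s)$ is known to be of moderate growth in $g$ locally uniformly in $s$ on compacta avoiding the singular set (this is part of Langlands' analytic continuation, see \cite{EisenstienSeries}), one chooses $\varepsilon>0$ small enough that the only singularity of $E_{\para{P}}(f,g,\cdot)$ inside $|s-s_0|\le\varepsilon$ is at $s_0$, and then bounds
$$\left|\leadingterm{-d}^{\para{P}}(f,g,s_0)\right|\le \varepsilon^{d-1}\max_{|s-s_0|=\varepsilon}\left|E_{\para{P}}(f,g,s)\right|,$$
which gives the required uniform polynomial bound $\|g\|^{N}$ from the corresponding bound for $E_{\para{P}}$ on the circle. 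This completes the verification that the leading term is an automorphic form and that the map is $G(\adele{A})$-equivariant.
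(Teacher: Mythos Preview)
The paper does not prove this proposition; it simply cites \cite{WeeTeck}. Your argument is the standard one and is essentially correct, so you have supplied what the paper leaves to the reference.

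One small correction: your well-definedness step is both unnecessary and slightly off. In this paper a \emph{flat} section is, by definition, determined entirely by its restriction to $K$, which is independent of $s$. Hence two flat sections agreeing at $s_0$ agree for every $s$, and the map $I_{\para{P}}(s_0)\to\{\text{flat sections}\}$ is a bijection; there is nothing to check. Your claim that $f^{1}_s-f^{2}_s=(s-s_0)\tilde f_s$ with $\tilde f_s$ flat cannot hold unless $\tilde f_s\equiv 0$, since $(s-s_0)\tilde f_s$ has $K$-restriction depending on $s$. The rest of your argument---equivariance via Theorem~\ref{EisensteinProperties}(4), inheritance of $G(F)$-invariance, $K$- and $Z(\lie g)$-finiteness, and the Cauchy-integral bound for moderate growth---is the correct route and matches the standard treatment in \cite{EisenstienSeries} and \cite{WeeTeck}.
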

\begin{thm} \cite{WeeTeck}
The poles of the Eisenstein series are the same as the poles of its constant term along the Borel subgroup.
\end{thm}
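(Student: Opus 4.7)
The plan is to prove the two inclusions of pole sets separately. The easy direction is that every pole of $E^0_\para{P}$ is a pole of $E_\para{P}$: if $E_\para{P}(f,g,s)$ is holomorphic at $s_0$ for all $f$ and $g$, then $E^0_\para{P}(f,g,s) = \int_{U(F)\backslash U(\adele{A})} E_\para{P}(f,ug,s)\, du$ is also holomorphic at $s_0$, since integration over the compact domain $U(F)\backslash U(\adele{A})$ commutes with analytic continuation in the spectral parameter $s$.

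For the reverse inclusion, suppose $E_\para{P}(f,g,s)$ has a pole of order $d \ge 1$ at $s_0$. By the definition of the order of the pole, the leading Laurent coefficient $\leadingterm{-d}^{\para{P}}(f,s_0,g)$ is a non-zero automorphic form for some section $f$, and the map $f \mapsto \leadingterm{-d}^{\para{P}}(f,s_0,\cdot)$ is a $\adelegroup{G}{A}$-equivariant map from $I_\para{P}(s_0)$ into $\automorphicspace{G}{F}$. Since the constant-term integral is over a compact domain, it commutes with the Laurent expansion in $s$, so the $\para{B}$-constant term of $\leadingterm{-d}^{\para{P}}(f,s_0,\cdot)$ coincides with $\lim_{s\to s_0}(s-s_0)^d E^0_\para{P}(f,g,s)$. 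The theorem therefore reduces to showing that this constant term is not identically zero in $(f,g)$.

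The main obstacle is exactly this non-vanishing statement, which is the substantive content of the theorem: one must rule out the possibility of catastrophic cancellation in the explicit formula $E^0_\para{P}(f,g,s) = \sum_{\weylelement{}\in \weyl{G}/\weyl{\para{P}}} M_{\weylelement{}}(s)f(g)$ coming from the Bruhat decomposition. The standard tool, going back to Langlands, is the Maass--Selberg relations: truncating the Eisenstein series yields a rapidly decreasing function whose $L^2$ inner product with itself can be expressed as an alternating sum over Weyl group elements of compositions of intertwining operators $M_{\weylelement{}}(s)$. Non-vanishing of $\leadingterm{-d}^{\para{P}}(f,s_0,\cdot)$ as an automorphic form forces the truncated inner product to have a pole of order at least $d$ at $s_0$, hence at least one of the intertwining operators appearing in $E^0_\para{P}$ must contribute a non-cancelling singularity of that order. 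Combined with the first direction, this matches the pole orders exactly.
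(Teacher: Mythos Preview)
The paper does not supply its own proof of this theorem; it is quoted with a citation to \cite{WeeTeck} and used as a black box throughout. So there is no paper-side argument to compare with, and I evaluate your attempt on its own merits.

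Your easy direction is fine: holomorphy of $E_{\para{P}}(f,\cdot,s)$ at $s_0$ passes to the constant term because the integral over the compact quotient $U(F)\backslash U(\adele{A})$ commutes with taking Laurent coefficients in $s$. Your reduction of the hard direction is also correct: it suffices to show that the nonzero automorphic form $\leadingterm{-d}^{\para{P}}(f,s_0,\cdot)$ has nonvanishing constant term along $\para{B}$.

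The gap is in your last paragraph. The Maass--Selberg formula for $\langle \wedge^{T}E(s),\wedge^{T}E(\bar s)\rangle$ is expressed in terms of the individual operators $M_{\weylelement{}}(s)$, but it also carries denominators of the shape $\inner{\weylelement{}s+\weylelement{}'\bar s,\,\cdot\,}^{-1}$ coming from the truncation, which already produce poles even where $E_{\para{P}}$ is holomorphic; you have not separated these spurious singularities from genuine ones. More seriously, even if your argument showed that some individual $M_{\weylelement{}}$ has a pole of order $\ge d$, that would not yield a pole of the \emph{sum} $E^{0}_{\para{P}}=\sum_{\weylelement{}}M_{\weylelement{}}(s)f$: cancellation among the $M_{\weylelement{}}$ is exactly the phenomenon the whole thesis is devoted to, so ``non-cancelling singularity'' is precisely the assertion that needs proof, not something you may invoke.

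The direct route, and the one the paper itself leans on elsewhere (see the proof of Proposition~\ref{ReducetoParabolic}, which appeals to \cite[Proposition~I.3.4]{EisenstienSeries}), is via cuspidal support. Since the inducing data is a character of the Levi, every cuspidal component of the constant term of $\leadingterm{-d}^{\para{P}}(f,s_0,\cdot)$ along a proper standard parabolic with Levi strictly larger than $T$ is automatically zero, and the form itself is orthogonal to cusp forms on $G$; thus its cuspidal support is $[T,\chi_{s_0}]$. Moeglin--Waldspurger's proposition then says that such a form is determined by its constant term along $\para{B}$, so vanishing of that constant term would force $\leadingterm{-d}^{\para{P}}(f,s_0,\cdot)=0$, contradicting the choice of $d$. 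This replaces your Maass--Selberg step with a structural fact already cited in the paper and closes the argument.
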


\chapter{Algorithm: Compute pole order} 

\label{ch4:algo} 

\lhead{Chapter \ref{ch4:algo}. \emph{Algorithm}} 


In this chapter we will describe the algorithm to determine  the poles of the Eisenstein series $E_{\para{P}}(f^{0},g,s)$ and compute their orders.\\ The chapter outline is as follows:\\
The chapter starts with few definitions that introduce the language for describing the algorithm. The chapter continues with the description of the algorithm itself.
Finally we will prove the correctness of the algorithm.

We fix a maximal parabolic subgroup $P$. 
We denote by  $W(\para{P},G)$ the set of all shortest representative in $\weyl{G} \slash \weyl{\para{P}}$. Set $\chi_{\para{P},s}=\delta_{\para{P}}^{s+\frac{1}{2}}\otimes\delta_{\para{B}}^{-\frac{1}{2}}$ since there  is no risk of confusion, we  shall omit the subscript  $\para{P}$.
\begin{defn}
Let $s_0 \in \mathbb{C}$, $s_0$ is called  \textbf{potential pole} of $E_{\para{P}}(f^{0},s,g)$ if $\operatorname{Re} s_0>0$ and there exists $\weylelement{} \in W(\para{P},G)$ such that $C_{\weylelement{}}(s)$ (as defined at \ref{gid})  admits a pole at $s=s_0$.
\end{defn}
\begin{defn}
For a potential pole $s_0$ we said that  $\weylelement{},u \in W(\para{P},G)$ are \textbf{equivalent} if $u.\chi_{s_0}=\weylelement{}.\chi_{s_0}$. We denote it by $u
 \sim_{s_0} {\weylelement{}}$. The equivalence class will be denoted by $[u]_{s_0}$.
\end{defn}
\begin{defn}
For a potential pole $s_0$ and equivalence class $[u]_{s_0}$ we define $$M_{u}^{\#}(s)=\sum_{\weylelement{}\in [u]_{s_0}}C_{\weylelement{}}(s)$$ 
and by 
$N_{u}^{\#}(s)$ its numerator.
\end{defn}
\begin{defn}
For a potential pole $s_0$, an equivalence class $[u]_{s_0}$ is called \textbf{square integrable} if $\operatorname{Re}(u.\chi_{s_0})$ can be written as sum of simple roots with  negative coefficients.
\end{defn}

After we put the language we are ready to describe our algorithm.

\subsection*{Pseudo code}
\begin{enumerate}
\item\label{algo::first}
Find out the set of all shortest representative in $\weyl{G} \slash \weyl{\para{P}}$ (denoted by $W(\para{P},G)$).
\item\label{algo::sec}
Find out the set of potential poles.
(denoted by $\eta(\para{P})$).
\item
For each potential pole $s_0 \in \eta(\para{P})$ do:
\begin{enumerate}
\item\label{algo::thirda}
Divide $W(\para{P},G)$ into equivalence classes.
\item
For each equivalence class do:
\begin{enumerate}
\item
\label{algo::thirdb_1}
 Define $M_{\weylelement{}}^{\#}$
\item
\label{algo::thirdb_2}
 Write Laurent expansion of $N_{\weylelement{}}^{\#}$ around $s=s_0$.
we denote by $d_{0}$ the formal order of 
$N_{\weylelement{}}^{\#}$ at $s_0$.
\item
\label{algo::thirdb_3}
Find out if the equivalence class is square integrable or not
(denote by $square$).
\end{enumerate}
\item\label{algo::thirdc}
$d= \operatorname{Max}{d_0}$
\item\label{algo::third_d}
$L2= \bigwedge_{(d_0,square)| d=d_0} square$.
\item\label{algo::thirde}
 Deduce that the Eisenstein series admits at  most a pole of order $d$ at $s=s_0$ and its leading term is square integrable (or not ) according to the value of $L2$.
\end{enumerate}
\end{enumerate}
\newpage
\section*{Correctness}
Recall that the poles of the degenerate Eisenstein series coincide with the poles of  its constant term along any parabolic. We shall compute the constant term of the Eisenstein series along the Borel subgroup. 
By its nature, the problem requires a lot  of computation (step \ref{algo::thirdb_2}) thus we use the computer.

We denote by $W(G,\para{P})$ the set of the shortest representatives in $\weyl{G} \slash \weyl{\para{P}}$. 

\begin{prop} \label{Constant_term_along_borel}
 Let $\para{B}=TU$ be the Borel subgroup of $G$. Then : 
$$\int_{U(F) \backslash U(\adele{A})} E_{\para{P}}(f^{0},s,ug)du= \sum_{\weylelement{} \in W(G,\para{P}) }M_{\weylelement{}}(s)f^{0}_{s}$$
\end{prop}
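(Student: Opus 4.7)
The strategy is the classical Bruhat-cell unfolding argument, carried out in the region $\operatorname{Re} s \gg 0$ where $E_{\para{P}}(f^0,s,g)$ converges absolutely, and then extended to all $s$ by meromorphic continuation of both sides.

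First, I would use the Bruhat decomposition
$$G(F) = \bigsqcup_{\weylelement{} \in W(G,\para{P})} \para{P}(F)\,\weylelement{}\,\para{B}(F)$$
to rewrite
$$E_{\para{P}}(f^0,s,g) = \sum_{\weylelement{} \in W(G,\para{P})} \sum_{\gamma \in \para{P}(F) \backslash \para{P}(F)\weylelement{}\para{B}(F)} f^0_s(\gamma g).$$
Substituting into the integral and interchanging (justified by absolute convergence), one is reduced to computing the contribution from each Bruhat cell separately.

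Next comes the unfolding step. For fixed $\weylelement{}$, since $T \subset \weylelement{}^{-1}\para{P}\weylelement{}$, one can identify $\para{P}(F) \backslash \para{P}(F)\weylelement{}\para{B}(F)$ with $(U(F) \cap \weylelement{}^{-1}\para{P}(F)\weylelement{}) \backslash U(F)$ via $\gamma \mapsto \weylelement{}\gamma$. Collapsing the discrete sum with the integral gives
$$\int_{(U(F) \cap \weylelement{}^{-1}\para{P}(F)\weylelement{}) \backslash U(\adele{A})} f^0_s(\weylelement{}\, u\, g)\, du.$$
The factor of $\weylelement{}^{-1}\para{P}\weylelement{}$ intersected with $U$ decomposes into the unipotent radical part (which is absorbed by the invariance of $f^0_s$ under $N_{\para{P}}(\adele{A})$) and a Levi-unipotent part (which exponentiates the trivial character on this abelian subgroup, giving volume one on $(F) \backslash (\adele{A})$ pieces). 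After this reduction the domain becomes $(U \cap \weylelement{}^{-1}\bar{U}\weylelement{})(\adele{A}) \backslash U(\adele{A})$, and the resulting expression is exactly $M_{\weylelement{}}(s) f^0_s(g)$ as defined in Theorem \ref{intertwiningProp} (up to the convention of $\weylelement{}$ versus $\weylelement{}^{-1}$, which merely relabels the index set). Summing over $\weylelement{}\in W(G,\para{P})$ yields the desired formula, and meromorphic continuation completes the proof.

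The main obstacle is the careful root-theoretic bookkeeping in the unfolding step: one must verify that the coset space $(U \cap \weylelement{}^{-1}\para{P}\weylelement{}) \backslash U$ corresponds exactly to the set of positive roots $\beta > 0$ with $\weylelement{}\beta < 0$, and that these roots all lie outside $\Phi(M)$. This is precisely where the hypothesis $\weylelement{} \in W(G,\para{P})$ — i.e., $\weylelement{}$ sends $\Phi^+(M)$ into $\Phi^+(G)$ — is used, ensuring that no "Levi part" of the stabilizer is lost in the reduction. The remaining ingredients (absolute convergence, Fubini, meromorphic continuation) are standard and recalled in Theorem \ref{EisensteinProperties}.
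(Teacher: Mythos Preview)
Your proposal is correct and follows essentially the same approach as the paper: both use the Bruhat decomposition $\para{P}(F)\backslash G(F)/\para{B}(F)\simeq W(G,\para{P})$, unfold the sum over $(U\cap\weylelement{}^{-1}\para{P}\weylelement{})(F)\backslash U(F)$ against the integral over $U(F)\backslash U(\adele{A})$, and then use left invariance of $f^0_s$ under $\para{P}(\adele{A})$ (with volume one on $F\backslash\adele{A}$) to collapse the inner integral, leaving exactly $M_{\weylelement{}}(s)f^0_s$. Your discussion of the root-theoretic bookkeeping is slightly more explicit than the paper's, but the argument is the same.
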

\begin{proof}
The Eisenstein series admits a meromorphic continuation to all $\mathbb{C}$, hence it is enough to prove this proposition only when the Eisenstein series converges absolutely and uniformly.   We denote by $U^{\gamma}=\gamma^{-1}\para{P}\gamma \cap U$ 
\begin{align*}
\int_{U(F) \backslash U(\adele{A})} E_{\para{P}}(f^{0},s,ug)du&= \int_{U(F) \backslash U(\adele{A})} \sum_{\gamma \in \para{P}(F) \backslash G(F)}f^{0}_{s}(\gamma ug)du  \\
&= \int_{U(F) \backslash U(\adele{A})}  
\sum_{\gamma \in \para{P}(F)  \backslash G(F) \slash \para{B}(F)} \: \: \: \:
\sum_{\delta \in \gamma^{-1}\para{P}(F)\gamma \cap \para{B}(F) \backslash \para{B}(F)}
f^{0}_{s}(\gamma \delta ug)du
\\
\\
&= \int_{U(F) \backslash U(\adele{A})}  
\sum_{\gamma \in \para{P}(F)  \backslash G(F) \slash \para{B}(F)} \: \: \: \:
\sum_{\delta \in 
U^{\gamma}(F) \backslash U(F)}
f^{0}_{s}(\gamma \delta ug)du
\\
&=\sum_{\gamma \in \para{P}(F) \backslash G(F)} \int_{U(F) \backslash U(\adele{A})} \sum_{\delta \in U^{\gamma}(F) \backslash U(F)}
f^{0}_{s}(\gamma \delta ug)du \\
\\
&=\sum_{\gamma \in \para{P}(F) \backslash G(F)} \int_{U^{\gamma}(F) \backslash U(\adele{A})}
f^{0}_{s}(\gamma ug)du 
\\
&=\sum_{\gamma \in \para{P}(F) \backslash G(F)} \int_{U^{\gamma}(\adele{A}) \backslash U(\adele{A})} \int_{U^{\gamma}(F) \backslash U^{\gamma}(\adele{A})}
f^{0}_{s}(\gamma u_1u_2g)du_1du_2
\\
&\overset{\#1}{=}\sum_{\gamma \in \para{P}(F) \backslash G(F)} \int_{U^{\gamma}(\adele{A}) \backslash U(\adele{A})}f^{0}_{s}(\gamma ug)du\\
&\overset{\#2}{=}
\sum_{\weylelement{} \in W(G,\para{P})}M_{\weylelement{}}(s)f^{0}_{s}(g)\\
&\overset{(\ref{gid})}{=}
\sum_{\weylelement{} \in W(G,\para{P})} C_{\weylelement{}}(s)f^{0}_{\weylelement{}s}(g).
\end{align*}
The equality ($\#1$) is due to left invariant properties of $f^{0}$ and taking the measure 
of $F\backslash \adele{A}$ to be one.
The equality ($\#2$) is the definition of $M_{\weylelement{}} $.
\end{proof}
From now on, we assume that $w\in W(G,\para{P})$.
Therefore the potential poles are the points where $C_{\weylelement{}}(s)$ has a pole for some $w\in W(G,\para{P})$.
Recall that $C_{\weylelement{}}(s)$ is described by a product of quotients of zeta functions (Theorem \ref{gid}). 
Let $f^{0}_{s} \in I_{\para{P}}(s)$. Since $I_{\para{P}}(s) \subset I_\para{B}(\chi_s)$
where $\chi_s= \delta_{\para{P}}^{s+\frac{1}{2}} \otimes \delta_{\para{B}}^{-\frac{1}{2}}$.
Then  $$C_{\weylelement{}}(s)= \prod_{\tiny \begin{aligned}
\alpha\in \Phi^{+}_{G}\\ 
 w\alpha <0 \\
\end{aligned}}
 \frac{ {\zeta(\left \langle  \chi_s,\check{\alpha} \right \rangle) }} {{\zeta(\left \langle  \chi_s,\check{\alpha} \right \rangle +1) }} \quad . 
$$
After performing cancellation we obtained a reduced form 
$$C_{\weylelement{}}(s)= \prod
 \frac{ {\zeta(p_i(s))}} {{\zeta(q_i(s)) }} 
$$
where $p_{i}(s)=\left \langle \chi_s,\check{\alpha}_{i} \right \rangle$  and 
$q_{i}(s)=\left \langle \chi_s,\check{\beta}_{i} \right \rangle+1$
for some positive roots $\alpha_i,\beta_i$.
 Hence the pole of each term  occurs either when the numerator has a pole (when the term $p_{i}(s)=1,0$) or when denominator is zero. 
\begin{asmp}\label{ASUM}
Let $s\in \mathbb{C}$ such that $\operatorname{Re} s>0$ then:\\
 For every  $\weylelement{} \in W(G,\para{P})$ 
the denominator of the reduced term of $C_{\weylelement{}}(s_0)$ is holomorphic and non zero (every $\operatorname{Re}q_i(s) >1$). In other words the potential  poles of $C_{\weylelement{}}(\chi_s)$ are coming  only from its numerator.
\end{asmp} 
\begin{remk}
This assumption implies that 
$$\eta(\para{P})\subset
\{s \in \mathbb{C} \: : \: \operatorname{Re} s>0 \text { and }  \exists \alpha \in N_{\para{P}} \: \inner{\chi_s,\check{\alpha}}=0,1 \}.$$
Moreover, all potential poles are reals.
\end{remk}
 \begin{obv}\label{ZETADOM}
 Let $G$ be a  group of rank $\leq 8$. Let  $\para{P}$ be a maximal parabolic subgroup of $G$. For every  $\weylelement{} \in W(G,\para{P})$ the denominator of the reduced term of $C_{\weylelement{}}(s)$ for $\operatorname{Re} s >0$ is holomorphic and non zero (every $q_i(s) >1$). In other words,  Assumption \ref{ASUM} holds.
 \end{obv}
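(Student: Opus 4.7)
The plan is to reduce the Observation to an explicit finite combinatorial check, which is then carried out by the SAGE implementation of the algorithm described in this chapter.

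First, I would linearize the problem. By Theorem \ref{gid}, for $w\in W(G,\para{P})$ one has $C_{\weylelement{}}(s)=\prod_{\alpha>0,\,w\alpha<0}\zeta(p_{\alpha}(s))/\zeta(q_{\alpha}(s))$ with $p_{\alpha}(s)=\langle\chi_s,\check{\alpha}\rangle$ and $q_{\alpha}(s)=p_{\alpha}(s)+1$. Because $\chi_s=(s+\frac{1}{2})\delta_{\para{P}}-\frac{1}{2}\delta_{\para{B}}$, each $p_{\alpha}$ is an affine function of $s$ whose $s$-coefficient equals $\langle\delta_{\para{P}},\check{\alpha}\rangle$. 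Since $w$ is the shortest representative in its coset, every positive root $\alpha$ with $w\alpha<0$ lies in the unipotent radical $N_{\para{P}}$, so $\langle\delta_{\para{P}},\check{\alpha}\rangle$ is a strictly positive integer. Consequently $q_{\alpha}(s)=c_{\alpha}s+d_{\alpha}$ with $c_{\alpha}\in\mathbb{Z}_{>0}$, and the condition $\operatorname{Re}q_{\alpha}(s)>1$ for all $\operatorname{Re}s>0$ is equivalent to the numerical condition $d_{\alpha}\geq 1$.

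Next I would reformulate the Observation as a cancellation claim. A denominator factor $\zeta(q_{\alpha}(s))$ survives in the reduced form of $C_{\weylelement{}}(s)$ if and only if there is no root $\beta$ appearing in the product with $p_{\beta}(s)=q_{\alpha}(s)$ identically in $s$. The Observation then asserts that every "bad" index $\alpha$, namely one with $d_{\alpha}<1$ (these are exactly the factors where $\zeta(q_{\alpha}(s))$ could pick up either the pole at $s=1$ of $\zeta$ or a nontrivial zero of $\zeta$ for some $s$ in the right half plane), admits a cancelling partner $\beta$ among the positive roots inverted by $w$.

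The plan is then to verify this cancellation claim by direct enumeration. For each $G$ of rank $\leq 8$, for each maximal parabolic $\para{P}$, and for each $w\in W(G,\para{P})$ (computed in step \ref{algo::first} of the algorithm), the SAGE implementation lists the multiset of linear polynomials $\{p_{\alpha}\}_{\alpha>0,\,w\alpha<0}$ and $\{q_{\alpha}\}_{\alpha>0,\,w\alpha<0}$, performs the cancellation of common affine polynomials between the numerator and denominator lists, and records the surviving $q_i(s)=c_i s+d_i$. One then reads off that in every case $d_i\geq 1$ and $c_i>0$, which yields the Observation.

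The main obstacle is the sheer combinatorial size: for large exceptional groups such as $E_7$ (and $E_8$, within the rank-$8$ bound), the set $W(G,\para{P})$ can contain tens of thousands of elements, each contributing up to $\ell(w)$ affine factors that must be compared. This is exactly why the verification is performed by computer; the routine nature of each individual check means there is no conceptual obstruction, only a volume-of-computation issue that the SAGE code handles. A more conceptual, uniform proof would presumably proceed by pairing each problematic root $\alpha\in N_{\para{P}}$ with a partner root $\beta\in N_{\para{P}}$ satisfying $\langle\chi_s,\check{\beta}\rangle=\langle\chi_s,\check{\alpha}\rangle+1$ and $w\beta<0$, but such a uniform combinatorial construction is not needed for the rank-bounded statement we are after and we do not pursue it here.
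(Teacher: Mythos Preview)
Your proposal is correct and matches the paper's approach: the paper offers no argument beyond stating this as an empirical observation coming out of the SAGE implementation, so your reduction to a finite combinatorial check on the constant terms $d_i$ of the surviving affine denominators, followed by machine verification over all $(G,\para{P},w)$, is exactly the intended route. Your write-up is in fact more explicit than the paper's, since you spell out why the check is finite (affinity of $q_\alpha$, positivity of the slope for $\alpha\in N_{\para{P}}$, and the equivalence with $d_\alpha\ge 1$); one small caveat is that the cancellation must be done with multiplicities, not just as a set-theoretic matching of $p_\beta$ with $q_\alpha$.
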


In order to make the algorithm more efficient,  we take 
$$\eta(\para{P})=\{s \in \mathbb{C} \: : \: \operatorname{Re} s>0 \text { and }  \exists \alpha \in N_{\para{P}} \: \inner{\chi_s,\check{\alpha}}=0,1 \}$$
to be the set in step 2.
Now we elaborate on step 3. Fix $s_0 \in  \eta(\para{P})$.
\begin{enumerate}
\item
Reorganize the $\sum_{\weylelement{} \in
W(G,\para{P}) }M_{\weylelement{}}(s)f^{0}$ as follows:
\begin{align*}
\sum_{\weylelement{} \in W(G,\para{P}) }M_{\weylelement{}}(s)f^{0}=& 
\sum_{\weylelement{} \in W(G,\para{P})} \frac{1}{|[\weylelement{}]_{s_0}|} \sum_{u \in [\weylelement{}]_{s_0}} M_{u}(s)f^{0} \\
=& \sum_{\weylelement{} \in W(G,\para{P})} \frac{1}{|[\weylelement{}]_{s_0}|}  M_{w}^{\#}(s)f^{0}
\end{align*}
where $M^{\#}_{\weylelement{}}(s)f^{0}=\left(\sum_{u \in [\weylelement{}]_{s_0}} M_{u}(s)\right)f^{0}$.
\end{enumerate}
\begin{prop}
Let $s_0 \in \eta(\para{P})$.
 The order of the pole of $E_{\para{P}}(f^{0},s,g)$ at $s=s_0$ is bounded by 
$\operatorname{max}_{\weylelement{} \in W(G,\para{P}) \slash \sim_{s_0}}\{ d \: :\: d=order_{s=s_0}M^{\#}_{\weylelement{}}\}$
\end{prop}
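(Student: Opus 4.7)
The plan is to reduce the claim to a purely analytic statement about the order of a pole of a sum of meromorphic functions, after rewriting the constant term of $E_{\para{P}}(f^0,s,g)$ along the Borel into a sum indexed by equivalence classes under $\sim_{s_0}$.

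First, I would invoke the theorem (recalled after Definition of the pole) that the poles of $E_{\para{P}}(f^0,s,g)$ coincide with the poles of its constant term along $\para{B}$. By Proposition \ref{Constant_term_along_borel}, this constant term equals
$$\sum_{w\in W(G,\para{P})} M_w(s) f^0_s.$$
Hence it suffices to bound the order of pole of this sum at $s=s_0$.

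Next, I would partition $W(G,\para{P})$ into equivalence classes with respect to $\sim_{s_0}$ and regroup the sum:
$$\sum_{w\in W(G,\para{P})} M_w(s) f^0_s \;=\; \sum_{[w]\in W(G,\para{P})/\sim_{s_0}} M^{\#}_w(s) f^0_s,$$
where by definition $M^{\#}_w(s)f^0_s = \sum_{u\in [w]_{s_0}} C_u(s)\, f^0_{u\chi_s}$ by Theorem \ref{gid}. This regrouping is nothing more than rearrangement, but it is exactly the decomposition in which possible cancellations among the $C_u(s)$ are isolated inside each class.

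The final step is the elementary observation that if $g_1(s),\ldots,g_k(s)$ are meromorphic functions (with values in a common vector space) whose orders of pole at $s_0$ are $d_1,\ldots,d_k$, then the order of the pole of $g_1+\cdots+g_k$ at $s_0$ is at most $\max_i d_i$. Applying this to the summands $M^{\#}_w(s)f^0_s$, one immediately obtains the desired bound $\max_{[w]} \operatorname{ord}_{s=s_0} M^{\#}_w$.

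There is no serious obstacle here: the only conceptual point worth flagging is why we group by $\sim_{s_0}$ rather than simply bound by $\max_{w} \operatorname{ord}_{s=s_0} M_w$. The class-wise bound is genuinely sharper, because within one class the vectors $f^0_{u\chi_s}$ all specialize at $s=s_0$ to a common vector $f^0_{w\chi_{s_0}}$, allowing cancellations among the scalar coefficients $C_u(s)$ that do lower the order of $M^{\#}_w$. Between different classes, the leading vectors lie in distinct induced representations $I_{\para{B}}(w\chi_{s_0})$, so no such cancellation is visible from this formal rearrangement. The proposition gives only an upper bound; whether the bound is attained is a separate matter addressed by the algorithm.
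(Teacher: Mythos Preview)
Your argument is correct and shares the paper's setup: reduce to the constant term via Proposition~\ref{Constant_term_along_borel}, regroup the sum over $W(G,\para{P})$ into equivalence classes under $\sim_{s_0}$, and bound by the class-wise maximum. The difference lies in which inequality is made explicit. You prove the upper bound directly from the elementary fact that the pole order of a finite sum is at most the maximum of the pole orders of the summands; this is precisely what the proposition asserts. The paper's proof, by contrast, argues the complementary direction: it shows that a pole of order $d$ contributed by some $M^{\#}_{w}$ \emph{cannot be canceled} by the other classes, because for $t$ in the torus the functions $(M^{\#}_{w}(s)f^{0})(t)$ from distinct classes carry distinct characters $w\chi_{s_0}$ and are therefore linearly independent. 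That argument actually establishes that the maximum is attained, i.e., equality rather than merely the stated upper bound. You allude to this linear-independence point in your final paragraph, but only as motivation for why the class-wise grouping is sharper than the naive bound by individual $C_w$; the paper uses it as the core of the proof.
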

\begin{proof}
If $M_{\weylelement{}}^{\#}$ admits a pole of order $d$ it can not be canceled. The reason is 
$$
(M_{w}^{\#}(s)f^{0})(t)=(w\chi_s)(t)(M_{w}^{\#}(s)f^{0})(1) \neq (u\chi_s)(t)(M_{w}^{\#}(s)f^{0})(1)=(M_{u}^{\#}(s)f^{0})(t)
$$
 here $t$ is arbitrary element of the torus  and $(u\chi_{s_0})\neq (w\chi_{s_0})$ for $u \not \sim_{s_0} w$ .
\end{proof}
\begin{remk}
This is only an upper bound since it may happened that the leading coefficient is sum of zeta values and we do not know if it is zero or not.
\end{remk}
\begin{remk}
Iwasawa decomposition $g=tuk$, implies that  the order of the pole of $(M_{u}^{\#}(s)f^{0})(g)$ at $s=s_0$ is the same as the order of the pole of $(M_{u}^{\#}(s)f^{0})(1)$ at $s=s_0$. 
\end{remk}
After we reorganize the sum, for every equivalence class  $u$, we determine the order of the pole  of 
$M^{\#}_u$ at $s=s_0$. This is done as follows:\\
For each $w$ we put 
$$m= \: \operatorname{max}_{u \in W(\chi_s,w)} \{n : M_u \text{ admits a pole of order } n \text{ at $s=s_0$} \}.$$
Notice that $M_{\weylelement{}}^{\#}(s)$ admits a pole of order at most $m$.
Recall that :
 $$(M_{\weylelement{}}^{\#}(s)f^{0})(1)=\sum_{
 u \in W(\chi_{s_0},\weylelement{})
 } C_{u}(s)=\sum_{u \in W(\chi_{s_0},\weylelement{})}\prod \frac{\zeta(p_{i,u}(s))}{\zeta(q_{i,u}(s))}.$$ 
sum all together and do common denominator. By our observation the denominator is holomorphic and non-zero for every $s$ such that $\operatorname{Re} s>0$ in particular for $s=s_0$. Hence we may ignore it.
\\
Therefore the numerator is of the shape
$\sum_{i} \prod_{l} \zeta(p_{il}(s))$.
For each term in the sum, $\prod_{l} \zeta(p_{il}(s))$, we expand every factor in the product by the following rule.
\\
 If  $p_{il}(s_0)>\frac{1}{2}$, we write the Laurent expansion of $\zeta(p_{il}(s_0))$ around $s_0$.
 If $p_{il}(s_0)\leq \frac{1}{2}$ we use the function equation $\zeta(s)=\zeta(1-s)$ and write the Laurent expansion of $\zeta(1-p_{il}(s))$ around $s_0$. 
\begin{remk}\label{expan}
To check possible cancellation it is enough to write the Laurent expansion up to order $m+1$.
\end{remk}
\begin{remk}
Since $\zeta(s)=\zeta(1-s)$ for $s=\frac{1}{2}$ it holds that  the $(2n+1)^{th} $ derivative of $\zeta(s)$ is zero for every $n \in \mathbb{N}$. Hence,
around $\frac{1}{2}$ 
$$\zeta(s)= \sum_{j=0}^{\infty} a_{2j}(s-\frac{1}{2})^{2j}. \quad 
a_{2j}= \frac{\zeta^{(2j)}(\frac{1}{2}) }{2j!}
$$
In other words all the odd derivatives of $\zeta(s)$ at $s=\frac{1}{2}$ are vanish.
\end{remk}

Summing all the terms, we find  the order of the pole at $s_0$ for this $N _{u}^{\#}(s)$ and also for $M _{u}^{\#}(s)$.
The order of the pole at $s_0$ is bounded by the maximum of the orders of the pole of the all $M_{u}^{\#}(s)$ at $s=s_0$. 

This algorithm also  gives an answer to the question whenever the residual representation (i.e the leading term $\leadingterm{-m}^{\para{P}}(f^{0},s_0,g)$)  is square integrable or not. This is by applying 
Langlands criterion for $L^{2}$ . We recall this criterion
\begin{thm}\cite{EisenstienSeries}[Lemma  I.4.11]
\label{square_init}
Let $\phi$ be an automorphic form. Let $\para{P}=M_{\para{P}}N_{\para{P}}$ standard parabolic subgroup of $G$. Denote be $\Pi_{0}(M_{\para{P}},\phi)$ the cuspidal support of $\phi$ along $\para{P}$. Then for $\phi$ to be square integrable, it is necessary  and sufficient that for all $\para{P}$  and all $\pi \in\Pi_{0}(M_{\para{P}},\phi)$
the character $\operatorname{Re} \pi$ can be written in the form
$$\operatorname{Re}\pi =\sum_{\alpha \in \Delta_{\para{P}}}x_{\alpha}\alpha$$ with coefficients $x_\alpha \in \mathbb{R} , x_{\alpha} <0$.
\end{thm}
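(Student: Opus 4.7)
The plan is to reduce the global $L^2$ integrability question on $G(F)\backslash G(\adele{A})$ to a local convergence question about an elementary integral on the split component of each standard Levi, and then to read off the sign condition on the exponents from that integral. The essential tool is reduction theory, which exhibits a Siegel set $\mathfrak{S} = \omega \cdot A_T(t_0) \cdot K$ (with $\omega$ a compact set in $U(\adele{A})$ and $A_T(t_0)$ a shifted positive chamber in the split torus) such that $G(F)\backslash G(\adele{A})$ is covered by finitely many $G(F)$-translates of $\mathfrak{S}$. Thus $\phi \in L^2(G(F)\backslash G(\adele{A}))$ if and only if $\int_{\mathfrak{S}} |\phi(g)|^{2}\,dg < \infty$, and using Iwasawa decomposition together with the Haar measure formula $dg = \delta_{\para{B}}(t)^{-1}dt\,du\,dk$, this integral splits into a trivial integral over a compact set in $U(\adele{A}) \times K$ times $\int_{A_T(t_0)} |\phi(t)|^{2}\delta_{\para{B}}(t)^{-1}\,dt$.

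Next, I would invoke the theory of the constant term. For each standard parabolic $\para{P} = M_{\para{P}} N_{\para{P}}$, the constant term $\phi_{\para{P}}$ restricted to $A_{M_{\para{P}}}$ admits an asymptotic expansion of the form
$$\phi_{\para{P}}(a) \sim \sum_{\pi \in \Pi_0(M_{\para{P}}, \phi)} a^{\pi + \rho_{\para{P}}} P_{\pi}(\log a),$$
with $P_\pi$ polynomials on $\mathfrak{a}_{M_{\para{P}}}$ and $\pi$ ranging through the cuspidal support along $\para{P}$. By partitioning the Siegel set according to which walls of the positive chamber one is far from, one localizes the asymptotics of $\phi$ itself on $\mathfrak{S}$ to a sum of contributions indexed by standard parabolics: a version of the ``truncation'' argument, or equivalently the Arthur--Langlands partition of unity on the chamber, shows that $\phi - \sum_{\para{P}\subsetneq G}(\text{cuspidal-truncation terms})$ is rapidly decreasing and hence harmless for $L^2$.

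With this in hand, the $L^2$ condition becomes: for each proper standard parabolic $\para{P}$, the integral
$$\int_{A_{M_{\para{P}}}(t_0)} \bigl|a^{\pi+\rho_{\para{P}}}\bigr|^{2} \delta_{\para{P}}(a)^{-1}\,da \;=\; \int_{A_{M_{\para{P}}}(t_0)} a^{2\operatorname{Re}\pi}\,da$$
must converge for every $\pi \in \Pi_0(M_{\para{P}},\phi)$ (polynomial factors only contribute logarithmic weights and do not affect convergence). Parameterizing $A_{M_{\para{P}}}(t_0)$ by $a = \exp\bigl(\sum_{\alpha\in\Delta_{\para{P}}} y_\alpha H_\alpha\bigr)$ with $y_\alpha$ large, the integral is a product of one-dimensional integrals of the form $\int_{y_\alpha > c} e^{2 y_\alpha \langle\operatorname{Re}\pi,\alpha\rangle}\,dy_\alpha$, which converges if and only if the coefficient of each simple root $\alpha \in \Delta_{\para{P}}$ in the expansion of $\operatorname{Re}\pi$ is strictly negative. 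This yields exactly the stated condition.

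The main obstacle is the combinatorial bookkeeping of the truncation step: one must justify that only the \emph{cuspidal} exponents along each standard parabolic matter (non-cuspidal contributions are absorbed by parabolics deeper in the chain), and that the contributions from different parabolics do not interact in a way that could create extra cancellations or obstructions. This is handled by the usual inner product computation \`a la Arthur, showing that the truncated form differs from $\phi$ by an $L^2$-negligible error, so that both sides of the equivalence can be tested on the truncation.
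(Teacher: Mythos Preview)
The paper does not prove this theorem; it is simply quoted from \cite{EisenstienSeries} (Lemma~I.4.11 of Moeglin--Waldspurger) and then applied as a black box to the leading terms of degenerate Eisenstein series. So there is no ``paper's own proof'' to compare against.

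Your sketch is a faithful outline of the standard argument in the reference: reduction theory to a Siegel domain, replacement of $\phi$ by its cuspidal constant terms via an Arthur--Langlands type partition, and then a direct computation of the resulting exponential integrals over the positive chamber in $A_{M_{\para{P}}}$. The identification of the convergence condition with strict negativity of the coefficients $x_\alpha$ is exactly how the criterion is obtained. One small point of care: in your parametrization you should make explicit that the pairing governing convergence is the one between $\mathfrak{a}_{M_{\para{P}}}^*$ and $\mathfrak{a}_{M_{\para{P}}}$, so that writing $\operatorname{Re}\pi = \sum_{\alpha\in\Delta_{\para{P}}} x_\alpha \alpha$ and testing against the basis dual to $\{\alpha\}_{\alpha\in\Delta_{\para{P}}}$ is what actually produces the product of one-dimensional integrals; as written, the expression $\langle \operatorname{Re}\pi,\alpha\rangle$ in your exponent is not quite the same thing as the coefficient $x_\alpha$. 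Otherwise the approach is correct and matches the cited source.
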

\begin{remk}
Since we are in the degenerate case, the cuspidal support of the leading term is only the Borel subgroup. Hence, this criterion can be written as follows: 
\\
Assume that $E_{\para{P}}(f^{0},g,s)$ admits a pole of order $d$ at $s_0$. Let $$Y=\{ \weylelement{}
\in W(G,\para{P}) \: : \: M_{\weylelement{}}^{\#} \text{ contributes a pole of order d at $s_0$}.
\}$$
Then  $\leadingterm{-d}^{\para{P}}(f^{0},s_0,g) \in L^{2}(G(F) \backslash G(\adele{A}))$  if  for every $\weylelement{} \in Y$  it  holds 
$$ \weylelement{}\chi_{s_0}= \sum_{\alpha \in \Delta_{G}}x_{\alpha}\alpha $$ with negative coefficients.
\end{remk}
\begin{remk}
In order to show that the leading term is square integrable, we have to show that all the equivalence classes that may contribute a pole of order $d$ and are non square integrable contribute a pole of order at most $d-1$. 
\end{remk}

\begin{obv}
If $F=\mathbb{Q}$ (this assumption can be lifted if we assume that several values of zeta function are non zero), and we are in the case that the algorithm found 
that $\operatorname{order}_{s=s_0}E_{\para{P}}(f^{0},s,g)\leq d$ where $d$ is as in step \ref{algo::thirdc} then there exists an equivalence class $[u]_{s_0}$ such that $\operatorname{order}_{s=s_0} M_{u}^{\#}(s)=d$.
 Moreover, if the leading term is  not square integrable then there exists an equivalence class such that contributes a pole of order exactly $d$ and that is not square integrable.
\end{obv}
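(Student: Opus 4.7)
The plan is to establish that the algorithm's upper bound $d$ is attained as the actual pole order of some $M_u^{\#}$ at $s_0$. First, I would strengthen the preceding proposition from an inequality to the equality $\operatorname{ord}_{s=s_0} E_{\para{P}}(f^0,s,g)=\max_{[w]}\operatorname{ord}_{s=s_0} M_w^{\#}(s)$. The argument is already implicit in that proof: if $[w]\neq[w']$ then by definition $w\chi_{s_0}\neq w'\chi_{s_0}$, so the leading Laurent terms of $M_w^{\#}$ and $M_{w'}^{\#}$ transform under the torus by distinct characters, belong to different isotypic subspaces of the space of automorphic forms, and therefore cannot cancel in the full sum. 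Since each actual order $\operatorname{ord}_{s=s_0}M_w^{\#}$ is bounded above by the formal order $d_0(w)$ computed in step \ref{algo::thirdb_2}, and $d=\max_{[w]}d_0(w)$, it suffices to exhibit one equivalence class in which the formal and actual orders coincide.

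Next, I would fix an equivalence class $[u_0]$ with $d_0(u_0)=d$. By Assumption \ref{ASUM}, the common denominator of $M_{u_0}^{\#}(s)=\sum_{v\sim u_0} C_v(s)$ is holomorphic and non-zero at $s_0$, so the actual order of $M_{u_0}^{\#}$ agrees with the formal order of $N_{u_0}^{\#}$ exactly when the leading Laurent coefficient of $N_{u_0}^{\#}$ does not vanish at $s_0$. This leading coefficient is an explicit finite sum of products of $\zeta$-values and a bounded number of derivatives $\zeta^{(j)}$ evaluated at specific real arguments lying in $(0,\infty)\setminus\{1\}$, read off from the expansion performed in step \ref{algo::thirdb_2}. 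The main obstacle is to rule out an accidental cancellation of these summands. For $F=\mathbb{Q}$, the Riemann zeta function has no real positive zeros, so each individual factor in every product is a non-zero real; the non-vanishing claim then reduces to the non-vanishing of specific algebraic combinations of $\zeta$-values, which is exactly the input named in the parenthetical remark. In the finite list of cases arising for $G_2$, $F_4$, $E_6$, $E_7$, one verifies term by term that the leading coefficient has a definite sign, and in particular is non-zero.

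For the second part of the observation, suppose that $\leadingterm{-d}^{\para{P}}(f^0,s_0,g)$ is not square integrable. By Langlands' criterion (Theorem \ref{square_init}), the leading Laurent coefficient of the constant term along the Borel must contain some character $w\chi_{s_0}$ whose real part cannot be written as a strictly negative combination of simple roots. The equivalence class $[w]$ containing $w$ is then by definition not square integrable, and since the contribution of $[w]$ to the leading coefficient of $E_{\para{P}}(f^0,s,g)$ at $s_0$ is precisely what witnesses failure of the Langlands criterion, the pole order of $M_w^{\#}(s)$ at $s_0$ must equal $d$. The hardest step in the whole plan is clearly the non-vanishing of the leading coefficient in the second paragraph: all other steps are bookkeeping on top of the linear-independence principle for distinct torus characters, whereas the non-vanishing is, at present, a case-by-case verification rather than a general theorem.
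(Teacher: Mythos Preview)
The paper does not prove this statement; it is labeled an \emph{Observation} and recorded as an empirical outcome of running the algorithm on the groups $G_2,F_4,E_6,E_7$. Your proposal correctly reconstructs the logical skeleton that underlies the computation: the equality $\operatorname{ord}_{s=s_0}E_{\para{P}}(f^0,s,g)=\max_{[w]}\operatorname{ord}_{s=s_0}M_w^{\#}(s)$ follows from the linear independence of distinct torus characters (this is exactly the content of the proposition preceding the observation), and the remaining issue is whether the formal order $d_0(w)$ produced by the Laurent expansion coincides with the actual order, i.e.\ whether the leading coefficient --- an explicit polynomial in values $\zeta(a)$ and $\zeta^{(j)}(a)$ at real $a$ --- is non-zero. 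You correctly identify this non-vanishing as the only substantive step and correctly note that it is, in the paper, a case-by-case verification rather than a general argument. The appendix (e.g.\ the $F_4$, $\para{P}_1$, $s_0=\tfrac14$ computation) illustrates precisely this: one exhibits an equivalence class whose leading coefficient is a visibly non-zero product such as $\tfrac18 c_{-1}c_0\zeta(2)_0$.

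Your treatment of the square-integrability clause is also the right one and is implicit in the paper's use of Langlands' criterion: if the leading term fails the criterion, the witnessing exponent $w\chi_{s_0}$ must appear with a non-zero coefficient in the order-$d$ term of the constant term, forcing $\operatorname{ord}_{s=s_0}M_w^{\#}=d$ for that (non-square-integrable) class. In short, your proposal is correct and matches the paper's implicit reasoning; the paper simply reports the observation without spelling out a proof, deferring a general argument to future work.
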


\chapter{Poles of degenerate Eisenstein series} 

\label{ch2:Poles} 

\lhead{Chapter \ref{ch2:Poles}. \emph{Results}} 


Let $G$ be a split simply connected group of exceptional type $G_2,F_4,E_6$ or $E_7$.  
 Using the algorithm described in the last chapter we determine the poles of degenerate Eisenstein series 
$E_{\para{P}}(f^{0},g,s)$ for $Re \: s>0$  associated to various maximal parabolic subgroups,  
compute their orders, and determine  the square integrable ones. For $G=G_2$ our results agree 
with the results obtained in \cite{G2_Ginz}. 

In \cite{op} it was shown 
 that there exists a bijection between distinguished unipotent orbits of  the dual group
$^{L}G$ 
and the spherical residual representations that are square integrable with the cuspidal support $[T,1]$.

In the case where the residual representation at $s_0$ is square integrable we will determine the distinguished
 orbit of  $^{L}G$  related to it. Whenever several residual representations correspond to 
one orbit there will be identities between the leading terms of the Eisenstein series 
as will be shown in Chapter \ref{ch3:ids}.
If $t(s_0)$ is the representative in the dominant chamber of the Satake conjuagacy class
of $I_{\para{P}}(s_0)$ then $t(s_0)^2$ is the weighted Dynkin diagram of the 
distinguished unipotent orbit. For example
the principal unipotent orbits corresponds to the trivial representations. 
The label of the orbits is the same as in \cite{Orbits}.

\section*{Chapter structure}
In sections \ref{GLn_header},\ref{G2_header} we 
consider $G=GL_{n}$ and $G=G_2$. We 
recall the known results regarding the poles
and their orders of degenerate Eisesntein series.

In sections \ref{F4_header},\ref{E6_header} ,and \ref{E7_header} 
we compute the order of the poles using the algorithm described in Chapter \ref{ch4:algo} for the group of type $F_4,E_6,E_7$.
The algorithm has been implemented using the sage packet. The code may be viewed at my homepage \footnote{
\url{https://www.math.bgu.ac.il/~halawi/}}. Below we only state the results. 
 A typical output of the program can be seen at Appendix \ref{Ap1:Code_example}.

We also determine which leading terms of the spherical Eisenstein series above 
are square integrable.  

For the rest of the chapter let us fix some notations: \\
Let $\para{P}_m$ be maximal parabolic subgroup of $G$ corresponded to $\Delta_{G} \setminus \{\alpha_m\}$. We
denote by $I_{\para{P}_m}(s)=Ind_{\para{P}_{m}(\adele{A})}^{G(\adele{A})}(\delta_{\para{P}_m}^{s})=\otimes_{\nu} Ind_{\para{P}_{m}(F_{\nu})}^{G(F_{\nu})}(\delta_{\para{P}_m}^{s})$.
For the exceptional groups we list our results in the following form:\\
For each $s_0 \in \mathbb{C}$  such that $\operatorname{Re}s_0>0$ and $E_{\para{P}}(f^{0},s,g)$ admits a pole at $s=s_0$ we write its order and whenever is square integrable or not. If it is square integrable we also write the orbit that corresponds for it in the dual group. 
\newpage
\section{The group $G=GL_n$} \label{GLn_header}
Mui\'c and Hanzer in  \cite{GL_n} have studied the poles of degenerate Eisenstein series for \\ $G=GL_n$. Let us recall their results. 

The Dynkin diagram of $G$ is of type $A_{n-1}$ and we labeled the roots as follows:
\begin{figure}[H]
\begin{center}
\begin{tikzpicture}[scale=0.5]
\draw (-1,0) node[anchor=east] {};
\draw (0 cm,0) -- (3 cm,0);
\draw[dashed,->] (3 cm,0) -- (6 cm,0);
\draw (6 cm,0) -- (9 cm,0 cm);
\draw[fill=white] (0 cm, 0 cm) circle (.25cm) node[below=4pt]{$1$};
\draw[fill=white] (3 cm, 0 cm) circle (.25cm) node[below=4pt]{$2$};
\draw[fill=white] (6 cm, 0 cm) circle (.25cm) node[below=4pt]{$n-2$};
\draw[fill=white] (9 cm, 0 cm) circle (.25cm) node[below=4pt]{$n-1$};
\end{tikzpicture}
\end{center}
\end{figure}
Let $\para{P}_{m}=MN$ be a maximal parabolic subgroup of $G$, with $M\simeq GL_{m} \times GL_{n-m}$. 
\begin{remk}
Note that in our notations, the representation 
 $I_{\para{P}_m}(s)$ in  \cite{GL_n} is denoted  $I_{\para{P}_m}(\frac{s}{n})$.
\end{remk}
\subsection{Poles of spherical Eisenstein series}
For every maximal parabolic subgroup $\para{P}_m$ we associate its Eisenstein series $E_{\para{P}_m}(f,g,s)$.
\begin{thm}[\cite{GL_n},Theorem 5.1,5.2]
Let $m \leq \frac{n}{2}$, and $Re \: s \geq 0$.
Then 
\begin{enumerate}
\item
 For $s \not \in \{\frac{1}{2}-\frac{a}{n} \: :  \: a \in \mathbb{Z}, \:\:\: 0\leq a \leq m-1   \}$ 
 the Eisenstein series $E_{\para{P}_m}(f,g,s)$ is holomorphic and non-zero.
\item
For $s_0  \in \{\frac{1}{2}-\frac{a}{n} \: : \: a \in \mathbb{Z}, \: \:\: 0\leq a \leq m-1   \}$ 
 the Eisenstein series $E_{\para{P}_m}(f,g,s_0)$ admits at most  a simple pole, that is attained by the normalized spherical section.
\end{enumerate} 
\end{thm}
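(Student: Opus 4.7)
The plan is to specialize the algorithm of Chapter \ref{ch4:algo} to $G=GL_n$ and $\para{P}=\para{P}_m$, exploiting the combinatorial simplicity of type $A_{n-1}$. First I would parametrize $W(G,\para{P}_m)$, the minimal-length coset representatives of $\weyl{G}/\weyl{\para{P}_m}$, by increasing $m$-tuples $1\le i_1<\cdots<i_m\le n$: such a tuple determines the unique shortest permutation sending $(1,\ldots,m)$ to $(i_1,\ldots,i_m)$. A direct calculation with $\delta_{\para{P}_m}=(n-m)\sum_{i\le m}e_i-m\sum_{j>m}e_j$ yields, for $\chi_s=\delta_{\para{P}_m}^{s+1/2}\otimes \delta_{\para{B}}^{-1/2}$ and any positive root $\alpha=e_i-e_j$ of the unipotent radical $N$,
$$\langle \chi_s,\check\alpha\rangle \;=\; sn+\tfrac{n}{2}-(j-i),\qquad 1\le i\le m<j\le n.$$
In particular this pairing equals $0$ or $1$ in $\operatorname{Re} s\ge 0$ only at points of the lattice $\{\tfrac12-\tfrac{a}{n}:a\in\mathbb{Z}_{\ge 0}\}$. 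Combined with Proposition \ref{Constant_term_along_borel}, the Gindikin-Karpelevich formula, and Observation \ref{ZETADOM} (whose denominators are nonvanishing in $\operatorname{Re} s>0$), this immediately places every pole of $E_{\para{P}_m}(f^0,s,g)$ in this lattice.

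Next I would refine the candidate set to the sharper list in part (1). A combinatorial check on the range of $(i,j)$ shows that, among these lattice points, only those with $0\le a\le m-1$ admit a root $\alpha\in N$ giving $\langle \chi_{s_0},\check\alpha\rangle=1$ together with some $w\in W(G,\para{P}_m)$ sending $\alpha$ to a negative root; at the other lattice points either no such root exists or the apparent pole in the reduced form of $C_w(s)$ is killed by a simultaneous zero of $\zeta(0)$ appearing in the denominator-numerator bookkeeping. For the non-vanishing half of part (1), I would exploit the absolute convergence of the series on a deep right half-plane—where each Gindikin-Karpelevich factor is plainly nonzero—and propagate this nonvanishing using the functional equation of Theorem \ref{EisensteinProperties}(5).

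The main obstacle is part (2). At each $s_0=\tfrac12-\tfrac{a}{n}$ with $0\le a\le m-1$ the algorithm bounds the pole order by $\max_{[w]_{s_0}} \mathrm{ord}_{s_0} N_w^\#(s)$, and this maximum can naively exceed $1$ because multiple pairs $(i,j)$ simultaneously satisfy $j-i=n-a-1$. I would exhibit the required cancellations through an explicit description of the equivalence classes: two representatives are equivalent iff they induce the same multiset of Satake parameters of $\chi_{s_0}$, which corresponds to a partition of $\{1,\ldots,n\}$ into blocks aligned with the arithmetic progression defining $\chi_{s_0}$. Within each class I would pair representatives via the longest element of the stabilizer of $\chi_{s_0}$ in $\weyl{G}$ and invoke the functional equation $\zeta(s)=\zeta(1-s)$, together with the vanishing of odd derivatives of $\zeta$ at $\tfrac12$ recorded in Remark \ref{expan}, to cancel all higher-order contributions pairwise. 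Finally, that the surviving simple pole is attained by the normalized spherical section follows by exhibiting one equivalence class—the one whose representative sends $\chi_{s_0}$ into the antidominant chamber—in which only a single summand contributes, so its simple pole cannot be canceled and the residue matches the unique irreducible spherical quotient predicted by Theorem \ref{langlands_data}.
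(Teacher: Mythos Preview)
The paper does not prove this theorem at all: it is quoted verbatim from Hanzer--Mui\'c \cite{GL_n} as part of a summary of known results, with no argument supplied. So there is no ``paper's own proof'' to compare your proposal against.

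Evaluating your proposal on its own merits, there is a structural gap. The theorem as stated concerns \emph{arbitrary} $K$-finite sections $f\in I_{\para{P}_m}(s)$: part~(1) asserts holomorphy and non-vanishing for every such $f$, and part~(2) asserts that the pole order for every $f$ is at most one. Your entire argument runs through the spherical algorithm of Chapter~\ref{ch4:algo}, which computes the constant term $\sum_w C_w(s)f^0_{w\chi_s}$ for the spherical vector only. Bounding the pole order for a general section requires controlling the local intertwining operators $M_w(s)$ themselves, place by place, not just the Gindikin--Karpelevich scalars $C_w(s)$; this is the substance of the local analysis in \cite{GL_n} and is not recoverable from the spherical computation. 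Likewise, ``propagating non-vanishing via the functional equation'' does not establish $E_{\para{P}_m}(f,g,s)\neq 0$ for general $f$ at a holomorphic point.

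A secondary issue: you invoke Observation~\ref{ZETADOM}, which in the paper is only verified for groups of rank $\le 8$, whereas the theorem is for $GL_n$ with $n$ arbitrary. For type $A_{n-1}$ the relevant denominator check is elementary, but you would need to carry it out rather than cite the small-rank observation.
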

\begin{remk}\label{GLn::remk}
The representation $\Pi_{-1,s_0,\para{P}_m}^{0}$ is not in $L^{2}_{d}(G(F) \backslash G(\adele{A}))$ except for the case $s_0=\frac{1}{2}$.
\end{remk}
Remark (\ref{GLn::remk}) agrees with the results of \cite{op} since the only distinguished  orbit of $GL_n$ is the principal one.
\newpage
\section{The group $G=G_2$}\label{G2_header}
We recall the results of Ginzburg and Jiang  in  \cite{G2_Ginz}.
The group $G=G_2$ is an exceptional split group. Its simple roots are labeled as follows:
\begin{figure}[H]
\begin{center}
\begin{tikzpicture}[scale=0.5]
\draw (-1,0) node[anchor=east] {};
\draw (0,0) -- (2 cm,0);
\draw (0, 0.15 cm) -- +(2 cm,0);
\draw (0, -0.15 cm) -- +(2 cm,0);
\draw[shift={(0.8, 0)}, rotate=180] (135 : 0.45cm) -- (0,0) -- (-135 : 0.45cm);
\draw[fill=white] (0 cm, 0 cm) circle (.25cm) node[below=4pt]{$1$};
\draw[fill=white] (2 cm, 0 cm) circle (.25cm) node[below=4pt]{$2$};
\end{tikzpicture}
\end{center}
\end{figure}

\begin{remk}
Note that in our notations, the representation 
 $I_{\para{P}_m}(s)$ in  \cite{G2_Ginz} is denoted  $I_{\para{P}_m}(s-\frac{1}{2})$.
\end{remk}
\subsection{Poles of spherical Eisenstein series}
\begin{prop}\cite{G2_Ginz}
 For $Re \: s >0$ it holds that:
 \begin{enumerate}
 \item
 The Eisenstein series $E_{\para{P}_1}(f^{0},g,s)$ is holomorphic except for $s\in \{\frac{1}{10} 
 ,\frac{1}{2}\}$. At these points it admits a simple pole.
 \item
 The Eisenstein series $E_{\para{P}_2}(f^{0},g,s)$ is holomorphic except for $s\in \{\frac{1}{6} 
  ,\frac{1}{2}\}$. At $s=\frac{1}{6}$ (resp. $s=\frac{1}{2}$) it admits a pole of order 2 (resp. 1).
  \item
  For the data above, the leading term $\leadingterm{-d}(f,g,s)$  of $E_{\para{P}_i}(f,g,s)$   at $s=s_0$ is square integrable.
 \end{enumerate}
 \end{prop}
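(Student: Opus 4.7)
The plan is to apply the algorithm of Chapter \ref{ch4:algo} case by case. First I would enumerate $W(G, \para{P}_i)$ for $i=1,2$, the shortest representatives of $\weyl{G_2}/\weyl{\para{P}_i}$. Since $|\weyl{G_2}|=12$ and each Levi has Weyl group of order $2$, each set has six elements, which I can list explicitly as reduced words in $\weylelement{\alpha_1},\weylelement{\alpha_2}$. For each such $\weylelement{}$ I would write out $C_{\weylelement{}}(s)$ using the Gindikin--Karpelevich formula of Theorem \ref{gid} applied to $\chi_s = \delta_{\para{P}_i}^{s+1/2}\otimes \delta_{\para{B}}^{-1/2}$, keeping track of the set $\{\alpha>0 : \weylelement{}\alpha<0\}$ for each $\weylelement{}$.

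Next I would determine the set $\eta(\para{P}_i)$ of potential poles. Under Assumption \ref{ASUM}, which holds for $G_2$ by Observation \ref{ZETADOM}, these are exactly the points with $\operatorname{Re} s_0 > 0$ where $\inner{\chi_{s_0},\check{\alpha}} \in \{0,1\}$ for some positive root $\alpha$ in the nilradical of $\para{P}_i$. Pairing $\chi_s$ against each of the six positive coroots of $G_2$ yields finitely many linear equations in $s$, whose positive real solutions are the candidates. For $\para{P}_1$ this should produce $\{1/10, 1/2\}$ (plus possibly points that end up being regular after cancellation), and for $\para{P}_2$ it should produce $\{1/6, 1/2\}$.

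For each candidate $s_0$ I would partition $W(G,\para{P}_i)$ into equivalence classes $[\weylelement{}]_{s_0}$ according to the action on $\chi_{s_0}$, form $M_{\weylelement{}}^{\#}(s) = \sum_{u\in[\weylelement{}]_{s_0}} C_u(s)$, and compute the Laurent expansion of its numerator $N_{\weylelement{}}^{\#}(s)$ at $s=s_0$, using the functional equation $\zeta(s)=\zeta(1-s)$ for arguments $\le 1/2$ and expanding to order one higher than the naive pole order as in the remark after \ref{expan}. By Proposition \ref{Constant_term_along_borel} together with the disjointness of characters $\weylelement{}\chi_{s_0}$ across inequivalent classes, the order of the pole of $E_{\para{P}_i}(f^0,s,g)$ at $s_0$ equals the maximum of the resulting orders. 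This should produce simple poles at $s=1/10,1/2$ for $\para{P}_1$ and a simple pole at $s=1/2$ together with a double pole at $s=1/6$ for $\para{P}_2$.

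Finally, square integrability of each leading term follows from Langlands' criterion, Theorem \ref{square_init}: for every equivalence class $[\weylelement{}]_{s_0}$ whose $M_{\weylelement{}}^{\#}$ contributes the maximal order, I need to check that $\operatorname{Re}(\weylelement{}\chi_{s_0})$ is a strictly negative linear combination of simple roots. In rank two this is a short verification, once the surviving classes have been identified in the previous step. The main obstacle will be the double pole at $s=1/6$ for $\para{P}_2$: there, I must rule out accidental cancellations within classes that would drop the order below two, and check that the classes which do achieve order two are all square integrable, while any non square integrable class contributes order at most one. These are exactly the cancellation phenomena flagged in the preface, and verifying them requires carrying the Laurent expansions of the relevant sums of zeta quotients one extra order and showing the leading coefficient is nonzero — a computation handled directly by the \textit{SAGE} implementation described in Chapter \ref{ch4:algo}.
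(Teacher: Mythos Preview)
Your proposal is correct and matches the paper's approach: the proposition is cited from \cite{G2_Ginz} rather than proved in the paper, but the paper explicitly notes that its algorithm of Chapter~\ref{ch4:algo}, applied to $G_2$, reproduces these results, and your outline is precisely that computation. One small correction: the set $\eta(\para{P}_i)$ of potential poles will in general be strictly larger than the set of actual poles (e.g.\ other rational points in $(0,1/2]$ may appear as candidates), and part of the algorithm is to show that at those extra points all $M_{\weylelement{}}^{\#}$ are holomorphic; your parenthetical ``plus possibly points that end up being regular after cancellation'' acknowledges this but you should not expect $\eta(\para{P}_i)$ itself to equal $\{1/10,1/2\}$ or $\{1/6,1/2\}$.
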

 
\begin{remk}\label{G2Orbirts}
In $G_2$ we have 2 distinguished orbits: 
\begin{enumerate}
\item
The principal orbit that corresponds to the trivial representation ($s=\frac{1}{2}$).
\item
The subregular orbit, labeled as $G_2(\alpha_1)$, that  corresponds to 
$(\para{P}_1,\frac{1}{10}),(\para{P}_2,\frac{1}{6})$.
\end{enumerate}

\end{remk}
\newpage
\section{The group $G=F_4$}\label{F4_header}
The group $G=F_4$ is an exceptional split group. Its simple roots are labeled as follows: 
\begin{figure}[H]
\begin{center}
\begin{tikzpicture}[scale=0.5]
\draw (-1,0) node[anchor=east] {};
\draw (0 cm,0) -- (2 cm,0);
\draw (2 cm, 0.1 cm) -- +(2 cm,0);
\draw (2 cm, -0.1 cm) -- +(2 cm,0);
\draw (4.0 cm,0) -- +(2 cm,0);
\draw[shift={(3.2, 0)}, rotate=0] (135 : 0.45cm) -- (0,0) -- (-135 : 0.45cm);
\draw[fill=white] (0 cm, 0 cm) circle (.25cm) node[below=4pt]{$1$};
\draw[fill=white] (2 cm, 0 cm) circle (.25cm) node[below=4pt]{$2$};
\draw[fill=white] (4 cm, 0 cm) circle (.25cm) node[below=4pt]{$3$};
\draw[fill=white] (6 cm, 0 cm) circle (.25cm) node[below=4pt]{$4$};
\end{tikzpicture}
\end{center}
\end{figure}

\subsection{Poles of spherical Eisenstein series}
 \begin{tabular}{|c|Hc|Hc|Hc|p{1cm}|c|HHc|c|c|c|} 
 \cline{0-6}\cline{9-15}
$\para{P}_{ 1 }$ & $\frac{1}{16}$ & $\frac{1}{8}$ & $\frac{3}{16}$ & $\frac{1}{4}$ & $\frac{3}{8}$ & $\frac{1}{2}$ 
&
&
$\para{P}_{ 2 }$ & $\frac{1}{30}$ & $\frac{1}{20}$ & $\frac{1}{10}$ & $\frac{1}{5}$ & $\frac{3}{10}$ & $\frac{1}{2}$
\\ 
 \cline{0-6}\cline{9-15}
Pole order & $0$ & $1$ & $0$ & $1$ & $0$ & $1$
&
& 
Pole order & $0$ & $0$ & $3$ & $1$ & $2$ & $1$ 
 \\  \cline{0-6}\cline{9-15}
$L_2$ & \xmark & \cmark & \xmark & \cmark & \xmark & \cmark 
&
& 
$L_2$ & \xmark & \xmark & \cmark & \xmark & \cmark & \cmark
\\  \cline{0-6}\cline{9-15}
Orbit &  & $F_4(a_2)$ &  & $F_4(a_1)$ &  & $F_4$ 
&
&
Orbit &  &  & $F_4(a_3)$ &  & $F_4(a_1)$ & $F_4$
\\  \cline{0-6}\cline{9-15}
\end{tabular} 
\vspace*{0.5cm}
\\
 \begin{tabular}{|c|Hc|Hc|c|c|p{5pt}|c|c|Hc|HHc|} 
 \cline{0-6}\cline{9-15}
$\para{P}_{ 3 }$ & $\frac{1}{42}$ & $\frac{1}{14}$ & $\frac{1}{7}$ & $\frac{3}{14}$ & $\frac{5}{14}$ & $\frac{1}{2}$ 
&
&
$\para{P}_{ 4 }$ & $\frac{1}{22}$ & $\frac{3}{22}$ & $\frac{5}{22}$ & $\frac{7}{22}$ & $\frac{9}{22}$ & $\frac{1}{2}$
\\  \cline{0-6}\cline{9-15} 

Pole order & $0$ & $2$ & $0$ & $2$ & $1$ & $1$
&
&
Pole order & $1$ & $0$ & $1$ & $0$ & $0$ & $1$ 
\\  \cline{0-6}\cline{9-15} 
$L_2$ & \xmark & \cmark & \xmark & \cmark & \xmark & \cmark
&
&
$L_2$ & \cmark & \xmark & \cmark & \xmark & \xmark & \cmark 
\\  \cline{0-6}\cline{9-15} 
Orbit &  & $F_4(a_3)$ &  & $F_4(a_2)$ &  & $F_4$ 
&
&
Orbit & $F_4(a_2)$ &  & $F_4(a_1)$ &  &  & $F_4$ 
\\  \cline{0-6}\cline{9-15} 
\end{tabular} 
\newpage
\section{The group $G=E_6$}\label{E6_header}
The group $G=E_6$ is an exceptional split group. Its simple roots are labeled as follows:
\begin{figure}[H]
\begin{center}
\begin{tikzpicture}[scale=0.5]
\draw (-1,0) node[anchor=east] {};
\draw (0 cm,0) -- (8 cm,0);
\draw (4 cm, 0 cm) -- +(0,2 cm);
\draw[fill=white] (0 cm, 0 cm) circle (.25cm) node[below=4pt]{$1$};
\draw[fill=white] (2 cm, 0 cm) circle (.25cm) node[below=4pt]{$3$};
\draw[fill=white] (4 cm, 0 cm) circle (.25cm) node[below=4pt]{$4$};
\draw[fill=white] (6 cm, 0 cm) circle (.25cm) node[below=4pt]{$5$};
\draw[fill=white] (8 cm, 0 cm) circle (.25cm) node[below=4pt]{$6$};
\draw[fill=white] (4 cm, 2 cm) circle (.25cm) node[right=3pt]{$2$};
\end{tikzpicture}
\end{center}
\end{figure}
\subsection{Poles of spherical Eisenstein series}
 \begin{tabular}{|c|HHc|HHc|p{1.7cm}|c|c|Hc|c|Hc|} 
 \cline{0-6}\cline{9-15}
$\para{P}_{ 1 }, \para{P}_{ 6 } $ & $\frac{1}{12}$ & $\frac{1}{6}$ & $\frac{1}{4}$ & $\frac{1}{3}$ & $\frac{5}{12}$ & $\frac{1}{2}$ 
&
&
$\para{P}_{ 2 }$ & $\frac{1}{22}$ & $\frac{3}{22}$ & $\frac{5}{22}$ & $\frac{7}{22}$ & $\frac{9}{22}$ & $\frac{1}{2}$ \\  \cline{0-6}\cline{9-15}
Pole order & $0$ & $0$ & $1$ & $0$ & $0$ & $1$
&
&
Pole order & $1$ & $0$ & $1$ & $1$ & $0$ & $1$ \\ 
 \cline{0-6}\cline{9-15}
$L_2$ & \xmark & \xmark & \cmark & \xmark & \xmark & \cmark
&
&
$L_2$ & \cmark & \xmark & \xmark & \cmark & \xmark & \cmark 
 \\ 
  \cline{0-6}\cline{9-15}
Orbit &  &  & $E_6(a_1)$ &  &  & $E_6$ 
&
&
Orbit & $E_6(a_3)$ &  &  & $E_6(a_1)$ &  & $E_6$ \\
  \cline{0-6}\cline{9-15}
\end{tabular} 
 \vspace{0.5cm}
 \\
 \begin{tabular}{|c|HHc|c|c|c|p{3pt}|c|Hc|c|c|c|c|} 
 \cline{0-6}\cline{9-15}
$\para{P}_{ 3 }$,$\para{P}_{ 5 }$ & $\frac{1}{18}$ & $\frac{1}{9}$ & $\frac{1}{6}$ & $\frac{5}{18}$ & $\frac{7}{18}$ & $\frac{1}{2}$
&
&
$\para{P}_{ 4 }$ & $\frac{1}{42}$ & $\frac{1}{14}$ & $\frac{1}{7}$ & $\frac{3}{14}$ & $\frac{5}{14}$ & $\frac{1}{2}$ 
 \\  \cline{0-6}\cline{9-15}
Pole order & $0$ & $0$ & $2$ & $1$ & $1$ & $1$
&
&
Pole order & $0$ & $2$ & $1$ & $3$ & $2$ & $1$ \\  \cline{0-6}\cline{9-15}
$L_2$ & \xmark & \xmark & \cmark & \xmark & \xmark & \cmark
&
&
$L_2$ & \xmark & \xmark & \xmark & \cmark & \cmark & \cmark 
 \\  \cline{0-6}\cline{9-15}
Orbit &  &  & $E_6(a_3)$ &  &  & $E_6$ 
&
&
Orbit &  &  &  & $E_6(a_3)$ & $E_6(a_1)$ & $E_6$ \\ \cline{0-6}\cline{9-15}
\end{tabular} 
\\
\begin{remk}
For the case $\para{P}=\para{P}_4$ and $s_0=\frac{1}{7}$ the result is valid under the assumption that $\zeta(\frac{1}{2})\neq 0$.
\end{remk}
\newpage
\section{The group $G=E_7$}\label{E7_header}
The group $G=E_7$ is an exceptional split group. Its simple roots are labeled as follows:
\begin{figure}[H]
\begin{center}
\begin{tikzpicture}[scale=0.5]
\draw (-1,0) node[anchor=east] {};
\draw (0 cm,0) -- (10 cm,0);
\draw (4 cm, 0 cm) -- +(0,2 cm);
\draw[fill=white] (0 cm, 0 cm) circle (.25cm) node[below=4pt]{$1$};
\draw[fill=white] (2 cm, 0 cm) circle (.25cm) node[below=4pt]{$3$};
\draw[fill=white] (4 cm, 0 cm) circle (.25cm) node[below=4pt]{$4$};
\draw[fill=white] (6 cm, 0 cm) circle (.25cm) node[below=4pt]{$5$};
\draw[fill=white] (8 cm, 0 cm) circle (.25cm) node[below=4pt]{$6$};
\draw[fill=white] (10 cm, 0 cm) circle (.25cm) node[below=4pt]{$7$};
\draw[fill=white] (4 cm, 2 cm) circle (.25cm) node[right=3pt]{$2$};
\end{tikzpicture}
\end{center}
\end{figure}
\subsection{Poles of spherical Eisenstein series}
The following results are valid under the following assumptions:
\begin{enumerate}
\item
 In the case where 
$\para{P}=\para{P}_4$ and $s \in \{\frac{1}{16},\frac{3}{16} \}$ we assume that $\zeta(\frac{1}{2}) \neq 0$.
\item
In the case where 
$\para{P}=\para{P}_4$ and $s =\frac{1}{12}$
we assume that  $\zeta(\frac{2}{3}) \neq 0$. 
\end{enumerate}
\begin{landscape}

 \begin{tabular}{|c|cHH|c|Hc|HHc|p{64pt}|c|Hc|Hc|c|c|c|Hc|}
\cline{0-9}\cline{12-21}
$\para{P}_{ 1 }$ & $\frac{1}{34}$ & $\frac{3}{34}$ & $\frac{5}{34}$ & $\frac{7}{34}$ & $\frac{9}{34}$ & $\frac{11}{34}$ & $\frac{13}{34}$ & $\frac{15}{34}$ & $\frac{1}{2}$
&
&
$\para{P}_{ 2 }$ & $\frac{1}{28}$ & $\frac{1}{14}$ & $\frac{3}{28}$ & $\frac{1}{7}$ & $\frac{3}{14}$ & $\frac{2}{7}$ & $\frac{5}{14}$ & $\frac{3}{7}$ & $\frac{1}{2}$
 \\ \cline{0-9}\cline{12-21}
Pole order & $1$ & $0$ & $0$ & $1$ & $0$ & $1$ & $0$ & $0$ & $1$ 
&
&
Pole order & $0$ & $1$ & $0$ & $1$ & $1$ & $1$ & $1$ & $0$ & $1$ \\ \cline{0-9}\cline{12-21}
$L_2$ & \cmark & \xmark & \xmark & \cmark & \xmark & \cmark & \xmark & \xmark & \cmark
&
&
$L_2$ & \xmark & \cmark & \xmark & \xmark & \xmark & \xmark & \cmark & \xmark & \cmark \\ \cline{0-9}\cline{12-21}
Orbit & $E_7(a_3)$ &  &  & $E_7(a_2)$ &  & $E_7(a_1)$ &  &  & $E_7$ 
&
&
Orbit &  & $E_7(a_4)$ &  &  &  &  & $E_7(a_1)$ &  & $E_7$ \\ \cline{0-9}\cline{12-21}
\end{tabular} 
\vspace{20pt}
\\
 \begin{tabular}{|cH|cH|cH|c|c|c|c|p{4pt}|cHH|c|c|c|c|c|c|c|} 
\cline{0-9}\cline{12-21}
$\para{P}_{ 3 }$ & $\frac{1}{66}$ & $\frac{1}{22}$ & $\frac{1}{11}$ & $\frac{3}{22}$ & $\frac{2}{11}$ & $\frac{5}{22}$ & $\frac{7}{22}$ & $\frac{9}{22}$ & $\frac{1}{2}$
&
&
$\para{P}_{ 4 }$ & $\frac{1}{32}$ & $\frac{1}{24}$ & $\frac{1}{16}$ & $\frac{1}{12}$ & $\frac{1}{8}$ & $\frac{3}{16}$ & $\frac{1}{4}$ & $\frac{3}{8}$ & $\frac{1}{2}$ \\ \cline{0-9}\cline{12-21}
Pole order & $0$ & $2$ & $0$ & $2$ & $0$ & $2$ & $1$ & $1$ & $1$
&
&
Pole order & $0$ & $0$ & $1$ & $1$ & $4$ & $1$ & $3$ & $2$ & $1$ \\ \cline{0-9}\cline{12-21}
$L_2$ & \xmark & \cmark & \xmark & \cmark & \xmark & \cmark & \xmark & \xmark & \cmark 
&
&
$L_2$ & \xmark & \xmark & \xmark & \xmark & \cmark & \xmark & \cmark & \cmark & \cmark \\ \cline{0-9}\cline{12-21}
Orbit &  & $E_7(a_5)$ &  & $E_7(a_4)$ &  & $E_7(a_3)$ &  &  & $E_7$ 
&
&
Orbit &  &  &  &  & $E_7(a_5)$ &  & $E_7(a_3)$ & $E_7(a_1)$ & $E_7$ \\ \cline{0-9}\cline{12-21}
\end{tabular} 
\vspace{20pt}
\\
 \begin{tabular}{|c|HHHc|c|c|c|c|c|p{51pt}|c|c|HHHc|c|Hc|c|}
 \cline{0-9}\cline{12-21}
$\para{P}_{ 5 }$ & $\frac{1}{30}$ & $\frac{1}{20}$ & $\frac{1}{15}$ & $\frac{1}{10}$ & $\frac{3}{20}$ & $\frac{1}{5}$ & $\frac{3}{10}$ & $\frac{2}{5}$ & $\frac{1}{2}$ 
&
&
$\para{P}_{ 6 }$ & $\frac{1}{26}$ & $\frac{1}{13}$ & $\frac{3}{26}$ & $\frac{2}{13}$ & $\frac{5}{26}$ & $\frac{7}{26}$ & $\frac{9}{26}$ & $\frac{11}{26}$ & $\frac{1}{2}$ \\ \cline{0-9}\cline{12-21}
Pole order & $0$ & $0$ & $0$ & $3$ & $1$ & $2$ & $2$ & $1$ & $1$ 
&
&
Pole order & $1$ & $0$ & $0$ & $0$ & $2$ & $1$ & $0$ & $1$ & $1$ 
\\ \cline{0-9}\cline{12-21}
$L_2$ & \xmark & \xmark & \xmark & \cmark & \xmark & \xmark & \cmark & \xmark & \cmark 
&
&
$L_2$ & \cmark & \xmark & \xmark & \xmark & \cmark & \cmark & \xmark & \xmark & \cmark \\ \cline{0-9}\cline{12-21}
Orbit &  &  &  & $E_7(a_5)$ &  &  & $E_7(a_2)$ &  & $E_7$ 
&
&
Orbit & $E_7(a_4)$ &  &  &  & $E_7(a_3)$ & $E_7(a_2)$ &  &  & $E_7$ \\ \cline{0-9}\cline{12-21}
\end{tabular} 
\vspace{20pt}
\\
 \begin{tabular}{|c|c|HHHc|HHHc|} \hline
$\para{P}_{ 7 }$ & $\frac{1}{18}$ & $\frac{1}{9}$ & $\frac{1}{6}$ & $\frac{2}{9}$ & $\frac{5}{18}$ & $\frac{1}{3}$ & $\frac{7}{18}$ & $\frac{4}{9}$ & $\frac{1}{2}$ \\ \hline
Pole order & $1$ & $0$ & $0$ & $0$ & $1$ & $0$ & $0$ & $0$ & $1$ \\ \hline
$L_2$ & \cmark & \xmark & \xmark & \xmark & \cmark & \xmark & \xmark & \xmark & \cmark \\ \hline
Orbit & $E_7(a_2)$ &  &  &  & $E_7(a_1)$ &  &  &  & $E_7$ \\ \hline
\end{tabular} 
\end{landscape}
\chapter{Normalized Eisenstein series} 

\label{ch2:NormalizedEise} 

\lhead{Chapter \ref{ch2:NormalizedEise}. \emph{Normalized Eisenstein series}} 
In this chapter we will see the relations between degenerate Eisenstein series associated to maximal parabolic subgroup and the Eisenstein series associated to the Borel subgroup. This relation stated in \cite{Ike1}. We also introduce the normalized Eisenstein series, that is entire and $\weyl{G}$ invariant. 
Its properties, stated  in \cite{Ike1}, play a crucial role in the proof of our main theorem. For the convenience of the reader we include the proof of its properties. \\

We denote by $W(G,\para{P})$ the set of the shortest representatives in $\weyl{G} \slash \weyl{\para{P}}$.
Let fix some notations:
let $\chi \in X(T)$ be an unramified character of $T$. For every $\alpha \in \Phi$ we denote by 
\begin{align*}
l_{\alpha}^{\pm}(\chi)=& \left \langle \chi,\check{\alpha} \right \rangle \pm 1 &
\mathcal{F}_{\alpha}^{-}=&\{\chi \in \mathfrak{a}^{\ast}_{\mathbb{C}} \: : l_{\alpha}^{-}(\chi)=0 \}.
\end{align*}
\begin{prop}\label{ReducetoParabolic}
Let $\para{P}=\para{P}_r$ be maximal parabolic of $G$.
	Let $\chi_{s}= \delta_{\para{P}}^{s-\frac{1}{2}}\otimes 
	\delta_{\para{B}}^{\frac{1}{2}}$  Then the  iterated  residue of $E_{\para{B}}(\chi,f^{0},g)$ along 
	$\mathcal{F}_{\hat{r}}=\mathcal{F}^{-}_{\alpha_1}, \dots, \mathcal{F}^{-}_{\alpha_{r-1}},\mathcal{F}_{\alpha_{r+1}}^{-},\dots ,\mathcal{F}^{-}_{\alpha_{n}}$
	is equal to $Res_{\mathcal{F}_{\hat{r}}} C_{\weylelement{_{\para{P}}}}(\chi) \times E_{\para{P}}(f^{0},s,g)
	$ where $\weylelement{_{\para{P}}}$ is the longest element in $\weyl{\para{P}}$. Moreover 
	$Res_{\mathcal{F}_{\hat{r}}} C_{\weylelement{\para{P}}}(\chi_s) \neq 0$ and is a constant function.
\end{prop}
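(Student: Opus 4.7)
The plan is to compare the constant terms of both sides along the Borel subgroup and exploit the fact that iterated residues of spherical Eisenstein series induced from the Borel are determined by their Borel constant terms. By Proposition \ref{Constant_term_along_borel} together with its analogue for $E_{\para{B}}$, the constant terms will be
\[ E_{\para{P}}^0(f^0,s,g) = \sum_{w \in W(G,\para{P})} C_w(\chi_{\para{P},s})\, f^0_{w\chi_{\para{P},s}}(g), \qquad E_{\para{B}}^0(\chi,f^0,g) = \sum_{w \in \weyl{G}} C_w(\chi)\, f^0_{w\chi}(g), \]
where $\chi_{\para{P},s} = \delta_{\para{P}}^{s+1/2} \otimes \delta_{\para{B}}^{-1/2}$ is the embedding character of the algorithm chapter. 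I would then decompose $\weyl{G} = W(G,\para{P})\cdot \weyl{\para{P}}$ via the unique reduced factorization $w = w_1 w_2$ with $w_1 \in W(G,\para{P})$ and $w_2 \in \weyl{\para{P}}$ (so that $\ell(w)=\ell(w_1)+\ell(w_2)$), and apply Corollary \ref{gid_cor} to rewrite
\[ E_{\para{B}}^0(\chi,f^0,g) = \sum_{w_1 \in W(G,\para{P})}\sum_{w_2 \in \weyl{\para{P}}} C_{w_1}(w_2\chi)\, C_{w_2}(\chi)\, f^0_{w_1 w_2 \chi}(g). \]

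The next step is a Weyl-group bookkeeping for the iterated residue along $\mathcal{F}_{\hat r}$. For $\alpha \in \Phi^+_M$ one has $\inner{\chi_s,\check\alpha} = \operatorname{ht}(\check\alpha) \geq 1$ with equality iff $\check\alpha$ is simple; hence only the simple coroots in $\Phi^+_M$ contribute poles to $C_{w_2}(\chi)$ at $\chi = \chi_s$, and $C_{w_2}(\chi)$ has a simple pole along $\mathcal{F}^-_{\alpha_j}$ precisely when $w_2\alpha_j < 0$. For the iterated residue along all $n-1$ hyperplanes $\mathcal{F}^-_{\alpha_j}$, $j \neq r$, to be nonzero, $w_2$ must flip every simple root of $M$, forcing $w_2 = \weylelement{\para{P}}$, the longest element of $\weyl{\para{P}}$. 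A direct computation using $\rho = \rho_M + \rho_P$, $\weylelement{\para{P}}\rho_M = -\rho_M$, and $\weylelement{\para{P}}\rho_P = \rho_P$ will yield
\[ \weylelement{\para{P}}\chi_s = \delta_{\para{P}}^{s-1/2}\cdot \delta_{\para{B}}^{1/2}\delta_{\para{B}\cap M}^{-1} = \delta_{\para{P}}^s\cdot \delta_{\para{B}\cap M}^{-1/2} = \chi_{\para{P},s}, \]
and substitution produces $\operatorname{Res}_{\mathcal{F}_{\hat r}} E_{\para{B}}^0 = \operatorname{Res}_{\mathcal{F}_{\hat r}} C_{\weylelement{\para{P}}}(\chi) \cdot E_{\para{P}}^0$. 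Lifting back from constant terms (using that the residue carries no cuspidal component) then gives the full identity for the Eisenstein series themselves.

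For the final assertion, every $\check\alpha$ with $\alpha \in \Phi^+_M$ is a nonnegative integer combination of the $\check\alpha_j$ for $j \neq r$, so $\inner{\chi,\check\alpha}$ is independent of the $\fundamental{r}$-component of $\chi$ and in particular of $s$ along the line $\{\chi_s\}$. Hence $C_{\weylelement{\para{P}}}(\chi)$ and its iterated residue depend only on the transverse coordinates $t_j = \inner{\chi,\check\alpha_j}-1$, $j \neq r$, showing constancy in $s$. Each of the $n-1$ simple coroots contributes a simple pole with residue $1/\zeta(2)$, while each non-simple $\alpha \in \Phi^+_M$ contributes a finite nonzero factor $\zeta(\operatorname{ht}(\check\alpha))/\zeta(\operatorname{ht}(\check\alpha)+1)$, so the product is a nonzero constant. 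The hardest part will be the identification $\weylelement{\para{P}}\chi_s = \chi_{\para{P},s}$, which reconciles the ``quotient'' realization governing the Borel residue with the ``embedding'' realization $I_{\para{P}}(s) \hookrightarrow I_{\para{B}}(\chi_{\para{P},s})$ and ensures the inner sum assembles back into $E_{\para{P}}^0$.
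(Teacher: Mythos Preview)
Your proposal is correct and follows essentially the same approach as the paper: reduce to Borel constant terms, decompose $\weyl{G}$ as $W(G,\para{P})\cdot\weyl{\para{P}}$, factor the Gindikin--Karpelevich coefficients, and observe that only $w_2=\weylelement{\para{P}}$ survives the iterated residue because it is the unique element of $\weyl{\para{P}}$ flipping every simple root of $M$. You are in fact slightly more explicit than the paper in verifying $\weylelement{\para{P}}\chi_s=\chi_{\para{P},s}$ (the paper proves this separately as Lemma~\ref{Longest_weyl} but does not invoke it here), which is exactly what is needed to recognize the surviving inner sum as $E_{\para{P}}^0(f^0,s,g)$.
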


\begin{proof}
Since $E_{\para{B}}(f^{0},\chi_ s,g)$ and $E_{\para{P}}(f^{0},s,g)$ have the same cuspidal support, it is enough to show that 
$\operatorname{Res}_{\mathcal{F}_{\hat{r}}} E_{\para{B}}(f^{0},\chi_s,g)$
 and $\operatorname{Res}_{\mathcal{F}_{\hat{r}}} C_{\weylelement{_{\para{P}}}}(\chi) \times E_{\para{P}}(f^{0},s,g)$ have the same constant term along the unipotent radical $U$ of the Borel see (\cite{EisenstienSeries}, Proposition I.3.4).
Indeed, 
\begin{align*}
 E_{\para{B}}^{0}(f^{0},\chi_s,g)&= 
\sum_{\weylelement{} \in \weyl{G}}M_{\weylelement{}}(\chi_s)f^{0}_{ \chi_s} \\
&=
\sum_{\weylelement{} \in \weyl{G} \slash \weyl{\para{P}}} \sum_{u \in \weyl{\para{P}}}
M_{\weylelement{}u}(\chi_s)f^{0}_{ \chi_s} \\
&\overset{\ref{gid}}{=}
\sum_{\weylelement{} \in \weyl{G} \slash \weyl{\para{P}}} \sum_{u \in \weyl{\para{P}}}
C_{u}(\chi_s)M_{\weylelement{}}(u\chi_s)f^{0}_{u \chi_s} 
\end{align*}
If $u \neq \weylelement{\para{P}}$, i.e. $u$ is not the longest element in $\weyl{\para{P}}$, then there exists a simple root $\alpha_i \in \Delta_{\para{P}}$ such that 
$u\alpha_i>0$. Hence $C_{u}(\chi)$ does not contain the factor $\frac{\zeta(\inner{\chi_s,\check{\alpha}_i})}{
\zeta(\inner{\chi_s,\check{\alpha}_i}+1)}$.
Therefore, 
$$\operatorname{Res}_{\mathcal{F}_{\hat{r}}} C_{u}(\chi) =\operatorname{lim}_{\chi \rightarrow \chi_s}\prod_{\alpha \in \Delta_{\para{P}}} (\inner{\chi,\check{\alpha}}-1) C_{u}(\chi)=\begin{cases}
0 & \mbox{if } u \neq \weylelement{\para{P}} \\
 \operatorname{Res}_{\mathcal{F}_{\hat{r}}} C_{\weylelement{\para{P}}}(\chi_s) & \mbox{if } u = \weylelement{\para{P}} \\
\end{cases}
$$
Therefore,
\begin{align*}
 \operatorname{Res}_{\mathcal{F}_{\hat{r}}}E_{\para{B}}^{0}(f^{0},\chi_s,g)&= \operatorname{Res}_{\mathcal{F}_{\hat{r}}} C_{\weylelement{\para{P}}}(\chi_s) \times 
 \sum_{\weylelement{} \in \weyl{G} \slash  \weyl{\para{P}}}M_{\weylelement{}}(\weylelement{\para{P}} \chi_s)f^{0}_{\weylelement{}\weylelement{\para{P}\chi_s}}\\
 &=
 \operatorname{Res}_{\mathcal{F}_{\hat{r}}} C_{\weylelement{\para{P}}}(\chi_s) \times 
  E_{\para{P}}^{0}(f^{0},s,g).
\end{align*}
\\
Note that 
$$C_{\weylelement{\para{P}}}(\chi_s)	= \prod_{\alpha \in \Phi_{M}^{+}}\frac{\zeta(\langle \chi_{s},\check{\alpha} \rangle)}{\zeta(\langle \chi_{s},\check{\alpha} \rangle+1)}.$$ 
\\
For given $\alpha>0$ it holds that   $\check{\alpha} =\sum_{\alpha \in \Delta_{G}}n_{\alpha}^{(\check{\alpha}_i)}\check{\alpha}_i$ where 
$n_{\alpha}^{(\check{\alpha}_i)} \in \mathbb{N}\cup \{0\}$.
Moreover if $\alpha \in \Phi^{+}_{M}$ it holds that
 $n_{\alpha}^{(\check{\alpha}_r)}=0$.
 Therefore for every $\alpha \in \Phi_{M}^{+} \setminus{\Delta_{G}}$  it holds 
 $1<\langle \chi_{s},\check{\alpha} \rangle \in \mathbb{N}$.
 Since the zeros of $\zeta(s)$ lie in $0 < Re \: s <1 $, we conclude that the every term corresponds to a non-simple root in that product is a non zero  constant.
 For $\alpha \in \Phi_{M}^{+} \cap \Delta_{G}$ it holds that 
 $\langle \chi_{s},\check{\alpha} \rangle=1$.  Therefore by taking the iterated residue along the $\mathcal{F}_{\hat{r}}$ we get a non zero constant number.
\end{proof}

\begin{defn}
	Let $f^{0} \in Ind_{\para{B}}^{G}(\chi)$ be the normalized spherical section. We define the \textbf{normalized Eisenstein series} to be 
	$$E_{\para{B}}^{\#}(\chi,g) = \prod_{\alpha \in \Phi^{+}}\zeta(l_{\alpha}^{+}(\chi))\cdot l_{\alpha}^{+} (\chi) \cdot
	l_{\alpha}^{-}(\chi)E_{\para{B}}(f^{0},\chi,g).$$
\end{defn}
\newpage
\begin{thm}\label{Ikeda_Series}
The normalized Eisenstein series $E_{\para{B}}^{\#}$ is entire and $\weyl{G}$ invariant, i.e. $E_\para{B}^{\#}(\chi,g)=E_\para{B}^{\#}(\weylelement{}\chi,g)$ for every $\weylelement{}\in \weyl{G}$.
\end{thm}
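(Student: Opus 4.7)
The proof splits into $\weyl{G}$-invariance and entireness. For the first, since $\weyl{G}$ is generated by simple reflections, it suffices to verify $E_\para{B}^{\#}(w_\beta\chi,g) = E_\para{B}^{\#}(\chi,g)$ for each simple reflection $w_\beta$. Combining the functional equation of Theorem \ref{EisensteinProperties}(5) with the Gindikin--Karpelevich formula (Theorem \ref{gid}), one obtains
$$E_{\para{B}}(f^{0},\chi,g) \;=\; C_{w_\beta}(\chi)\, E_{\para{B}}(f^{0},w_\beta\chi,g),$$
where $C_{w_\beta}(\chi) = \zeta(\langle\chi,\check{\beta}\rangle)/\zeta(\langle\chi,\check{\beta}\rangle+1)$. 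It therefore suffices to establish the algebraic identity
$$N(w_\beta\chi) \;=\; C_{w_\beta}(\chi)\, N(\chi), \qquad N(\chi) \;:=\; \prod_{\alpha \in \Phi^+}\zeta(l^{+}_\alpha(\chi))\,l^{+}_\alpha(\chi)\,l^{-}_\alpha(\chi).$$
This identity follows from two inputs: $w_\beta$ permutes $\Phi^+ \setminus \{\beta\}$ while sending $\check{\beta}$ to $-\check{\beta}$, so the product over $\alpha \neq \beta$ is unchanged; and the functional equation $\zeta(s)=\zeta(1-s)$, which converts the flipped $\beta$-factor into exactly $C_{w_\beta}(\chi)$. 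An explicit check shows that with $x = \langle\chi,\check{\beta}\rangle$, the ratio of the $\beta$-factors is $\zeta(1-x)(1-x)(-x-1)/[\zeta(x+1)(x+1)(x-1)] = \zeta(x)/\zeta(x+1)$, as required.

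For entireness, the poles of $E_{\para{B}}(f^{0},\chi,g)$ coincide with those of its constant term along the Borel, which by the argument of Proposition \ref{Constant_term_along_borel} (applied to $\para{P}=\para{B}$) equals $\sum_{w\in\weyl{G}} C_w(\chi) f^{0}_{w\chi}(g)$. Each $C_w(\chi)$ has potential poles along the hyperplanes $\langle\chi,\check{\alpha}\rangle=1$ (from the pole of $\zeta$ at $s=1$ in the numerator) and along the zeros of $\zeta(\langle\chi,\check{\alpha}\rangle+1)$ in the denominator. These are killed respectively by the factors $l^{-}_\alpha(\chi)$ and $\zeta(l^{+}_\alpha(\chi))$ in $N(\chi)$. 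Higher-order poles of $E_\para{B}$ that arise along intersections of such hyperplanes are controlled by the simultaneous vanishing of several $l^{-}_\alpha$ factors, while the polynomial $\prod_\alpha l^{+}_\alpha(\chi)$ is entire and introduces no singularity.

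The main obstacle is ruling out new poles produced by the normalization itself: $\zeta(l^{+}_\alpha(\chi))$ has a simple pole on $\langle\chi,\check{\alpha}\rangle=0$. To exclude these I would use the $\weyl{G}$-invariance proved above. Via the key identity $N(\chi)\,C_w(\chi) = N(w\chi)$ (the same computation as in the $W$-invariance step), one rewrites
$$N(\chi)\sum_{w\in \weyl{G}} C_w(\chi)\, f^{0}_{w\chi}(g) \;=\; \sum_{w\in \weyl{G}} N(w\chi)\, f^{0}_{w\chi}(g),$$
so the pole divisor of $E_{\para{B}}^{\#}$ is $\weyl{G}$-invariant; but $E_{\para{B}}^{\#}$ is manifestly holomorphic in the domain of absolute convergence of $E_\para{B}$ (where every $\zeta$-factor in $N$ is bounded and nonvanishing), and no $\weyl{G}$-invariant divisor can miss this cone. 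Hence no residual poles remain and $E_{\para{B}}^{\#}$ is entire. I expect the delicate part to be precisely this transfer from ``each summand has controlled singularities'' to ``their $\weyl{G}$-symmetric sum is globally holomorphic.''
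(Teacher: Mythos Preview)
Your proof of $\weyl{G}$-invariance is correct and is exactly the paper's argument: reduce to a simple reflection $\weylelement{\beta}$, use that $\weylelement{\beta}$ permutes $\Phi^{+}\setminus\{\beta\}$, and combine the functional equations of $E_{\para{B}}$ and of $\zeta$ on the remaining $\beta$-factor. The identity $N(\chi)\,C_{\weylelement{}}(\chi)=N(\weylelement{}\chi)$ and the resulting rewriting of the normalized constant term as $\sum_{\weylelement{}}N(\weylelement{}\chi)\,f^{0}_{\weylelement{}\chi}$ are also correct, and they neatly show that the only possible singular hyperplanes of $E^{\#}_{\para{B}}$ are the root walls $H^{0}_{\alpha}=\{\chi:\inner{\chi,\check\alpha}=0\}$.

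The gap is the last step. The assertion ``no $\weyl{G}$-invariant divisor can miss this cone'' is false: the union $\bigcup_{\alpha>0}H^{0}_{\alpha}$ is itself $\weyl{G}$-invariant and lies entirely outside the domain of absolute convergence $\{\operatorname{Re}\inner{\chi,\check\alpha}>1\text{ for all }\alpha>0\}$. The $\weyl{G}$-translates of that shifted tube are the chambers $\weylelement{}\rho+\weylelement{}C^{\circ}+i\,\operatorname{Re}\mathfrak a^{*}$, which do \emph{not} cover $\mathfrak a^{*}_{\mathbb C}$; the walls $H^{0}_{\alpha}$ sit precisely in what is left over. So $\weyl{G}$-invariance together with holomorphy on one shifted chamber cannot exclude poles along the $H^{0}_{\alpha}$, and these must be ruled out directly.

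The paper does this by Hartogs' theorem, reducing to a generic point $\chi_{0}$ of a single $H^{0}_{\alpha}$ (away from the other hyperplanes), and then pairing each $\weylelement{}$ with $\weylelement{}\weylelement{\alpha}$. On $H^{0}_{\alpha}$ one has $\weylelement{\alpha}\chi_{0}=\chi_{0}$, so the two summands carry the same section $f^{0}_{\weylelement{}\chi_{0}}$, and a one-line residue computation (the residues of $\zeta(2s)$ and $\zeta(2s+1)$ at $s=0$ are opposite) shows their simple poles cancel. Your rewriting would make this pairing a bit cleaner to execute, but it does not replace it.
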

\begin{proof}
We shall prove first the  $\weyl{G}$ invariance property. 
Observe that for given $\weylelement{} \in \weyl{G}$ it holds that 
\begin{align*}
 l_{\alpha}^{\pm}(\weylelement{}\chi)=& l_{\weylelement{}^{-1}\alpha}^{\pm}(\chi).
\end{align*}
Furthermore, since $\weyl{G}$ is generated by simple reflections it is enough to show the $\weyl{G}$ invariance for simple reflection.   
Let $\weylelement{}= \weylelement{i}$ be a simple reflection corresponding  to the simple root $\alpha_i$. Note that the following holds:
	\begin{enumerate}
	\item
	$w^{-1}=w$.
	\item \label{main_t1}
	$w(\Phi^{+} \setminus \{\alpha_i\})=\Phi^{+} \setminus \{\alpha_i\}$.
	\end{enumerate}
It implies that:
\begin{align}
\label{main_l_invariant}\prod_{\alpha \in \Phi^{+} \setminus \{\alpha_i\}} l_{\alpha}^{+}(\chi)l_{\alpha}^{-}(\chi)=&
\prod_{\alpha \in \Phi^{+} \setminus \{\alpha_i\}}
l_{\alpha}^{+}(\weylelement{}\chi)l_{\alpha}^{-}(\weylelement{}\chi)\\
\label{main_zeta_invariant}\prod_{\alpha \in \Phi^{+} \setminus \{\alpha_i\}} \zeta(l_{\alpha}^{+}(\chi))=&
\prod_{\alpha \in \Phi^{+} \setminus \{\alpha_i\}}
\zeta(l_{\alpha}^{+}(\weylelement{}\chi)).
\end{align}
Therefore, we get:
\begin{align*}
\prod_{\alpha \in \Phi^{+} \setminus\{\alpha_i\}} \zeta(l_{\alpha}^{+}(\weylelement{}\chi)) l_{\alpha}^{+}(\weylelement{}\chi)l_{\alpha}^{-}(\weylelement{} \chi)&= \prod_{\alpha \in \Phi^{+} \setminus\{\alpha_i\}} \zeta(l_{\alpha}^{+}(\chi)) l_{\alpha}^{+}(\chi)l_{\alpha}^{-}( \chi).
\end{align*}
So, it remains to show that 
$$\zeta( l_{\alpha_{i}}^{+}(\chi)) l_{\alpha_{i}}^{+}(\chi) l_{\alpha_{i}}^{-}(\chi)
E_{\para{B}}(f^{0},\chi,g)= \zeta( l_{\alpha_{i}}^{+}(\weylelement{}\chi)) l_{\alpha_{i}}^{+}(\weylelement{}\chi) l_{\alpha_{i}}^{-}(\weylelement{}\chi)
E_{\para{B}}(f^{0},\weylelement{}\chi,g)
$$

Recall that both  Eisenstein series and the zeta function admit  functional equations:
\begin{align*}
E_{\para{B}}(f^{0},\chi,g) =& E_{\para{B}}(M_{\weylelement{}}(\chi)f^{0},\weylelement{}\chi,g) \\
\zeta(s) =& \zeta(1-s). \\
\end{align*}
Using (\ref{gid}) it holds that:
$$ E_{\para{B}}(f^{0},\chi,g) = \frac{\zeta(\inner{\chi,\check{\alpha_i}})}{
\zeta(\inner{\chi,\check{\alpha_i}}+1)
} E_{\para{B}}(f^{0},\weylelement{}\chi,g).$$
Observe that $l_{\alpha_i}^{\pm}(\weylelement{i}\chi)= -l_{\alpha_i}^{\mp}(\chi)$.  Thus, 
\begin{align*}
\zeta(l_{\alpha_i}^{+}(\weylelement{}\chi)) 
(l_{\alpha_i}^{+}(\weylelement{}\chi)) 
(l_{\alpha_i}^{-}(\weylelement{}\chi)) 
 E_{\para{B}}(f^{0},\weylelement{} \chi,g) &= 
 \zeta(-l_{\alpha_i}^{-}(\chi)) 
 (l_{\alpha_i}^{+}(\chi)) 
 (l_{\alpha_i}^{-}(\chi)) E_{\para{B}}(f^{0},\weylelement{}\chi,g) \\
 &=
 \zeta(l_{\alpha_i}^{+}(\chi)) 
  (l_{\alpha_i}^{+}(\chi)) 
  (l_{\alpha_i}^{-}(\chi)) \frac{\zeta(\inner{\chi,\check{\alpha_i}})}{\zeta(l_{\alpha_i}^{+}(\chi))} E_{\para{B}}(f^{0},\weylelement{}\chi,g) \\
  &=
  \zeta(l_{\alpha_i}^{+}(\chi)) 
  (l_{\alpha_i}^{+}(\chi)) 
  (l_{\alpha_i}^{-}(\chi)) 
   E_{\para{B}}(f^{0},\chi,g).
\end{align*}
Since $E_{\para{B}}^{\#}(\chi,g)$ is a product of two $\weylelement{}$ invariant functions we conclude that $E_{\para{B}}^{\#}$ is $\weylelement{}$ invariant. 
Thus $E_{\para{B}}^{\#}(\chi,g)$  is $\weyl{G}$ invariant since it is invariant for every generator of $\weyl{G}$.
\\

Let's prove that $E_{\para{B}}^{\#}(\chi,g)$ is entire.
Recall that the constant term of the degenerate Eisenstein series along the unipotent radical of the Borel subgroup $E_{\para{B}}^{0}(f^{0},\chi,g)$ and the Eisenstein series itself share the same analytical behavior. Thus, in order to show that $E_{\para{B}}^{\#}$ is entire, it will be enough to show that the following function is entire
$$\Psi(\chi,g) =\prod_{\alpha \in \Phi^{+}}l_{\alpha}^{+}(\chi) l_{\alpha}^{-}(\chi) \zeta (l_{\alpha}^{+}(\chi)) E^{0}_{\para{B}}(f^{0},\chi,g). $$

Furthermore, by Proposition (\ref{Constant_term_along_borel}) the constant term can be written as :
 $$E^{0}_{\para{B}}(f^{0},\chi,g)  =
 \sum_{\weylelement{} \in \weyl{G}} C_{\weylelement{}}(\chi)f^{0}_{\weylelement{}\chi} =
 \sum_{\weylelement{} \in \weyl{G}}\prod_{\tiny \begin{aligned}
	\alpha\in \Phi^{+}_{G}\\ 
	w\alpha <0 \\
	\end{aligned}
}{\frac{\zeta(\left \langle \chi,\check{\alpha}\right \rangle)}{\zeta(\left \langle \chi,\check{\alpha}\right \rangle+1)}}
f_{\weylelement{}\chi}^{0} .
$$
Note that $ \Psi(\chi,g) $ can be written as: \\
\begin{align*}
\Psi(\chi,g)=&
\prod_{\alpha \in \Phi^{+}}l_{\alpha}^{+}(\chi) l_{\alpha}^{-}(\chi) \zeta (l_{\alpha}^{+} (\chi)) E^{0}_{\para{B}}(f^{0},\chi,g) \\
 =&
\prod_{\alpha \in \Phi^{+}}l_{\alpha}^{+}(\chi) l_{\alpha}^{-}(\chi) \zeta (l_{\alpha}^{+}(\chi)) 
\times 
\sum_{\weylelement{} \in \weyl{G}}
\prod_{\tiny \begin{aligned}
	\alpha\in \Phi^{+}_{G}\\ 
	w\alpha <0 \\
	\end{aligned}
}{\frac{\zeta(\left \langle \chi,\check{\alpha}\right \rangle)}{\zeta(\left \langle \chi,\check{\alpha}\right \rangle+1)}}
f^{0}_{\weylelement{}\chi} \\
=&
\underbrace{
\prod_{\alpha \in \Phi^{+}}l_{\alpha}^{+}(\chi) l_{\alpha}^{-}(\chi)
}_{L(\chi)} \times
\sum_{\weylelement{} \in \weyl{G}}
\underbrace{
\prod_{
\tiny \begin{aligned}
\alpha\in \Phi^{+}_{G}\\ 
w\alpha >0 \\
\end{aligned}
}		
 \zeta (l_{\alpha}^{+}(\chi)) 
\times 
\prod_{\tiny \begin{aligned}
	\alpha\in \Phi^{+}_{G}\\ 
	w\alpha <0 \\
	\end{aligned}
}\zeta(\left \langle \chi,\check{\alpha}\right \rangle)
f^{0}_{\weylelement{}\chi} }_{F_{\weylelement{}}(\chi)}
.
\end{align*}
While  $L(\chi)$ is entire, $F_{\weylelement{}}(\chi)$ can have possible poles along the hyperplanes \\
$$H^{\epsilon}_{\alpha} (\chi) = \{ \chi \in \mathfrak{a}^{\ast}_{\mathbb{C}} \: : \: \left\langle
\chi,\check{\alpha}\right \rangle= \epsilon \} \: \text{  for  } \: \epsilon \in \{-1,0,1\}.$$ \\
We will show that these poles are canceled either by each other or by the zeros of $L(\chi)$.

Observe that 
\begin{align} \label{main_t6}
F_{\weylelement{}}(\chi) = \left(
\prod_{\alpha \in \Phi^{+}} \zeta(l_{\alpha}^{+}(\chi)) \times C_{\weylelement{}}(\chi) \right) \times f_{\weylelement{}\chi}^{0}
\end{align}
 where $C_{\weylelement{}}(\chi)$ is the factor as in Theorem (\ref{gid}). 
  
We recall Hartog theorem.
\begin{thm}\cite[Vol~1, part~D, Theorem~4]{Gunning}
Suppose that $E$ is an analytic subset of $\mathbb{C}^{n}$ where $(n \geq 2)$ of complex dimension at most $n - 2$, then every function holomorphic on $\mathbb{C}^{n} \setminus  E$ can be
extended holomorphically to $\mathbb{C}^{n}$.
\end{thm}
Let
\begin{align*}
X =& \bigcup_{
\tiny \begin{aligned}
0<\alpha \neq \alpha'\\
\epsilon,\epsilon' \in \{-1,0,1\}
\end{aligned}}
( H_{\alpha}^{\epsilon} \cap
H_{\alpha'}^{\epsilon'}) &
Y =& 
\bigcup_{
	\tiny \begin{aligned}
	0<&\alpha \\
	\epsilon \in& \{-1,0,1\}
	\end{aligned}}
 H_{\alpha}^{\epsilon} \quad.
\end{align*}
Note that $X$ is of codimension 2. Thus, by Hartog theorem, it is enough to show that for every $\chi \in \mathfrak{\alpha}^{\ast}_{\mathbb{C}} \setminus X$ the function $\Psi(\chi,g)$ is holomorphic.
Moreover,
for given $\chi \not \in Y$  $\Psi(\chi,g)$ is holomoprhic.
 Hence, it remains to show that the for given  $\chi \in Y \setminus X$  the function $\Psi(\chi,g)$ is holomorphic.
\\
For given $\chi \in Y \bigcap \underset{\alpha>0}{\cup}(H^{\pm1}_{\alpha})$
it easy to see that the poles of $F_{\weylelement{}}(\chi)$ are canceled by the zeros of $L(\chi)$.\\
Hence, it remains to prove the holomorphic property for  $\chi_0 \in H^{0}_{\alpha} \setminus X $. 
Recall that  function of several complex variables is holomorphic at $s_0 \in \mathbb{C}^{n}$ if and only if it holomorphic with respect to each variable separately, i.e. it enough to show that $f(s_0 + se_i)$  at $s=0$, where $e_i$ is the standard basis element of $\mathbb{C}^{n}$.

Let $\{v_1, \dots, v_n\}$ be a basis for $\mathbb{C}^{n}$ such that for every $i$ it holds that  $\inner{v_i,\check{\alpha}}=2$.
Therefore it is enough to show 
 that $\Psi(\chi,g)$ is holomoprhic at $\chi =\chi_0$ with respect to this coordinate system.
\item{\textbf{Along $v_i$}} \\
Our main goal is to show that $\Psi(\chi_0 +sv_i,g)$  is holomoprhic at $s=0$ for every $v_{i}$. It is enough to show that 
for every $\weylelement{} \in \weyl{G} \slash \inner{\weylelement{\alpha}}$ the function $$F_{\weylelement{}\weylelement{\alpha}}(\chi_0 +sv_i) + 
F_{\weylelement{}}(\chi_0 +sv_i)$$ is holomoprhic at $s=0$.
Without loss of generality $l(\weylelement{}\weylelement{\alpha}) = l(\weylelement{})+1$.
\begin{align*}
& F_{\weylelement{}\weylelement{\alpha}}(\chi) + 
F_{\weylelement{}}(\chi) = \\&
\underbrace{
\left( \prod_{
\tiny \begin{aligned}
\alpha\in \Phi^{+}_{G} \setminus \{\alpha\}\\ 
w\alpha >0 \\
\end{aligned}
}		
 \zeta (l_{\alpha}^{+}(\chi)) 
\times 
\prod_{\tiny \begin{aligned}
	\alpha\in \Phi^{+}_{G} \setminus \{\alpha\} \\ 
	w\alpha <0 \\
	\end{aligned}
}\zeta(\left \langle \chi,\check{\alpha}\right \rangle)
\right)}_{K_1(\chi,\weylelement{} )} \times 
\underbrace{
\left(
\zeta(\inner{\chi,\check{\alpha}})f^{0}_{\weylelement{}\weylelement{\alpha}\chi} 
+ \zeta(\inner{\chi,\check{\alpha}}+1)
f^{0}_{\weylelement{}\chi}
\right)}_{K_2(\chi,\weylelement{})}.
\end{align*}
It is obvious that $K_1(\chi,\weylelement{})$ is holomorphic at small neighborhood of $\chi_0$ 
 since \\
$\chi_0 \in H_{\alpha}^{0} \setminus X$. Let us  show that $K_2(\chi_0 +sv_i,\weylelement{})$ is holomoprhic at $s=0$.
Denote by $\chi_s =\chi_0 +sv_i$ then for  $s=0$  it holds that $f^{0}_{\weylelement{}\weylelement{\alpha}\chi_s}=
f^{0}_{\weylelement{}\chi_s}$ and 
\begin{align*}
\operatorname{Res}_{s=0}
K_2(\chi_0 +sv_i,\weylelement{}) &= 
\operatorname{Res}_{s=0}
\zeta(2s)f^{0}_{\weylelement{}\weylelement{\alpha}\chi_s} 
+ \zeta(2s+1)
f^{0}_{\weylelement{}\chi_s}=0.
\end{align*} 
Hence it holomoprhic at $s=0$.
To sum up we showed that for every $\chi_0 \in H_{\alpha}^{0} \setminus X$ it holds that $\Psi(\chi,g)$ is holomoprhic for $\chi=\chi_0$  with respect to each variable separately. Hence  it holomoprhic at $\chi=\chi_0$. So we are done.
\end{proof}
Let us fix some notations:
Let $\para{P}$ be a maximal parabolic subgroup of $G$. 
Consider $\chi_s= \delta_{\para{P}}^{s- \frac{1}{2}} \otimes \delta_{\para{B}}^{\frac{1}{2}}$.
\begin{cor}\label{Lemma1}
With the notations as above, $E_{\para{B}}^{\#}(\chi_s,g)$ can be written as 
 $$E_{\para{B}}^{\#}(\chi_s,g) = A_{\weylelement{\para{P}}} \times
 G_{\para{P}}(\chi_s)   \times \prod_{\alpha \in \Delta_{\para{P}}} \zeta(l_{\alpha}^{+}(\chi_s))l_{\alpha}^{+}(\chi_s) \times E_{\para{P}}(f^{0},s,g)
 $$
 where 
 $G_{\para{P}}(\chi_s)= \prod_{\alpha \in \Phi^{+} \setminus \Delta_{\para{P}}} \zeta(l_{\alpha}^{+}(\chi_s))
 l_{\alpha}^{+}(\chi_s)
 l_{\alpha}^{-}(\chi_s)$
 and $A_{\weylelement{\para{P}}}=Res_{\mathcal{F}_{\hat{i}}} C_{\weylelement{\para{P}}}(\chi_s)$.
\end{cor}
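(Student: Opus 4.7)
The plan is to split the product $\prod_{\alpha \in \Phi^{+}} \zeta(l_{\alpha}^{+}(\chi))\,l_{\alpha}^{+}(\chi)\,l_{\alpha}^{-}(\chi)$ from the definition of $E_{\para{B}}^{\#}$ along the decomposition $\Phi^{+} = \Delta_{\para{P}} \sqcup (\Phi^{+}\setminus \Delta_{\para{P}})$. The part indexed by $\Phi^{+}\setminus \Delta_{\para{P}}$ is, by definition, exactly $G_{\para{P}}(\chi_{s})$, while the part indexed by $\Delta_{\para{P}}$ gives $\prod_{\alpha \in \Delta_{\para{P}}} \zeta(l_{\alpha}^{+}(\chi_{s}))\,l_{\alpha}^{+}(\chi_{s})\,l_{\alpha}^{-}(\chi_{s})$.

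The crucial observation is that for every simple root $\alpha$ of the Levi of $\para{P}$ one has $\langle \chi_{s},\check{\alpha}\rangle = 1$: indeed $\delta_{\para{P}}$ is proportional to the fundamental weight $\fundamental{i}$ and so annihilates $\check{\alpha}$ for $\alpha\in\Delta_{\para{P}}$, while $\delta_{\para{B}}^{1/2} = \rho$ satisfies $\langle \rho,\check{\alpha}\rangle = 1$. Hence $l_{\alpha}^{-}(\chi_{s}) = 0$ for each $\alpha \in \Delta_{\para{P}}$, so the factor $\prod_{\alpha \in \Delta_{\para{P}}} l_{\alpha}^{-}(\chi)$ has a simple zero along every hyperplane $\mathcal{F}^{-}_{\alpha}$ passing through $\chi_{s}$. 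Since $E_{\para{B}}^{\#}$ is entire in $\chi$ by Theorem \ref{Ikeda_Series}, these zeros must precisely compensate the corresponding poles of $E_{\para{B}}(f^{0},\chi,g)$ along the hyperplanes $\mathcal{F}_{\hat{i}}$.

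Concretely, I would interpret
\[
\lim_{\chi\to \chi_{s}} \prod_{\alpha \in \Delta_{\para{P}}} l_{\alpha}^{-}(\chi)\cdot E_{\para{B}}(f^{0},\chi,g) \;=\; \operatorname{Res}_{\mathcal{F}_{\hat{i}}} E_{\para{B}}(f^{0},\chi,g),
\]
which is justified by applying the one-variable residue identity $\lim_{z\to z_{0}}(z-z_{0})f(z) = \operatorname{Res}_{z=z_{0}}f(z)$ iteratively in coordinates dual to the linearly independent forms $\{l_{\alpha}^{-}\}_{\alpha \in \Delta_{\para{P}}}$; the transversality of the hyperplanes and the algebraic independence of these linear forms ensures no spurious cross-cancellations occur. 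Proposition \ref{ReducetoParabolic} then evaluates this iterated residue as $A_{\weylelement{\para{P}}} \cdot E_{\para{P}}(f^{0},s,g)$. Substituting back and grouping the remaining holomorphic factors $\prod_{\alpha\in\Delta_{\para{P}}}\zeta(l_{\alpha}^{+}(\chi_{s}))\,l_{\alpha}^{+}(\chi_{s})$ (at $\chi=\chi_{s}$ each $l_{\alpha}^{+}(\chi_{s}) = 2$, so these are just nonzero constants) produces the claimed identity.

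The only delicate point is confirming that the limit may be computed factor by factor as an iterated residue; this is the content of the previous paragraph and is essentially routine once entireness of $E_{\para{B}}^{\#}$ has been established. Beyond that, all pieces are already assembled: the separation of roots is formal, the vanishing of $l_{\alpha}^{-}(\chi_{s})$ on $\Delta_{\para{P}}$ is a direct computation, and the residue evaluation is Proposition \ref{ReducetoParabolic}.
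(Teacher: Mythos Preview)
Your proof is correct and is precisely the argument the paper intends: the corollary is stated without proof in the paper, and your derivation---splitting the normalizing product over $\Phi^{+}=\Delta_{\para{P}}\sqcup(\Phi^{+}\setminus\Delta_{\para{P}})$, identifying the $\Delta_{\para{P}}$ factors $\prod_{\alpha\in\Delta_{\para{P}}}l_{\alpha}^{-}(\chi)\cdot E_{\para{B}}(f^{0},\chi,g)$ with the iterated residue via the definition used in the proof of Proposition~\ref{ReducetoParabolic}, and then invoking that proposition---is exactly the intended route. The only remark is that entireness of $E_{\para{B}}^{\#}$ is not logically needed here (Proposition~\ref{ReducetoParabolic} already handles the residue computation); it merely guarantees that the left side, and hence both sides, are well defined as functions of $s$.
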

\begin{remk}
The product $\prod_{\alpha \in \Delta_{\para{P}}} \zeta(l_{\alpha}^{+}(\chi_s))l_{\alpha}^{+}(\chi_s)$
is equal to $(2\zeta(2))^{n-1}$.
since for every $\alpha \in \Delta_{M}$ is holds 
$\inner{\chi_s,\check{\alpha}} =1$.
\end{remk}
Let $d_{\para{P}}(\chi_{s_0})$ denote the order of the zero of 
$G_{\para{P}}(\chi_s)$ at $s=s_0$.
\begin{thm}
The Eisenstein series $E_{\para{P}}(f^{0},s,g)$ admits a pole of order at most $d_{\para{P}}(\chi_{s_0})$ at $s=s_0$.
\end{thm}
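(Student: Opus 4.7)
The plan is to invert the identity from Corollary \ref{Lemma1} and read the pole order off directly from the analytic data on the right-hand side. Solving for the degenerate Eisenstein series we would obtain
\begin{equation*}
E_{\para{P}}(f^{0},s,g) \;=\; \frac{E_{\para{B}}^{\#}(\chi_s,g)}{A_{\weylelement{\para{P}}}\cdot (2\zeta(2))^{n-1}\cdot G_{\para{P}}(\chi_s)}.
\end{equation*}
By Theorem \ref{Ikeda_Series} the numerator $E_{\para{B}}^{\#}(\chi_s,g)$ is entire as a function of $\chi$, and hence of $s$. By Proposition \ref{ReducetoParabolic} the scalar $A_{\weylelement{\para{P}}}$ is a nonzero constant, and by the remark following Corollary \ref{Lemma1} the product $\prod_{\alpha\in\Delta_{\para{P}}}\zeta(l_{\alpha}^{+}(\chi_s))\,l_{\alpha}^{+}(\chi_s)=(2\zeta(2))^{n-1}$ is also a nonzero constant. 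Therefore every pole of $E_{\para{P}}(f^{0},s,g)$ must come from a zero of $G_{\para{P}}(\chi_s)$, and its order is bounded above by $\operatorname{ord}_{s=s_0}G_{\para{P}}(\chi_s)$.

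The remaining step is to match this order of vanishing with the combinatorial quantity $d_{\para{P}}(\chi_{s_0})$. Recall
\begin{equation*}
G_{\para{P}}(\chi_s)=\prod_{\alpha\in\Phi^{+}\setminus\Delta_{\para{P}}}\zeta(l_{\alpha}^{+}(\chi_s))\,l_{\alpha}^{+}(\chi_s)\,l_{\alpha}^{-}(\chi_s).
\end{equation*}
Each factor $l_{\alpha}^{-}(\chi_s)=\inner{\chi_s,\check{\alpha}}-1$ contributes a simple zero at $s_0$ exactly when $\inner{\chi_{s_0},\check{\alpha}}=1$, while $\zeta(l_{\alpha}^{+}(\chi_s))=\zeta(\inner{\chi_s,\check{\alpha}}+1)$ contributes a simple pole exactly when $\inner{\chi_{s_0},\check{\alpha}}=0$. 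For $\operatorname{Re} s_0>0$ the quantity $\inner{\chi_{s_0},\check{\alpha}}$ stays bounded away from $-1$ on positive coroots, so $l_{\alpha}^{+}(\chi_s)$ contributes nothing and these are the only resonances. Finally, the $n-1$ simple roots in $\Delta_{\para{P}}$ satisfy $\inner{\chi_s,\check{\alpha}}=1$ identically, so they are counted in the global tally of $\alpha>0$ with $\inner{\check{\alpha},\chi_{s_0}}=1$ but do not belong to the index set of $G_{\para{P}}$. Subtracting them gives
\begin{equation*}
\operatorname{ord}_{s=s_0}G_{\para{P}}(\chi_s)=\bigl|\{\alpha>0:\inner{\check{\alpha},\chi_{s_0}}=1\}\bigr|-\bigl|\{\alpha>0:\inner{\check{\alpha},\chi_{s_0}}=0\}\bigr|-(n-1)=d_{\para{P}}(\chi_{s_0}),
\end{equation*}
which combined with the previous paragraph yields the claim.

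There is essentially no hard step left here, since the substantive analytic content was packaged into the entireness of $E_{\para{B}}^{\#}$ (Theorem \ref{Ikeda_Series}) and the factorization in Corollary \ref{Lemma1}. The only point meriting care is verifying that no additional zero or pole of the auxiliary factors sneaks in at $s_0$; this is precisely why the hypothesis $\operatorname{Re} s_0>0$ (implicit in the setting, where potential poles live) keeps $l_{\alpha}^{+}(\chi_{s_0})\neq 0$ for all $\alpha\in\Phi^{+}$ and makes the bookkeeping above exhaustive.
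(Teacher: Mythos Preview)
Your first paragraph is exactly the paper's proof: invert Corollary \ref{Lemma1}, use entireness of $E_{\para{B}}^{\#}$ (Theorem \ref{Ikeda_Series}) and nonvanishing of the constant prefactors, and conclude that the pole order of $E_{\para{P}}(f^{0},s,g)$ is bounded by the order of the zero of $G_{\para{P}}(\chi_s)$ at $s_0$. In the paper's logical order this already finishes the proof, because immediately before this theorem $d_{\para{P}}(\chi_{s_0})$ is \emph{defined} to be that order of vanishing; the combinatorial formula $|N_1|-|N_0|-(n-1)$ is proved separately afterwards (Theorem \ref{Upper}). So your second paragraph is not needed here.

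If you do want to include the combinatorial identification (effectively folding Theorem \ref{Upper} into this proof), two points in your bookkeeping need correction. First, the assertion that for $\operatorname{Re} s_0>0$ one never has $\inner{\chi_{s_0},\check{\alpha}}=-1$ for a positive root is false; the set $N_{-1}(\chi_{s_0})$ is generally nonempty (it appears explicitly throughout Chapter \ref{ch:Technical lemmas}). What actually happens is that for $\alpha\in N_{-1}(\chi_{s_0})$ the simple pole of $\zeta(l_{\alpha}^{+}(\chi_s))$ at $l_{\alpha}^{+}=0$ is cancelled by the simple zero of $l_{\alpha}^{+}(\chi_s)$, so the product $\zeta(l_{\alpha}^{+})\,l_{\alpha}^{+}$ is holomorphic and nonzero there; hence $N_{-1}$ contributes neither a zero nor a pole, and your formula survives for a different reason than the one you gave. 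Second, you have not ruled out the possibility that $\zeta(l_{\alpha}^{+}(\chi_{s_0}))=0$ for some $\alpha\notin N_{0,\pm 1}(\chi_{s_0})$, which would add extra zeros to $G_{\para{P}}$. The paper handles this by imposing $F=\mathbb{Q}$ and $s_0\in\mathbb{R}$, so that $l_{\alpha}^{+}(\chi_{s_0})$ is real and the completed zeta is nonvanishing on the real line; without such a hypothesis the combinatorial formula need not hold, which is precisely why Theorem \ref{Upper} carries those assumptions.
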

\begin{proof}
Since $E_{\para{B}}^{\#}(\chi_s,g)$ is entire, the order of the zero of $G_{\para{P}}(s)$ at $s=s_0$ is an upper bound for the order of the pole of $E_{\para{P}}(f^{0},s,g)$ at $s=s_0$.
\end{proof}
We are mostly interested in the case where $\operatorname{Re} s>0$. Our observation shows that the poles of the degenerate Eisenstein series in the right half plane are real. In that case, we can express the  number $d_{\para{P}}(\chi_{s_0})$ in geometric terms for $F =\mathbb{Q}$.
Let,
\begin{align*}
N_{\epsilon}(\chi) =& \{  \alpha \in \Phi^{+} \: : \: \inner{\chi,\check{\alpha}}=\epsilon   \}  & 
N_{\epsilon_1,\dots ,\epsilon_k}(\chi) =& \bigcup_{i=1}^{k} N_{\epsilon_i}(\chi).
\end{align*}
\begin{thm}\label{Upper}
For $F= \mathbb{Q}$ and for $s \in \mathbb{R}$ it holds that 
$$d_{\para{P}}(\chi_s) = | N_{1}(\chi_{s_0}) | - | N_{0}(\chi_{s_0}) |-(n-1).$$
\end{thm}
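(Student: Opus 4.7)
The plan is to compute the order of vanishing of
$$G_\para{P}(\chi_s)=\prod_{\alpha\in\Phi^+\setminus\Delta_\para{P}}\zeta(l_\alpha^+(\chi_s))\cdot l_\alpha^+(\chi_s)\cdot l_\alpha^-(\chi_s)$$
at $s=s_0$ factor by factor. Each $l_\alpha^\pm(\chi_s)=\inner{\chi_s,\check{\alpha}}\pm 1$ is affine in $s$, and $\zeta(l_\alpha^+(\chi_s))$ is the composition of $\zeta$ with an affine shift. Under the hypothesis $F=\mathbb{Q}$, the Riemann zeta function has a single simple pole at $t=1$ and its real zeros lie only at the negative even integers, which I will argue do not arise as values of $l_\alpha^+(\chi_{s_0})$ in the regime under consideration.

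A direct case analysis then gives
$$\operatorname{ord}_{s=s_0}\bigl(\zeta(l_\alpha^+)\cdot l_\alpha^+\cdot l_\alpha^-\bigr) = [\alpha\in N_1(\chi_{s_0})] + [\alpha\in N_{-1}(\chi_{s_0})] - [\alpha\in N_0(\chi_{s_0})],$$
the three contributions coming respectively from the zero of $l_\alpha^-$, the zero of $l_\alpha^+$, and the simple pole of $\zeta$ at $t=1$. Since the three sets are pairwise disjoint (they are distinct level sets of the linear functional $\alpha\mapsto\inner{\chi_{s_0},\check{\alpha}}$), summing over $\alpha\in\Phi^+\setminus\Delta_\para{P}$ produces
$$d_\para{P}(\chi_{s_0}) = |N_1\cap(\Phi^+\setminus\Delta_\para{P})| + |N_{-1}\cap(\Phi^+\setminus\Delta_\para{P})| - |N_0\cap(\Phi^+\setminus\Delta_\para{P})|.$$

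Next I will relate these restricted sets to the full $|N_\epsilon|$. Every simple root $\alpha\in\Delta_\para{P}$ of the Levi lies in $N_1$: since $\delta_\para{P}$ is a positive multiple of the fundamental weight $\fundamental{i}$ attached to the simple root $\alpha_i$ defining $\para{P}$, and $\alpha\neq \alpha_i$, one has $\inner{\delta_\para{P},\check{\alpha}}=0$; together with $\inner{\delta_\para{B}^{1/2},\check{\alpha}}=1$ for every simple root, this yields $\inner{\chi_s,\check{\alpha}}\equiv 1$. Thus $\Delta_\para{P}\subset N_1$ contributes exactly $n-1$ elements while $\Delta_\para{P}\cap N_0=\Delta_\para{P}\cap N_{-1}=\emptyset$, so $|N_1\cap(\Phi^+\setminus\Delta_\para{P})|=|N_1|-(n-1)$ and the other two restricted cardinalities equal the full ones.

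What remains is to show $|N_{-1}(\chi_{s_0})|=0$ in the range where the formula is applied, so that the middle summand drops. For $\alpha$ in the Levi root system the pairing $\inner{\chi_s,\check{\alpha}}$ is a constant positive integer and never equals $-1$; for $\alpha\in\Phi^+$ not lying in the Levi, the pairing is affine in $s$ with positive slope and equals $\inner{\delta_\para{B}^{1/2},\check{\alpha}}\geq 1$ at $s=\tfrac{1}{2}$, so an elementary estimate rules out the value $-1$ for $s_0$ in the relevant range. The main technical obstacle I foresee is precisely this zeta-factor bookkeeping: one must check that $l_\alpha^+(\chi_{s_0})$ never lands on a trivial zero of $\zeta_\mathbb{Q}$ and that the pole at $t=1$ is exactly simple, which is where the hypothesis $F=\mathbb{Q}$ enters essentially.
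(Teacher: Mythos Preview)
Your factor-by-factor strategy is exactly the paper's, but you have misidentified which $\zeta$ is in play and this produces a real error. The $\zeta$ used throughout is the \emph{completed} zeta function, satisfying the symmetric functional equation $\zeta(s)=\zeta(1-s)$; it therefore has \emph{two} simple poles, at $s=1$ and at $s=0$, and no real zeros at all. The pole at $0$ is exactly what you are missing: for $\alpha\in N_{-1}(\chi_{s_0})$ one has $l_\alpha^+(\chi_{s_0})=0$, so $\zeta(l_\alpha^+)$ contributes a simple pole which cancels the simple zero of $l_\alpha^+$, and the net order of $\zeta(l_\alpha^+)\,l_\alpha^+\,l_\alpha^-$ at $s_0$ is $0$, not $1$. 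The correct per-root contribution is thus $[\alpha\in N_1]-[\alpha\in N_0]$, and summing over $\Phi^+\setminus\Delta_{\para{P}}$ gives the stated formula immediately, with no $|N_{-1}|$ term to dispose of.

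Your attempted repair, arguing $|N_{-1}(\chi_{s_0})|=0$, is false in the range of interest. Already for $G=G_2$, $\para{P}=\para{P}_1$, $s_0=\tfrac{1}{10}$ one computes $\inner{\chi_{s_0},\check\alpha_1}=-1$; and the paper's later identities repeatedly use nonzero values of $d=|N_{-1}(\chi_{\para{P}_j,t_0})|-|N_{-1}(\chi_{\para{P}_i,s_0})|$, so $N_{-1}$ is routinely nonempty. Conversely, once you use the completed $\zeta$, your worry about trivial zeros evaporates: this is precisely why the paper can assert that for $F=\mathbb{Q}$ and real $s$ one has $\zeta(l_\alpha^+(\chi_s))\neq 0$, making the ``generic'' factor $h_3$ holomorphic and nonvanishing.
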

\begin{proof}
Note that $\Delta_{\para{P}} \subset N_{1}(\chi_s)$ for every $s$.
Therefore,
we can rewrite as
\begin{align*}
G_{\para{P}}(\chi_s) =&\left( \prod_{\alpha \in N_{-1}(\chi_s)}
\zeta(l_{\alpha}^{+}(\chi_s)) l_{\alpha}^{+}(\chi_s)
\right) \times \prod_{\alpha \in N_{-1}(\chi_s)} l_{\alpha}^{-}(\chi_s) \\
\times & 
\left( \prod_{\alpha \in N_{0}(\chi_s)}
l_{\alpha}^{+}(\chi_s)) l_{\alpha}^{-}(\chi_s)
\right) \times \prod_{\alpha \in N_{0}(\chi_s)} \zeta(l_{\alpha}^{+}(\chi_s)) \\
\times & 
\left( \prod_{\alpha \in N_{1}(\chi_s)\setminus \Delta_{\para{P}}}
\zeta(l_{\alpha}^{+}(\chi_s)) l_{\alpha}^{+}(\chi_s)
\right) \times \prod_{\alpha \in N_{1}(\chi_s) \setminus \Delta_{\para{P}}} l_{\alpha}^{-}(\chi_s) \\
\times  &
\left( \prod_{\alpha \in\Phi^{+} \setminus N_{\pm1,0}(\chi_s)}
\zeta(l_{\alpha}^{+}(\chi_s)) l_{\alpha}^{+}(\chi_s)l_{\alpha}^{-}(\chi_s) \right)
\end{align*}
Note that:
\begin{align*}
 \left( \prod_{\alpha \in N_{0}(\chi_s)}
l_{\alpha}^{+}(\chi_s)) l_{\alpha}^{-}(\chi_s)
\right)=&(-1)^{|N_{0}(\chi_s)|}
& \prod_{\alpha \in N_{-1}(\chi_s)} l_{\alpha}^{-}(\chi_s)=&(-2)^{|N_{-1}(\chi_s)|}
\\
\left( \prod_{\alpha \in N_{1}(\chi_s)\setminus \Delta_{\para{P}}}
\zeta(l_{\alpha}^{+}(\chi_s)) l_{\alpha}^{+}(\chi_s)
\right)=& (2\zeta(2))^{|N_{1}(\chi_s)|-(n-1)}
\end{align*}
Hence,
\newpage
\begin{align}
G_{\para{P}}(\chi_s) &= \left( \prod_{\alpha \in N_{-1}(\chi_s)}
\zeta(l_{\alpha}^{+}(\chi_s)) l_{\alpha}^{+}(\chi_s)
\right) \label{h1}\\
&\times \prod_{\alpha \in N_{0}(\chi_s)} \zeta(l_{\alpha}^{+}(\chi_s)) \times  \prod_{\alpha \in N_{1}(\chi_s) \setminus \Delta_{\para{P}}} l_{\alpha}^{-}(\chi_s) \label{h2}\\
& \times 
\left( \prod_{\alpha \in\Phi^{+} \setminus N_{\pm1,0}(\chi_s)}
\zeta(l_{\alpha}^{+}(\chi_s)) l_{\alpha}^{+}(\chi_s)l_{\alpha}^{-}(\chi_s) \right) \label{h3}\\
& \times (-1)^{|N_{0,-1}(\chi_s)|} \times 2^{|N_{\pm1}(\chi_s)| -(n-1)|} \times 
\zeta(2)^{|N_{1}(\chi_s)| -(n-1)|}
\end{align}

In the case where $s\in \mathbb{R}$ it holds that 
$l_{\alpha}^{+}(\chi_{s}) \in \mathbb{R}$, and
hence  $\zeta(l_{\alpha}^{+}(\chi_s))\neq 0$
for $F=\mathbb{Q}$. As a result (\ref{h3}) is a non zero
and holomorphic.
The term (\ref{h1}) is holomorphic and non zero.
Hence the zero order of $G_{\para{P}}(\chi_s)$ is exactly the zero order of (\ref{h2}) which is  $|N_{1}(\chi_s)|- |N_{0}(\chi_s)|-(n-1)$. To sum up the order of the zero of $G_{\para{P}}(\chi_s)$ is exactly 
$|N_{1}(\chi_s)|- |N_{0}(\chi_s)|-(n-1)$.
\end{proof}
\begin{cor} \label{EiseConst}
For $F= \mathbb{Q}$, and for $s_0 \in \mathbb{R}$ there exists a constant $C \in \mathbb{C} ^{\ast}$ such that at $s=s_0$
$$E_{\para{B}}^{\#}(\chi_{s_0},g)= C \times \leadingterm{-d_{\para{P}}(\chi_{s_0})}^{\para{P}}
(f^{0},s_0,g).$$
\end{cor}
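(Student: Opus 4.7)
The plan is to combine Corollary \ref{Lemma1} with the vanishing-order analysis already carried out inside the proof of Theorem \ref{Upper}, and then to use the entireness of $E_{\para{B}}^{\#}$ from Theorem \ref{Ikeda_Series} to turn the factorization into a pointwise identity at $s=s_0$.

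First I would rewrite the identity of Corollary \ref{Lemma1} more cleanly. Since $\inner{\chi_s,\check{\alpha}}=1$ for every simple root $\alpha$ of the Levi $M$, the finite product $\prod_{\alpha \in \Delta_{\para{P}}} \zeta(l_\alpha^+(\chi_s))\, l_\alpha^+(\chi_s)$ collapses to the nonzero constant $(2\zeta(2))^{n-1}$, yielding
$$E_{\para{B}}^{\#}(\chi_s, g) \;=\; A_{\weylelement{\para{P}}} \cdot (2\zeta(2))^{n-1} \cdot G_{\para{P}}(\chi_s) \cdot E_{\para{P}}(f^{0}, s, g).$$

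Second, I would invoke from the proof of Theorem \ref{Upper} the factorization
$$G_{\para{P}}(\chi_s) \;=\; (s-s_0)^{d_{\para{P}}(\chi_{s_0})}\, h(s),$$
where $h$ is holomorphic at $s_0$ and $h(s_0)\neq 0$. The decomposition of $G_{\para{P}}$ according to the sets $N_1(\chi_{s_0})$, $N_0(\chi_{s_0})$ and their complement shows that only the factors $l_\alpha^-(\chi_s)$ with $\alpha\in N_1(\chi_{s_0})\setminus \Delta_{\para{P}}$ and the factors $\zeta(l_\alpha^+(\chi_s))$ with $\alpha\in N_0(\chi_{s_0})$ contribute to the order of vanishing at $s_0$; the hypotheses $F=\mathbb{Q}$ and $s_0\in\mathbb{R}$ ensure that all the remaining $\zeta$-values in $G_{\para{P}}(\chi_{s_0})$ are finite and nonzero, which is precisely what forces $h(s_0)\neq 0$.

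Third, since $E_{\para{B}}^{\#}(\chi_s, g)$ is entire in $s$ by Theorem \ref{Ikeda_Series}, dividing the displayed identity by $A_{\weylelement{\para{P}}}\cdot(2\zeta(2))^{n-1}\cdot h(s)$ shows that $(s-s_0)^{d_{\para{P}}(\chi_{s_0})}\, E_{\para{P}}(f^{0},s,g)$ is holomorphic at $s_0$, and its value there equals the Laurent coefficient $\leadingterm{-d_{\para{P}}(\chi_{s_0})}^{\para{P}}(f^{0},s_0,g)$ by definition. Evaluating the original identity at $s=s_0$ then gives
$$E_{\para{B}}^{\#}(\chi_{s_0}, g) \;=\; A_{\weylelement{\para{P}}} \cdot (2\zeta(2))^{n-1} \cdot h(s_0) \cdot \leadingterm{-d_{\para{P}}(\chi_{s_0})}^{\para{P}}(f^{0}, s_0, g),$$
so the constant $C = A_{\weylelement{\para{P}}} \cdot (2\zeta(2))^{n-1} \cdot h(s_0)$ verifies the claim and lies in $\mathbb{C}^{\ast}$ because $A_{\weylelement{\para{P}}}\neq 0$ by Proposition \ref{ReducetoParabolic}, $\zeta(2)\neq 0$, and $h(s_0)\neq 0$ by construction. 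The only delicate point is the last nonvanishing, namely that the true order of $G_{\para{P}}(\chi_s)$ at $s_0$ really equals $d_{\para{P}}(\chi_{s_0})$; this is exactly where the assumptions $F=\mathbb{Q}$ and $s_0\in \mathbb{R}$ are essential, as they prevent any zero of $\zeta$ on the real line from entering the extraneous factors.
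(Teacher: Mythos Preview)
Your proof is correct and is precisely the argument the paper intends: the corollary is stated without proof immediately after Theorem \ref{Upper}, and your derivation---combining the factorization of Corollary \ref{Lemma1}, the exact vanishing order of $G_{\para{P}}(\chi_s)$ established in Theorem \ref{Upper}, and the entireness of $E_{\para{B}}^{\#}$ from Theorem \ref{Ikeda_Series}---is exactly how it follows. Your identification of where the hypotheses $F=\mathbb{Q}$ and $s_0\in\mathbb{R}$ enter (to guarantee $h(s_0)\neq 0$ via nonvanishing of $\zeta$ on the real line) matches the paper's own remark after the corollary.
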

\begin{remk}
Under the assumption that the term (\ref{h3}) does not vanish, the assumption that $F=\mathbb{Q}$ and $s\in \mathbb{R}$ can be lifted. 
\end{remk}
Our  goal is to find out identities between various leading term. In order to do that, we introduce the notions of (positive) admissible data. As we can see it will help us to find the desire identities. 
 \begin{defn}
 The quintuple $(\para{P},s_0,\para{Q},t_0,\weylelement{})$ where $\para{P},\para{Q}$ are maximal parabolic subgroups,
 $s_0,t_0 \in \mathbb{R}$ and $\weylelement{}\in \weyl{G}$ is called \textbf{admissible data} if 
$$\weylelement{} ( \delta_{\para{P}}^{s_0-\frac{1}{2}} \otimes 
 \delta_{\para{B}}^{\frac{1}{2}}) = \delta_{\para{Q}}^{t_0-\frac{1}{2}} \otimes 
 \delta_{\para{B}}^{\frac{1}{2}}.$$
 If also $s_0,t_0$ are non negative numbers it is called  \textbf{positive admissible data}.
 \end{defn}
 \newpage
\begin{thm}\label{Weak_Thm}
Let $F= \mathbb{Q}$, and let $(\para{P},s_0,\para{Q},t_0,\weylelement{})$ be an admissible data. 
 Then there exists a constant $C \in \mathbb{C}^{\ast}$ such that 
 $$\leadingterm{-d_{\para{P}}
 (\chi_{s_0})}^{\para{P}}(f^{0},s_0,g) = C \times 
 \leadingterm{-d_{\para{Q}}
  (\chi_{t_0})}^{\para{Q}}(f^{0},t_0,g)$$
where $\chi_{s_0}=\delta_{\para{P}}^{s_0-\frac{1}{2}} \otimes   \delta_{\para{B}}^{\frac{1}{2}}$ and 
$\chi_{t_0}=\delta_{\para{Q}}^{t_0-\frac{1}{2}} \otimes   \delta_{\para{B}}^{\frac{1}{2}}$.
\end{thm}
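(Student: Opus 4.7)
The plan is to deduce the identity essentially as a two-line consequence of the machinery already developed in Theorem \ref{Ikeda_Series} (entireness and $\weyl{G}$-invariance of the normalized Eisenstein series) together with Corollary \ref{EiseConst} (which identifies $E_{\para{B}}^{\#}(\chi_{s_0},g)$ with the leading term of $E_{\para{P}}(f^0,s,g)$ up to a nonzero scalar).

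First I would apply Corollary \ref{EiseConst} twice: once to the pair $(\para{P},s_0)$, obtaining a nonzero constant $C_1 \in \mathbb{C}^{\ast}$ with
$$E_{\para{B}}^{\#}(\chi_{s_0},g) \;=\; C_1 \cdot \leadingterm{-d_{\para{P}}(\chi_{s_0})}^{\para{P}}(f^{0},s_0,g),$$
and once to the pair $(\para{Q},t_0)$, obtaining a nonzero constant $C_2 \in \mathbb{C}^{\ast}$ with
$$E_{\para{B}}^{\#}(\chi_{t_0},g) \;=\; C_2 \cdot \leadingterm{-d_{\para{Q}}(\chi_{t_0})}^{\para{Q}}(f^{0},t_0,g).$$
The hypothesis $F=\mathbb{Q}$ combined with $s_0,t_0 \in \mathbb{R}$ guarantees that the relevant zeta factors appearing in the constants $C_1,C_2$ from Corollary \ref{EiseConst} do not vanish, so both constants are genuinely in $\mathbb{C}^{\ast}$.

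Next I would invoke the admissibility hypothesis, which says precisely that $\weylelement{}\chi_{s_0} = \chi_{t_0}$ for some $\weylelement{}\in \weyl{G}$. By the $\weyl{G}$-invariance half of Theorem \ref{Ikeda_Series}, we have the equality of entire functions on $\adelegroup{G}{A}$:
$$E_{\para{B}}^{\#}(\chi_{s_0},g) \;=\; E_{\para{B}}^{\#}(\weylelement{}\chi_{s_0},g) \;=\; E_{\para{B}}^{\#}(\chi_{t_0},g).$$
Combining the three displayed identities gives
$$C_1 \cdot \leadingterm{-d_{\para{P}}(\chi_{s_0})}^{\para{P}}(f^{0},s_0,g) \;=\; C_2 \cdot \leadingterm{-d_{\para{Q}}(\chi_{t_0})}^{\para{Q}}(f^{0},t_0,g),$$
and setting $C := C_2/C_1 \in \mathbb{C}^{\ast}$ finishes the proof.

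Since the conceptual content is entirely carried by Theorem \ref{Ikeda_Series} and Corollary \ref{EiseConst}, there is no real analytic obstacle here; the only thing one has to be careful about is book-keeping, namely verifying that the two scalars $C_1, C_2$ produced by Corollary \ref{EiseConst} are indeed nonzero (this is where $F=\mathbb{Q}$ is used, via the non-vanishing of $\zeta$ on the real line so that the auxiliary factor in the expression (\ref{h3}) of the proof of Theorem \ref{Upper} survives). The genuine work — cancellation of poles between Weyl group summands and the functional equation argument — has already been carried out in the proof of Theorem \ref{Ikeda_Series}; the present theorem is the clean pay-off. In Chapter \ref{ch:Technical lemmas} one can then extract the explicit value of $C = C_2/C_1$ by tracking the zeta-factors in (\ref{h1})--(\ref{h3}) on each side.
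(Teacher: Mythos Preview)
Your proposal is correct and essentially identical to the paper's own proof: the paper also invokes the $\weyl{G}$-invariance of $E_{\para{B}}^{\#}$ from Theorem \ref{Ikeda_Series} to equate $E_{\para{B}}^{\#}(\chi_{s_0},g)$ with $E_{\para{B}}^{\#}(\chi_{t_0},g)$, applies Corollary \ref{EiseConst} on each side to produce nonzero constants $c_1,c_2$, and sets $C=c_2/c_1$. The only cosmetic difference is the order in which the two ingredients are cited.
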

\begin{proof}
$E_{\para{B}}^{\#}(\chi,g)$ is a $\weyl{G}$ invariant function. In particular,
$$E_{\para{B}}^{\#}(\chi_{t_0},g) = E_{\para{B}}^{\#}(\weylelement{}\chi_{s_0},g)= E_{\para{B}}^{\#}(\chi_{s_0},g).$$

From Corollary (\ref{EiseConst}) we deduce that there exists 
$c_1 \in \mathbb{C}^{\ast}$ (resp. $c_2 \in \mathbb{C}^{\ast}$) such that 
\begin{align*}
E_{\para{B}}^{\#}(\chi_{s_0},g)&= c_1 \times \leadingterm{-d_{\para{P}}(\chi_{s_0})}^{\para{P}}
(f^{0},s_0,g) \\
E_{\para{B}}^{\#}(\chi_{t_0},g)&= c_2 \times \leadingterm{-d_{\para{Q}}(\chi_{t_0})}^{\para{Q}}
(f^{0},t_0,g).
\end{align*}
Hence 
$$
\leadingterm{-d_{\para{P}}(\chi_{s_0})}^{\para{P}}
(f^{0},s_0,g) = \frac{c_2}{c_1} \times \leadingterm{-d_{\para{Q}}(\chi_{t_0})}^{\para{Q}}
(f^{0},t_0,g) .$$
\end{proof}
In Chapter \ref{ch:Technical lemmas} we will give an explicit formula for the  $C$ in Theorem
(\ref{Weak_Thm}).
In Chapter \ref{ch3:ids} we will find all the  positive admissible data.

\chapter{Refining The Basic Identity} 

\label{ch:Technical lemmas} 

\lhead{Chapter \ref{ch:Technical lemmas}. \emph{
Refining The Basic Identity
}}

Our main goal for this chapter is to write explicitly the constant appearing in Theorem (\ref{Weak_Thm}).
Let $\para{P}=\para{P}_i$ be a maximal parabolic subgroup of $G$. Let fix some notations:
\begin{enumerate}
\item
$\delta_{\para{P}_i}=b_{ii}\fundamental{i}$
\item
$R= \operatorname{Res}_{s=1}\zeta(s)$
\item
$\chi_{\para{P},s}=\delta_{\para{P}}^{s-\frac{1}{2}} \otimes \delta_{\para{B}}^{\frac{1}{2}}$
\item $\check{\alpha}=\sum_{i=1}^{n}n_{i}^{(\check{\alpha})}\check{\alpha}_i$
\item
$N_{\epsilon}(\chi) = \{ \alpha \in \Phi^{+} \: : \: \inner{\chi,\check{\alpha}}=\epsilon \}$ 
\item
$N_{\epsilon_1,\dots \epsilon_n }(\chi) =\bigcup_{i=1}^{n} N_{\epsilon_i} (\chi)$
\item
$B_{\epsilon}(\chi) = \{ \alpha \in \Phi \: : \: \inner{\chi,\check{\alpha}}=\epsilon \} $
\item
$B_{\epsilon_1,\dots \epsilon_n }(\chi) =\bigcup_{i=1}^{n} B_{\epsilon_i} (\chi)$
\item
$h_1(\chi_{\para{P},s},s_0)= \prod_{\alpha \in N_{-1}(\chi_{\para{P},s_0})}
\zeta(l_{\alpha}^{+}(\chi_{\para{P},s})) l_{\alpha}^{+}(\chi_{\para{P},s}) $
\item
$h_2(\chi_{\para{P},s},s_0)=
\prod_{\alpha \in N_{0}(\chi_{\para{P},s_0})} \zeta(l_{\alpha}^{+}(\chi_{\para{P},s})) \times  \prod_{\alpha \in N_{1}(\chi_{\para{P},s_0}) \setminus \Delta_{\para{P}}} l_{\alpha}^{-}(\chi_{\para{P},s})$
\item
$h_{3,\para{P}}(\chi_{\para{P},s},s_0)=
\prod_{\alpha \in\Phi^{+} \setminus N_{\pm1,0}(\chi_{\para{P},s_0})}
\zeta(l_{\alpha}^{+}(\chi_{\para{P},s})) l_{\alpha}^{+}(\chi_{\para{P},s})l_{\alpha}^{-}(\chi_{\para{P},s})$
\item
$\weylelement{\para{P}_i}$ is the longest element in $\weyl{\para{P}_i}$.
\item
$\mathcal{F}_{\alpha}^{-}=\{\chi \in \mathfrak{a}^{\ast}_{\mathbb{C}} \: : l_{\alpha}^{-}(\chi)=0 \}$
\item
$\mathcal{F}_{\hat{r}}=\overset{n}{\underset{
\tiny{\begin{aligned}
	i&=1\\
	i& \neq r
	\end{aligned}}}
{\bigcap}}\mathcal{F}_{\alpha_i}$
\end{enumerate}
 \begin{remk}
 When there is no risk of confusion, we  shall omit the subscript  $\para{P}$.
 \end{remk}
 Let state the main result of this chapter.
\begin{thm*} \label{main_thm1}
Let $(\para{P}_i,s_0,\para{P}_j,t_0,\weylelement{})$ be an admissible data. For $F=\mathbb{Q}$ 
the constant $c$ appears in Theorem (\ref{Weak_Thm}) equals to
 $$\frac{\epsilon_{t_0}}{\epsilon_{s_0}}\times 
 \prod_{\tiny {\begin{aligned}
   		\alpha \in& \Phi^{+} \setminus N_{\pm1,0}(\chi_{\para{P}_i,s_0})\\
   		\weylelement{}\alpha & \in \Phi^{-}
   		\end{aligned}}}
   \frac{
   	\zeta (\left \langle \chi_{\para{P}_i,s_0} ,\check{\alpha} \right \rangle)
   }{\zeta(
   \left \langle \chi_{\para{P}_i,s_0} ,\check{\alpha} \right \rangle+1)
 }
  \times 
 \left(
 \frac{R}{\zeta(2)} \right)^{d}\times \frac{A_{\para{P}_j}}{A_{\para{P}_i}}
 $$
 where 
 \begin{align*}
A_{\para{P}_i}&=
Res_{\mathcal
{F}_{\hat{i}}} C_{\weylelement{\para{P}_i}} (\chi_{\para{P}_i,s_0}) & A_{\para{P}_j}&=
Res_{\mathcal
{F}_{\hat{j}}} C_{\weylelement{\para{P}_j}} (\chi_{\para{P}_j,t_0})\\
d=&|N_{-1}(\chi_{\para{P}_j,t_0})|-|N_{-1}(\chi_{\para{P}_i,s_0})| \\
\epsilon_{s_0} =&
	b_{ii}^{|N_1(\chi_{s_0})| - |N_0(\chi_{s_0})|-(n-1)}
	\times
	\prod_{
\alpha \in N_{0}(\chi_{s_0})
	}\frac{1}{n_{i}^{(\check{\alpha})}
	}
	\times
	\prod_{
			\alpha \in N_{1}(\chi_{s_0}) \setminus \Delta_{\para{P}} 	
		}{n_{i}^{(\check{\alpha})}}
 \end{align*}
\end{thm*}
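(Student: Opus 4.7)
The plan is to extract $C$ by evaluating the identity
$$E_{\para{B}}^{\#}(\chi_{\para{P}_i,s_0},g)=E_{\para{B}}^{\#}(\weylelement{}\chi_{\para{P}_i,s_0},g)=E_{\para{B}}^{\#}(\chi_{\para{P}_j,t_0},g),$$
which is guaranteed by the $\weyl{G}$-invariance of the normalized Eisenstein series (Theorem~\ref{Ikeda_Series}). On each side, Corollary~\ref{Lemma1} gives a presentation
$$E_{\para{B}}^{\#}(\chi_{\para{P}_k,s},g)=A_{\para{P}_k}\cdot G_{\para{P}_k}(\chi_{\para{P}_k,s})\cdot(2\zeta(2))^{n-1}\cdot E_{\para{P}_k}(f^{0},s,g),$$
and since $E_{\para{B}}^{\#}$ is entire, the order-$d$ zero of $G_{\para{P}_k}$ at the distinguished point must cancel the order-$d$ pole of $E_{\para{P}_k}(f^{0},s,g)$. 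Extracting the two finite values and equating them produces
$$C=\frac{A_{\para{P}_j}}{A_{\para{P}_i}}\cdot\frac{g_{\para{P}_j}(t_0)}{g_{\para{P}_i}(s_0)},\qquad g_{\para{P}_k}(x_0):=\lim_{s\to x_0}(s-x_0)^{-d}\,G_{\para{P}_k}(\chi_{\para{P}_k,s}),$$
reducing the whole problem to an explicit computation of the two leading Taylor coefficients of the $G$-function and, above all, of their ratio.

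To compute $g_{\para{P}_i}(s_0)$ I would re-use the four-part decomposition of $G_{\para{P}_i}$ already exploited in the proof of Theorem~\ref{Upper}: a subproduct over $N_{-1}(\chi_{s_0})$, one over $N_{0}(\chi_{s_0})$, one over $N_{1}(\chi_{s_0})\setminus\Delta_{\para{P}_i}$, and the regular part $h_{3,\para{P}_i}(\chi_{s_0})$ over $\Phi^{+}\setminus N_{\pm 1,0}(\chi_{s_0})$. For each $\alpha$ in one of the three exceptional sets the factor $\zeta(l_{\alpha}^{+}(\chi_s))\,l_{\alpha}^{+}(\chi_s)\,l_{\alpha}^{-}(\chi_s)$ has either a simple zero or a simple pole at $s_0$, with leading coefficient governed by the derivative $\partial_s\inner{\chi_{\para{P}_i,s},\check{\alpha}}=b_{ii}\,n_{i}^{(\check{\alpha})}$ together with the explicit values $\zeta(0)=-\tfrac12$, $\zeta(2)$, $R$, and the constants $\pm1,\pm2$ that $l_{\alpha}^{\pm}$ takes at the distinguished evaluations. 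Collecting everything reproduces exactly the combinatorial monomial $\epsilon_{s_0}$ of the statement, multiplied by explicit signs, powers of $R$ and $\zeta(2)$, and the nonzero factor $h_{3,\para{P}_i}(\chi_{s_0})$; the parallel analysis at $(\para{P}_j,t_0)$ yields $g_{\para{P}_j}(t_0)$.

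Next I would split the resulting ratio into a combinatorial piece and a regular piece. The admissibility hypothesis $\weylelement{}\chi_{\para{P}_i,s_0}=\chi_{\para{P}_j,t_0}$ says that $\weylelement{}$ restricts to a bijection $B_{\epsilon}(\chi_{s_0})\longleftrightarrow B_{\epsilon}(\chi_{t_0})$ for every $\epsilon$; since $B_{\epsilon}$ is stable under $\alpha\mapsto-\alpha$, this yields $|N_{0}(\chi_{s_0})|=|N_{0}(\chi_{t_0})|$ and
$$|N_{-1}(\chi_{t_0})|-|N_{-1}(\chi_{s_0})|=|N_{1}(\chi_{s_0})|-|N_{1}(\chi_{t_0})|=d.$$
Because the three exceptional sets contribute asymmetrically to $g_{\para{P}_k}$ (residues $R$ from $N_{0}$, values $\zeta(0)$ from $N_{-1}$, values $\zeta(2)$ from $N_{1}\setminus\Delta_{\para{P}}$), the bookkeeping of $b_{ii}$, $\pm1$ and of the coefficients $n_{i}^{(\check{\alpha})}$ collapses into $\epsilon_{t_0}/\epsilon_{s_0}$, while the mismatch between $|N_{-1}|$'s on the two sides produces exactly the power $(R/\zeta(2))^{d}$ recorded in the statement.

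The main obstacle is identifying the remaining factor $h_{3,\para{P}_j}(\chi_{t_0})/h_{3,\para{P}_i}(\chi_{s_0})$ with the $C_{\weylelement{}}$-like product in the formula. Partition $\Phi^{+}\setminus N_{\pm1,0}(\chi_{s_0})$ into the roots $\alpha$ with $\weylelement{}\alpha\in\Phi^{+}$---whose factor on the $s_0$-side matches directly the factor at $\weylelement{}\alpha$ on the $t_0$-side via the bijection induced by $\weylelement{}$---and those with $\weylelement{}\alpha\in\Phi^{-}$. For the latter, the natural positive partner on the $t_0$ side is $\beta=-\weylelement{}\alpha$, which satisfies $\inner{\chi_{t_0},\check{\beta}}=-\inner{\chi_{s_0},\check{\alpha}}$. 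A short computation combining this sign flip with the functional equation $\zeta(s)=\zeta(1-s)$---the very identity underlying Theorem~\ref{Ikeda_Series}---shows that
$$\frac{\zeta(l_{\beta}^{+}(\chi_{t_0}))\,l_{\beta}^{+}(\chi_{t_0})\,l_{\beta}^{-}(\chi_{t_0})}{\zeta(l_{\alpha}^{+}(\chi_{s_0}))\,l_{\alpha}^{+}(\chi_{s_0})\,l_{\alpha}^{-}(\chi_{s_0})}=\frac{\zeta(\inner{\chi_{s_0},\check{\alpha}})}{\zeta(\inner{\chi_{s_0},\check{\alpha}}+1)}.$$
Multiplying over all such $\alpha$ produces the $\zeta$-quotient in the statement, and assembling the four pieces $A_{\para{P}_j}/A_{\para{P}_i}$, $\epsilon_{t_0}/\epsilon_{s_0}$, $(R/\zeta(2))^{d}$ and this $\zeta$-quotient yields the claimed closed form for $C$.
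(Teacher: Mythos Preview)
Your proposal is correct and follows essentially the same route as the paper: the paper packages your ``four-part decomposition'' and leading-coefficient computation as Lemma~\ref{Corollary-1} and Corollary~\ref{redEis}, your counting identities as Lemma~\ref{Lemma-T1}, and your partition argument for the $h_{3}$-ratio as Lemma~\ref{Lem} and Corollary~\ref{corr}, then assembles them exactly as you describe. One small caveat: in this paper $\zeta$ denotes the completed zeta function satisfying $\zeta(s)=\zeta(1-s)$, so it has a simple pole at $s=0$ (with residue $-R$) rather than the finite value $\zeta(0)=-\tfrac12$; thus for $\alpha\in N_{-1}(\chi_{s_0})$ the factor $\zeta(l_{\alpha}^{+})\,l_{\alpha}^{+}$ contributes $-R$, not $0$, which is what makes the $N_{-1}$-subproduct equal $(-R)^{|N_{-1}|}$ as you need.
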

In order to prove it we will prove a sequence of Lemmas. \\
 With the notation of 
 Corollary (\ref{Lemma1}) it holds that 
  $$E_{\para{B}}^{\#}(\chi_{\para{P},s},g) = A_{\weylelement{\para{P}}} \times
  G_{\para{P}}(\chi_{\para{P},s})   \times \prod_{\alpha \in \Delta_{\para{P}}} \zeta(l_{\alpha}^{+}(\chi_{\para{P},s}))l_{\alpha}^{+}(\chi_{\para{P},s}) \times E_{\para{P}}(f^{0},s,g)
  $$
  where 
  $G_{\para{P}}(\chi_{\para{P},s})= \prod_{\alpha \in \Phi^{+} \setminus \Delta_{\para{P}}} \zeta(l_{\alpha}^{+}(\chi_{\para{P},s}))
  l_{\alpha}^{+}(\chi_{\para{P},s})
  l_{\alpha}^{-}(\chi_{\para{P},s})$
  and $A_{\weylelement{\para{P}}}=Res_{\mathcal{F}_{\hat{i}}} C_{\weylelement{\para{P}}}(\chi_{\para{P},s})$.
 
 Moreover, Theorem (\ref{Upper}) shows that  $G_{\para{P}}(\chi_{s})$ can be written as
 \begin{align*}
 G_{\para{P}}(\chi_s) =& h_1(\chi_s,s_0) \times h_2(\chi_s,s
_0) \times h_3 (\chi_s,s_0)
\\
\times& (\zeta(2))^{|N_1(\chi_{s})|-(n-1)} \times (-1)^{|N_{0,-1}(\chi_s)|}
\times 
2^{|N_{\pm1}(\chi_s)|  -(n-1)}.
\end{align*}
\begin{lemm}\label{Corollary-1}
Let $\para{P}=\para{P}_i$ be a maximal parabolic subgroup of $G$. 
 Then 
\begin{enumerate}
	\item
	$\operatorname{lim}_{s \rightarrow s_0} h_1(\chi_s ,s_0)
 =(-R)^{|N_{-1}(\chi_{s_0})|}$.
	\item
	$h_{2}(\chi_s,s_0)$ has a zero of order 
	$|N_{1}(\chi_{s_0})| - |N_{0}(\chi_{s_0})|-(n-1)$ at $s=s_0$. Moreover, its leading coefficient is equal to  
$\epsilon_{s_0} \times R^{|N_{0}(\chi_{s_0})|}	$
where 
\begin{align*}
\epsilon_{s_0} =&
	b_{ii}^{|N_1(\chi_{s_0})| - |N_0(\chi_{s_0})|-(n-1)}
	\times
	\prod_{
\alpha \in N_{0}(\chi_{s_0})
	}\frac{1}{n_{i}^{(\check{\alpha})}
	}
	\times
	\prod_{
			\alpha \in N_{1}(\chi_{s_0}) \setminus \Delta_{\para{P}} 	
		}{n_{i}^{(\check{\alpha})}}
	\end{align*}
\end{enumerate}
\end{lemm}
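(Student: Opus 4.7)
The plan is to reduce everything to an explicit formula for $l_{\alpha}^{\pm}(\chi_s)$ and then analyze each factor of $h_1$ and $h_2$ individually. First I would write $\chi_s = \delta_{\para{P}}^{s-1/2} \otimes \delta_{\para{B}}^{1/2}$ in a convenient basis: using $\delta_{\para{P}} = b_{ii}\fundamental{i}$ and $\delta_{\para{B}}^{1/2} = \sum_{j}\fundamental{j}$, and expanding $\check{\alpha} = \sum_j n_j^{(\check{\alpha})}\check{\alpha}_j$, I obtain the affine expression
$$l_{\alpha}^{\pm}(\chi_s) = b_{ii}\, n_i^{(\check{\alpha})}(s-\tfrac{1}{2}) + \sum_{j} n_j^{(\check{\alpha})} \pm 1.$$
In particular each $l_{\alpha}^{\pm}(\chi_s)$ is linear in $s$ with slope $b_{ii}\, n_i^{(\check{\alpha})}$. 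The second key input I would record is that, by the functional equation $\zeta(s)=\zeta(1-s)$ together with $\operatorname{Res}_{s=1}\zeta = R$, the completed zeta function has a simple pole at $s=0$ with residue $-R$; equivalently $\lim_{z\to 0} z\zeta(z)=-R$.

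For part (1), the membership $\alpha \in N_{-1}(\chi_{s_0})$ is exactly the condition $l_{\alpha}^{+}(\chi_{s_0})=0$, so the affine formula above collapses to $l_{\alpha}^{+}(\chi_s) = b_{ii}\, n_i^{(\check{\alpha})}(s-s_0)$. The factor $\zeta(l_{\alpha}^{+}(\chi_s))\, l_{\alpha}^{+}(\chi_s)$ therefore tends to $-R$ as $s\to s_0$ (the slope cancels between the pole of $\zeta$ and the prefactor). Taking the product over $\alpha \in N_{-1}(\chi_{s_0})$ yields $(-R)^{|N_{-1}(\chi_{s_0})|}$.

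For part (2), I would split $h_2$ into its two products and treat each factor separately. If $\alpha \in N_0(\chi_{s_0})$ then $l_{\alpha}^{+}(\chi_{s_0})=1$, so $\zeta(l_{\alpha}^{+}(\chi_s))$ has a simple pole at $s=s_0$; writing $l_{\alpha}^{+}(\chi_s)-1 = b_{ii}\,n_i^{(\check{\alpha})}(s-s_0)$, its leading coefficient is $R/(b_{ii}\,n_i^{(\check{\alpha})})$. If $\alpha \in N_{1}(\chi_{s_0})\setminus \Delta_{\para{P}}$ then $l_{\alpha}^{-}(\chi_{s_0})=0$, so $l_{\alpha}^{-}(\chi_s) = b_{ii}\,n_i^{(\check{\alpha})}(s-s_0)$ contributes a simple zero with leading coefficient $b_{ii}\,n_i^{(\check{\alpha})}$. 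One small verification is that $\Delta_{\para{P}}\subseteq N_1(\chi_{s_0})$ with $|\Delta_{\para{P}}|=n-1$: for $\alpha_j \in \Delta_{\para{P}}$ we have $n_i^{(\check{\alpha}_j)}=0$, so the affine formula gives $\langle \chi_{s_0},\check{\alpha}_j\rangle = 1$ independent of $s_0$. Combining these counts, the order of zero of $h_2$ at $s_0$ is $|N_1(\chi_{s_0})\setminus\Delta_{\para{P}}| - |N_0(\chi_{s_0})| = |N_1(\chi_{s_0})|-|N_0(\chi_{s_0})|-(n-1)$.

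Finally, multiplying the leading coefficients from the two products and regrouping the powers of $b_{ii}$ gives
$$R^{|N_0(\chi_{s_0})|}\, b_{ii}^{\,|N_1(\chi_{s_0})|-|N_0(\chi_{s_0})|-(n-1)}\, \prod_{\alpha\in N_0(\chi_{s_0})}\!\!\frac{1}{n_i^{(\check{\alpha})}}\,\prod_{\alpha\in N_1(\chi_{s_0})\setminus\Delta_{\para{P}}}\!\!n_i^{(\check{\alpha})},$$
which is exactly $\epsilon_{s_0}\cdot R^{|N_0(\chi_{s_0})|}$. There is no real obstacle here; the argument is entirely bookkeeping once the affine formula for $l_{\alpha}^{\pm}(\chi_s)$ and the residues of $\zeta$ at $0,1$ are in hand. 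The only point demanding care is correctly identifying which roots contribute zeros versus poles, and ensuring that $\Delta_{\para{P}}$ is subtracted exactly once.
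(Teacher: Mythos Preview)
Your proof is correct and follows essentially the same approach as the paper's own proof: both derive the affine formula $l_{\alpha}^{\pm}(\chi_s) = b_{ii}\,n_i^{(\check{\alpha})}(s-s_0) + l_{\alpha}^{\pm}(\chi_{s_0})$, then read off the residue of each factor of $h_1$ and $h_2$ individually and multiply. Your explicit verification that $\Delta_{\para{P}}\subseteq N_1(\chi_{s_0})$ (via $n_i^{(\check{\alpha}_j)}=0$ for $\alpha_j\in\Delta_{\para{P}}$) is slightly more detailed than the paper, which simply asserts this fact.
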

\begin{proof}
\begin{enumerate}
\item
It will be enough to show that if $\alpha \in N_{-1}(\chi_{s_0})$ then $$\zeta(l_{\alpha}^{+}(\chi_{s_0}))
l_{\alpha}^{+}(\chi_{s_0}) = -R.$$
But it is just the residue of $\zeta(s)$ at $s=0$ so we are done.
\item
Note that $h_2(\chi_s,s_0)$ is a meromorphic function (it is a product of such functions) therefore it admits a Laurent expansion around $s=s_0$. 
For every $\alpha \in \Phi^{+}$ it holds that 
$l_{\alpha}^{+}(\chi_s)$ is a linear function in $s$. and  $$l_{\alpha}^{+}(\chi_s)=b_{ii}n_{i}^{(\check{\alpha})}s+ m(\alpha)$$
where  $m(\alpha)$ is a constant depends in $\alpha$.

In particular, for every $\alpha \in N_{0}(\chi_{s_0})$ the element 
$\zeta(l_{\alpha}^{+}(\chi_s))$  can be written as $\zeta(F(s,\alpha))$ where 
$$F(s,\alpha)=b_{ii}n_{i}^{(\check{\alpha})}s+ m(\alpha).$$
By definition  $F(s_0,\alpha)=1$. Therefore $\zeta(F(s,\alpha))$ admits a simple pole at $s=s_0$. The  Laurent expansion of $\zeta(F(s,\alpha))$ around $s_0$ is 
$$\zeta(F(s,\alpha)) = \sum_{j=-1}^{\infty} a_j(F(s,\alpha)-1)^j$$
where $a_{-1}=R$. 
Moreover,
\begin{align*}
F(s,\alpha)-1 &= F(s,\alpha)- F(s_0,\alpha) \\
&= b_{ii}n_{i}^{(\check{\alpha})}s+ m(\alpha) - ( b_{ii}n_{i}^{(\check{\alpha})}s_0+ m(\alpha)) \\
&= b_{ii}n_{i}^{(\check{\alpha})}(s-s_0).
\end{align*}
Therefore 
$$\zeta(F(s,\alpha))= \sum_{j=-1}^{\infty} a_j(b_{ii}n_{i}^{(\check{\alpha})})^{j} (s-s_0)^{j}.$$
The leading term  around $s=s_0$ of
$$\prod_{\alpha \in N_{0}(\chi_{s_{0}})}
\zeta(l_{\alpha}^{+}(\chi_{s}))$$ is the product of the leading term of every $\zeta(l_{\alpha}^{+}(\chi_{s}))$ around $s=s_0$.
Therefore, the product 
contributes a pole of order $|N_{0}(\chi_{s_0})|$ and its leading coefficient equals to 
$$\prod_{\alpha \in N_{0}(\chi_{s_0})
	}\frac{1}{n_{i}^{(\check{\alpha})}
	}
	\times \left(\frac{R}{b_{ii}}\right)^{|N_{0}(\chi_{s_0})|}.
 $$
The product 
$$\prod_{\tiny{
			\begin{aligned}
			\alpha \in N_{1}(\chi_{s_0}) \\
			\alpha \not \in \Delta_{\para{P}}\\
			\end{aligned}}}l_{\alpha}^{-}(\chi_s)
$$
has a zero of order $|N_{1}(\chi_{s_0}) \setminus \Delta_{\para{P}}| = |N_{1}(\chi_{s_0})| -(n-1)$.
Since every $\alpha \in \Delta_{\para{P}}$ satisfies  
$\inner{\chi_{s},\check{\alpha}}=1$.
By a similar argument,
its leading coefficient equals to 
$$b_{ii}^{|N_{1}(\chi_{s_0})|-(n-1)}
\times 
	\prod_{\tiny{
				\begin{aligned}
				\alpha \in N_{1}(\chi_{s_0}) \\
				\alpha \not \in \Delta_{\para{P}}\\
				\end{aligned}}				
				}
				n_{i}^{(\check{\alpha})}
 .$$
Hence, the leading coefficient of $h_2(\chi_s,s_0)$ around $s=s_0$  equals to
$$\epsilon_{s_0} \times R^{|N_{0}(\chi_{s_0})|}$$
and it admits a zero of order 
$$N_{1}(\chi_s) -N_{0}(\chi_s) -(n-1).$$
\end{enumerate}
\end{proof}
\begin{cor}\label{redEis}
For the notations as above and for $F=\mathbb{Q}$ it holds that 
\begin{align*}
E_{\para{B}}^{\#}(\chi_{s_0},g) =&
h_{3}(\chi_{s_0},s_0)\\
\times &
\epsilon_{s_0} \times (-1)^{|N_{0}(\chi_{s_0})|} \times 
2^{|N_{\pm1}(\chi_{s_0})|}
\times \zeta(2) ^{|N_{1}(\chi_{s_0})|}\\
\times &
R^{|N_{0,-1}(\chi_{s_0})|}
\times 
A_{\weylelement{_\para{P}}}\times 
\leadingterm{-d_{\para{P}}(\chi_{s_0})  }^{\para{P}}(f^{0},s_0,g).
\end{align*}
\end{cor}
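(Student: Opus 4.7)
The plan is to substitute the explicit factorizations already established into the identity from Corollary~\ref{Lemma1} and then take $s \to s_0$, using that $E_{\para{B}}^{\#}$ is entire (Theorem~\ref{Ikeda_Series}) to make sense of the limit as a product of leading Laurent coefficients.

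First I would start from
$$E_{\para{B}}^{\#}(\chi_s,g)=A_{\weylelement{\para{P}}}\cdot G_{\para{P}}(\chi_s)\cdot \prod_{\alpha\in\Delta_{\para{P}}}\zeta(l^+_\alpha(\chi_s))\,l^+_\alpha(\chi_s)\cdot E_{\para{P}}(f^0,s,g)$$
and observe that the product over $\Delta_{\para{P}}$ is in fact \emph{independent} of $s$: since $\delta_{\para{P}_i}=b_{ii}\fundamental{i}$ pairs trivially with the coroots of $\Delta\setminus\{\alpha_i\}$, every $\alpha\in\Delta_{\para{P}}$ satisfies $\inner{\chi_s,\check\alpha}=1$, so $l^+_\alpha(\chi_s)=2$ and $\zeta(l^+_\alpha(\chi_s))=\zeta(2)$. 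Hence this factor equals $(2\zeta(2))^{n-1}$.

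Next I would use the decomposition of $G_{\para{P}}(\chi_s)$ given in the proof of Theorem~\ref{Upper}, namely
\begin{align*}
G_{\para{P}}(\chi_s)=h_1(\chi_s,s_0)\cdot h_2(\chi_s,s_0)\cdot h_3(\chi_s,s_0)&\cdot(\zeta(2))^{|N_1(\chi_{s_0})|-(n-1)}\\
&\cdot(-1)^{|N_{0,-1}(\chi_{s_0})|}\cdot 2^{|N_{\pm1}(\chi_{s_0})|-(n-1)}.
\end{align*}
By Lemma~\ref{Corollary-1}, $h_1(\chi_s,s_0)\to(-R)^{|N_{-1}(\chi_{s_0})|}$, while $h_2(\chi_s,s_0)$ has a zero of order $d_{\para{P}}(\chi_{s_0})=|N_1(\chi_{s_0})|-|N_0(\chi_{s_0})|-(n-1)$ at $s=s_0$ with leading coefficient $\epsilon_{s_0}\,R^{|N_0(\chi_{s_0})|}$; and $h_3(\chi_s,s_0)$ is holomorphic and non-vanishing at $s_0$ (since $F=\mathbb{Q}$ and $s_0\in\mathbb{R}$, as in the proof of Theorem~\ref{Upper}), so it contributes the factor $h_3(\chi_{s_0},s_0)$. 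Meanwhile $E_{\para{P}}(f^0,s,g)$ has Laurent expansion beginning with $\leadingterm{-d_{\para{P}}(\chi_{s_0})}^{\para{P}}(f^0,s_0,g)(s-s_0)^{-d_{\para{P}}(\chi_{s_0})}$.

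Since the left-hand side is entire at $\chi_{s_0}$, I would then compute $E_{\para{B}}^{\#}(\chi_{s_0},g)$ as the finite limit $s\to s_0$ of the right-hand side: the $(s-s_0)^{d_{\para{P}}(\chi_{s_0})}$ from $h_2$ cancels the pole of $E_{\para{P}}(f^0,s,g)$, and multiplying together all the resulting leading constants yields
\begin{align*}
E_{\para{B}}^{\#}(\chi_{s_0},g)=A_{\weylelement{\para{P}}}\cdot h_3(\chi_{s_0},s_0)&\cdot\epsilon_{s_0}\cdot(-R)^{|N_{-1}(\chi_{s_0})|}R^{|N_0(\chi_{s_0})|}\\
&\cdot(\zeta(2))^{|N_1(\chi_{s_0})|-(n-1)}\cdot(2\zeta(2))^{n-1}\\
&\cdot(-1)^{|N_{0,-1}(\chi_{s_0})|}\cdot 2^{|N_{\pm1}(\chi_{s_0})|-(n-1)}\cdot\leadingterm{-d_{\para{P}}(\chi_{s_0})}^{\para{P}}(f^0,s_0,g).
\end{align*}
The only remaining task is bookkeeping: combining $(-1)^{|N_{-1}|}(-1)^{|N_{0,-1}|}=(-1)^{|N_0|}$ (using disjointness of $N_0$ and $N_{-1}$), merging $R^{|N_{-1}|}R^{|N_0|}=R^{|N_{0,-1}(\chi_{s_0})|}$, and absorbing $(2\zeta(2))^{n-1}$ into the $\zeta(2)$ and $2$ powers to produce $\zeta(2)^{|N_1(\chi_{s_0})|}$ and $2^{|N_{\pm1}(\chi_{s_0})|}$. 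This is not a conceptual obstacle but is the only place where one can easily slip up; I would be careful to note that $|N_{0,-1}|=|N_0|+|N_{-1}|$ and $|N_{\pm1}|=|N_1|+|N_{-1}|$ when tracking the exponents.
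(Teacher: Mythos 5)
Your proposal is correct and follows exactly the route the paper intends: substitute the decomposition of $G_{\para{P}}$ from the proof of Theorem (\ref{Upper}) into Corollary (\ref{Lemma1}), evaluate $h_1$ and the leading coefficient of $h_2$ via Lemma (\ref{Corollary-1}), use holomorphy and non-vanishing of $h_3$ together with the entireness of $E_{\para{B}}^{\#}$ to pass to the limit $s\to s_0$, and then collect the constants $(2\zeta(2))^{n-1}$, signs, and powers of $R$ and $\zeta(2)$. Your exponent and sign bookkeeping ($|N_{0,-1}|=|N_0|+|N_{-1}|$, $|N_{\pm1}|=|N_1|+|N_{-1}|$) reproduces the stated formula, so nothing is missing.
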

Let $(\para{P},s_0,\para{Q},t_0,\weylelement{})$ be an admissible data. Our goal is to determine the constant for Theorem (\ref{Weak_Thm}). For this purpose, we shall evaluate the quotient $\frac{h_{3,\para{Q}}(\chi_{\para{Q},t_0})}
{h_{3,\para{P}}(\chi_{\para{P},s_0})}$. We shall need the following lemmas.
\begin{lemm}\label{Lemma-T1}
Let $\chi$ be an unramified character of $T$. For every $\epsilon\in \mathbb{C}$ and $\weylelement{} \in \weyl{G} $ it holds that:
\begin{enumerate}
\item\label{Lemma-T1-1}
$B_{\epsilon}(\chi)=-B_{-\epsilon}(\chi)$.
\item\label{Lemma-T1-2}
	$B_{\epsilon}(\weylelement{}\chi)=\weylelement{}B_{\epsilon}(\chi)$.
\item \label{Lemma-T1-3}
$|N_{\pm\epsilon}(\chi)|=|N_{\pm\epsilon}(\weylelement{}\chi)|$.
\item \label{Lemma-T1-4}
$|N_{1}(\chi)| - | N_{-1} (\weylelement{}\chi) | = |N_{1}(\weylelement{}\chi)| -
|N_{-1}(\chi)|$.
\item \label{Lemma-T1-5}
$|N_{0}(\chi)| = |N_{0}(\weylelement{}\chi)|$.
\end{enumerate}
\end{lemm}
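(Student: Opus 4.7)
The plan is to derive everything from the two basic identities (\ref{Lemma-T1-1}) and (\ref{Lemma-T1-2}), which are immediate from the definitions, and then combine them with the observation that the $B$--sets are stable under $\alpha \mapsto -\alpha$, so that ``positive half'' and ``full'' cardinalities differ by a factor of $2$.

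First I would dispatch (\ref{Lemma-T1-1}) by noting $\widecheck{-\alpha} = -\check{\alpha}$, which gives $\langle \chi, \widecheck{-\alpha}\rangle = -\langle \chi,\check{\alpha}\rangle$ and hence $\alpha \in B_{\epsilon}(\chi) \iff -\alpha \in B_{-\epsilon}(\chi)$. For (\ref{Lemma-T1-2}), the adjointness $\langle w\chi,\check{\alpha}\rangle = \langle \chi, w^{-1}\check{\alpha}\rangle$, together with the fact that the Weyl group acts on coroots compatibly with its action on roots, yields $\alpha \in B_{\epsilon}(w\chi) \iff w^{-1}\alpha \in B_{\epsilon}(\chi)$, which is exactly $B_{\epsilon}(w\chi) = wB_{\epsilon}(\chi)$. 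A direct consequence is $|B_{\epsilon}(w\chi)| = |B_{\epsilon}(\chi)|$.

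For (\ref{Lemma-T1-3}), I would argue that by (\ref{Lemma-T1-1}) the set $B_{\epsilon}(\chi) \cup B_{-\epsilon}(\chi)$ is closed under $\alpha \mapsto -\alpha$, so it meets $\Phi^{+}$ in exactly half its elements; equivalently
\[
|N_{\pm\epsilon}(\chi)| \;=\; \tfrac{1}{2}\bigl(|B_{\epsilon}(\chi)| + |B_{-\epsilon}(\chi)|\bigr)
\]
(with the two summands equal when $\epsilon\neq 0$, and the convention that $B_{0}\cup B_{0} = B_{0}$ when $\epsilon=0$). Applying (\ref{Lemma-T1-2}) on each term gives $|N_{\pm\epsilon}(\chi)| = |N_{\pm\epsilon}(w\chi)|$. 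The same argument with $\epsilon=0$ (where $B_0(\chi)$ is itself stable under $\alpha\mapsto -\alpha$) proves (\ref{Lemma-T1-5}).

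Finally, (\ref{Lemma-T1-4}) is a formal rearrangement: it is equivalent to $|N_{1}(\chi)| + |N_{-1}(\chi)| = |N_{1}(w\chi)| + |N_{-1}(w\chi)|$, which is precisely (\ref{Lemma-T1-3}) with $\epsilon = 1$. There is no real obstacle here; the only point to be careful about is keeping track of the distinction between the sets $B_\epsilon$ (all of $\Phi$) and $N_\epsilon$ (only $\Phi^+$), since the Weyl group preserves $|B_\epsilon|$ directly but affects positivity, and so the passage through the $\alpha\mapsto -\alpha$ symmetry is what makes the ``positive half'' cardinalities also $W$--invariant.
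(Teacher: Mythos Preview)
Your proof is correct and follows essentially the same approach as the paper: (\ref{Lemma-T1-1}) and (\ref{Lemma-T1-2}) are dispatched directly from the definitions, then (\ref{Lemma-T1-3}) is obtained by relating $|N_{\pm\epsilon}|$ to $|B_\epsilon|$ (the paper writes $B_\epsilon = N_\epsilon \cup -N_{-\epsilon}$ as a disjoint union, while you pass through the symmetry of $B_\epsilon \cup B_{-\epsilon}$ under negation---these are the same observation), and (\ref{Lemma-T1-4}), (\ref{Lemma-T1-5}) are then read off as special cases.
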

\begin{proof}
(\ref{Lemma-T1-1}) and (\ref{Lemma-T1-2}) are immediate from the definition and the $\weyl{G}$ invariance of the paring. (\ref{Lemma-T1-4}) and (\ref{Lemma-T1-5})  follow from (\ref{Lemma-T1-3}). Thus we need to prove only (\ref{Lemma-T1-3}).

Note that
\begin{align*}
 B_{\epsilon}(\chi) &= \{ \alpha \in \Phi \: : \: \inner{\chi,\check{\alpha}} = \epsilon \} \\
 &= \{ \alpha \in \Phi^{+} \: : \: \inner{\chi,\check{\alpha}} = \epsilon \} \cup 
 \{ \alpha \in \Phi^{-} \: : \: \inner{\chi,\check{\alpha}} = \epsilon \} \\
 &= N_{\epsilon}(\chi) \cup \{ -\alpha \in \Phi^{+} \: : \: \inner{\chi,-\check{\alpha}} = -\epsilon \} \\
 &= N_{\epsilon}(\chi) \cup  - N_{-\epsilon}(\chi).
 \end{align*}
 Hence, 
 \begin{align*}
 |B_{\epsilon}(\chi)| &=  |N_{\epsilon}(\chi)| + |-N_{-\epsilon}(\chi)| =  |N_{\epsilon}(\chi)| + |N_{-\epsilon}(\chi)|.
\end{align*}
By the same argument 
$$|B_{\epsilon}(\weylelement{}\chi)| =    |N_{\epsilon}(\weylelement{}\chi)| + |N_{-\epsilon}(\weylelement{}\chi)|.$$
From (\ref{Lemma-T1-2}) and the injectivity of 
 $\weylelement{}$  it holds that $|B_{\epsilon}(\weylelement{}\chi)|=|B_{\epsilon}(\chi)|$.
Thus, we are done.
\end{proof}
\begin{lemm}\label{Lem}
For every 
$\alpha \in \Phi$  let $F_{\alpha} : \mathfrak{a}^{\ast}_{\mathbb{C}} \rightarrow \mathbb{C}$ that satisfies $F_{\alpha}(\weylelement{} \chi) = 
F_{\weylelement{}^{-1}\alpha}(\chi)$ for every $\chi \in \mathfrak{a}^{\ast}_{\mathbb{C}},\weylelement{}\in \weyl{G}$
Assume that $F_{\alpha}(\chi) \neq 0 $ for every $\alpha \in \Phi$ then
\begin{align*}
\frac{ \prod_{\alpha \in \Phi^{+} \setminus N_{\pm1,0}(\weylelement{} \chi)} F_{\alpha}(\weylelement{} \chi)}
{ \prod_{\alpha \in \Phi^{+} \setminus N_{\pm1,0}(\chi)} F_{\alpha}(\chi)}= \prod_{
\tiny {\begin{aligned}
    \alpha \in&  \Phi^{+} \setminus N_{\pm1,0}(\chi) \\
     \weylelement{}\alpha &\in \Phi^{-}
    \end{aligned}}}
\frac{F_{-\alpha}(\chi)}{F_{\alpha}(\chi)}.
\end{align*}
\end{lemm}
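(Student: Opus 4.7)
The plan is to apply the covariance $F_\alpha(w\chi)=F_{w^{-1}\alpha}(\chi)$ to the numerator and re-index. Setting $\beta = w^{-1}\alpha$, the numerator becomes
\[
\prod_{\alpha\in\Phi^+\setminus N_{\pm1,0}(w\chi)} F_{w^{-1}\alpha}(\chi) = \prod_{\beta\in w^{-1}\bigl(\Phi^+\setminus N_{\pm1,0}(w\chi)\bigr)} F_{\beta}(\chi).
\]
By part~(\ref{Lemma-T1-2}) of Lemma~\ref{Lemma-T1} together with the observation that $N_\epsilon=B_\epsilon\cap\Phi^+$, we have $w^{-1} N_{\pm1,0}(w\chi) = w^{-1}\Phi^+\cap B_{\pm1,0}(\chi)$, so the indexing set simplifies to $w^{-1}\Phi^+\setminus B_{\pm1,0}(\chi)$.

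Next I would split $w^{-1}\Phi^+$ into its positive and negative parts, writing $w^{-1}\Phi^+ = A\sqcup B$ where $A=\{\beta\in\Phi^+\colon w\beta\in\Phi^+\}$ and $B=\{\beta\in\Phi^-\colon w\beta\in\Phi^+\}$. Using part~(\ref{Lemma-T1-1}) of Lemma~\ref{Lemma-T1}, the set $B_{\pm1,0}(\chi)$ is symmetric under $\alpha\mapsto -\alpha$, so $B\setminus B_{\pm1,0}(\chi) = -\{\alpha\in\Phi^+\setminus N_{\pm1,0}(\chi)\colon w\alpha\in\Phi^-\}$. Therefore the numerator factors as
\[
\left(\prod_{\beta\in A\setminus N_{\pm1,0}(\chi)} F_\beta(\chi)\right)
\cdot
\left(\prod_{\substack{\alpha\in \Phi^+\setminus N_{\pm1,0}(\chi)\\ w\alpha\in\Phi^-}} F_{-\alpha}(\chi)\right).
\]

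Meanwhile the denominator splits according to the same decomposition $\Phi^+=A\sqcup\{\alpha\in\Phi^+\colon w\alpha\in\Phi^-\}$:
\[
\prod_{\alpha\in\Phi^+\setminus N_{\pm1,0}(\chi)} F_\alpha(\chi) = \left(\prod_{\alpha\in A\setminus N_{\pm1,0}(\chi)} F_\alpha(\chi)\right)\cdot\left(\prod_{\substack{\alpha\in \Phi^+\setminus N_{\pm1,0}(\chi)\\ w\alpha\in\Phi^-}} F_{\alpha}(\chi)\right).
\]
The first factors cancel (this is where the nonvanishing hypothesis is used), and taking the quotient produces exactly the claimed formula. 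The main bookkeeping obstacle is keeping careful track of how $B_{\pm1,0}$ interacts with $w^{-1}\Phi^+$ under the re-indexing and verifying that the symmetry $\alpha\mapsto -\alpha$ preserves the excluded set; everything else is a clean set-theoretic manipulation.
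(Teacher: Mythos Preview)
Your proof is correct and follows essentially the same approach as the paper: apply the covariance $F_\alpha(w\chi)=F_{w^{-1}\alpha}(\chi)$, re-index via $\beta=w^{-1}\alpha$, use Lemma~\ref{Lemma-T1}(\ref{Lemma-T1-2}) to rewrite the index set as $w^{-1}\Phi^+\setminus B_{\pm1,0}(\chi)$, split into positive and negative parts, and invoke the symmetry $-B_{\pm1,0}(\chi)=B_{\pm1,0}(\chi)$ from Lemma~\ref{Lemma-T1}(\ref{Lemma-T1-1}) to handle the negative piece before cancelling against the analogous splitting of the denominator. Your write-up is slightly more compressed than the paper's chain of equalities, but the logical skeleton and the lemma invocations are identical.
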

\begin{proof}
\begin{align*}
\prod_{\alpha \in \Phi^{+} \setminus N_{\pm1,0}(\weylelement{}\chi)}F_{\alpha}(\weylelement{}\chi)=&\prod_{\tiny {\begin{aligned}
 \alpha &\in  \Phi^{+} \\
 \alpha \not \in& B_{\pm1,0}(\weylelement{}\chi)
 \end{aligned}}} F_{\alpha}(\weylelement{}\chi)  
 \overset{\#1}{=}
 \prod_{\tiny {\begin{aligned}
  \alpha &\in  \Phi^{+} \\
  \alpha \not \in& \weylelement{}B_{\pm1,0}(\chi)
  \end{aligned}}} F_{\weylelement{}^{-1}\alpha}(\chi)  \\
 =
 &\prod_{\tiny {\begin{aligned}
  \alpha &\in  \Phi^{+} \\
  \weylelement{}^{-1}\alpha \not \in& B_{\pm1,0}(\chi)
  \end{aligned}}} F_{\weylelement{}^{-1}\alpha}(\chi)  
 =
 \prod_{\tiny {\begin{aligned}
  \alpha &\in  \Phi^{+} \\
  \beta =&\weylelement{}^{-1}\alpha \\
   \beta \not \in& B_{\pm1,0}(\chi)
  \end{aligned}}} F_{\beta}(\chi)  \\
 =&\prod_{\tiny {\begin{aligned}
  \alpha &\in  \Phi^{+} \\
  \beta =&\weylelement{}^{-1}\alpha \\
   \beta \not \in& B_{\pm1,0}(\chi) \\
   \beta &\in  \Phi^{+} \\
  \end{aligned}}} F_{\beta}(\chi)  \times 
   \prod_{\tiny {\begin{aligned}
    \alpha &\in  \Phi^{+} \\
    \beta =&\weylelement{}^{-1}\alpha \\
     \beta \not \in& B_{\pm1,0}(\chi) \\
     \beta &\in  \Phi^{-} \\
    \end{aligned}}} F_{\beta}(\chi) \\
 =&\prod_{\tiny {\begin{aligned}
  \weylelement{}\beta &\in  \Phi^{+} \\
   \beta \not \in& B_{\pm1,0}(\chi) \\
   \beta &\in  \Phi^{+} \\
  \end{aligned}}} F_{\beta}(\chi)  \times 
   \prod_{\tiny {\begin{aligned}
    \weylelement{}\beta &\in  \Phi^{+} \\
     \beta \not \in& B_{\pm1,0}(\chi) \\
     \beta &\in  \Phi^{-} \\
    \end{aligned}}} F_{\beta}(\chi) \\
 =&\prod_{\tiny {\begin{aligned}
  \weylelement{}\beta &\in  \Phi^{+} \\
   \beta \not \in& B_{\pm1,0}(\chi) \\
   \beta &\in  \Phi^{+} \\
  \end{aligned}}} F_{\beta}(\chi)  \times 
   \prod_{\tiny {\begin{aligned}
    -\weylelement{}\beta &\in  \Phi^{-} \\
     -\beta \not \in& -B_{\pm1,0}(\chi) \\
     -\beta &\in  \Phi^{+} \\
    \end{aligned}}} F_{\beta}(\chi) \\    
 \overset{\#2}{=}&\prod_{\tiny {\begin{aligned}
  \weylelement{}\beta &\in  \Phi^{+} \\
   \beta \not \in& B_{\pm1,0}(\chi) \\
   \beta &\in  \Phi^{+} \\
  \end{aligned}}} F_{\beta}(\chi)  \times 
   \prod_{\tiny {\begin{aligned}
    \weylelement{}\gamma &\in  \Phi^{-} \\
     \gamma \not \in& B_{\pm1,0}(\chi) \\
     \gamma &\in  \Phi^{+} \\
    \end{aligned}}} F_{-\gamma}(\chi) 
\end{align*}
where 
$\#1$ is due to (\ref{Lemma-T1}) item (\ref{Lemma-T1-2}) and 
$\#2$ is due to
Lemma (\ref{Lemma-T1}) item (\ref{Lemma-T1-1}) since
\begin{align*}
-B_{\pm1,0}(\chi) &= -B_{1}(\chi)\cup -B_{0}(\chi) \cup -B_{-1}(\chi) \\
&=B_{-1}(\chi)\cup B_{0}(\chi) \cup B_{1}(\chi) \\
&=B_{\pm1,0}(\chi).
\end{align*}
Hence,
\begin{align*}
\frac{ \prod_{\alpha \in \Phi^{+} \setminus N_{\pm1,0}(\weylelement{} \chi)} F_{\alpha}(\weylelement{} \chi)}
{ \prod_{\alpha \in \Phi^{+} \setminus N_{\pm1,0}(\chi)} F_{\alpha}(\chi)}= \prod_{
\tiny {\begin{aligned}
    \alpha \in&  \Phi^{+} \setminus N_{\pm1,0}(\chi) \\
     \weylelement{}\alpha &\in \Phi^{-}
    \end{aligned}}}
\frac{F_{-\alpha}(\chi)}{F_{\alpha}(\chi)}.
\end{align*}
\end{proof}
\begin{cor}\label{corr}
Let $(\para{P},s_0,\para{Q},t_0,\weylelement{})$ be an admissible data. For $F=\mathbb{Q}$ it holds that
\begin{align}
\frac{h_{3,\para{Q}}(\chi_{\para{Q},t},t_0)}{h_{3,\para{P}}(\chi_{\para{P},s},s_0)}&=\prod_{
\tiny {\begin{aligned}
    \alpha \in&  \Phi^{+} \setminus N_{\pm1,0}(\chi_{\para{P},s_0}) \\
     \weylelement{}\alpha &\in \Phi^{-}
    \end{aligned}}}
\frac{\zeta(\inner{\chi_{\para{P},s_0},\check{\alpha}})}{
\zeta(\inner{\chi_{\para{P},s_0},\check{\alpha}}+1)
}.
\end{align}
\end{cor}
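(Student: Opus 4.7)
The natural plan is to recognize the corollary as a direct application of Lemma \ref{Lem} with the function $F_\alpha(\chi) = \zeta(l_\alpha^+(\chi))\, l_\alpha^+(\chi)\, l_\alpha^-(\chi)$. Then $h_{3,\para{P}}(\chi_{\para{P},s_0},s_0)=\prod_{\alpha \in \Phi^+ \setminus N_{\pm 1,0}(\chi_{\para{P},s_0})} F_\alpha(\chi_{\para{P},s_0})$, and the admissibility hypothesis $\weylelement{}\chi_{\para{P},s_0}=\chi_{\para{Q},t_0}$ lets me rewrite the numerator of the left-hand side as $\prod_{\alpha\in\Phi^+\setminus N_{\pm 1,0}(\weylelement{}\chi_{\para{P},s_0})} F_\alpha(\weylelement{}\chi_{\para{P},s_0})$. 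Lemma \ref{Lem} then immediately yields
\[
\frac{h_{3,\para{Q}}(\chi_{\para{Q},t_0},t_0)}{h_{3,\para{P}}(\chi_{\para{P},s_0},s_0)} \;=\; \prod_{\substack{\alpha\in\Phi^+\setminus N_{\pm 1,0}(\chi_{\para{P},s_0}) \\ \weylelement{}\alpha\in\Phi^-}}\frac{F_{-\alpha}(\chi_{\para{P},s_0})}{F_\alpha(\chi_{\para{P},s_0})}.
\]

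Before that, two sanity checks need to be performed. First, the equivariance hypothesis $F_\alpha(\weylelement{}\chi)=F_{\weylelement{}^{-1}\alpha}(\chi)$ of Lemma \ref{Lem} follows from the identity $\inner{\weylelement{}\chi,\check{\alpha}}=\inner{\chi,\weylelement{}^{-1}\check{\alpha}}=\inner{\chi,(\weylelement{}^{-1}\alpha)^\vee}$, since the three factors in $F_\alpha$ are all functions of $\inner{\chi,\check{\alpha}}$ alone. Second, the non-vanishing hypothesis $F_\alpha(\chi_{\para{P},s_0})\neq 0$ for every $\alpha$ in the product is where the assumption $F=\mathbb{Q}$ enters: for $\alpha \notin N_{\pm 1,0}(\chi_{\para{P},s_0})$ the factors $l_\alpha^\pm(\chi_{\para{P},s_0})$ are automatically non-zero, and since $\chi_{\para{P},s_0}$ is real, $l_\alpha^+(\chi_{\para{P},s_0})$ is real, at which points the completed Riemann zeta has no zero (the only possible real zero is $s=1/2$, which we assume does not occur, in accordance with the remark after Theorem \ref{intomain}).

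The only remaining step is to evaluate the ratio $F_{-\alpha}/F_\alpha$ explicitly. Writing $x=\inner{\chi_{\para{P},s_0},\check{\alpha}}$, so that $F_\alpha(\chi_{\para{P},s_0})=\zeta(x+1)(x+1)(x-1)$ and $F_{-\alpha}(\chi_{\para{P},s_0})=\zeta(1-x)(1-x)(-(x+1))=\zeta(1-x)(x-1)(x+1)$, and then invoking the functional equation $\zeta(s)=\zeta(1-s)$ of the completed zeta (the same functional equation that underlies the proof of Theorem \ref{Ikeda_Series}), the denominators $(x+1)(x-1)$ cancel and one finds
\[
\frac{F_{-\alpha}(\chi_{\para{P},s_0})}{F_\alpha(\chi_{\para{P},s_0})}=\frac{\zeta(x)}{\zeta(x+1)}=\frac{\zeta(\inner{\chi_{\para{P},s_0},\check{\alpha}})}{\zeta(\inner{\chi_{\para{P},s_0},\check{\alpha}}+1)},
\]
which is exactly the factor appearing in the statement. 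No step is a genuine obstacle — the real work is packaged inside Lemma \ref{Lem} — and the only subtle point is the combination of the functional equation with the sign bookkeeping coming from swapping $\alpha\leftrightarrow -\alpha$.
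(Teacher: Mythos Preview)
Your proof is correct and follows essentially the same route as the paper: apply Lemma \ref{Lem} with $F_\alpha(\chi)=\zeta(l_\alpha^+(\chi))\,l_\alpha^+(\chi)\,l_\alpha^-(\chi)$, then simplify $F_{-\alpha}/F_\alpha$ using $l_{-\alpha}^+l_{-\alpha}^-=l_\alpha^+l_\alpha^-$ together with the functional equation of $\zeta$. If anything, you are slightly more explicit than the paper in verifying the non-vanishing hypothesis required by Lemma \ref{Lem}.
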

\begin{proof}
Notice that $l_{\alpha}^{\pm}(\weylelement{}\chi)=l_{\weylelement{}^{-1}\alpha}^{\pm}(\chi)$. Hence, we can apply Lemma (\ref{Lem}) with the function 
$$F_{\alpha}(\chi) = l_{\alpha}^{+}(\chi) l_{\alpha}^{-}(\chi) \zeta(l_{\alpha}^{+}(\chi)).$$
Since $(\para{P},s_0,\para{Q},t_0,\weylelement{})$ is an admissible data, it holds that $\weylelement{} (\chi_{\para{P},s_0}) = \chi_{\para{Q},t_0}$.
Hence,
$$ \frac{h_{3,\para{Q}}(\chi_{\para{Q},t},t_0)}{h_{3,\para{P}}(\chi_{\para{P},s},s_0)} =
\frac{ \prod_{\alpha \in \Phi^{+} \setminus N_{\pm1,0}(\chi_{\para{Q},t_0})} F_{\alpha}(\chi_{\para{Q},t_0})}
{ \prod_{\alpha \in \Phi^{+} \setminus N_{\pm1,0}(\chi_{\para{P},s_0})} F_{\alpha}(\chi_{\para{P},s_0})}= \prod_{
\tiny {\begin{aligned}
    \alpha \in&  \Phi^{+} \setminus N_{\pm1,0}(\chi_{\para{P},s_0}) \\
     \weylelement{}\alpha &\in \Phi^{-}
    \end{aligned}}}
\frac{F_{-\alpha}(\chi_{\para{P},s_0})}{F_{\alpha}(\chi_{\para{P},s_0})}.
$$
Notice that
\begin{align*}
l_{-\alpha}^{+}(\chi) l_{-\alpha}^{-}(\chi)  =
l_{\alpha}^{+}(\chi) l_{\alpha}^{-}(\chi).
\end{align*}
Moreover, by functional equation of the $\zeta$ function it holds that 
$\zeta(l_{-\alpha }(\chi)) =\zeta(\inner{\chi,\check{\alpha}})$.
Hence 
$$\frac{F_{-\alpha}(\chi)}{F_{\alpha}(\chi)} = \frac{\zeta(\inner{\chi,\check{\alpha}})}{\zeta(\inner{\chi,\check{\alpha}}+1)}
.$$
So the Corollary follows.
\end{proof}
\newpage
\begin{thm} \label{main_thm}
Let $(\para{P}_i,s_0,\para{P}_j,t_0,\weylelement{})$ be an admissible data. For $F=\mathbb{Q}$ 
there exists $A \in \mathbb{C}$ such that
$$\leadingterm{-d_{\para{P}_i}(\chi_{\para{P}_i,s_0})}^{\para{P}_i}(f^{0}_{\para{P}_i},s_0,g)=A \times \leadingterm{
-d_{\para{P}_j}(\chi_{\para{P}_j,t_0})
} 
 ^{\para{P}_j}(f^{0}_{\para{P}_j},t_0,g)
 \quad \forall g \in G.
 $$
 Moreover,
 $$A= \frac{\epsilon_{t_0}}{\epsilon_{s_0}}\times 
 \prod_{\tiny {\begin{aligned}
   		\alpha \in& \Phi^{+} \setminus N_{\pm1,0}(\chi_{\para{P}_i,s_0})\\
   		\weylelement{}\alpha & \in \Phi^{-}
   		\end{aligned}}}
   \frac{
   	\zeta (\left \langle \chi_{\para{P}_i,s_0} ,\check{\alpha} \right \rangle)
   }{\zeta(
   \left \langle \chi_{\para{P}_i,s_0} ,\check{\alpha} \right \rangle+1)
 }
  \times 
 \left(
 \frac{R}{\zeta(2)} \right)^{d}\times \frac{A_{\para{P}_j}}{A_{\para{P}_i}}
 $$
 where 
 \begin{align*}
A_{\para{P}_i}&=
Res_{\mathcal
{F}_{\hat{i}}} C_{\weylelement{\para{P}_i}} (\chi_{\para{P}_i,s_0}) & A_{\para{P}_j}&=
Res_{\mathcal
{F}_{\hat{j}}} C_{\weylelement{\para{P}_j}} (\chi_{\para{P}_j,t_0})\\
d=&|N_{-1}(\chi_{\para{P}_j,t_0})|-|N_{-1}(\chi_{\para{P}_i,s_0})|
 \end{align*}
\end{thm}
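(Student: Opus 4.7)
The plan is to apply Corollary \ref{redEis} twice, once at $\chi_{\para{P}_i,s_0}$ and once at $\chi_{\para{P}_j,t_0}$, and then use the $\weyl{G}$-invariance of the normalized Eisenstein series from Theorem \ref{Ikeda_Series}. Since $(\para{P}_i,s_0,\para{P}_j,t_0,\weylelement{})$ is admissible, $\weylelement{}\chi_{\para{P}_i,s_0}=\chi_{\para{P}_j,t_0}$, so the two specializations of $E_\para{B}^{\#}$ agree on the nose. Equating the two expressions and solving for the ratio $\leadingterm{-d_{\para{P}_i}(\chi_{\para{P}_i,s_0})}^{\para{P}_i}(f^0,s_0,g)/\leadingterm{-d_{\para{P}_j}(\chi_{\para{P}_j,t_0})}^{\para{P}_j}(f^0,t_0,g)$ gives $A$ as a product of explicit factors, each coming from one line of the formula in Corollary \ref{redEis}.

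The next step is to simplify each of these factors using the counting identities of Lemma \ref{Lemma-T1}. Specifically: the equality $|N_0(\chi_{\para{P}_i,s_0})|=|N_0(\chi_{\para{P}_j,t_0})|$ (item \ref{Lemma-T1-5}) cancels the sign $(-1)^{|N_0|}$; the equality $|N_{\pm 1}(\chi_{\para{P}_i,s_0})|=|N_{\pm 1}(\chi_{\para{P}_j,t_0})|$ (item \ref{Lemma-T1-3}) cancels the power of $2$; and item \ref{Lemma-T1-4} combined with \ref{Lemma-T1-5} shows that
\[
|N_1(\chi_{\para{P}_j,t_0})|-|N_1(\chi_{\para{P}_i,s_0})|
=|N_{-1}(\chi_{\para{P}_j,t_0})|-|N_{-1}(\chi_{\para{P}_i,s_0})|
=|N_{0,-1}(\chi_{\para{P}_j,t_0})|-|N_{0,-1}(\chi_{\para{P}_i,s_0})|=d,
\]
so that the surviving powers of $R$ and $\zeta(2)$ combine into $\left(R/\zeta(2)\right)^{d}$. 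What remains is the quotient of the ``generic'' factors $h_{3,\para{P}_j}(\chi_{\para{P}_j,t_0},t_0)/h_{3,\para{P}_i}(\chi_{\para{P}_i,s_0},s_0)$, which by Corollary \ref{corr} equals precisely the product of ratios of $\zeta$-values over positive roots sent to negative roots by $\weylelement{}$ and lying outside $N_{\pm 1,0}(\chi_{\para{P}_i,s_0})$. Finally, the constants $A_{\para{P}_i}$ and $A_{\para{P}_j}$ appear through the normalization of Corollary \ref{Lemma1}, yielding the factor $A_{\para{P}_j}/A_{\para{P}_i}$, and $\epsilon_{t_0}/\epsilon_{s_0}$ is the ratio of the explicit leading coefficients of $h_2$ in Lemma \ref{Corollary-1}.

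The core of the argument is just bookkeeping, not analysis — all the analytic content is packed into the entireness and $\weyl{G}$-invariance of $E_\para{B}^{\#}$ (Theorem \ref{Ikeda_Series}) together with the separation of $G_\para{P}(\chi_s)$ into the four pieces $h_1,h_2,h_3$ and the constant block in Corollary \ref{redEis}. The main conceptual point to verify is that Corollary \ref{redEis} genuinely isolates $\leadingterm{-d_\para{P}(\chi_{s_0})}^\para{P}(f^0,s_0,g)$, i.e.\ that the zero of $h_2$ at $s=s_0$ accounts for the entire order $d_\para{P}(\chi_{s_0})$ of the pole of $E_\para{P}(f^0,s,g)$. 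This is guaranteed by Theorem \ref{Upper} together with the fact that $h_1$ is holomorphic and non-vanishing at $s_0$ and that $h_{3,\para{P}}(\chi_{\para{P}_i,s_0},s_0)\neq 0$ (under $F=\mathbb{Q}$, $s_0\in\mathbb{R}$), which is exactly the regime in which the theorem is stated.

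The only potential obstacle is verifying that no hidden cancellation or zero arises in $h_{3,\para{P}_i}(\chi_{\para{P}_i,s_0},s_0)$ that would invalidate dividing by it — but for $F=\mathbb{Q}$ and $s_0\in\mathbb{R}$ every $l_\alpha^{+}(\chi_{\para{P}_i,s_0})$ is real and the corresponding $\zeta$-value is nonzero, as already exploited in the proof of Theorem \ref{Upper}. Once this is noted, the identity asserted in Theorem \ref{main_thm} drops out as the product of the factors enumerated above.
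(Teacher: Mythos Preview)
Your proposal is correct and follows essentially the same route as the paper: apply Corollary~\ref{redEis} at both $(\para{P}_i,s_0)$ and $(\para{P}_j,t_0)$, equate via the $\weyl{G}$-invariance of $E_\para{B}^{\#}$, then simplify using Lemma~\ref{Lemma-T1} and Corollary~\ref{corr}. One small slip: Lemma~\ref{Lemma-T1}(\ref{Lemma-T1-4}) actually gives $|N_1(\chi_{\para{P}_j,t_0})|-|N_1(\chi_{\para{P}_i,s_0})|=-d$ (not $+d$), which is exactly what is needed for $\zeta(2)^{-d}R^{d}=(R/\zeta(2))^{d}$; your final conclusion is right, only the displayed intermediate equality needs the sign fixed.
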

\begin{proof}
Using Corollary (\ref{redEis}) we obtained  that:
\begin{align*}
E_{\para{B}}^{\#}(\chi_{\para{P},s_0},g) =&
h_{3}(\chi_{\para{P}_i,s_0},s_0) \\
\times &
\epsilon_{s_0} \times (-1)^{|N_{0}(\chi_{\para{P}_i,s_0})|} \times 
2^{|N_{\pm1}(\chi_{\para{P}_i,s_0})|}
\times \zeta(2) ^{|N_{1}(\chi_{\para{P}_i,s_0})|}\\
\times &
R^{|N_{0,-1}(\chi_{\para{P}_i,s_0})|}
\times 
A_{\para{P}_i}\times 
\leadingterm{-d_{\para{P}}(\chi_{\para{P}_i,s_0})  }^{\para{P}_i}(f^{0}_{\para{P}_i},s_0,g).
\end{align*}
By a similar argument we get :
\begin{align*}
E_{\para{B}}^{\#}(\chi_{\para{P}_j,t_0},g) =&
h_{3}(\chi_{\para{P}_j,t_0},t_0)\\
\times &
\epsilon_{t_0} \times (-1)^{|N_{0}(\chi_{\para{P}_j,t_0})|} \times 
2^{|N_{\pm1}(\chi_{\para{P}_j,t_0})|}
\times \zeta(2) ^{|N_{1}(\chi_{\para{P}_j,t_0})|}\\
\times &
R^{|N_{0,-1}(\chi_{\para{P}_j,t_0})|}
\times 
A_{\para{P}_j}\times 
\leadingterm{-d_{\para{P}_j}(\chi_{\para{P}_j,t_0})  }^{\para{P}_j}(f^{0}_{\para{P}_j},t_0,g).
\end{align*}
Since $E_{\para{B}}^{\#}$ is $\weyl{G}$ invariant, both terms are equal. So, we get:
\begin{align*}
\leadingterm{-d_{\para{P}_i}(\chi_{\para{P}_i,s_0})}^{\para{P}_i}(f^{0}_{\para{P}_i},s_0,g)&=
\frac{A_{\para{P}_j}}{A_{\para{P}_i}} \times 
\frac{\epsilon_{t_0}}{\epsilon_{s_0}} \times \frac{h_{3}(\chi_{\para{P}_j,t_0},t_0)}
{h_{3}(\chi_{\para{P}_i,s_0},s_0)} \\
&\times 
2^{|N_{\pm1} (\chi_{\para{P}_j,t_0})|- |N_{\pm1} (\chi_{\para{P}_i,s_0})|} \times (-1)^{|N_{0} (\chi_{\para{P}_j,t_0})|- |N_{0} (\chi_{\para{P}_i,s_0})|}\\
&\times 
\zeta(2)^{|N_{1} (\chi_{\para{P}_j,t_0})|- |N_{1} (\chi_{\para{P}_i,s_0})|} \times R^{|N_{0,-1} (\chi_{\para{P}_j,t_0})|- |N_{0,-1} (\chi_{\para{P}_i,s_0})|} \\
&\times
\leadingterm{-d_{\para{P}_j}(\chi_{\para{P}_j,t_0})  }^{\para{P}_j}(f^{0}_{\para{P}_j},t_0,g).
\end{align*}
 Using Lemma (\ref{Lemma-T1}) it holds that:
 \begin{align*}
 (-1)^{|N_{\pm1}(\chi_{\para{P}_j,t_0})| - N_{\pm1}(\chi_{\para{P}_i,s_0})|} &= 1, &
 2^{|N_{\pm1}(\chi_{\para{P}_j,t_0})| - N_{\pm1}(\chi_{\para{P}_i,s_0})|} &= 1, &
R^{|N_{0}(\chi_{\para{P}_j,t_0})| - |N_{0}(\chi_{\para{P}_i,s_0})|} &=1,
   \end{align*}
Hence it holds that:
\begin{align*}
\leadingterm{-d_{\para{P}_i}(\chi_{\para{P}_i,s_0})}^{\para{P}_i}(f^{0}_{\para{P}_i},s_0,g)&=
\frac{A_{\para{P}_j}} {A_{\para{P}_i}} \times 
\frac{\epsilon_{t_0}}{\epsilon_{s_0}} \times \frac{h_{3}(\chi_{\para{P}_j,t_0},t_0)}
{h_{3}(\chi_{\para{P}_i,s_0},s_0)} \\
&\times 
\zeta(2)^{|N_{1} (\chi_{\para{P}_j,t_0})|- |N_{1} (\chi_{\para{P}_i,s_0})|} \times R^{|N_{-1} (\chi_{\para{P}_j,t_0})|- |N_{-1} (\chi_{\para{P}_i,s_0})|} \\
&\times
\leadingterm{-d_{\para{P}_j}(\chi_{\para{P}_j,t_0})  }^{\para{P}_j}(f^{0}_{\para{P}_j},t_0,g).
\end{align*}
Using Lemma (\ref{Lemma-T1}) again we obtain
\begin{align*}
\zeta(2)^{|N_{1} (\chi_{\para{P}_j,t_0})|- |N_{1} (\chi_{\para{P}_i,s_0})|} \times R^{|N_{-1} (\chi_{\para{P}_j,t_0})|- |N_{-1} (\chi_{\para{P}_i,s_0})|}
=& \left(\frac{R}{\zeta(2)}\right)^{d}.
\end{align*}
Corollary (\ref{corr}) implies that 
\begin{align*}
\frac{h_{3}(\chi_{\para{P}_j,t_0},t_0)}
{h_{3}(\chi_{\para{P}_i,s_0},s_0)} & =
\prod_{
\tiny {\begin{aligned}
    \alpha \in&  \Phi^{+} \setminus N_{\pm1,0}(\chi_{\para{P}_i,s_0}) \\
     \weylelement{}\alpha &\in \Phi^{-}
    \end{aligned}}}
\frac{\zeta(\inner{\chi_{\para{P}_i,s_0},\check{\alpha}})}{
\zeta(\inner{\chi_{\para{P}_i,s_0},\check{\alpha}}+1)
}.
\end{align*}
Hence,
\begin{align*}
 \leadingterm{-d_{\para{P}_i}(\chi_{\para{P}_i,s_0})}^{\para{P}_i}(f^{0}_{\para{P}_i},s_0,g)&=
\frac{A_{\para{P}_j}}{A_{\para{P}_i}} \times 
\frac{\epsilon_{t_0}}{\epsilon_{s_0}}
\\
&\times 
\left(\frac{R}{\zeta(2)}\right)^{d} \times 
\prod_{
\tiny {\begin{aligned}
    \alpha \in&  \Phi^{+} \setminus N_{\pm1,0}(\chi_{\para{P}_i,s_0}) \\
     \weylelement{}\alpha &\in \Phi^{-}
    \end{aligned}}}
\frac{\zeta(\inner{\chi_{\para{P}_i,s_0},\check{\alpha}})}{
\zeta(\inner{\chi_{\para{P}_i,s_0},\check{\alpha}}+1)
}\\
&\times 
\leadingterm{-d_{\para{P}_j}(\chi_{\para{P}_j,t_0})}^{\para{P}_j}(f^{0}_{\para{P}_j},t_0,g).
\end{align*}
 Observe that 
 \begin{align*}
 d_{\para{P}_i}(\chi_{\para{P}_i,s_0})-d=&
 |N_{1}(\chi_{\para{P}_i,s_0})| -
 |N_{0}(\chi_{\para{P}_i,s_0})|-(n-1)
 - |N_{-1}(\chi_{\para{P}_j,t_0})| + |N_{-1}(\chi_{\para{P}_i,s_0})|\\
 =&
 |N_{1}(\chi_{\para{P}_i,s_0})| -
 |N_{0}(\chi_{\para{P}_i,s_0})|-(n-1)
 + |N_{1}(\chi_{\para{P}_j,t_0})| - |N_{1}(\chi_{\para{P}_i,s_0})| \\
 =& |N_{1}(\chi_{\para{P}_j,t_0})| -|N_{0}(\chi_{\para{P}_i,s_0})| -(n-1) =d_{\para{P}_j}(\chi_{\para{P}_j,t_0}).
 \end{align*}
\end{proof}

\chapter{Identities} 

\label{ch3:ids} 

\lhead{Chapter \ref{ch3:ids}. \emph{Identities}} 

In this chapter we explore the identities between leading terms of various Eisenstein series associated to various maximal parabolic subgroups. The motivation comes from the famous Siegel Weil Formula.

Let $W$ be a symplectic space of dimension $2n$ and $V$ be an orthogonal space of dimension $m$. A. Weil in his seminal paper \cite{We}
has evaluated an average of the  theta function on the group $\widetilde Sp(W\otimes V)$ over $O(V)$ as a special value
of the degenerate Eisenstein on $Sp(W)$ series associated to the Siegel parabolic subgroup under the assumption that 
the integral converges. The convergence condition has been lifted in the paper of Kudla and Rallis \cite{KR} 
who considered the regularized average of the theta function above. The regularized integral is naturally related 
to a residue of a non-Siegel Eisenstein series. However it was essential to find the relation between the average integral
 the residue of the Siegel  Eisenstein series, whose analytic properties can be further related to 
the standard L-functions by \cite{GPSR}.  So Kudla and Rallis have found the identity between the leading terms 
of the Siegel and non-Siegel Eisenstein series.  Later their work has been generalized for all classical groups. 

Jiang in \cite{Dih1} has considered various identities for the leading terms of degenerate spherical Eisenstein series on
symplectic groups, for various maximal parabolic subgroups. 

Finally Ginzburg and Jiang  in \cite{G2_Ginz} have considered similar identities for the exceptional group $G_2. $

\paragraph*{The kind of identities we expect to see} 
One type of the identities follows from the functional equation satisfied  by 
Eisenstein series that relate the Eisenstein series associated to
maximal parabolic subgroups whose Levi parts are conjugated by elements of the Weyl group.

However, there exists another type of identities that relate
 between the Eisenstein series associated to maximal parabolic subgroups whose Levi parts are not conjugated by elements of the Weyl group.
The simplest  example is the trivial representation that can be realized as residue of Eisenstein series associated to any maximal parabolic subgroup 
$\para{P}$ at $s=\frac{1}{2}$. This kind of identities is sometimes called  Siegel-Weil identities. 

Let us briefly describe the context of the chapter.
The chapter begins with the proof of the following combinat results 
\begin{thm}\label{we}
For every pair of maximal parabolic subgroup
$\para{P}_i,\para{P}_j$ there exist $\weylelement{} \in \weyl{G}$ and $s_0,t_0 \in \mathbb{R}$ such that 
$(\para{P}_i,s_0,\para{P}_j,t_0,
\weylelement{})$ is an admissible data.
\end{thm}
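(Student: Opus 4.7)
The plan is to exhibit an explicit admissible quintuple for any pair $(\para{P}_i,\para{P}_j)$, which makes the theorem essentially immediate. First I would take $\weylelement{}=e$, the identity of $\weyl{G}$, and set $s_0=t_0=\tfrac{1}{2}$. With these choices the twisting exponents $s_0-\tfrac{1}{2}$ and $t_0-\tfrac{1}{2}$ vanish, so both characters $\delta_{\para{P}_i}^{s_0-\frac{1}{2}}$ and $\delta_{\para{P}_j}^{t_0-\frac{1}{2}}$ collapse to the trivial character of $T$. The admissibility equation
$$\weylelement{}\bigl(\delta_{\para{P}_i}^{s_0-\frac{1}{2}}\otimes \delta_{\para{B}}^{\frac{1}{2}}\bigr)=\delta_{\para{P}_j}^{t_0-\frac{1}{2}}\otimes \delta_{\para{B}}^{\frac{1}{2}}$$
then reduces to the tautology $\delta_{\para{B}}^{1/2}=\delta_{\para{B}}^{1/2}$, and the quintuple $(\para{P}_i,\tfrac{1}{2},\para{P}_j,\tfrac{1}{2},e)$ is visibly admissible (and even positive admissible).

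This proof, though short, is not accidental: it mirrors the representation-theoretic fact recalled in Proposition~\ref{trivial_indced}, namely that the trivial representation arises as the unique irreducible quotient of $I_{\para{P}}(\tfrac{1}{2})$ for every maximal parabolic $\para{P}$. Thus Theorem~\ref{we} is the combinatorial shadow of the Siegel--Weil identity relating the residue of $E_{\para{P}}(f^{0},s,g)$ at $s=\tfrac{1}{2}$ across all maximal parabolics to the constant function.

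The real content of Chapter~\ref{ch3:ids} is therefore not Theorem~\ref{we} itself but the finer task of enumerating all non-trivial admissible data, i.e.\ those with $\weylelement{}\neq e$ or $(s_0,t_0)\neq(\tfrac{1}{2},\tfrac{1}{2})$. For that finer classification the strategy is: fix $\weylelement{}\in \weyl{G}$, and observe that since $\mathfrak{a}^{\ast}_{M_{\para{P}_i}}$ and $\mathfrak{a}^{\ast}_{M_{\para{P}_j}}$ are one-dimensional (spanned by $\fundamental{i}$ and $\fundamental{j}$ respectively), the condition $\weylelement{}\chi_{\para{P}_i,s_0}=\chi_{\para{P}_j,t_0}$ projects to an over-determined linear system in the two unknowns $s_0,t_0$; it has a solution only when the $n-2$ compatibility conditions on the coordinates of $\weylelement{}\chi_{\para{P}_i,s_0}$ along $\fundamental{k}$ ($k\neq j$) are satisfied.

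The main practical obstacle, therefore, is not a mathematical one but a computational one: the search must be performed over $\weyl{G}$, whose order is $1152$, $51840$ and $2903040$ for $F_4$, $E_6$ and $E_7$ respectively, and running the linear check on every element in order to extract the positive admissible data (and, in particular, the chains of length three for $E_6$ mentioned in Section~\ref{section:E6}) is where the algorithmic effort of the chapter lies. The existence assertion of Theorem~\ref{we} itself, however, is settled by the trivial witness above.
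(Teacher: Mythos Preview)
Your proof is correct for the theorem as literally stated, but it takes a very different route from the paper.

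You observe that the trivial quintuple $(\para{P}_i,\tfrac{1}{2},\para{P}_j,\tfrac{1}{2},e)$ is always admissible, which disposes of the bare existence claim in one line. The paper is aware of this (indeed it later remarks explicitly that ``for every pair of maximal parabolic subgroups $\para{P}_i,\para{P}_j$ we have the admissible data $(\para{P}_{i},\frac{1}{2},\para{P}_{j},\frac{1}{2},e)$ \ldots\ hence we may omit this''), but its proof of Theorem~\ref{we} is deliberately more constructive. Writing $\para{R}=\para{P}_i\cap\para{P}_j$ and $\delta_{\para{P}_i}=b_{ii}\fundamental{i}$, $\delta_{\para{R}}=b_{ij}\fundamental{i}+b_{ji}\fundamental{j}$, the paper sets $c_{ij}=\tfrac{b_{ij}}{b_{ii}}-\tfrac{1}{2}$, $c_{ji}=\tfrac{b_{ji}}{b_{jj}}-\tfrac{1}{2}$ and shows, via Lemma~\ref{Longest_weyl} and a short character computation, that the explicit Weyl element $\weylelement{}=\weylelement{\para{R}}\weylelement{\para{P}_i}=\weylelement{\para{P}_j}\tilde{\weylelement{}}_{0,j}\weylelement{\para{P}_i}$ satisfies
\[
\weylelement{}\bigl(\delta_{\para{P}_i}^{\,c_{ij}-\frac{1}{2}}\otimes\delta_{\para{B}}^{\frac{1}{2}}\bigr)=\delta_{\para{P}_j}^{\,-c_{ji}-\frac{1}{2}}\otimes\delta_{\para{B}}^{\frac{1}{2}},
\]
so that $(\para{P}_i,c_{ij},\para{P}_j,-c_{ji},\weylelement{})$ is admissible. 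What the paper gains is precisely the ``uniform formula'' promised in the chapter introduction: these are the \emph{special admissible data}, generically distinct from the trivial point, and they account for almost all of the positive admissible data tabulated later for $F_4,E_6,E_7$. Your trivial witness, by contrast, is the one case the paper immediately discards when compiling those tables. Your commentary in the last two paragraphs correctly anticipates this, though note that the paper's actual search algorithm in Section~\ref{section:anoterway} does not loop over $\weyl{G}$ as you suggest; it instead compares the piecewise-linear dominant-chamber maps $dom_{\para{P}}$ and $dom_{\para{Q}}$ on $[0,\tfrac{1}{2}]$, which avoids the large Weyl-group enumeration.
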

During the proof, we give an uniformly formula for those data.
As a corollary in the case where $-1 \in \weyl{G}$ it holds that :
\begin{cor*}
For every pair of maximal parabolic subgroup
$\para{P}_i,\para{P}_j$ there exist $\weylelement{} \in \weyl{G}$ and $s_0,t_0 \in \mathbb{R}$ non negative numbers such that 
$(\para{P}_i,s_0,\para{P}_j,t_0,
\weylelement{})$ is a positive admissible data.
\end{cor*}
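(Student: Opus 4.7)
The plan is as follows. In the proof of Theorem \ref{we}, one constructs admissible data $(\para{P}_i, s_0, \para{P}_j, t_0, \weylelement{})$ via an explicit formula that expresses $s_0$ and $t_0$ in terms of $\weylelement{}$ and the structural constants of $G$ (namely the numbers $c_i, c_j$ with $\delta_{\para{P}_i} = c_i \fundamental{i}$, $\delta_{\para{P}_j} = c_j \fundamental{j}$, and the coefficients of $\weylelement{}$ in the fundamental weight basis). The key observation is that this formula carries a natural $\mathbb{Z}/2$ symmetry $s \mapsto 1 - s$, which on characters corresponds to the involution $\chi \mapsto 2\rho - \chi$: indeed, from $\chi_{\para{P}, s} = (s - \tfrac{1}{2})\delta_{\para{P}} + \rho$ one computes directly $\chi_{\para{P}, 1 - s} = 2\rho - \chi_{\para{P}, s}$.

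Next, when $-1 \in \weyl{G}$, there exists $w_\ast \in \weyl{G}$ acting as $-\mathrm{id}$ on $\mathfrak{a}^\ast$, so $w_\ast \rho = -\rho$ and $w_\ast \chi_{\para{P}, s} = -\chi_{\para{P}, s} = \chi_{\para{P}, 1 - s} - 2\rho$. Starting from the admissibility relation $\weylelement{}\,\chi_{\para{P}_i, s_0} = \chi_{\para{P}_j, t_0}$ furnished by Theorem \ref{we} and applying $w_\ast$ on both sides, one obtains after substitution a relation of the form $(w_\ast \weylelement{} w_\ast)\,\chi_{\para{P}_i, 1 - s_0} = \chi_{\para{P}_j, 1 - t_0}$, provided the $\rho$-shifts on the two sides cancel. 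Since exactly one element of $\{s_0, 1 - s_0\}$ lies in $[\tfrac{1}{2}, +\infty) \subset \mathbb{R}_{\geq 0}$, and similarly for $\{t_0, 1 - t_0\}$, by choosing the appropriate representative one produces the desired positive admissible quintuple.

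The main obstacle is precisely the cancellation of the $-2\rho$ correction terms that arise when substituting $w_\ast \chi_{\para{P}, s} = \chi_{\para{P}, 1 - s} - 2\rho$ into both sides of the admissibility relation. I expect these terms to cancel identically because $w_\ast$ acts centrally on $\mathfrak{a}^\ast$, so the shift appears symmetrically on the two sides. The careful bookkeeping amounts to verifying that $w_\ast \weylelement{} w_\ast \cdot \rho = \rho + (w_\ast \weylelement{} w_\ast - 1)\rho$ pairs correctly against the residual $\rho$-terms coming from the substitution; here the hypothesis $-1 \in \weyl{G}$ is essential, as it is what makes the reflection $\chi \mapsto 2\rho - \chi$ compatible with the $\weyl{G}$-action globally. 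In the case that the simple cancellation fails for the particular $\weylelement{}$ returned by Theorem \ref{we}, one may additionally compose with the longest element $\weylelement{\para{P}_j}$ of $\weyl{\para{P}_j}$, which preserves $\fundamental{j}$ and therefore sends $\chi_{\para{P}_j, u}$ into a character of the same ray up to a $W_{\para{P}_j}$-translate of $\rho$, absorbing any leftover discrepancy.
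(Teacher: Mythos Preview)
Your approach has a genuine gap: the $\rho$-shift cancellation you hope for does not occur. Since $w_\ast=-1$ is central, $w_\ast w w_\ast = w$, and a direct computation from $\chi_{\para{P},1-s}=2\rho-\chi_{\para{P},s}$ gives
\[
w\,\chi_{\para{P}_i,1-s_0}
= w\bigl(2\rho-\chi_{\para{P}_i,s_0}\bigr)
= 2w\rho-\chi_{\para{P}_j,t_0}
= \chi_{\para{P}_j,1-t_0}+2(w\rho-\rho).
\]
Thus the shifts cancel only when $w\rho=\rho$, i.e.\ $w=e$, which is never the case for the element produced in Theorem~\ref{we}. Your proposed repair via $\weylelement{\para{P}_j}$ does not help either: by Lemma~\ref{Longest_weyl} one has $\weylelement{\para{P}_j}\chi_{\para{P}_j,u}=\delta_{\para{P}_j}^{u+1/2}\otimes\delta_{\para{B}}^{-1/2}$, which is \emph{not} of the form $\chi_{\para{P}_j,u'}$, so composing with $\weylelement{\para{P}_j}$ alone cannot absorb the discrepancy $2(w\rho-\rho)$.

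More fundamentally, the relevant involution is $s\mapsto -s$, not $s\mapsto 1-s$. The paper realizes it by the element $w_0\weylelement{\para{P}_i}$ (the shortest representative of the longest class in $\weyl{G}/\weyl{\para{P}_i}$): first $\weylelement{\para{P}_i}$ sends $\chi_{\para{P}_i,s}=\delta_{\para{P}_i}^{s-1/2}\otimes\delta_{\para{B}}^{1/2}$ to $\delta_{\para{P}_i}^{s+1/2}\otimes\delta_{\para{B}}^{-1/2}$ (Lemma~\ref{Longest_weyl}), and then $w_0=-1$ sends this to $\delta_{\para{P}_i}^{-s-1/2}\otimes\delta_{\para{B}}^{1/2}=\chi_{\para{P}_i,-s}$. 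The hypothesis $-1\in\weyl{G}$ enters precisely here. Pre-composing with $w_r=w_0\weylelement{\para{P}_i}$ and/or post-composing with $w_l=w_0\weylelement{\para{P}_j}$ then turns the admissible data $(\para{P}_i,c_{ij},\para{P}_j,-c_{ji},w)$ of Theorem~\ref{we} into one with $s_0=|c_{ij}|$, $t_0=|c_{ji}|$. The key point your argument misses is that the sign flip must be implemented by a \emph{parabolic-specific} element $w_0\weylelement{\para{P}}$ on each side, not by $w_0$ applied globally.
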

The positive admissible data constructed by the above method is called \textbf{special admissible} data if $s_0,t_0$ are non negative. However, Theorem
(\ref{we}) gives only a partial list of admissible data.
Surprisingly, the list of admissible data that  can be derived from Theorem (\ref{we}) is almost the complete list of admissible data that exists.

In section \ref{section:anoterway} we describe an algorithm that gives a complete list of admissible data.  
\\
In sections \ref{section:G2},\ref{section:F4},\ref{section:E6},\ref{section:E7}  we write explicitly the admissible data and apply  Theorem (\ref{main_thm}) for the groups of types $G_2,F_4,E_6,E_7$. 

\section{Special admissible data} \label{section_suit_data}
Our goal is to prove Theorem (\ref{we}). We need few notations.

Let $\para{P}_i,\para{P}_j$ be  maximal parabolic subgroups of $G$. Denote by $\para{R}=\para{P}_i \cap \para{P}_j$. Let 
$\weyl{\para{P}_i},\weyl{\para{P}_j},\weyl{\para{R}}$ be the Weyl group of $\para{P}_i,\para{P}_j,\para{R}$. Let $\tilde{\weylelement{}}_{0,i}$ (resp. $\tilde{\weylelement{}}_{0,j}$) be the shortest representative of the longest Weyl element of $\weyl{\para{P}_i} \slash \weyl{\para{R}}$ (resp. $\weyl{\para{P}_j} \slash \weyl{\para{R}}$).

Note that since $\para{R}$ is the intersection of two maximal parabolic subgroups the set of the unramified characters of $\para{R}$ is isomorphic to $\mathbb{C}^2$. 
Let us  define a matrix $B =(b_{ij})$ whose  entries satisfy
\begin{align*}
\delta_{\para{P}_i}&=b_{ii}\fundamental{i} &
\delta_{\para{P}_j}&=b_{jj}\fundamental{j} & 
\delta_{\para{R}}&=b_{ij}\fundamental{i}+ b_{ji}\fundamental{j}  \quad \mbox{ if } i<j.
\end{align*}
\begin{lemm}
Let 
 $c_{ij}=\frac{b_{ij}}{b_{ii}} -\frac{1}{2}$ and  $c_{ji}=\frac{b_{ji}}{b_{jj}} -\frac{1}{2}$.  
Then:
$$\delta_{\para{P}_{i}}^{c_{ij}} \otimes ( \delta_{\para{R}}^{\para{P}_{i}})^{-\frac{1}{2}} =
\delta_{\para{P}_{j}}^{-c_{ji}} \otimes ( \delta_{\para{R}}^{\para{P}_{j}})^{\frac{1}{2}}.$$
\end{lemm}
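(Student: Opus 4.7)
The plan is to reduce the identity to an elementary linear computation in the two--dimensional space $\mathbb{Q}\fundamental{i}\oplus\mathbb{Q}\fundamental{j}$, which contains all the characters in sight because $\para{R}=\para{P}_i\cap\para{P}_j$ corresponds to $\Delta_G\setminus\{\alpha_i,\alpha_j\}$.

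First I would unpack the notation $\delta_{\para{R}}^{\para{P}_i}$ and $\delta_{\para{R}}^{\para{P}_j}$. Since $\para{R}\subset\para{P}_i$ and $\para{R}\subset\para{P}_j$, the modular character factors as
\begin{align*}
\delta_{\para{R}} &= \delta_{\para{P}_i}\cdot\delta_{\para{R}}^{\para{P}_i}, &
\delta_{\para{R}} &= \delta_{\para{P}_j}\cdot\delta_{\para{R}}^{\para{P}_j},
\end{align*}
where $\delta_{\para{R}}^{\para{P}_k}$ is the modular character of $\para{R}$ viewed as a parabolic of the Levi $M_k$ of $\para{P}_k$. Using the hypotheses $\delta_{\para{P}_i}=b_{ii}\fundamental{i}$, $\delta_{\para{P}_j}=b_{jj}\fundamental{j}$, and $\delta_{\para{R}}=b_{ij}\fundamental{i}+b_{ji}\fundamental{j}$, this immediately gives the explicit formulas
\begin{align*}
\delta_{\para{R}}^{\para{P}_i}&=(b_{ij}-b_{ii})\fundamental{i}+b_{ji}\fundamental{j}, &
\delta_{\para{R}}^{\para{P}_j}&=b_{ij}\fundamental{i}+(b_{ji}-b_{jj})\fundamental{j}.
\end{align*}

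Second, I would compute both sides of the claimed equality separately in additive notation. For the left--hand side, the coefficient of $\fundamental{i}$ in $c_{ij}\delta_{\para{P}_i}-\tfrac12\delta_{\para{R}}^{\para{P}_i}$ is $c_{ij}b_{ii}-\tfrac12(b_{ij}-b_{ii})$, and substituting $c_{ij}=\tfrac{b_{ij}}{b_{ii}}-\tfrac12$ collapses this to $\tfrac{b_{ij}}{2}$, while the coefficient of $\fundamental{j}$ is simply $-\tfrac{b_{ji}}{2}$. For the right--hand side, an identical manipulation shows that $-c_{ji}\delta_{\para{P}_j}+\tfrac12\delta_{\para{R}}^{\para{P}_j}$ has $\fundamental{i}$--coefficient equal to $\tfrac{b_{ij}}{2}$ and $\fundamental{j}$--coefficient equal to $-c_{ji}b_{jj}+\tfrac12(b_{ji}-b_{jj})=-\tfrac{b_{ji}}{2}$. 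Both sides therefore reduce to the same element $\tfrac{b_{ij}}{2}\fundamental{i}-\tfrac{b_{ji}}{2}\fundamental{j}$ of $X(T)\otimes\mathbb{Q}$.

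There is no genuine obstacle here; the entire content of the lemma is the bookkeeping observation that the two specific shifts $c_{ij},c_{ji}$ are tuned precisely so that the $\delta_{\para{P}_i}$--part of the left side kills the $\fundamental{i}$--contribution of $-\tfrac12\delta_{\para{P}_i}$ coming from the factorization $\delta_{\para{R}}^{\para{P}_i}=\delta_{\para{R}}-\delta_{\para{P}_i}$, and symmetrically on the right. The main ``content'' of the proof is thus just recording the two factorizations of $\delta_{\para{R}}$ and carrying out the substitution of the definitions of $c_{ij},c_{ji}$; this lemma then provides the common character that will play the role of the ``mid-point'' linking $\chi_{\para{P}_i,s_0}$ and $\chi_{\para{P}_j,t_0}$ in the Weyl--conjugacy used to construct the special admissible data in Theorem \ref{we}.
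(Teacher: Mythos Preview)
Your proof is correct and follows essentially the same approach as the paper: both compute each side explicitly in the basis $\{\fundamental{i},\fundamental{j}\}$, using the factorization $\delta_{\para{R}}^{\para{P}_k}=\delta_{\para{R}}\cdot\delta_{\para{P}_k}^{-1}$, and arrive at the common value $\tfrac{b_{ij}}{2}\fundamental{i}-\tfrac{b_{ji}}{2}\fundamental{j}$. The only cosmetic difference is that you record the formulas for $\delta_{\para{R}}^{\para{P}_i}$ and $\delta_{\para{R}}^{\para{P}_j}$ as a separate step before substituting, while the paper folds this directly into the algebraic simplification.
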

\begin{proof}
\begin{align*}
\delta_{\para{P}_i}^{c_{ij}}\otimes ( \delta_{\para{R}}^{\para{P}_{i}})^{-\frac{1}{2}}=&
c_{ij}b_{ii}\fundamental{i} - \frac{1}{2}(b_{ij}\fundamental{i} +b_{ji}\fundamental{j} -b_{ii}\fundamental{i}) \\
=&
b_{ij}\fundamental{i}-\frac{b_{ii}}{2}\fundamental{i} -\frac{1}{2}b_{ij}\fundamental{i}-\frac{1}{2}b_{ji}\fundamental{j}+\frac{1}{2}b_{ii}\fundamental{i}\\
=& \frac{1}{2}b_{ij}\fundamental{i}-\frac{1}{2}b_{ji}\fundamental{j}.
\end{align*}
On the other hand 
\begin{align*}
\delta_{\para{P}_j}^{-c_{ji}}\otimes ( \delta_{\para{R}}^{\para{P}_{j}})^{\frac{1}{2}}=&
-c_{ji}b_{jj}\fundamental{j} + \frac{1}{2}(b_{ij}\fundamental{i} +b_{ji}\fundamental{j} -b_{jj}\fundamental{j}) \\
=&
-b_{ji}\fundamental{j}+\frac{b_{jj}}{2}\fundamental{j} +\frac{1}{2}b_{ij}\fundamental{i}+\frac{1}{2}b_{ji}\fundamental{j}-\frac{1}{2}b_{jj}\fundamental{j}\\
=& \frac{1}{2}b_{ij}\fundamental{i}-\frac{1}{2}b_{ji}\fundamental{j}.
\end{align*} 
So we are done.
\end{proof}
 \begin{lemm} \label{Longest_weyl}
 Let $\chi_s= \delta_{\para{P}}^{s -\frac{1}{2}} \otimes \delta_{\para{B}}^{\frac{1}{2}}$ and let $\weylelement{_\para{P}} \in \weyl{\para{P}}$ be the longest element. Then $\weylelement{_\para{P}}(\chi_s)= \delta_{\para{P}}^{s+ \frac{1}{2}} \otimes \delta_{\para{B}}^{-\frac{1}{2}}$. 
 \end{lemm}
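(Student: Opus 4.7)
The plan is to exploit the decomposition of $\delta_{\para{B}}$ relative to the Levi subgroup $M$ of $\para{P}$ and the well-known action of the longest element of $\weyl{M}$ on the half-sum of positive roots of $M$.

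First, I would write $\delta_{\para{B}} = \delta_{\para{P}} + \delta_{\para{B}\cap M}$, where $\delta_{\para{B}\cap M} = \sum_{\alpha \in \Phi^{+}_{M}}\alpha$ is the modular character of the Borel of $M$. This follows directly from the definition $\delta_{\para{B}} = \sum_{\alpha \in \Phi^{+}}\alpha$ together with the disjoint decomposition $\Phi^{+} = \Phi^{+}_{M} \sqcup (\Phi^{+}\setminus \Phi^{+}_{M})$.

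Next I would record two facts about the action of $\weylelement{\para{P}}$ on these characters. Since $\delta_{\para{P}}$ is an element of $X(M)\otimes \mathbb{Q}$, it is invariant under $\weyl{\para{P}} = \weyl{M}$; in particular $\weylelement{\para{P}}(\delta_{\para{P}}) = \delta_{\para{P}}$. On the other hand $\delta_{\para{B}\cap M}$ is (twice) the half-sum of positive roots of $M$, so the longest element of $\weyl{M}$ sends it to its negative: $\weylelement{\para{P}}(\delta_{\para{B}\cap M}) = -\delta_{\para{B}\cap M}$.

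Combining these gives
\[
\weylelement{\para{P}}(\delta_{\para{B}}) \;=\; \weylelement{\para{P}}(\delta_{\para{P}}) + \weylelement{\para{P}}(\delta_{\para{B}\cap M}) \;=\; \delta_{\para{P}} - \delta_{\para{B}\cap M} \;=\; 2\delta_{\para{P}} - \delta_{\para{B}}.
\]
Applying $\weylelement{\para{P}}$ to $\chi_{s} = \delta_{\para{P}}^{s-\frac{1}{2}} \otimes \delta_{\para{B}}^{\frac{1}{2}}$ and using linearity in the exponents then yields
\[
\weylelement{\para{P}}(\chi_{s}) \;=\; \delta_{\para{P}}^{s-\frac{1}{2}} \otimes \delta_{\para{P}} \otimes \delta_{\para{B}}^{-\frac{1}{2}} \;=\; \delta_{\para{P}}^{s+\frac{1}{2}} \otimes \delta_{\para{B}}^{-\frac{1}{2}},
\]
as claimed. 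There is no real obstacle here; the only thing to verify carefully is the $\weyl{M}$-invariance of $\delta_{\para{P}}$, which I would justify by noting that $\delta_{\para{P}}$ extends to a rational character of $M$ (the determinant of the adjoint action of $M$ on $\operatorname{Lie}(N)$), and rational characters of $M$ are fixed by the Weyl group of $M$.
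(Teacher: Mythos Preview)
Your proof is correct and follows essentially the same approach as the paper: both decompose $\delta_{\para{B}} = \delta_{\para{P}} \otimes \delta_{\para{B}\cap M}$, use that $\weylelement{\para{P}}$ fixes $\delta_{\para{P}}$ and negates $\delta_{\para{B}\cap M}$, and combine. The only cosmetic difference is that the paper justifies $\weylelement{\para{P}}(\delta_{\para{P}}) = \delta_{\para{P}}$ by noting $\weylelement{\para{P}}$ permutes $\Phi_{G}^{+}\setminus\Phi_{M}^{+}$, whereas you invoke $\weyl{M}$-invariance of characters of $M$; both are valid.
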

 \begin{proof}
For every $\alpha \in \Phi_{G}^{+} \setminus \Phi_{M}^{+}$ it holds that $\weylelement{_\para{P}}\alpha \in 
 \Phi_{G}^{+} \setminus \Phi_{M}^{+}$
 and for every $\beta \in \Phi_{M}^{+}$ it holds that $\weylelement{_\para{P}}\beta \in  \Phi_{M}^{-} $.
Now rewrite $\delta_{\para{B}}=\delta_{\para{P}} \otimes  (\delta_{\para{B}} \otimes \delta_{\para{P}}^{-1})$.
Observe that 
$$(\delta_{\para{B}} \otimes \delta_{\para{P}}^{-1}) = \prod_{\alpha \in \Phi_{M}^{+}} \alpha.$$ 
Hence, $\weylelement{_\para{P}}(\delta_{\para{B}} \otimes \delta_{\para{P}}^{-1}) =
(\delta_{\para{B}}^{-1} \otimes \delta_{\para{P}})$. Hence, 
\begin{align*}
\weylelement{_\para{P}}(\delta_{\para{P}}^{s -\frac{1}{2}} \otimes \delta_{\para{B}}^{\frac{1}{2}})=& 
\weylelement{_\para{P}} (\delta_{\para{P}}^{s}) \otimes 
\weylelement{_\para{P}} (\delta_{\para{P}}^{-\frac{1}{2}} \otimes 
\delta_{\para{B}}^{\frac{1}{2}})\\
=& \delta_{\para{P}}^{s} \otimes 
\delta_{\para{P}}^{\frac{1}{2}} \otimes \delta_{\para{B}}^{-\frac{1}{2}} \\
=& \delta_{\para{P}}^{s + \frac{1}{2}} \otimes \delta_{\para{B}}^{-\frac{1}{2}}.
\end{align*}
 \end{proof}
\begin{thm*}\label{Weyl}
With the notation as above it holds that there exists $\weylelement{} \in \weyl{G}$ such that 
$$\weylelement{}(\delta_{\para{P}_i}^{c_{ij}-\frac{1}{2} } \otimes\delta_{\para{B}}^{\frac{1}{2}}) =
\delta_{\para{P}_j}^{-c_{ji}-\frac{1}{2} } \otimes\delta_{\para{B}}^{\frac{1}{2}}$$
i.e $(\para{P}_i,c_{ij},\para{P}_j,-c_{ji},\weylelement{})$ is an admissible data.
\end{thm*}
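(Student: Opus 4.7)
The plan is to take $\weylelement{} = \tilde{\weylelement{}}_{0,i}$ and verify the claimed equality by a direct computation, using the preceding Lemma as the key algebraic input.

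First I would record the relevant structural fact about $\tilde{\weylelement{}}_{0,i}$. Since $\tilde{\weylelement{}}_{0,i}$ is the shortest representative of the longest coset in $\weyl{\para{P}_i}/\weyl{\para{R}} = \weyl{M_i}/\weyl{M_R}$, standard theory of parabolic cosets in Weyl groups gives $\tilde{\weylelement{}}_{0,i} = \weylelement{0,M_i}\weylelement{0,M_R}$, with length $\ell(\tilde{\weylelement{}}_{0,i}) = |\Phi^+_{M_i}| - |\Phi^+_{M_R}|$. Together with the fact that $\tilde{\weylelement{}}_{0,i}$ makes exactly $\ell(\tilde{\weylelement{}}_{0,i})$ positive roots negative, this forces (i) $\tilde{\weylelement{}}_{0,i}(\Phi^+_{\para{R}\cap M_i}) = -\Phi^+_{\para{R}\cap M_i}$ (the roots of the unipotent radical of $\para{R}$ inside $M_i$ are all negated), and (ii) $\tilde{\weylelement{}}_{0,i}(\Phi^+_{M_R}) = \Phi^+_{M_R}$ (the positive roots of $M_R$ are merely permuted). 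Consequently $\tilde{\weylelement{}}_{0,i}$ fixes $\delta_{\para{P}_i}$ (a $\weyl{M_i}$-invariant character), fixes $\delta_{\para{B}\cap M_R}$ by (ii), and sends $\delta_{\para{R}}^{\para{P}_i}$ to $-\delta_{\para{R}}^{\para{P}_i}$ by (i).

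Next I would decompose the character $\chi_{c_{ij}}$. Writing $\delta_{\para{B}} = \delta_{\para{P}_i} \otimes \delta_{\para{R}}^{\para{P}_i} \otimes \delta_{\para{B}\cap M_R}$, one finds
$$\chi_{c_{ij}} \;=\; \delta_{\para{P}_i}^{c_{ij}-\frac{1}{2}} \otimes \delta_{\para{B}}^{\frac{1}{2}} \;=\; \delta_{\para{P}_i}^{c_{ij}} \otimes (\delta_{\para{R}}^{\para{P}_i})^{\frac{1}{2}} \otimes \delta_{\para{B}\cap M_R}^{\frac{1}{2}}.$$
Applying $\tilde{\weylelement{}}_{0,i}$ and invoking the three properties above yields
$$\tilde{\weylelement{}}_{0,i}(\chi_{c_{ij}}) \;=\; \delta_{\para{P}_i}^{c_{ij}} \otimes (\delta_{\para{R}}^{\para{P}_i})^{-\frac{1}{2}} \otimes \delta_{\para{B}\cap M_R}^{\frac{1}{2}}.$$

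Finally I would substitute the preceding Lemma, which asserts $\delta_{\para{P}_i}^{c_{ij}} \otimes (\delta_{\para{R}}^{\para{P}_i})^{-\frac{1}{2}} = \delta_{\para{P}_j}^{-c_{ji}} \otimes (\delta_{\para{R}}^{\para{P}_j})^{\frac{1}{2}}$, and re-bundle using the analogous decomposition $\delta_{\para{B}\cap M_j}^{\frac{1}{2}} = (\delta_{\para{R}}^{\para{P}_j})^{\frac{1}{2}} \otimes \delta_{\para{B}\cap M_R}^{\frac{1}{2}}$ for the index $j$. This gives
$$\tilde{\weylelement{}}_{0,i}(\chi_{c_{ij}}) \;=\; \delta_{\para{P}_j}^{-c_{ji}} \otimes \delta_{\para{B}\cap M_j}^{\frac{1}{2}} \;=\; \delta_{\para{P}_j}^{-c_{ji}-\frac{1}{2}} \otimes \delta_{\para{B}}^{\frac{1}{2}} \;=\; \chi_{-c_{ji}},$$
completing the proof with $\weylelement{} = \tilde{\weylelement{}}_{0,i}$. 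The main obstacle is justifying the two structural properties of $\tilde{\weylelement{}}_{0,i}$ — that it negates $\Phi^+_{\para{R}\cap M_i}$ and preserves $\Phi^+_{M_R}$ setwise — but this is a standard length-counting argument, so the real content of the theorem is concentrated in the prior Lemma and in the identification of the correct Weyl element.
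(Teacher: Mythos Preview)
Your choice $\weylelement{} = \tilde{\weylelement{}}_{0,i}$ does not work, because both structural claims (i) and (ii) are false in general. Being a minimal coset representative in $\weyl{\para{P}_i}/\weyl{\para{R}}$ only guarantees $\tilde{\weylelement{}}_{0,i}(\alpha) > 0$ for every $\alpha \in \Delta_{M_R}$; this says the images of $\Phi^+_{M_R}$ are \emph{positive}, not that they remain in $\Phi^+_{M_R}$. Likewise, the length count shows $\tilde{\weylelement{}}_{0,i}$ makes each root of $\Phi^+_{\para{R}\cap M_i}$ negative, but the images need not form the set $-\Phi^+_{\para{R}\cap M_i}$. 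Take $G$ of type $A_3$ with $i=1$, $j=2$, so $M_i$ has simple roots $\{\alpha_2,\alpha_3\}$ and $M_R$ has simple root $\{\alpha_3\}$. Then $\tilde{\weylelement{}}_{0,i} = \weylelement{M_i}\weylelement{M_R} = \weylelement{3}\weylelement{2}$, and one checks $\weylelement{3}\weylelement{2}(\alpha_3) = \alpha_2 \notin \Phi_{M_R}$ (so (ii) fails) and $\weylelement{3}\weylelement{2}(\alpha_2+\alpha_3) = -\alpha_3 \notin -\Phi^+_{\para{R}\cap M_i}$ (so (i) fails). Computing the characters directly in this example gives $\chi_{c_{12}} = \alpha_2+\alpha_3$ and $\chi_{-c_{21}} = -\alpha_2$, while $\tilde{\weylelement{}}_{0,i}(\chi_{c_{12}}) = -\alpha_3 \neq \chi_{-c_{21}}$, so your element simply does not satisfy the required identity.

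The paper takes $\weylelement{} = \weylelement{\para{R}}\weylelement{\para{P}_i}$, which is the \emph{inverse} of your element, and proceeds in two steps rather than one. First $\weylelement{\para{P}_i}$ is applied, and Lemma~\ref{Longest_weyl} gives $\weylelement{\para{P}_i}(\delta_{\para{P}_i}^{c_{ij}-\frac{1}{2}}\otimes\delta_{\para{B}}^{\frac{1}{2}}) = \delta_{\para{P}_i}^{c_{ij}+\frac{1}{2}}\otimes\delta_{\para{B}}^{-\frac{1}{2}}$; the preceding Lemma then lets one rewrite this as $\delta_{\para{P}_j}^{-c_{ji}-\frac{1}{2}}\otimes\delta_{\para{R}}^{\frac{1}{2}}\otimes(\delta_{\para{B}}^{\para{R}})^{-\frac{1}{2}}$. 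Now $\weylelement{\para{R}}$, being the honest longest element of $\weyl{\para{R}}$, genuinely fixes $\delta_{\para{P}_j}$ and $\delta_{\para{R}}$ and sends $\delta_{\para{B}}^{\para{R}}$ to its inverse, producing $\delta_{\para{P}_j}^{-c_{ji}-\frac{1}{2}}\otimes\delta_{\para{B}}^{\frac{1}{2}}$ as desired. The essential point is that the full longest elements $\weylelement{\para{P}_i}$ and $\weylelement{\para{R}}$ each act cleanly on the relevant modular pieces, whereas the composite $\tilde{\weylelement{}}_{0,i}$ scrambles $\Phi^+_{M_R}$ and $\Phi^+_{\para{R}\cap M_i}$ into one another.
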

\begin{proof}
By Lemma (\ref{Longest_weyl}) we know that 
$$\weylelement{_{\para{P}_i}} (\delta_{\para{P}_i}^{c_{ij}-\frac{1}{2} } \otimes\delta_{\para{B}}^{\frac{1}{2}})=
\delta_{\para{P}_{i}}^{c_{ij} + \frac{1}{2}} \otimes \delta_{\para{B}}^{-\frac{1}{2}}.$$ 
Note that 
\begin{align*}
\delta_{\para{P}_{i}}^{c_{ij} + \frac{1}{2}} \otimes \delta_{\para{B}}^{-\frac{1}{2}}
&=\delta_{\para{P}_{i}}^{c_{ij} + \frac{1}{2}} \otimes \delta_{R}^{-\frac{1}{2}} \otimes 
\delta_{R}^{\frac{1}{2}} \otimes 
\delta_{\para{B}}^{-\frac{1}{2}}\\
&\overset{}{=}
\delta_{\para{P}_{j}}^{-c_{ji} - \frac{1}{2}} \otimes \delta_{R}^{\frac{1}{2}} \otimes 
\delta_{R}^{\frac{1}{2}} \otimes 
\delta_{\para{B}}^{-\frac{1}{2}}\\
&=\delta_{\para{P}_j}^{-c_{ji}-\frac{1}{2}} \otimes
\delta_{\para{R}}^{\frac{1}{2}}\otimes 
(\delta_{\para{B}}^{\para{R}})^{-\frac{1}{2}}.
\end{align*}
Let $\weylelement{\para{R}}$ be the longest element in $\weyl{\para{R}}$. Then 
\begin{align*}
\weylelement{\para{R}}\delta_{\para{P}_j}&=
\delta_{\para{P}_j},
&
\weylelement{\para{R}}\delta_{\para{R}}&=
\delta_{\para{R}},
&
\weylelement{\para{R}}\delta_{\para{B}}^{\para{R}}&=
(\delta_{\para{B}}^{\para{R}})^{-1}.
\end{align*}
Hence
\begin{align*}
\weylelement{\para{R}}(\delta_{\para{P}_j}^{-c_{ji}-\frac{1}{2}} \otimes
\delta_{\para{R}}^{\frac{1}{2}}\otimes 
(\delta_{\para{B}}^{\para{R}})^{-\frac{1}{2}})&=
\delta_{\para{P}_j}^{-c_{ji}-\frac{1}{2}} \otimes
\delta_{\para{R}}^{\frac{1}{2}}\otimes 
(\delta_{\para{B}}^{\para{R}})^{\frac{1}{2}}\\
&=\delta_{\para{P}_j}^{-c_{ji}-\frac{1}{2} } \otimes\delta_{\para{B}}^{\frac{1}{2}}.
\end{align*}
Therefore  for
$\weylelement{}=\weylelement{\para{R}}\weylelement{\para{P}_i}$  it holds that:
$$\weylelement{}(\delta_{\para{P}_i}^{c_{ij}-\frac{1}{2} } \otimes\delta_{\para{B}}^{\frac{1}{2}}) =
\delta_{\para{P}_j}^{-c_{ji}-\frac{1}{2} } \otimes\delta_{\para{B}}^{\frac{1}{2}}$$
Notice that 
\begin{align*}
\weylelement{}&=\weylelement{\para{R}}\weylelement{\para{P}_i} \\
&=\weylelement{\para{P}_j}\weylelement{\para{P}_j}\weylelement{\para{R}}\weylelement{\para{P}_i} \\
&= \weylelement{\para{P}_j}
\tilde{\weylelement{}}_{0,j}
\weylelement{\para{P}_i}
\end{align*}
where $\tilde{\weylelement{}}_{0,j}$ is the shortest representative of the longest element in $\weyl{\para{P}_j}\slash \weyl{\para{R}}$.
\end{proof}
\begin{cor}
Let $G$ be an algebraic group of type different from 
$A_n, D_{2n+1},E_6$. Then for $s_{0} = |c_{ij}|,t_0=|c_{ji}|$  there exists 
$\weylelement{} \in \weyl{G} $ such that 
$$\weylelement{}(\delta_{\para{P}_i}^{s_0-\frac{1}{2} } \otimes\delta_{\para{B}}^{\frac{1}{2}}) =
\delta_{\para{P}_j}^{t_0-\frac{1}{2} } \otimes\delta_{\para{B}}^{\frac{1}{2}}.$$
\end{cor}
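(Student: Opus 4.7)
The plan is to reduce to the previous theorem by exploiting the fact that, for the types excluded from the statement (namely types different from $A_n$ with $n \geq 2$, $D_{2n+1}$, and $E_6$), the longest element $w_0 \in \weyl{G}$ acts as $-1$ on the character lattice $X(T) \otimes \mathbb{R}$. This is a standard fact from the classification of irreducible root systems.

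By the previous theorem, there exists $\tilde{w} \in \weyl{G}$ with
$$\tilde{w}(\delta_{\para{P}_i}^{c_{ij}-\frac{1}{2}} \otimes \delta_{\para{B}}^{\frac{1}{2}}) = \delta_{\para{P}_j}^{-c_{ji}-\frac{1}{2}} \otimes \delta_{\para{B}}^{\frac{1}{2}}.$$
In the fortunate case $c_{ij} \geq 0$ and $-c_{ji} \geq 0$, we have $s_0 = c_{ij}$, $t_0 = -c_{ji}$, and $w = \tilde{w}$ does the job. The task is therefore to correct the signs of the two exponents independently when they are negative.

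The sign-flipping device is the following. For any maximal parabolic $\para{P}$ with longest Weyl element $\weylelement{\para{P}}$, Lemma (\ref{Longest_weyl}) gives $\weylelement{\para{P}}(\chi_s^{\para{P}}) = \delta_{\para{P}}^{s+\frac{1}{2}} \otimes \delta_{\para{B}}^{-\frac{1}{2}}$. Composing with $w_0 = -1$ we obtain
$$w_0 \weylelement{\para{P}}(\chi_s^{\para{P}}) = \delta_{\para{P}}^{-s-\frac{1}{2}} \otimes \delta_{\para{B}}^{\frac{1}{2}} = \chi_{-s}^{\para{P}},$$
so the element $\sigma_{\para{P}} := w_0 \weylelement{\para{P}}$ reverses the sign of the parameter while preserving the form $\chi_{\bullet}^{\para{P}}$.

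To conclude, set $\epsilon_i = 1$ if $c_{ij} < 0$ and $\epsilon_i = 0$ otherwise, and likewise $\epsilon_j = 1$ if $-c_{ji} < 0$ and $\epsilon_j = 0$ otherwise. Define
$$w := \sigma_{\para{P}_j}^{\epsilon_j} \circ \tilde{w} \circ \sigma_{\para{P}_i}^{\epsilon_i}.$$
By construction $w(\chi_{|c_{ij}|}^{\para{P}_i}) = \chi_{|c_{ji}|}^{\para{P}_j}$, which is the desired identity with $s_0 = |c_{ij}|$ and $t_0 = |c_{ji}|$. The main conceptual point is the case analysis on the signs of $c_{ij}$ and $c_{ji}$; the only external input beyond the previous theorem and Lemma (\ref{Longest_weyl}) is the classification-theoretic fact that $-1 \in \weyl{G}$ for the allowed types, which is where the hypothesis on the type of $G$ enters.
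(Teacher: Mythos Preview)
Your proof is correct and follows essentially the same approach as the paper. Your sign-flipping elements $\sigma_{\para{P}_i}=w_0\weylelement{\para{P}_i}$ and $\sigma_{\para{P}_j}=w_0\weylelement{\para{P}_j}$ are precisely the paper's $w_r$ and $w_l$ (the shortest representatives of the longest element in $\weyl{G}/\weyl{\para{P}_i}$ and $\weyl{G}/\weyl{\para{P}_j}$), and both proofs construct the desired Weyl element by pre- and post-composing $\tilde w$ with these according to the signs of $c_{ij}$ and $-c_{ji}$. Your version is in fact more explicit: you justify the sign-flip $\sigma_{\para{P}}(\chi_s^{\para{P}})=\chi_{-s}^{\para{P}}$ via Lemma~(\ref{Longest_weyl}) together with $w_0=-1$, whereas the paper simply asserts the final identity without spelling out this step.
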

\begin{proof}
Let 
$w_{l} \in \weyl{G} \slash \weyl{\para{P}_j}$ and 
$w_{r} \in \weyl{G} \slash \weyl{\para{P}_i}$ be the shortest representative of the longest element.
Set
$$r=\begin{cases}
e & \mbox{if } s_0=c_{ij} \\
w_{r} & \mbox{if } s_0= -c_{ij}
\end{cases}
\quad \quad 
l=\begin{cases}
e & \mbox{if } t_0=-c_{ji} \\
w_{l} & \mbox{if } t_0= c_{ji}
\end{cases}
$$
Let $\weylelement{}$ be as in Proposition (\ref{Weyl}). Then 
$$l\weylelement{}r(\delta_{\para{P}_i}^{s_0-\frac{1}{2} } \otimes\delta_{\para{B}}^{\frac{1}{2}}) =
\delta_{\para{P}_j}^{t_0-\frac{1}{2} } \otimes\delta_{\para{B}}^{\frac{1}{2}}$$
\end{proof}
\begin{defn}
The data $(\para{P}_i,s_0,\para{P}_j,t_0,\weylelement{})$ that is constructed by the above method  is called  \textbf{special admissible data} if $s_0,t_0$ are non negative.
\end{defn}

\newpage
\section{Algorithm: Finding Positive Admissible Data}\label{section:anoterway}
In this section, we  introduce a way to compute all
the (positive) admissible data. 
Let us fix some notations.
\\
Let $\para{P},\para{Q}$ be  maximal parabolic subgroups. 
Let 
\begin{align*}
\chi_{\para{P}}(s) =&\delta_{\para{P}}^{s-\frac{1}{2}}\otimes \delta_{\para{B}}^{\frac{1}{2}} &
\chi_{\para{Q}}(t) =&\delta_{\para{Q}}^{t-\frac{1}{2}}\otimes \delta_{\para{B}}^{\frac{1}{2}}
& (\mathfrak{a}_T^{\ast})^{+}=& \{ 
\lambda \in \mathfrak{a}_T^{\ast} \: : \:
\inner{\lambda,\check{\alpha}} \geq 0 \quad \forall \alpha \in \Delta_{G}\}
\end{align*}
It is well known that 
$|(\mathfrak{a}_T^{\ast})^{+} \cap 
\{ \weylelement{}\chi_{\para{P}}(s_0) \: : \: \weylelement{}\in \weyl{G} \}|= 1$. Let
\begin{align*}
dom_{\para{P}},dom_{\para{Q}} \: &: \mathbb{R} \rightarrow 
(\mathfrak{a}_T^{\ast})^{+} 
\end{align*}
such that 
$$dom_{\para{P}}(s_0) = (\mathfrak{a}_T^{\ast})^{+} \cap 
\{ \weylelement{}\chi_{\para{P}}(s_0) \: : \: \weylelement{}\in \weyl{G} \} \quad 
dom_{\para{Q}}(s_0) = (\mathfrak{a}_T^{\ast})^{+} \cap 
\{ \weylelement{}\chi_{\para{Q}}(s_0) \: : \: \weylelement{}\in \weyl{G} \}
$$
\begin{remk}
The functions $dom_{\para{P}},dom_{\para{Q}}$ are piecewise linear.
\end{remk} 
For every pair of intervals on which $dom_{\para{P}},
dom_{\para{Q}}$ are linear, we solve the system of linear equation $dom_{\para{P}}(s) =dom_{\para{Q}}(t)$. The solution will give rise to admissible data by the following procedure. 
Let 
\begin{align*}
W_{\para{P}} \: &: \mathbb{R} \rightarrow 
\weyl{G}
   & W_{\para{Q}} \: &: \mathbb{R} \rightarrow 
\weyl{G}
\end{align*}
such that $W_{\para{P}}(s_0)$ and $W_{\para{Q}}(s_0)$ are the shortest Weyl elements such that
\begin{align*}
W_{\para{P}}(s_0)\chi_{\para{P}}(s_0) =& dom_{\para{P}}(s_0)
& W_{\para{Q}}(s_0)\chi_{\para{Q}}(s_0) =& dom_{\para{Q}}(s_0)
\end{align*}
\begin{remk}
The functions $W_{\para{P}},W_{\para{Q}}$ are piecewise constants.
\end{remk}
Suppose $dom_{\para{P}}(s_0)=dom_{\para{Q}}(t_0)$ for some numbers $s_0,t_0$. Then 
$$
w(\chi_{\para{P}}(s_0))=\chi_{\para{Q}}(t_0)
\quad  \text{ for }
w=W_{\para{Q}}(t_0)^{-1}W_{\para{P}}(s_0)
$$
Hence 
$(\para{P},s_0,\para{Q},t_0,w)$ is an  admissible   data.

\begin{remk}
Since we are mostly interested in the positive admissible data, we can restrict ourself to the domain $[0,\infty)$.
Moreover, for $s>\frac{1}{2},t>\frac{1}{2}$ we do not have any positive admissible data.
\end{remk}
In the following sections we list all the positive admissible data.
\begin{remk}
As we have mentioned earlier, for every pair of maximal parabolic subgroups $\para{P}_i,\para{P}_j$ we have the admissible  data $(\para{P}_{i},\frac{1}{2},\para{P}_{j},\frac{1}{2} ,e)$ that corresponds to the identity between the trivial representation.
Hence we may omit this.
\end{remk}
For each section we start by finding all the special data and list the identities. After this we give the complete list of positive admissible data and write the remaining identities. 

\lhead{Chapter \ref{ch3:ids}. \emph{Identities- Group of type $G_{2}$}}
\section{Group of type $G_{2}$}\label{section:G2}
\begin{prop}\label{G2_mod_char} 
Let $G=G_2$ then it holds that
\begin{align*}
 \delta_{\para{P}_1}&=5\fundamental{1} &
  \delta_{\para{P}_2}&=3\fundamental{2}
  & \delta_{\para{B}}=2\fundamental{1}+2\fundamental{2}.
  \end{align*}
 \end{prop}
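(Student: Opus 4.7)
The plan is a direct computation using the formula $\delta_{\para{P}} = \sum_{\alpha \in \Phi^+_G \setminus \Phi^+_M} \alpha$ recalled in the preliminaries, together with the description of the fundamental weights via the duality $\langle \fundamental{j}, \check{\alpha}_i \rangle = \delta_{ij}$.

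First I would list the six positive roots of $G_2$ in the basis of simple roots: $\alpha_1, \alpha_2, \alpha_1+\alpha_2, 2\alpha_1+\alpha_2, 3\alpha_1+\alpha_2, 3\alpha_1+2\alpha_2$, with $\alpha_1$ short and $\alpha_2$ long (as dictated by the Dynkin diagram drawn earlier). Then, for each of $\para{P}_1, \para{P}_2, \para{B}$, I identify the set of positive roots contributing to the sum: for $\para{P}_1$ we remove $\alpha_2$; for $\para{P}_2$ we remove $\alpha_1$; for $\para{B}$ we keep them all. Summing one at a time gives
$$\delta_{\para{P}_1} = 10\alpha_1 + 5\alpha_2, \quad \delta_{\para{P}_2} = 9\alpha_1 + 6\alpha_2, \quad \delta_{\para{B}} = 10\alpha_1 + 6\alpha_2.$$

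Next, to convert each expression to the fundamental-weight basis, I would use the Cartan matrix of $G_2$, i.e.\ the pairings $\langle \alpha_1, \check{\alpha}_1 \rangle = 2$, $\langle \alpha_1, \check{\alpha}_2 \rangle = -1$, $\langle \alpha_2, \check{\alpha}_1 \rangle = -3$, $\langle \alpha_2, \check{\alpha}_2 \rangle = 2$. Pairing each $\delta$ with $\check{\alpha}_1$ and $\check{\alpha}_2$ and reading off the coefficients gives $\delta_{\para{P}_1} = 5\fundamental{1}$, $\delta_{\para{P}_2} = 3\fundamental{2}$, and $\delta_{\para{B}} = 2\fundamental{1} + 2\fundamental{2}$, exactly as claimed. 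The only point that needs mild attention is the sign/length convention for $G_2$, which must be fixed consistently with the Dynkin diagram at the start of the section; once this is pinned down, the remaining arithmetic is routine and no genuine obstacle appears.
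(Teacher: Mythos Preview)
Your proof is correct and complete. The paper does not give an explicit proof of this proposition (nor of the analogous ones for $F_4$, $E_6$, $E_7$); these are standard root-system computations left to the reader. Your direct computation via the formula $\delta_{\para{P}}=\sum_{\alpha\in\Phi^+_G\setminus\Phi^+_M}\alpha$ followed by pairing with the simple coroots is exactly the expected argument, and your identification of $\alpha_1$ as the short root matches the paper's Dynkin diagram convention.
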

 \begin{prop} 
In this case we have  only one (special) admissible data
$(\para{P}_2, \frac{1}{6}, \para{P}_1 ,\frac{1}{10})$
 \begin{remk}
In this case we have \textbf{only} one identity.
 \end{remk}
 \end{prop}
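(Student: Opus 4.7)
The plan is to apply the machinery of Section \ref{section_suit_data} directly: since $G_2$ has rank $2$, there are only two maximal parabolic subgroups, namely $\para{P}_1$ and $\para{P}_2$, so the construction of special admissible data needs to be carried out for exactly one unordered pair $\{\para{P}_1,\para{P}_2\}$. Observe that $\para{R}=\para{P}_1\cap \para{P}_2=\para{B}$, hence using Proposition \ref{G2_mod_char} the matrix $B=(b_{ij})$ is determined by $b_{11}=5$, $b_{22}=3$, and $\delta_{\para{R}}=\delta_{\para{B}}=2\fundamental{1}+2\fundamental{2}$, giving $b_{12}=b_{21}=2$.

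First I would compute the exponents $c_{ij}$, $c_{ji}$ from their definitions: $c_{12}=\tfrac{b_{12}}{b_{11}}-\tfrac12=\tfrac{2}{5}-\tfrac12=-\tfrac{1}{10}$ and $c_{21}=\tfrac{b_{21}}{b_{22}}-\tfrac12=\tfrac{2}{3}-\tfrac12=\tfrac{1}{6}$. By Theorem \ref{Weyl} applied to the pair $(\para{P}_1,\para{P}_2)$, there exists $w\in \weyl{G}$ with $w(\chi_{\para{P}_1}(c_{12}))=\chi_{\para{P}_2}(-c_{21})$. Since for $G=G_2$ the longest Weyl element acts as $-1$ on $\mathfrak{a}^{*}$, one can left-- and right--multiply $w$ by shortest representatives of the longest elements of $\weyl{G}/\weyl{\para{P}_j}$ and $\weyl{G}/\weyl{\para{P}_i}$ to replace $(c_{12},-c_{21})$ by $(|c_{12}|,|c_{21}|)=(\tfrac{1}{10},\tfrac{1}{6})$, as made precise in the Corollary following Theorem \ref{Weyl}. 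This produces the special admissible data $(\para{P}_1,\tfrac{1}{10},\para{P}_2,\tfrac{1}{6},w')$, or equivalently, by inverting $w'$, the data $(\para{P}_2,\tfrac{1}{6},\para{P}_1,\tfrac{1}{10})$ as stated.

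For uniqueness I would argue that the construction of Section \ref{section_suit_data} assigns exactly one pair $(c_{ij},c_{ji})$ to each ordered pair of maximal parabolics, and taking absolute values identifies the ordered pair $(\para{P}_i,\para{P}_j)$ with $(\para{P}_j,\para{P}_i)$. Hence for $G_2$ there is only one unordered pair of distinct maximal parabolics available, giving exactly one piece of special admissible data. The main (mild) subtlety is the bookkeeping of which representative $w$ is produced by Theorem \ref{Weyl} and how the absolute--value modification affects it; this, however, is routine since $-1\in \weyl{G_2}$ and does not affect the final pair of real numbers $(\tfrac{1}{10},\tfrac{1}{6})$.
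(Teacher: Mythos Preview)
Your proposal is correct and follows exactly the paper's approach: the proposition is a direct application of the construction in Section~\ref{section_suit_data}, and the paper does not supply a separate proof beyond that general machinery. Your computation of $b_{12}=b_{21}=2$ from $\delta_{\para{B}}=2\fundamental{1}+2\fundamental{2}$, the resulting values $c_{12}=-\tfrac{1}{10}$, $c_{21}=\tfrac{1}{6}$, and the use of $-1\in\weyl{G_2}$ to pass to the positive data $(\para{P}_2,\tfrac{1}{6},\para{P}_1,\tfrac{1}{10})$ all match the paper's framework precisely.
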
 
 \begin{prop} It holds that : 
 \begin{align*} 
 A_{w_{\para{P}_{1} }} = &\frac{R}{\zeta ( 2 )}& w_{\para{P}_{1} } =&  w_{2} \\ 
 A_{w_{\para{P}_{2} }} = &\frac{R}{\zeta ( 2 )}& w_{\para{P}_{2} } =&  w_{1} \\ 
  \end{align*} 
 \end{prop}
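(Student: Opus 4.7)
The plan is to unwind the definition of $A_{w_{\para{P}_i}}$ given just before Proposition \ref{ReducetoParabolic} and compute it by direct application of the Gindikin--Karpelevich formula. Each maximal Levi of $G_2$ has rank one, so the longest element $w_{\para{P}_i}$ of $\weyl{\para{P}_i}$ is just the simple reflection through the simple root retained by the Levi. Concretely, $\para{P}_1$ corresponds to $\Delta_G\setminus\{\alpha_1\}$, so the Levi root is $\alpha_2$ and $w_{\para{P}_1}=w_2$; symmetrically $w_{\para{P}_2}=w_1$. This gives the second column of the table.

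For the residue, I will apply Theorem \ref{gid} to express
\[
C_{w_2}(\chi)=\frac{\zeta(\langle\chi,\check\alpha_2\rangle)}{\zeta(\langle\chi,\check\alpha_2\rangle+1)},\qquad
C_{w_1}(\chi)=\frac{\zeta(\langle\chi,\check\alpha_1\rangle)}{\zeta(\langle\chi,\check\alpha_1\rangle+1)},
\]
since in each case the unique positive root sent to a negative root by $w_i$ is $\alpha_i$. Next, by Proposition \ref{G2_mod_char}, the character $\chi_{\para{P}_1,s}=\delta_{\para{P}_1}^{s-\frac12}\otimes\delta_{\para{B}}^{\frac12}$ satisfies $\langle\chi_{\para{P}_1,s},\check\alpha_2\rangle=1$, because $\alpha_2$ lies in the Levi of $\para{P}_1$; likewise $\langle\chi_{\para{P}_2,s},\check\alpha_1\rangle=1$. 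Thus $C_{w_{\para{P}_i}}(\chi_{\para{P}_i,s})$ is, as a function of the character $\chi$, singular precisely along the hyperplane $\mathcal{F}_{\alpha_j}^-$ ($j\neq i$) that defines $\mathcal{F}_{\hat i}$.

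Since $\Delta_{\para{P}_i}$ consists of the single simple root $\alpha_j$ ($j\neq i$), the iterated residue $\operatorname{Res}_{\mathcal{F}_{\hat i}}$ collapses to a one-variable residue: setting $u=\langle\chi,\check\alpha_j\rangle$,
\[
A_{w_{\para{P}_i}}=\lim_{u\to 1}(u-1)\,\frac{\zeta(u)}{\zeta(u+1)}=\frac{R}{\zeta(2)},
\]
where the first equality uses the formula in the proof of Proposition \ref{ReducetoParabolic} and the second uses the definition $R=\operatorname{Res}_{s=1}\zeta(s)$ together with the regularity of $\zeta$ at $2$.

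There is essentially no obstacle here: the calculation is one-dimensional because the Levi subgroups are rank one, and the only ingredient beyond bookkeeping is the residue of $\zeta$. The same template (identify $w_{\para{P}}$, apply Theorem \ref{gid}, collapse $\operatorname{Res}_{\mathcal{F}_{\hat r}}$) will be reused for the larger exceptional groups treated in the remaining sections, where the main subtlety will be that $\weyl{\para{P}}$ has length greater than one and $C_{w_\para{P}}(\chi)$ becomes a product of several zeta ratios.
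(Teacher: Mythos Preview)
Your proof is correct and is precisely the direct computation the paper leaves implicit: the proposition is stated there without proof as the output of unwinding the definition $A_{w_{\para{P}_i}}=\operatorname{Res}_{\mathcal{F}_{\hat i}}C_{w_{\para{P}_i}}(\chi)$ via Theorem~\ref{gid} and Proposition~\ref{ReducetoParabolic}, exactly as you do. There is nothing to add.
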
 
 \begin{tabular}{|c|c|c|c|c|c|c|c|c|c|c|} \hline
$ \para{P}_i $ & $ s $ & $ \para{P}_j $ & $ t $ & $ w $ & $\frac{h_3(\chi_t)}{h_3(\chi_s)}$ & $d_{\para{P}_i}(\chi_s)$ & $d_{\para{P}_j}(\chi_t)$ & $ d $ & $ \epsilon_s $ & $ \epsilon_t $ \\ \hline
$\para{P}_{2}$ & $ \frac{1}{6} $ & $\para{P}_{1}$ & $ \frac{1}{10} $ & $ w_{1} $ & $ 1 $ & $ 2 $ & $ 1 $ & $ 1 $ & $ 54 $ & $ 10 $ \\ \hline
\end{tabular}
 \begin{thm} 
Let $f^{0} \in I_{\para{P}_i}(s)$ be the normalized spherical section then:
\begin{enumerate} 
\item 
 $\leadingterm{-2}^{\para{P}_{2} } (f^{0},{\frac{1}{6}},g)=\frac{5}{27} \times \frac{R}{\zeta ( 2 )} \times  \leadingterm{-1}^{\para{P}_{1} } (f^{0},{\frac{1}{10}},g).$ 
\end{enumerate} 
\end{thm}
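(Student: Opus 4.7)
The plan is to apply the main identity of Chapter \ref{ch:Technical lemmas} (Theorem \ref{main_thm}) directly to the admissible data $(\para{P}_2,\tfrac{1}{6},\para{P}_1,\tfrac{1}{10},w_1)$ that was produced in the previous proposition. Since $F=\mathbb{Q}$ and $s_0=\tfrac{1}{6}$, $t_0=\tfrac{1}{10}$ are real, the hypotheses of Theorem \ref{main_thm} are met, and the identity
$$\leadingterm{-d_{\para{P}_2}(\chi_{s_0})}^{\para{P}_{2}}(f^{0},\tfrac{1}{6},g)=A\times \leadingterm{-d_{\para{P}_1}(\chi_{t_0})}^{\para{P}_{1}}(f^{0},\tfrac{1}{10},g)$$
holds with the explicit constant $A$ described in that theorem. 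So the entire task is to evaluate the six ingredients of $A$ and check that their product simplifies to $\tfrac{5}{27}\cdot\tfrac{R}{\zeta(2)}$.

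First I would record the orders of the zeros of $G_{\para{P}}(\chi_s)$ at the relevant points: by the algorithm of Chapter \ref{ch4:algo} (or directly from Theorem \ref{Upper} applied with the explicit root data of $G_2$), we have $d_{\para{P}_2}(\chi_{s_0})=2$ and $d_{\para{P}_1}(\chi_{t_0})=1$, hence $d=|N_{-1}(\chi_{t_0})|-|N_{-1}(\chi_{s_0})|=1$ as listed in the table. Second, using $\delta_{\para{P}_2}=3\bar\omega_2$ and $\delta_{\para{P}_1}=5\bar\omega_1$ from Proposition \ref{G2_mod_char}, I would enumerate the six positive roots of $G_2$, evaluate $\langle\chi_{s_0},\check\alpha\rangle$ and $\langle\chi_{t_0},\check\alpha\rangle$ for each, and read off the sets $N_{0},N_{1}$. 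Plugging these into the closed form of $\epsilon_{s_0},\epsilon_{t_0}$ from Lemma \ref{Corollary-1} yields $\epsilon_{s_0}=54$ and $\epsilon_{t_0}=10$, matching the table.

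Third, I would treat the factors $A_{\para{P}_i}$ and $A_{\para{P}_j}$. For a maximal parabolic associated to a single simple root $\alpha_i$, the iterated residue $A_{w_{\para{P}}}=\operatorname{Res}_{\mathcal{F}_{\hat i}}C_{w_{\para{P}}}(\chi)$ factors, via Corollary \ref{gid_cor}, into a product over the positive roots of the Levi; the only simple root appearing in the Levi of $\para{P}_1$ (resp. $\para{P}_2$) is $\alpha_2$ (resp. $\alpha_1$), and the standard calculation gives $A_{w_{\para{P}_1}}=A_{w_{\para{P}_2}}=R/\zeta(2)$, so the quotient $A_{\para{P}_j}/A_{\para{P}_i}$ equals $1$. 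The product over roots $\alpha\in\Phi^+\setminus N_{\pm1,0}(\chi_{s_0})$ with $w_1\alpha<0$ is trivial: among the six positive roots of $G_2$, direct inspection shows that every root $\alpha$ sent to a negative root by the simple reflection $w_1$ already lies in $N_{\pm 1,0}(\chi_{s_0})$, so the product is empty and contributes $1$. Combining everything gives
$$A=\frac{\epsilon_{t_0}}{\epsilon_{s_0}}\cdot 1\cdot \left(\frac{R}{\zeta(2)}\right)^{1}\cdot 1=\frac{10}{54}\cdot\frac{R}{\zeta(2)}=\frac{5}{27}\cdot\frac{R}{\zeta(2)},$$
as claimed. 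No step is genuinely difficult; the only bookkeeping hazard is keeping track of the six positive roots of $G_2$ and their coroot decompositions correctly when tabulating $N_0$, $N_1$ and $N_{-1}$, but this is a finite, mechanical verification.
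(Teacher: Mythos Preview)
Your proposal is correct and follows exactly the paper's approach: the theorem is obtained by plugging the tabulated data $(\para{P}_2,\tfrac16,\para{P}_1,\tfrac{1}{10},w_1)$, $\epsilon_{s_0}=54$, $\epsilon_{t_0}=10$, $d=1$, $h_3$-ratio $=1$, and $A_{w_{\para{P}_1}}=A_{w_{\para{P}_2}}=R/\zeta(2)$ into Theorem~\ref{main_thm}. Your explicit check that the only positive root with $w_1\alpha<0$ is $\alpha_1$, and that $\langle\chi_{s_0},\check\alpha_1\rangle=1$ (so the $h_3$-product is empty), is precisely the verification behind the table entry~$1$.
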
 
\newpage
\lhead{Chapter \ref{ch3:ids}. \emph{Identities- Group of type $F_{4}$}}
\section{Group of type $F_{4}$}\label{section:F4}
 \begin{prop}\label{F4_mod_char} 
The matrix $B$ for $G=F_4$ is
 $$B= \left(\begin{array}{rrrr}
8 & 2 & 3 & 5 \\
4 & 5 & 3 & 4 \\
5 & 3 & 7 & 6 \\
6 & 3 & 2 & 11
\end{array}\right) $$ 
 \end{prop}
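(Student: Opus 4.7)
The proof is a direct computation inside the root system of $F_4$; no deep ingredients are required beyond the definitions.

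Fix a maximal parabolic $\para{P}_i$ with Levi $M_i$. By the definition recalled earlier,
\[
\delta_{\para{P}_i}=\sum_{\alpha\in\Phi^+\setminus\Phi_{M_i}^+}\alpha.
\]
The summation set is stable under the Weyl group $W_{M_i}$ generated by the simple reflections $\{s_j:j\neq i\}$, so $\delta_{\para{P}_i}$ is $W_{M_i}$-invariant. Since the subspace of $\mathfrak{a}^*$ fixed by $W_{M_i}$ is the one-dimensional line $\mathbb{Q}\fundamental{i}$, one has $\delta_{\para{P}_i}=b_{ii}\fundamental{i}$ for a unique rational $b_{ii}$, and pairing with $\check{\alpha}_i$ (using $\inner{\fundamental{i},\check{\alpha}_j}=\delta_{ij}$) gives
\[
b_{ii}=\inner{\delta_{\para{P}_i},\check{\alpha}_i}=\sum_{\alpha\in\Phi^+\setminus\Phi_{M_i}^+}\inner{\alpha,\check{\alpha}_i}.
\]
The same argument applied to $\para{R}_{ij}=\para{P}_i\cap\para{P}_j$ shows that $\delta_{\para{R}_{ij}}$ is fixed by the reflections in $\Delta\setminus\{\alpha_i,\alpha_j\}$ and hence lies in $\mathbb{Q}\fundamental{i}+\mathbb{Q}\fundamental{j}$; the coefficients are then extracted by
\[
b_{ij}=\inner{\delta_{\para{R}_{ij}},\check{\alpha}_i},\qquad b_{ji}=\inner{\delta_{\para{R}_{ij}},\check{\alpha}_j}.
\]

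Concretely, I would tabulate the $24$ positive roots of $F_4$ as integer $4$-tuples in the simple-root basis, and convert each one to the fundamental-weight basis using the $F_4$ Cartan matrix. Then for every pair $(i,j)$ (allowing $i=j$) it suffices to sum the fundamental-weight rows of those positive roots lying outside the relevant Levi, and read off the $\fundamental{i}$- and $\fundamental{j}$-coordinates. This produces all $4+2\binom{4}{2}=16$ entries of $B$. As a cross-check one can use the identity $\delta_{\para{R}_{ij}}=\delta_{\para{P}_i}+\delta^{M_i}_{\para{R}_{ij}\cap M_i}$, which expresses each off-diagonal computation as one inside the rank-three Levi $M_i$, and verify it against the symmetric reduction via $M_j$. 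A useful shortcut is the formula $\delta_{\para{P}_i}=2\rho-2\rho_{M_i}$, where both summands are straightforward to compute from the Cartan matrix; one can check, for instance, that $2\rho_{M_2}=\alpha_1+2\alpha_3+2\alpha_4=2\fundamental{1}-3\fundamental{2}+2\fundamental{3}+2\fundamental{4}$ and hence $\delta_{\para{P}_2}=2\rho-2\rho_{M_2}=5\fundamental{2}$, recovering $b_{22}=5$.

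The only real obstacle is clerical: $F_4$ is non-simply-laced, so the coroot $\check{\alpha}$ is not proportional to $\alpha$, and one must carefully distinguish the long simple roots $\alpha_1,\alpha_2$ from the short simple roots $\alpha_3,\alpha_4$ when computing the pairings $\inner{\alpha,\check{\alpha}_i}$. With a correct positive-root table in hand the verification is routine and mechanical.
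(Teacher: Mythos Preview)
Your approach is correct and is exactly the natural way to verify this proposition; the paper itself states the matrix $B$ without proof, treating it as a routine root-system computation. Your sample check $\delta_{\para{P}_2}=2\rho-2\rho_{M_2}=5\fundamental{2}$ is accurate, and the same method (either summing the nilradical roots directly or using $2\rho-2\rho_{M}$) recovers all sixteen entries once the $24$ positive roots of $F_4$ are tabulated.
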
 
\begin{prop} 
 The special admissible  data are as follows: 
 \begin{figure}[H] \begin{center}
 
\begin{tabular}{|c|c|c|c|c|} \hline
$[s_0,t_0]$ & $ \para{P}_{1}$ & $ \para{P}_{2}$ & $ \para{P}_{3}$ & $ \para{P}_{4}$ \\ \hline
$\para{P}_{1}$ &  & $\left[\frac{1}{4}, \frac{3}{10}\right]$ & $\left[\frac{1}{8}, \frac{3}{14}\right]$ & $\left[\frac{1}{8}, \frac{1}{22}\right]$ \\ \hline
$\para{P}_{2}$ & $\left[\frac{3}{10}, \frac{1}{4}\right]$ &  & $\left[\frac{1}{10}, \frac{1}{14}\right]$ & $\left[\frac{3}{10}, \frac{5}{22}\right]$ \\ \hline
$\para{P}_{3}$ & $\left[\frac{3}{14}, \frac{1}{8}\right]$ & $\left[\frac{1}{14}, \frac{1}{10}\right]$ &  & $\left[\frac{5}{14}, \frac{7}{22}\right]$ \\ \hline
$\para{P}_{4}$ & $\left[\frac{1}{22}, \frac{1}{8}\right]$ & $\left[\frac{5}{22}, \frac{3}{10}\right]$ & $\left[\frac{7}{22}, \frac{5}{14}\right]$ &  \\ \hline
\end{tabular}

  \end{center} 
 \end{figure} 
 \end{prop}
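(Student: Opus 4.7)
The plan is to derive each entry of the table as a direct computation from the general formula in Theorem \ref{Weyl}. Recall that for any pair of maximal parabolic subgroups $\para{P}_i, \para{P}_j$ of a reductive group $G$, this theorem produces an admissible datum $(\para{P}_i, c_{ij}, \para{P}_j, -c_{ji}, w)$ with $w = \weylelement{\para{P}_j}\tilde{\weylelement{}}_{0,j}\weylelement{\para{P}_i}$, where $c_{ij} = b_{ij}/b_{ii} - \frac{1}{2}$ and $c_{ji} = b_{ji}/b_{jj} - \frac{1}{2}$ are read off from the matrix $B$ of Proposition \ref{F4_mod_char}. Since $-1 \in \weyl{F_4}$, composing $w$ with a lift of $-1$ flips signs, so up to such compositions the positive admissible data associated to the pair $(\para{P}_i, \para{P}_j)$ by this construction is $(\para{P}_i, |c_{ij}|, \para{P}_j, |c_{ji}|, w')$.

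The execution is then purely arithmetic. For each of the $\binom{4}{2}=6$ unordered pairs $(i,j)$, I compute $c_{ij}$ and $c_{ji}$ from the entries of $B$. For instance, $c_{12} = 2/8 - 1/2 = -1/4$ and $c_{21} = 4/5 - 1/2 = 3/10$, yielding the entry $(\para{P}_1, \tfrac{1}{4}, \para{P}_2, \tfrac{3}{10})$; $c_{13} = 3/8 - 1/2 = -1/8$ and $c_{31} = 5/7 - 1/2 = 3/14$ yield $(\para{P}_1, \tfrac{1}{8}, \para{P}_3, \tfrac{3}{14})$; and analogous computations produce the four remaining table entries $(\para{P}_1, \tfrac{1}{8}, \para{P}_4, \tfrac{1}{22})$, $(\para{P}_2, \tfrac{1}{10}, \para{P}_3, \tfrac{1}{14})$, $(\para{P}_2, \tfrac{3}{10}, \para{P}_4, \tfrac{5}{22})$, $(\para{P}_3, \tfrac{5}{14}, \para{P}_4, \tfrac{7}{22})$. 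Each pair therefore contributes exactly one special admissible datum (together with its transpose, which fills the symmetric cell of the table).

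There is no real obstacle: once Theorem \ref{Weyl} is in place, verifying the table reduces to twelve scalar computations using the explicit matrix $B$ of Proposition \ref{F4_mod_char}. The only subtlety worth flagging is that ``special admissible data'' here refers strictly to data produced by the construction of Section \ref{section_suit_data}; there may exist additional positive admissible data not arising from this construction (coming from solutions to the piecewise-linear system $\mathrm{dom}_{\para{P}}(s) = \mathrm{dom}_{\para{Q}}(t)$ described in Section \ref{section:anoterway}), and these will be exhibited separately in the complete list that follows.
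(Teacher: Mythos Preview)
Your proposal is correct and follows precisely the approach set up in the paper: the proposition is a direct application of the machinery of Section~\ref{section_suit_data} (Theorem~\ref{Weyl} and its corollary for groups with $-1\in\weyl{G}$) to the explicit matrix $B$ of Proposition~\ref{F4_mod_char}, and the paper itself presents this table as the output of that computation without further argument. Your arithmetic checks out for all six pairs, and your closing remark correctly anticipates the additional non-special positive admissible datum $(\para{P}_2,\tfrac{1}{5},\para{P}_1,\tfrac{1}{16},w)$ that the paper exhibits immediately afterward.
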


 \begin{prop} It holds that : 
 \begin{align*} 
 A_{w_{\para{P}_{1} }} = &\frac{R^{3}}{\zeta ( 4 ) \zeta ( 2 ) \zeta ( 6 )}& w_{\para{P}_{1} } =&  w_{4}w_{3}w_{2}w_{3}w_{4}w_{3}w_{2}w_{3}w_{2} \\ 
 A_{w_{\para{P}_{2} }} = &\frac{R^{3}}{\zeta ( 2 )^{2} \zeta ( 3 )}& w_{\para{P}_{2} } =&  w_{3}w_{4}w_{3}w_{1} \\ 
 A_{w_{\para{P}_{3} }} = &\frac{R^{3}}{\zeta ( 2 )^{2} \zeta ( 3 )}& w_{\para{P}_{3} } =&  w_{4}w_{1}w_{2}w_{1} \\ 
 A_{w_{\para{P}_{4} }} = &\frac{R^{3}}{\zeta ( 6 ) \zeta ( 4 ) \zeta ( 2 )}& w_{\para{P}_{4} } =&  w_{3}w_{2}w_{3}w_{1}w_{2}w_{3}w_{1}w_{2}w_{1} \\ 
  \end{align*} 
 \end{prop}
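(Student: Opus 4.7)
The plan is to unfold the definition $A_{w_{\para{P}_i}} = \operatorname{Res}_{\mathcal{F}_{\hat{i}}} C_{w_{\para{P}_i}}(\chi_{\para{P}_i,s})$ given in Corollary \ref{Lemma1}, expand $C_{w_{\para{P}_i}}$ via Gindikin--Karpelevich, and recognize the result as a telescoping product governed by the exponents of the Weyl group of the Levi. The first step is to identify the Levi subgroup $M_i$ and exhibit a reduced expression for its longest element. Deleting node $i$ from the $F_4$ Dynkin diagram produces $M_1$ of type $C_3$ (simple roots $\alpha_2,\alpha_3,\alpha_4$), $M_2 \cong A_1 \times A_2$ (simple roots $\{\alpha_1\}\sqcup\{\alpha_3,\alpha_4\}$), $M_3 \cong A_2 \times A_1$ (simple roots $\{\alpha_1,\alpha_2\}\sqcup\{\alpha_4\}$), and $M_4$ of type $B_3$ (simple roots $\alpha_1,\alpha_2,\alpha_3$); the number of positive roots of $M_i$ is $9,4,4,9$ respectively, matching the lengths of the stated words. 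That each listed product of simple reflections represents the longest Weyl element of $\weyl{M_i}$ is a direct finite check that it sends every simple root of $M_i$ to a negative root.

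Next, Theorem \ref{gid}, together with the fact that the longest element of $\weyl{M_i}$ inverts exactly the positive roots in $\Phi_{M_i}^+$, gives
\begin{equation*}
C_{w_{\para{P}_i}}(\chi_{\para{P}_i,s}) \;=\; \prod_{\alpha \in \Phi_{M_i}^+} \frac{\zeta(\langle \chi_{\para{P}_i,s}, \check{\alpha}\rangle)}{\zeta(\langle \chi_{\para{P}_i,s}, \check{\alpha}\rangle + 1)}.
\end{equation*}
For $\alpha \in \Phi_{M_i}^+$ the coroot $\check{\alpha}$ lies in the coroot lattice of $M_i$ and hence has zero $\check{\alpha}_i$-coefficient, whence $\langle \delta_{\para{P}_i}^{s-\frac{1}{2}},\check{\alpha}\rangle = 0$ and $\langle \chi_{\para{P}_i,s},\check{\alpha}\rangle = \langle \delta_{\para{B}}^{\frac{1}{2}},\check{\alpha}\rangle$ equals the coheight of $\alpha$ in $M_i$, independently of $s$. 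For each of the $\operatorname{rank}(M_i)=n-1$ simple roots $\alpha_j$ ($j\neq i$) of $M_i$ the coheight is $1$ and the corresponding factor $\zeta(1)/\zeta(2)$ contributes a pole; the iterated residue along $\mathcal{F}_{\hat{i}} = \bigcap_{j\neq i}\{\langle\chi,\check{\alpha}_j\rangle = 1\}$ precisely collects these poles.

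The decisive step is the classical coheight--multiplicity identity of Kostant: the multiset $\{\langle \rho,\check{\alpha}\rangle : \alpha \in \Phi_{M_i}^+\}$ coincides with $\bigsqcup_{k=1}^{\operatorname{rank}(M_i)} \{1,2,\dots,m_k^{(i)}\}$, where $m_1^{(i)},\dots,m_{\operatorname{rank}(M_i)}^{(i)}$ are the exponents of $\weyl{M_i}$. The product therefore telescopes:
\begin{equation*}
\prod_{\alpha \in \Phi_{M_i}^+} \frac{\zeta(\langle\rho,\check{\alpha}\rangle)}{\zeta(\langle\rho,\check{\alpha}\rangle+1)} \;=\; \prod_{k=1}^{\operatorname{rank}(M_i)} \prod_{j=1}^{m_k^{(i)}} \frac{\zeta(j)}{\zeta(j+1)} \;=\; \frac{\zeta(1)^{\operatorname{rank}(M_i)}}{\prod_k \zeta(m_k^{(i)}+1)}.
\end{equation*}
Taking the iterated residue along the $\operatorname{rank}(M_i)$ simple coroot hyperplanes replaces each $\zeta(1)$ by $R$, yielding $A_{w_{\para{P}_i}} = R^{\operatorname{rank}(M_i)}/\prod_k \zeta(m_k^{(i)}+1)$. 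Reading off exponents---$(1,3,5)$ for each of $B_3, C_3$, and $\{1\}\cup\{1,2\}$ for each of $A_1\times A_2, A_2\times A_1$---recovers the four stated constants.

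The main obstacle is the coheight--multiplicity identity for the non-simply-laced Levis $B_3$ and $C_3$; this can either be quoted from Kostant or verified by explicit enumeration, checking that the coheight multiplicities $(3,2,2,1,1)$ for both $B_3$ and $C_3$ agree with the conjugate partition of the exponent sequence $(1,3,5)$. Once this is granted, the remainder of the argument is bookkeeping: identification of Levi types and verification of the reduced expressions.
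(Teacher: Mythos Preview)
Your argument is correct. The paper does not actually give a proof of this proposition: these values of $A_{w_{\para{P}_i}}$ are presented as output of the SAGE computation (cf.\ the remark at the start of Chapter~\ref{ch2:Poles}), with the only theoretical input being the observation in the proof of Proposition~\ref{ReducetoParabolic} that $C_{w_{\para{P}}}(\chi_s)=\prod_{\alpha\in\Phi_M^+}\zeta(\langle\chi_s,\check\alpha\rangle)/\zeta(\langle\chi_s,\check\alpha\rangle+1)$ is a nonzero constant whose only poles come from the simple roots of $M$.

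Your route is genuinely different and more conceptual. By recognising that $\langle\chi_{\para{P}_i,s},\check\alpha\rangle$ is the height of $\check\alpha$ in the coroot lattice of $M_i$ and invoking Kostant's height--exponent identity (applied to the dual root system $\Phi_{M_i}^\vee$, which has the same Weyl group and hence the same exponents), you obtain the closed formula $A_{w_{\para{P}_i}}=R^{\,\mathrm{rk}\,M_i}\big/\prod_k\zeta(m_k^{(i)}+1)$ uniformly, without any case-by-case enumeration. This explains structurally why the denominators are $\zeta(2)\zeta(4)\zeta(6)$ for the $B_3/C_3$ Levis and $\zeta(2)^2\zeta(3)$ for the $A_1\times A_2$ Levis, and the same argument immediately yields the analogous propositions for $E_6$ and $E_7$ later in the chapter. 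The paper's computational approach has the advantage of simultaneously producing the explicit reduced words for $w_{\para{P}_i}$, but your method gives a cleaner and more portable proof of the numerical values themselves.
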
 
 \begin{longtable}{|c|c|c|c|c|c|c|c|c|c|c|} 
\hline$ \para{P}_{i} $ & $ s $ & $ \para{P}_{j} $ &  $ t $ & $w$ & $ \frac{h_3(\chi_t)}{h_3(\chi_s)} $ & $d_{\para{P}_i}(\chi_s)$ & $d_{\para{P}_j}(\chi_t)$ & $d$ & $\epsilon_p$ & $\epsilon_q$ \\ \hline 
$ \para{P}_{1} $ & $ \frac{1}{8} $ & $ \para{P}_{4} $ &  $ \frac{1}{22} $ & $w_{4}w_{3}w_{1}w_{2}$ & $ \frac{ \zeta ( 3 ) } { \zeta ( 5 ) } $ & $1$ & $1$ & $0$ & $8$ & $22$ \\ \hline 
$ \para{P}_{2} $ & $ \frac{3}{10} $ & $ \para{P}_{1} $ &  $ \frac{1}{4} $ & $w_{1}$ & $ 1 $ & $2$ & $1$ & $1$ & $50$ & $16$ \\ \hline 
$ \para{P}_{2} $ & $ \frac{1}{10} $ & $ \para{P}_{3} $ &  $ \frac{1}{14} $ & $w_{3}w_{4}$ & $ \frac{ \zeta ( 2 ) } { \zeta ( 3 ) } $ & $3$ & $2$ & $1$ & $3000$ & $294$ \\ \hline 
$ \para{P}_{2} $ & $ \frac{3}{10} $ & $ \para{P}_{4} $ &  $ \frac{5}{22} $ & $w_{4}w_{3}$ & $ \frac{ \zeta ( 2 ) } { \zeta ( 3 ) } $ & $2$ & $1$ & $1$ & $50$ & $11$ \\ \hline 
$ \para{P}_{3} $ & $ \frac{3}{14} $ & $ \para{P}_{1} $ &  $ \frac{1}{8} $ & $w_{1}w_{2}$ & $ \frac{ \zeta ( 2 ) } { \zeta ( 3 ) } $ & $2$ & $1$ & $1$ & $98$ & $8$ \\ \hline 
$ \para{P}_{3} $ & $ \frac{5}{14} $ & $ \para{P}_{4} $ &  $ \frac{7}{22} $ & $w_{4}$ & $ 1 $ & $1$ & $0$ & $1$ & $7$ & $1$ \\ \hline 

 \end{longtable}

\newpage
 \begin{thm}  
 Let $f^{0} \in I_{\para{P}_i}(s)$ be the normalized spherical section then:\begin{enumerate} 
\item 
$\leadingterm{ -1 }^{\para{P}_{1}} (f^0,\frac{1}{8},g)=\frac{11}{4} \times \frac{ \zeta ( 3 ) } { \zeta ( 5 ) } \times \leadingterm{ -1 }^{\para{P}_{4}} (f^0,\frac{1}{22},g). 
 $\item 
$\leadingterm{ -2 }^{\para{P}_{2}} (f^0,\frac{3}{10},g)=\frac{8}{25} \times \frac{ R \zeta ( 3 ) } { \zeta ( 4 ) \zeta ( 6 ) } \times \leadingterm{ -1 }^{\para{P}_{1}} (f^0,\frac{1}{4},g). 
 $\item 
$\leadingterm{ -3 }^{\para{P}_{2}} (f^0,\frac{1}{10},g)=\frac{49}{500} \times \frac{R } { \zeta ( 3 ) } \times \leadingterm{ -2 }^{\para{P}_{3}} (f^0,\frac{1}{14},g). 
 $\item 
$\leadingterm{ -2 }^{\para{P}_{2}} (f^0,\frac{3}{10},g)=\frac{11}{50} \times \frac{R \zeta ( 2 ) } { \zeta ( 6 ) \zeta ( 4 ) } \times \leadingterm{ -1 }^{\para{P}_{4}} (f^0,\frac{5}{22},g) .
 $\item 
$\leadingterm{ -2 }^{\para{P}_{3}} (f^0,\frac{3}{14},g)=\frac{4}{49} \times \frac{R \zeta ( 2 ) } { \zeta ( 4 ) \zeta ( 6 ) } \times \leadingterm{ -1 }^{\para{P}_{1}} (f^0,\frac{1}{8},g) .
 $\item 
$\leadingterm{ -1 }^{\para{P}_{3}} (f^0,\frac{5}{14},g)=\frac{1}{7} \times \frac{R \zeta ( 3 ) } { \zeta ( 6 ) \zeta ( 4 ) } \times \leadingterm{ 0 }^{\para{P}_{4}} (f^0,\frac{7}{22},g) .
 $\end{enumerate} 
 \end{thm}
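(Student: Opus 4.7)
The plan is to apply Theorem (\ref{main_thm}) directly to each of the six entries in the admissible--data table that precedes the theorem. That table already supplies, for each identity in the statement, the quintuple $(\para{P}_i, s_0, \para{P}_j, t_0, w)$ together with the numerical ingredients $d_{\para{P}_i}(\chi_{s_0})$, $d_{\para{P}_j}(\chi_{t_0})$, $d = |N_{-1}(\chi_{\para{P}_j,t_0})| - |N_{-1}(\chi_{\para{P}_i,s_0})|$, the $\zeta$-ratio $h_3(\chi_t)/h_3(\chi_s)$, and the scalars $\epsilon_{s_0}$, $\epsilon_{t_0}$. Combined with the values of $A_{w_{\para{P}_k}}$ from the intervening proposition, these are exactly the quantities needed to evaluate the explicit formula
$$C \;=\; \frac{\epsilon_{t_0}}{\epsilon_{s_0}}\cdot \frac{h_3(\chi_{t_0})}{h_3(\chi_{s_0})}\cdot \Bigl(\tfrac{R}{\zeta(2)}\Bigr)^{d}\cdot \frac{A_{\para{P}_j}}{A_{\para{P}_i}}$$
of Theorem (\ref{main_thm}), so each of the six claimed identities is obtained by a single substitution.

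First, for each row I would check that the listed $w$ really is a Weyl element realizing $w(\chi_{\para{P}_i,s_0}) = \chi_{\para{P}_j,t_0}$. Using $\delta_{\para{P}_k} = b_{kk}\fundamental{k}$ from Proposition (\ref{F4_mod_char}) and the matrix $B$, both $\chi_{\para{P}_i,s_0}$ and $\chi_{\para{P}_j,t_0}$ can be written in the basis $\{\alpha_i\}$, and the assertion becomes a short calculation of the action of a small product of simple reflections on $F_4$. Next I would sort $\Phi^+$ into the sets $N_{-1}(\chi_{s_0})$, $N_0(\chi_{s_0})$, $N_1(\chi_{s_0})$, and ``other,'' read off the integers $n_i^{(\check{\alpha})}$ needed to assemble $\epsilon_{s_0}$ (and similarly $\epsilon_{t_0}$), and verify the tabulated values. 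By Corollary (\ref{corr}) the ratio $h_3(\chi_t)/h_3(\chi_s)$ equals the product of $\zeta(\langle\chi_{s_0},\check\alpha\rangle)/\zeta(\langle\chi_{s_0},\check\alpha\rangle+1)$ over the positive roots $\alpha \notin N_{\pm 1, 0}(\chi_{s_0})$ sent to negative roots by $w$; this is again a finite, checkable enumeration within $F_4$.

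Having verified all ingredients, substitution into the formula from Theorem (\ref{main_thm}) produces, after an elementary simplification of $\zeta$-values at integer translates (for example $\zeta(2)R/\zeta(3)$ telescopes against the corresponding factors in $A_{\para{P}_k}$), precisely the constants $\tfrac{11}{4}\cdot\zeta(3)/\zeta(5)$, $\tfrac{8}{25}\cdot R\zeta(3)/(\zeta(4)\zeta(6))$, etc.\ displayed in items (1)--(6). Indices $d_{\para{P}_i}(\chi_{s_0})$ and $d_{\para{P}_j}(\chi_{t_0})$ determine the orders of the leading terms appearing on the two sides, and these match the pole orders recorded in Section \ref{F4_header}, which is a useful consistency check.

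The main obstacle is not conceptual but bookkeeping: $F_4$ has $24$ positive roots, and for each of the six admissible data one must correctly partition $\Phi^+$ according to the pairing with $\chi_{s_0}$, track the sign of $w\alpha$ for each $\alpha$ outside $N_{\pm 1, 0}(\chi_{s_0})$, and compute the products of $n_i^{(\check\alpha)}$ entering $\epsilon_{s_0}, \epsilon_{t_0}$. This is precisely the computation performed by the SAGE implementation underlying the algorithm of Chapter \ref{ch4:algo}; once its output for the table is trusted, the six identities follow mechanically from Theorem (\ref{main_thm}) without any additional analytic input.
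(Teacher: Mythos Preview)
Your approach is correct and is exactly what the paper does: the theorem is stated without a written proof because it follows by direct substitution of the tabulated data (the admissible quintuples, the $\epsilon$-values, the $h_3$-ratios, $d$, and the $A_{w_{\para{P}_k}}$) into the explicit constant of Theorem~(\ref{main_thm}). The bookkeeping you describe is precisely the content of the preceding tables, and no additional argument is given or required.
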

There exists a positive admissible data which is  non special admissible data and cannot be derived from the list above   
$$(\para{P}_2,\frac{1}{5},\para{P}_1,\frac{1}{16},w_{1}w_{2}w_{3}w_{4}). $$
This positive admissible data gives rise to the following identity:
 \begin{thm} 
Let $f^{0} \in I_{\para{P}_i}(s)$ be the normalized spherical section then:
\begin{enumerate} 
\item 
$\leadingterm{ -1 }^{\para{P}_{2}} (f^0,\frac{1}{5},g)=\frac{1}{10} \times \frac{R \zeta ( 2 ) \zeta ( \frac{3}{2} ) } { \zeta ( 4 ) \zeta ( 6 ) \zeta ( \frac{7}{2} ) } \times \leadingterm{ 0 }^{\para{P}_{1}} (f^0,\frac{1}{16},g). 
 $
\end{enumerate}
\end{thm}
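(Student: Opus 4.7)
The plan is to verify that this identity falls under the framework of Theorem~\ref{main_thm} applied to the non-special admissible data $(\para{P}_2,\frac{1}{5},\para{P}_1,\frac{1}{16},w_1w_2w_3w_4)$, and then to grind out each of the factors in the explicit formula for the constant~$A$.

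First I would double-check that the quintuple is indeed admissible, i.e.\ verify the equality $w(\chi_{\para{P}_2,1/5})=\chi_{\para{P}_1,1/16}$ using the matrix $B$ from Proposition~\ref{F4_mod_char} together with the standard action of $w_1w_2w_3w_4$ on $X(T)$. Once this is checked, the spherical degenerate Eisenstein series $E_{\para{P}_2}(f^0,\frac{1}{5},g)$ admits a simple pole (from the table in section~\ref{F4_header}) so that $d_{\para{P}_2}(\chi_{1/5})=1$, while $E_{\para{P}_1}(f^0,\frac{1}{16},g)$ is holomorphic with $d_{\para{P}_1}(\chi_{1/16})=0$ in the sense of our $G_{\para{P}}$-normalization; the identity of Theorem~\ref{Weak_Thm} then guarantees the existence of a scalar $C\in\mathbb{C}^\ast$ relating the two leading terms, and our task reduces to evaluating $C$.

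Next I would plug into the formula of Theorem~\ref{main_thm} piece by piece. The constants $A_{\weylelement{\para{P}_1}}$ and $A_{\weylelement{\para{P}_2}}$ are already recorded above, giving the ratio $\frac{A_{\para{P}_1}}{A_{\para{P}_2}}=\frac{\zeta(2)\zeta(3)}{\zeta(4)\zeta(6)}$. For $\epsilon_{s_0}$ and $\epsilon_{t_0}$ I would enumerate, via a direct computation on the root system of $F_4$, the sets $N_0(\chi_{\para{P}_2,1/5})$, $N_1(\chi_{\para{P}_2,1/5})$ and their analogues at $\para{P}_1$, $t_0=\frac{1}{16}$, using the formula $\langle\chi_{\para{P},s},\check\alpha\rangle=b_{ii}n_i^{(\check\alpha)}(s-\tfrac12)+\sum_j n_j^{(\check\alpha)}$ that is implicit in the proof of Lemma~\ref{Corollary-1}. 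The integer $d=|N_{-1}(\chi_{\para{P}_1,1/16})|-|N_{-1}(\chi_{\para{P}_2,1/5})|$ is read off at the same time and should come out to $1$, producing the factor $R/\zeta(2)$.

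The hard part will be the product of zeta quotients $\prod_{\alpha>0,\,\alpha\notin N_{\pm1,0}(\chi_{s_0}),\,w\alpha<0}\zeta(\langle\chi_{s_0},\check\alpha\rangle)/\zeta(\langle\chi_{s_0},\check\alpha\rangle+1)$. Here one must list all $48$ positive roots of $F_4$, discard those in $N_{\pm1,0}(\chi_{\para{P}_2,1/5})$, compute the image of each remaining root under $w=w_1w_2w_3w_4$ (a length-$4$ element, so at most $4$ roots flip sign), and then evaluate $\langle\chi_{\para{P}_2,1/5},\check\alpha\rangle$ on the surviving roots. I expect exactly one root to survive and to give $\langle\chi_{s_0},\check\alpha\rangle=\tfrac32$, producing the factor $\zeta(3/2)/\zeta(7/2)$ that appears in the answer. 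Multiplying $\epsilon_{t_0}/\epsilon_{s_0}$, which after the combinatorial bookkeeping should collapse to $\tfrac{1}{10}$, with the ratio $A_{\para{P}_1}/A_{\para{P}_2}$, the factor $R/\zeta(2)$ and the zeta quotient $\zeta(3/2)/\zeta(7/2)$, and cancelling common $\zeta(2),\zeta(3)$ factors, yields the stated coefficient $\frac{1}{10}\cdot\frac{R\,\zeta(2)\,\zeta(3/2)}{\zeta(4)\,\zeta(6)\,\zeta(7/2)}$. The remaining observation is that none of the zeta values appearing in the $h_3$-factor vanish at these arguments (all arguments are real and different from $1/2$ except that $\zeta(3/2),\zeta(7/2)\neq 0$), which justifies the application of Theorem~\ref{main_thm} without additional hypotheses beyond $F=\mathbb{Q}$.
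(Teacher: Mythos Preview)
Your approach is exactly the one the paper uses (implicitly): this identity is simply a specialization of Theorem~\ref{main_thm} to the non-special admissible datum $(\para{P}_2,\tfrac{1}{5},\para{P}_1,\tfrac{1}{16},w_1w_2w_3w_4)$, and the constant is read off from the explicit formula there together with the recorded values of $A_{w_{\para{P}_1}}$, $A_{w_{\para{P}_2}}$. So conceptually there is nothing to add.

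Two small computational slips in your sketch are worth flagging before you carry out the details. First, $F_4$ has $24$ positive roots, not $48$. Second, a single root $\alpha$ with $\langle\chi_{s_0},\check\alpha\rangle=\tfrac32$ contributes the factor $\zeta(3/2)/\zeta(5/2)$, not $\zeta(3/2)/\zeta(7/2)$; the product in Corollary~\ref{corr} always has numerator and denominator arguments differing by~$1$. To obtain the stated constant you will in fact find that three of the four roots sent negative by $w_1w_2w_3w_4$ lie outside $N_{\pm1,0}(\chi_{\para{P}_2,1/5})$, with inner products $\tfrac32$, $2$, $\tfrac52$, so that the $h_3$-ratio telescopes to $\dfrac{\zeta(3/2)}{\zeta(5/2)}\cdot\dfrac{\zeta(2)}{\zeta(3)}\cdot\dfrac{\zeta(5/2)}{\zeta(7/2)}=\dfrac{\zeta(2)\zeta(3/2)}{\zeta(3)\zeta(7/2)}$. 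Combined with $\dfrac{A_{\para{P}_1}}{A_{\para{P}_2}}=\dfrac{\zeta(2)\zeta(3)}{\zeta(4)\zeta(6)}$, $(R/\zeta(2))^{d}$ with $d=1$, and $\epsilon_{t_0}/\epsilon_{s_0}=\tfrac{1}{10}$, the $\zeta(3)$'s cancel and you recover precisely the coefficient in the statement.
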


Therefore we get the follows: 
\begin{figure}[H]
\begin{center}
\begin{tikzpicture}[thin,scale=2]
\tikzset{
  text style/.style={
    sloped, 
    text=black,
    font=\tiny,
    above
  }
}
\matrix (m) [matrix of math nodes, row sep=1em, column sep=3em]
    { 	
    & |[name=p114]|	 \text{I}_{\para{P}_1}(\frac{1}{4})
    & & |[name=p118]|	 \text{I}_{\para{P}_1}(\frac{1}{8}) 
    \\
    |[name=p2310]|	 \text{I}_{\para{P}_2}(\frac{3}{10})
    & & |[name=p3314]|	 \text{I}_{\para{P}_3}(\frac{3}{14})
    \\
    & |[name=p4522]|	 \text{I}_{\para{P}_4}(\frac{5}{22})
    & & |[name=p4122]|	 \text{I}_{\para{P}_4}(\frac{1}{22}) 
    \\
    |[name=p2110]|	 \text{I}_{\para{P}_2}(\frac{1}{10})
        & 
    |[name=p3114]|	 \text{I}_{\para{P}_3}(\frac{1}{14})        
    & |[name=p3514]|	 \text{I}_{\para{P}_3}(\frac{5}{14})
    & |[name=p4722]|	 \text{I}_{\para{P}_4}(\frac{7}{22})
\\
   |[name=p215]|	 \text{I}_{\para{P}_2}(\frac{1}{5})
  &
  |[name=p1116]|	 \text{I}_{\para{P}_1}(\frac{1}{16})
  \\
};
\draw[->]      
		(p2310)  edge [-]
		node[text style,above]{}  (p114)
        (p2310) edge [-] 
        		node[text style,above]
        		 {} (p4522) 
        (p3314)  edge [-]
        node[text style,above]{}(p118)
        
        (p2110)  edge [-]
        		node[text style,above]{}  (p3114)
	    (p3514) edge [-] 
                		node[text style,above]
                		 {
                		 } (p4722) 
	    (p215) edge [-] 
                		node[text style,above]
                		 {
                		 } (p1116) 
(p3314)
 edge [-] 
                		node[text style,above]
                		 {
                		 } (p4122) 

            ;
         
\end{tikzpicture}
\label{digram:F4s}
\end{center}
\end{figure}
\newpage
\lhead{Chapter \ref{ch3:ids}. \emph{Identities- Group of type $E_{6}$}}
\section{Group of type $E_{6}$} \label{section:E6}
 \begin{prop}\label{E6_mod_char} 
The matrix $B$ for $G=E_6$ is
 $$B= \left(\begin{array}{rrrrrr}
12 & 6 & 2 & 3 & 5 & 8 \\
8 & 11 & 5 & 2 & 5 & 8 \\
8 & 6 & 9 & 3 & 5 & 7 \\
6 & 6 & 5 & 7 & 5 & 6 \\
7 & 6 & 5 & 3 & 9 & 8 \\
8 & 6 & 5 & 3 & 2 & 12
\end{array}\right) $$ 
 \end{prop}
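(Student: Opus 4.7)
The plan is to verify the matrix $B$ by direct computation in the root system of $E_6$, following the same recipe implicitly used for $G_2$ (Proposition \ref{G2_mod_char}) and $F_4$ (Proposition \ref{F4_mod_char}). First I would observe that Remark \ref{latticedim} guarantees that $\delta_{\para{P}_i} \in \mathbb{R}\fundamental{i}$ (since $\para{P}_i$ is maximal) and $\delta_{\para{R}} \in \mathbb{R}\fundamental{i} \oplus \mathbb{R}\fundamental{j}$ (since $\para{R} = \para{P}_i \cap \para{P}_j$ has corank $2$), so the matrix entries are unambiguously defined by pairing against simple coroots:
\[
b_{ii} = \inner{\delta_{\para{P}_i}, \check{\alpha}_i}, \qquad b_{ij} = \inner{\delta_{\para{R}}, \check{\alpha}_i}, \qquad b_{ji} = \inner{\delta_{\para{R}}, \check{\alpha}_j} \quad (i<j).
\]

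Next I would reduce each quantity to a sum over positive roots. By the definition of $\delta_{\para{P}}$ given in Section \ref{Ch1:section1}, $\delta_{\para{P}_i}$ is the sum of the positive roots $\alpha$ with $n_i^{(\alpha)} > 0$, and $\delta_{\para{R}}$ is the sum of the positive roots $\alpha$ with $n_i^{(\alpha)} + n_j^{(\alpha)} > 0$, where $\alpha = \sum_k n_k^{(\alpha)}\alpha_k$. Each pairing $\inner{\alpha, \check{\alpha}_k}$ is then read off from the Cartan matrix of $E_6$: since $E_6$ is simply-laced, it equals $2 n_k^{(\alpha)} - \sum_{l \sim k} n_l^{(\alpha)}$, where $l \sim k$ ranges over simple roots adjacent to $\alpha_k$ in the Dynkin diagram. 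Thus every entry of $B$ is an explicit finite integer sum determined by the standard enumeration of the $36$ positive roots of $E_6$ (Bourbaki, Planche V).

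Finally, the actual evaluation is mechanical: there are $6$ diagonal entries and $\binom{6}{2}=15$ off-diagonal pairs, giving $36$ scalars in total. For instance, for $i=4$ one sums $\inner{\alpha, \check{\alpha}_4}$ over all positive $\alpha$ with $n_4^{(\alpha)}\ge 1$ to recover $b_{44}=7$, and for the pair $(i,j)=(1,6)$ one sums over positive roots involving $\alpha_1$ or $\alpha_6$ to recover $b_{16}=8$ and $b_{61}=8$. The main obstacle is purely bookkeeping rather than conceptual; this is precisely the kind of task automated by the \emph{SAGE} implementation described in Chapter \ref{ch4:algo}, and the output agrees with the displayed matrix, consistent with the analogous verifications for $G_2$ and $F_4$ performed above.
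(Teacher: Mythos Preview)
Your approach is correct and is precisely what the paper does: the proposition is stated without proof as a routine computation in the $E_6$ root system, carried out by the \textsc{Sage} implementation referenced in Chapter~\ref{ch4:algo}, and your recipe (pairing $\delta_{\para{P}_i}$ and $\delta_{\para{R}}$ against the relevant simple coroots and reducing to sums over the $36$ positive roots via the Cartan matrix) is exactly the intended verification.
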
 \ 
 \begin{prop} 
 The special admissible  data are as follows: 
 \begin{figure}[H] \begin{center} 
\begin{tabular}{|c|c|c|c|c|c|c|} \hline
$[s_0,t_0]$ & $ \para{P}_{1}$ & $ \para{P}_{2}$ & $ \para{P}_{3}$ & $ \para{P}_{4}$ & $ \para{P}_{5}$ & $ \para{P}_{6}$ \\ \hline
$\para{P}_{1}$ &  &  &  &  &  &  \\ \hline
$\para{P}_{2}$ & $\left[\frac{5}{22}, 0\right]$ &  &  &  &  & $\left[\frac{5}{22}, 0\right]$ \\ \hline
$\para{P}_{3}$ & $\left[\frac{7}{18}, \frac{1}{3}\right]$ & $\left[\frac{1}{6}, \frac{1}{22}\right]$ &  &  &  & $\left[\frac{5}{18}, \frac{1}{12}\right]$ \\ \hline
$\para{P}_{4}$ & $\left[\frac{5}{14}, \frac{1}{4}\right]$ & $\left[\frac{5}{14}, \frac{7}{22}\right]$ & $\left[\frac{3}{14}, \frac{1}{6}\right]$ &  & $\left[\frac{3}{14}, \frac{1}{6}\right]$ & $\left[\frac{5}{14}, \frac{1}{4}\right]$ \\ \hline
$\para{P}_{5}$ & $\left[\frac{5}{18}, \frac{1}{12}\right]$ & $\left[\frac{1}{6}, \frac{1}{22}\right]$ &  &  &  & $\left[\frac{7}{18}, \frac{1}{3}\right]$ \\ \hline
$\para{P}_{6}$ &  &  &  &  &  &  \\ \hline
\end{tabular}
  \end{center} 
 \end{figure} 
 \end{prop}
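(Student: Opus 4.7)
My plan is to derive this table as a direct application of the general construction from Section \ref{section_suit_data} (Theorem "Weyl" above), together with the numerical data in Proposition \ref{E6_mod_char}. Recall that the construction assigns to each ordered pair of maximal parabolics $(\para{P}_i,\para{P}_j)$ the pair of rational numbers
\[
c_{ij}=\frac{b_{ij}}{b_{ii}}-\frac{1}{2},\qquad c_{ji}=\frac{b_{ji}}{b_{jj}}-\frac{1}{2},
\]
and produces a Weyl element $w=w_{\para{P}_j}\tilde{\weylelement{}}_{0,j}w_{\para{P}_i}$ intertwining $\chi_{\para{P}_i}(c_{ij})$ with $\chi_{\para{P}_j}(-c_{ji})$. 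Hence $(\para{P}_i,c_{ij},\para{P}_j,-c_{ji},w)$ is automatically admissible data, and it is \emph{positive} admissible data precisely when $c_{ij}\ge 0$ and $c_{ji}\le 0$.

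First I would read off the entries of the matrix $B$ in Proposition \ref{E6_mod_char} and tabulate the quantities $c_{ij}$ for all $1\le i\ne j\le 6$; this is a routine arithmetic step. A direct inspection then shows that $c_{ij}\ge 0$ and $c_{ji}\le 0$ hold exactly for the pairs listed in the table, with values $(s_0,t_0)=(c_{ij},-c_{ji})$ matching each tabulated entry. For instance, $c_{31}=\tfrac{8}{9}-\tfrac12=\tfrac{7}{18}$ and $c_{13}=\tfrac{2}{12}-\tfrac12=-\tfrac13$, giving the entry $[\tfrac{7}{18},\tfrac13]$ in row $\para{P}_3$, column $\para{P}_1$; all other entries are verified identically. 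The fact that rows $\para{P}_1$ and $\para{P}_6$ are empty reflects the observation that for every $j\ne 1$ one has $c_{1j}\le c_{j1}$ (and similarly at $\para{P}_6$), so each unordered pair involving $\para{P}_1$ or $\para{P}_6$ already appears in some other row.

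Once the signs are checked, the existence of the intertwining Weyl element for each valid pair is immediate from the Theorem in Section \ref{section_suit_data}, producing the required admissible data. Note that for $E_6$ we cannot appeal to the Corollary of that theorem (which used $-1\in\weyl{G}$) to freely replace $c_{ij}$ by $|c_{ij}|$, so it is essential that we list only pairs for which $c_{ij}$ itself is already non-negative and $c_{ji}$ non-positive; this is precisely why the $E_6$ table looks asymmetric compared to, e.g., the $F_4$ table.

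The only mild subtlety—and the step I expect to need the most care—is the bookkeeping: making sure that each unordered pair $\{\para{P}_i,\para{P}_j\}$ contributes exactly one row of the table, and that the Weyl elements produced are in fact shortest representatives so that the resulting data is special in the sense defined above. This can be handled by invoking the explicit decomposition $w=w_{\para{P}_j}\tilde{\weylelement{}}_{0,j}w_{\para{P}_i}$ from the proof of the Theorem, which is manifestly well-defined in all cases, and then noting that any of the remaining positive admissible data (if such exist) must be found by the algorithm of Section \ref{section:anoterway} rather than by this construction.
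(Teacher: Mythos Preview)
Your approach is correct and is exactly what the paper does (implicitly): the proposition is a direct tabulation obtained by computing $c_{ij}=b_{ij}/b_{ii}-\tfrac12$ from the matrix $B$ of Proposition~\ref{E6_mod_char} and recording those ordered pairs with $c_{ij}\ge 0$ and $-c_{ji}\ge 0$, exactly as Section~\ref{section_suit_data} prescribes. One small inaccuracy in your commentary: it is not true that every unordered pair involving $\para{P}_1$ or $\para{P}_6$ appears elsewhere in the table---the pairs $\{\para{P}_1,\para{P}_6\}$ and $\{\para{P}_3,\para{P}_5\}$ yield no special admissible data in either order (both $c_{ij}$ and $c_{ji}$ are strictly positive), and your side concern about shortest representatives is unnecessary since the definition of special admissible data imposes no such condition.
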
 
 \begin{prop} It holds that : 
 \begin{align*} 
 A_{w_{\para{P}_{1} }} = &\frac{R^{5}}{\zeta ( 2 ) \zeta ( 4 ) \zeta ( 5 ) \zeta ( 6 ) \zeta ( 8 )}& w_{\para{P}_{1} } =&  w_{6}w_{5}w_{4}w_{3}w_{2}w_{4}w_{5}w_{6}w_{5}w_{4}w_{3}w_{2}w_{4}w_{5}w_{4}w_{3}w_{2}w_{4}w_{3}w_{2} \\ 
 A_{w_{\para{P}_{2} }} = &\frac{R^{5}}{\zeta ( 2 ) \zeta ( 3 ) \zeta ( 4 ) \zeta ( 5 ) \zeta ( 6 )}& w_{\para{P}_{2} } =&  w_{1}w_{3}w_{4}w_{5}w_{6}w_{1}w_{3}w_{4}w_{5}w_{1}w_{3}w_{4}w_{1}w_{3}w_{1} \\ 
 A_{w_{\para{P}_{3} }} = &\frac{R^{5}}{\zeta ( 2 )^{2} \zeta ( 3 ) \zeta ( 4 ) \zeta ( 5 )}& w_{\para{P}_{3} } =&  w_{2}w_{4}w_{5}w_{6}w_{2}w_{4}w_{5}w_{2}w_{4}w_{2}w_{1} \\ 
 A_{w_{\para{P}_{4} }} = &\frac{R^{5}}{\zeta ( 2 )^{3} \zeta ( 3 )^{2}}& w_{\para{P}_{4} } =&  w_{5}w_{6}w_{5}w_{1}w_{3}w_{2}w_{1} \\ 
 A_{w_{\para{P}_{5} }} = &\frac{R^{5}}{\zeta ( 2 )^{2} \zeta ( 3 ) \zeta ( 4 ) \zeta ( 5 )}& w_{\para{P}_{5} } =&  w_{6}w_{3}w_{4}w_{1}w_{3}w_{2}w_{4}w_{1}w_{3}w_{2}w_{1} \\ 
 A_{w_{\para{P}_{6} }} = &\frac{R^{5}}{\zeta ( 2 ) \zeta ( 4 ) \zeta ( 5 ) \zeta ( 6 ) \zeta ( 8 )}& w_{\para{P}_{6} } =&  w_{2}w_{4}w_{5}w_{3}w_{4}w_{1}w_{3}w_{2}w_{4}w_{5}w_{3}w_{4}w_{1}w_{3}w_{2}w_{4}w_{1}w_{3}w_{2}w_{1} \\ 
  \end{align*} 
 \end{prop}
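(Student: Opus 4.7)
The plan is to derive both assertions—the explicit values of $A_{w_{\para{P}_i}}$ and the reduced expressions for $w_{\para{P}_i}$—by direct computation from the Gindikin--Karpelevich formula (Theorem~\ref{gid}) combined with the height distribution of the positive roots of each Levi subgroup. First, since $w_{\para{P}}$ is the longest element of $\weyl{\para{P}}$, the set $\{\alpha \in \Phi^+ : w_{\para{P}}\alpha < 0\}$ equals $\Phi_M^+$. Thus
\begin{equation*}
C_{w_{\para{P}}}(\chi) \;=\; \prod_{\alpha \in \Phi_M^+} \frac{\zeta(\langle \chi, \check{\alpha}\rangle)}{\zeta(\langle \chi, \check{\alpha}\rangle+1)}.
\end{equation*}

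Next, I would evaluate at $\chi_{\para{P},s} = \delta_{\para{P}}^{s-1/2} \otimes \delta_{\para{B}}^{1/2}$. Since $\delta_{\para{P}_i}$ is a multiple of the fundamental weight $\fundamental{i}$, the pairing $\langle \delta_{\para{P}}^{s-1/2}, \check{\alpha}\rangle$ vanishes for every $\alpha \in \Phi_M$. Hence $\langle \chi_{\para{P},s}, \check{\alpha}\rangle = \langle \delta_{\para{B}}^{1/2}, \check{\alpha}\rangle = \operatorname{ht}_M(\alpha)$, the height of $\alpha$ in the root system of $M$. Letting $n_k$ be the number of positive roots of $M$ of height $k$ (with $n_0=0$), a telescoping rearrangement gives
\begin{equation*}
C_{w_{\para{P}}}(\chi_{\para{P},s}) \;=\; \prod_{k\geq 1} \zeta(k)^{\,n_k - n_{k-1}}.
\end{equation*}
This expression is independent of $s$, which is consistent with Proposition~\ref{ReducetoParabolic}. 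Taking the iterated residue $\operatorname{Res}_{\mathcal{F}_{\hat{i}}}$ picks out the pole of $\zeta$ at $1$ coming from each $\alpha_j \in \Delta_M$ (there are $n-1 = 5$ such simple roots for $E_6$), contributing a factor $R$ for each. The surviving infinite product collapses to
\begin{equation*}
A_{w_{\para{P}}} \;=\; R^{\,n-1} \cdot \prod_{k\geq 2} \zeta(k)^{\,n_k - n_{k-1}}.
\end{equation*}

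It remains to carry out the combinatorial calculation for the six Levis of $E_6$. For example, for $\para{P}_1$ the Levi is $D_5$, whose positive roots have height distribution $(n_1,\dots,n_7)=(5,4,4,3,2,1,1)$; substituting yields exponents $(-1,0,-1,-1,-1,0,-1)$ for $\zeta(2),\dots,\zeta(8)$, which reproduces $R^5/(\zeta(2)\zeta(4)\zeta(5)\zeta(6)\zeta(8))$. For $\para{P}_2, \para{P}_4$ (Levis $A_5$ and $A_4 \times A_1$) and the symmetric cases $\para{P}_3, \para{P}_5, \para{P}_6$, one performs the analogous enumeration of heights of positive roots in the Levi; symmetry of the $E_6$ Dynkin diagram under its outer automorphism explains why $A_{w_{\para{P}_1}} = A_{w_{\para{P}_6}}$, $A_{w_{\para{P}_2}}$ is self-dual, $A_{w_{\para{P}_3}} = A_{w_{\para{P}_5}}$.

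Finally, the explicit reduced words $w_{\para{P}_i}$ are obtained by the standard algorithm for the longest element in a parabolic Weyl group: fix a decreasing sequence of roots in $\Phi_M^+$ under a height order and multiply the corresponding simple reflections, or equivalently compute the longest word in $W(M)$ via the standard combinatorial recursion. The main obstacle is not conceptual but purely bookkeeping: accurately enumerating the heights $n_k$ of positive roots in each Levi subsystem (especially $D_5$ and $A_4\times A_1$) and verifying the reduced expressions. Both tasks are mechanical and are precisely the computations that the \textit{SAGE} implementation described in Chapter~\ref{ch4:algo} performs.
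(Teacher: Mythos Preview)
Your approach is correct and, in fact, more explicit than what the paper provides. The paper does not give a proof of this proposition at all: it simply lists the values of $A_{w_{\para{P}_i}}$ and the reduced words $w_{\para{P}_i}$ as computational output (these are produced by the {\sl SAGE} implementation referred to in Chapter~\ref{ch4:algo}, and the general fact that $A_{w_{\para{P}}}$ is a nonzero constant was already established in Proposition~\ref{ReducetoParabolic}). Your derivation via the height distribution of $\Phi_M^+$ and the telescoping formula
\[
A_{w_{\para{P}}} \;=\; R^{\,n-1}\prod_{k\ge 2}\zeta(k)^{\,n_k-n_{k-1}}
\]
is a clean conceptual explanation that the paper does not spell out, and it correctly reproduces all six values.

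One small correction: the Levi of $\para{P}_4$ in $E_6$ is $A_2\times A_2\times A_1$ (removing node $4$ disconnects the diagram into $\{1,3\}$, $\{5,6\}$, and $\{2\}$), not $A_4\times A_1$ as you wrote. The height distribution is then $(n_1,n_2)=(5,2)$, which still gives the stated $R^5/(\zeta(2)^3\zeta(3)^2)$, so the slip is harmless for the conclusion but should be fixed. The identification $A_4\times A_1$ is actually the Levi of $\para{P}_3$ (and $\para{P}_5$).
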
 
 \begin{longtable}{|c|c|c|c|c|c|c|c|c|c|c|} 
 \hline$ \para{P}_{i} $ & $ s $ & $ \para{P}_{j} $ &  $ t $ & $w$ & $ \frac{h_3(\chi_t)}{h_3(\chi_s)} $ & $d_{\para{P}_i}(\chi_s)$ & $d_{\para{P}_j}(\chi_t)$ & $d$ & $\epsilon_p$ & $\epsilon_q$ \\ \hline 
 $ \para{P}_{2} $ & $ \frac{5}{22} $ & $ \para{P}_{1} $ &  $ 0 $ & $w_{1}w_{3}w_{4}w_{5}w_{6}$ & $ \frac{ \zeta ( 2 ) } { \zeta ( 6 ) } $ & $1$ & $0$ & $1$ & $11$ & $1$ \\ \hline 
 $ \para{P}_{2} $ & $ \frac{5}{22} $ & $ \para{P}_{6} $ &  $ 0 $ & $w_{6}w_{5}w_{4}w_{3}w_{1}$ & $ \frac{ \zeta ( 2 ) } { \zeta ( 6 ) } $ & $1$ & $0$ & $1$ & $11$ & $1$ \\ \hline 
 $ \para{P}_{3} $ & $ \frac{7}{18} $ & $ \para{P}_{1} $ &  $ \frac{1}{3} $ & $w_{1}$ & $ 1 $ & $1$ & $0$ & $1$ & $9$ & $1$ \\ \hline 
 $ \para{P}_{3} $ & $ \frac{1}{6} $ & $ \para{P}_{2} $ &  $ \frac{1}{22} $ & $w_{2}w_{4}w_{5}w_{6}$ & $ \frac{ \zeta ( 2 ) } { \zeta ( 5 ) } $ & $2$ & $1$ & $1$ & $162$ & $22$ \\ \hline 
 $ \para{P}_{3} $ & $ \frac{5}{18} $ & $ \para{P}_{6} $ &  $ \frac{1}{12} $ & $w_{6}w_{5}w_{4}w_{2}$ & $ \frac{ \zeta ( 2 ) } { \zeta ( 5 ) } $ & $1$ & $0$ & $1$ & $9$ & $1$ \\ \hline 
 $ \para{P}_{4} $ & $ \frac{5}{14} $ & $ \para{P}_{1} $ &  $ \frac{1}{4} $ & $w_{1}w_{3}$ & $ \frac{ \zeta ( 2 ) } { \zeta ( 3 ) } $ & $2$ & $1$ & $1$ & $49$ & $12$ \\ \hline 
 $ \para{P}_{4} $ & $ \frac{5}{14} $ & $ \para{P}_{2} $ &  $ \frac{7}{22} $ & $w_{2}$ & $ 1 $ & $2$ & $1$ & $1$ & $49$ & $11$ \\ \hline 
 $ \para{P}_{4} $ & $ \frac{3}{14} $ & $ \para{P}_{3} $ &  $ \frac{1}{6} $ & $w_{3}w_{1}$ & $ \frac{ \zeta ( 2 ) } { \zeta ( 3 ) } $ & $3$ & $2$ & $1$ & $686$ & $162$ \\ \hline 
 $ \para{P}_{4} $ & $ \frac{3}{14} $ & $ \para{P}_{5} $ &  $ \frac{1}{6} $ & $w_{5}w_{6}$ & $ \frac{ \zeta ( 2 ) } { \zeta ( 3 ) } $ & $3$ & $2$ & $1$ & $686$ & $162$ \\ \hline 
 $ \para{P}_{4} $ & $ \frac{5}{14} $ & $ \para{P}_{6} $ &  $ \frac{1}{4} $ & $w_{6}w_{5}$ & $ \frac{ \zeta ( 2 ) } { \zeta ( 3 ) } $ & $2$ & $1$ & $1$ & $49$ & $12$ \\ \hline 
 $ \para{P}_{5} $ & $ \frac{5}{18} $ & $ \para{P}_{1} $ &  $ \frac{1}{12} $ & $w_{1}w_{3}w_{4}w_{2}$ & $ \frac{ \zeta ( 2 ) } { \zeta ( 5 ) } $ & $1$ & $0$ & $1$ & $9$ & $1$ \\ \hline 
 $ \para{P}_{5} $ & $ \frac{1}{6} $ & $ \para{P}_{2} $ &  $ \frac{1}{22} $ & $w_{2}w_{4}w_{3}w_{1}$ & $ \frac{ \zeta ( 2 ) } { \zeta ( 5 ) } $ & $2$ & $1$ & $1$ & $162$ & $22$ \\ \hline 
 $ \para{P}_{5} $ & $ \frac{7}{18} $ & $ \para{P}_{6} $ &  $ \frac{1}{3} $ & $w_{6}$ & $ 1 $ & $1$ & $0$ & $1$ & $9$ & $1$ \\ \hline 
 
  \end{longtable}
\begin{thm}  \label{E6_id}
 Let $f^{0} \in I_{\para{P}_i}(s)$ be the normalized spherical section then: 
\begin{enumerate} 
\item 
$\leadingterm{ -1 }^{\para{P}_{2}} (f^0,\frac{5}{22},g)=\frac{1}{11} \times \frac{ \text{R} \zeta ( 3 ) } { \zeta ( 6 ) \zeta ( 8 ) } \times \leadingterm{ 0 }^{\para{P}_{1}} (f^0,0,g). 
 $\item 
$\leadingterm{ -1 }^{\para{P}_{2}} (f^0,\frac{5}{22},g)=\frac{1}{11} \times \frac{ \text{R} \zeta ( 3 ) } { \zeta ( 6 ) \zeta ( 8 ) } \times \leadingterm{ 0 }^{\para{P}_{6}} (f^0,0,g). 
 $\item 
$\leadingterm{ -1 }^{\para{P}_{3}} (f^0,\frac{7}{18},g)=\frac{1}{9} \times \frac{ \text{R} \zeta ( 3 ) } { \zeta ( 6 ) \zeta ( 8 ) } \times \leadingterm{ 0 }^{\para{P}_{1}} (f^0,\frac{1}{3},g).
 $\item 
$\leadingterm{ -2 }^{\para{P}_{3}} (f^0,\frac{1}{6},g)=\frac{11}{81} \times \frac{ \text{R} \zeta ( 2 ) } { \zeta ( 5 ) \zeta ( 6 ) } \times \leadingterm{ -1 }^{\para{P}_{2}} (f^0,\frac{1}{22},g).
 $\item 
$\leadingterm{ -1 }^{\para{P}_{3}} (f^0,\frac{5}{18},g)=\frac{1}{9} \times \frac{ \text{R} \zeta ( 2 ) \zeta ( 3 ) } { \zeta ( 5 ) \zeta ( 6 ) \zeta ( 8 ) } \times \leadingterm{ 0 }^{\para{P}_{6}} (f^0,\frac{1}{12},g).
 $\item 
$\leadingterm{ -2 }^{\para{P}_{4}} (f^0,\frac{5}{14},g)=\frac{12}{49} \times \frac{ \text{R} \zeta ( 2 ) ^{ 2 } \zeta ( 3 ) } { \zeta ( 4 ) \zeta ( 5 ) \zeta ( 6 ) \zeta ( 8 ) } \times \leadingterm{ -1 }^{\para{P}_{1}} (f^0,\frac{1}{4},g).
 $\item 
$\leadingterm{ -2 }^{\para{P}_{4}} (f^0,\frac{5}{14},g)=\frac{11}{49} \times \frac{ \text{R} \zeta ( 2 ) \zeta ( 3 ) } { \zeta ( 4 ) \zeta ( 5 ) \zeta ( 6 ) } \times \leadingterm{ -1 }^{\para{P}_{2}} (f^0,\frac{7}{22},g).
 $\item 
$\leadingterm{ -3 }^{\para{P}_{4}} (f^0,\frac{3}{14},g)=\frac{81}{343} \times \frac{ \text{R} \zeta ( 2 ) } { \zeta ( 4 ) \zeta ( 5 ) } \times \leadingterm{ -2 }^{\para{P}_{3}} (f^0,\frac{1}{6},g).
 $\item 
$\leadingterm{ -3 }^{\para{P}_{4}} (f^0,\frac{3}{14},g)=\frac{81}{343} \times \frac{ \text{R} \zeta ( 2 ) } { \zeta ( 4 ) \zeta ( 5 ) } \times \leadingterm{ -2 }^{\para{P}_{5}} (f^0,\frac{1}{6},g).
 $\item 
$\leadingterm{ -2 }^{\para{P}_{4}} (f^0,\frac{5}{14},g)=\frac{12}{49} \times \frac{ \text{R} \zeta ( 2 ) ^{ 2 } \zeta ( 3 ) } { \zeta ( 4 ) \zeta ( 5 ) \zeta ( 6 ) \zeta ( 8 ) } \times \leadingterm{ -1 }^{\para{P}_{6}} (f^0,\frac{1}{4},g).
 $\item 
$\leadingterm{ -1 }^{\para{P}_{5}} (f^0,\frac{5}{18},g)=\frac{1}{9} \times \frac{ \text{R} \zeta ( 2 ) \zeta ( 3 ) } { \zeta ( 5 ) \zeta ( 6 ) \zeta ( 8 ) } \times \leadingterm{ 0 }^{\para{P}_{1}} (f^0,\frac{1}{12},g).
 $\item 
$\leadingterm{ -2 }^{\para{P}_{5}} (f^0,\frac{1}{6},g)=\frac{11}{81} \times \frac{ \text{R} \zeta ( 2 ) } { \zeta ( 5 ) \zeta ( 6 ) } \times \leadingterm{ -1 }^{\para{P}_{2}} (f^0,\frac{1}{22},g).
 $\item 
$\leadingterm{ -1 }^{\para{P}_{5}} (f^0,\frac{7}{18},g)=\frac{1}{9} \times \frac{ \text{R} \zeta ( 3 ) } { \zeta ( 6 ) \zeta ( 8 ) } \times \leadingterm{ 0 }^{\para{P}_{6}} (f^0,\frac{1}{3},g).
 $\end{enumerate} 
 \end{thm}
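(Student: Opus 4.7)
The strategy is to derive each of the 13 identities as a direct application of Theorem \ref{main_thm}, which provides an explicit formula for the constant appearing in Theorem \ref{Weak_Thm} whenever one has admissible data $(\para{P}_i,s_0,\para{P}_j,t_0,\weylelement{})$. The plan is first to verify that each row of the table preceding the theorem indeed gives a positive admissible datum (this is the output of the algorithm of Section \ref{section:anoterway} applied to $E_6$, using the modular characters recorded in the matrix $B$ of Proposition \ref{E6_mod_char}), and then to read off the thirteen identities by substituting the tabulated data into the formula
$$A=\frac{\epsilon_{t_0}}{\epsilon_{s_0}}\cdot\prod_{\substack{\alpha\in\Phi^{+}\setminus N_{\pm1,0}(\chi_{\para{P}_i,s_0})\\ \weylelement{}\alpha<0}}\frac{\zeta(\langle\chi_{\para{P}_i,s_0},\check{\alpha}\rangle)}{\zeta(\langle\chi_{\para{P}_i,s_0},\check{\alpha}\rangle+1)}\cdot\left(\frac{R}{\zeta(2)}\right)^{d}\cdot\frac{A_{\para{P}_j}}{A_{\para{P}_i}}.$$

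Concretely, for each row of the table I would extract four pieces of data. First, the ratio $\epsilon_{t_0}/\epsilon_{s_0}$ is read directly from the last two columns, where each $\epsilon$ has been computed from its closed-form expression in Lemma \ref{Corollary-1} using the coefficients $b_{ii}$ from Proposition \ref{E6_mod_char} together with the coroot expansions $\check{\alpha}=\sum n_{k}^{(\check{\alpha})}\check{\alpha}_{k}$ of the positive roots in $N_{0}(\chi_{s_0})$ and $N_{1}(\chi_{s_0})\setminus\Delta_{\para{P}}$. Second, the integer $d=|N_{-1}(\chi_{\para{P}_j,t_0})|-|N_{-1}(\chi_{\para{P}_i,s_0})|$ is again tabulated, and it produces the factor $(R/\zeta(2))^{d}$. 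Third, the quotient $A_{\para{P}_j}/A_{\para{P}_i}$ is obtained from the values of $A_{w_{\para{P}_i}}$ computed just before the table — for example $A_{\para{P}_1}/A_{\para{P}_2}=\zeta(3)/\zeta(8)$, $A_{\para{P}_3}/A_{\para{P}_2}=\zeta(3)/\zeta(6)$, etc. Fourth, the column $h_3(\chi_t)/h_3(\chi_s)$ is precisely the $\zeta$–product above, which by Corollary \ref{corr} reduces to a finite product over positive roots $\alpha\notin N_{\pm1,0}(\chi_{\para{P}_i,s_0})$ with $\weylelement{}\alpha<0$. Multiplying these four factors and simplifying yields the explicit constant displayed in each identity of the theorem.

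The truly mechanical step is the verification of the column $h_3(\chi_t)/h_3(\chi_s)$: for the given reduced expression of each $\weylelement{}$ one must enumerate the positive roots it sends to negative roots, discard those in $N_{\pm1,0}(\chi_{\para{P}_i,s_0})$, and compute the product. Since $E_6$ has only $36$ positive roots and each $\weylelement{}$ in the table has small length ($\le 5$), this enumeration is short. The fact that the $\zeta$–ratio collapses to a single (or empty) factor in every case — e.g.\ to $\zeta(2)/\zeta(5)$ for the admissible data involving $(\para{P}_3,\tfrac{1}{6})\leftrightarrow(\para{P}_2,\tfrac{1}{22})$ — reflects that $w$ sends only one positive root outside $N_{\pm1,0}(\chi_{s_0})$ to a negative root.

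The main obstacle is essentially bookkeeping: one must be careful about the direction of each identity (i.e.\ which side plays the role of $\para{P}_i$ versus $\para{P}_j$), because swapping them inverts the constant, and one must verify correctness of the tabulated $\epsilon$–values, which are the most error-prone quantities. Once the table entries are confirmed by a direct combinatorial check in the root system of $E_6$, each of the 13 identities follows by substitution into the formula of Theorem \ref{main_thm}, completing the proof. Note in particular the chains $(\para{P}_4,\tfrac{5}{14})\to(\para{P}_1,\tfrac14)\to\{(\para{P}_2,\tfrac{7}{22}),(\para{P}_6,\tfrac14)\}$ and $(\para{P}_4,\tfrac{3}{14})\to(\para{P}_3,\tfrac16)\to(\para{P}_2,\tfrac{1}{22})$: composing consecutive identities along such chains gives a consistency check on the constants that can be used to detect any arithmetic error.
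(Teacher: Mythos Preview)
Your proposal is correct and follows exactly the paper's approach: the theorem is obtained by substituting each row of the admissible-data table (together with the $A_{w_{\para{P}_i}}$ values) into the explicit formula of Theorem \ref{main_thm}, and the paper presents the statement precisely as the output of this computation without a separate written proof.
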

However in that case we are able to find another positive admissible data
which are not special and cannot be derived from the list above.
\begin{align*}
(\para{P}_4,\frac{1}{7},\para{P}_3,0,w_{3}w_{4}w_{1}w_{3}w_{2}w_{4}w_{5}w_{6}) \\
(\para{P}_4,\frac{1}{7},\para{P}_5,0,w_{5}w_{6}w_{4}w_{5}w_{2}w_{4}w_{3}w_{1}
)
\end{align*}
The following Theorem together with Theorem (\ref{E6_id}) give the complete list of the identities corresponds to positive admissible data.
 \begin{thm}  
 Let $f^{0} \in I_{\para{P}_i}(s)$ be the normalized spherical section then: 
\begin{enumerate} 
\item 
$\leadingterm{ -1 }^{\para{P}_{1}} (f^0,\frac{1}{4},g)=\frac{11}{12} \times \frac{ \zeta ( 8 ) } { \zeta ( 2 ) } \times \leadingterm{ -1 }^{\para{P}_{2}} (f^0,\frac{7}{22},g).
 $\item 
$\leadingterm{ 0 }^{\para{P}_{1}} (f^0,0,g)=1 \times \leadingterm{ 0 }^{\para{P}_{6}} (f^0,0,g).
 $\item 
$\leadingterm{ -1 }^{\para{P}_{1}} (f^0,\frac{1}{4},g)=1 \times  \leadingterm{ -1 }^{\para{P}_{6}} (f^0,\frac{1}{4},g).
 $\item 
$\leadingterm{ -3 }^{\para{P}_{4}} (f^0,\frac{3}{14},g)=\frac{11}{343} \times \frac{ R^{ 2 } \zeta ( 2 ) ^{ 2 } } { \zeta ( 4 ) \zeta ( 5 ) ^{ 2 } \zeta ( 6 ) } \times \leadingterm{ -1 }^{\para{P}_{2}} (f^0,\frac{1}{22},g).
 $\item 
$\leadingterm{ -1 }^{\para{P}_{2}} (f^0,\frac{7}{22},g)=\frac{12}{11} \times \frac{ \zeta ( 2 ) } { \zeta ( 8 ) } \times \leadingterm{ -1 }^{\para{P}_{6}} (f^0,\frac{1}{4},g).
 $\item 
$\leadingterm{ -1 }^{\para{P}_{4}} (f^0,\frac{1}{7},g)=\frac{1}{14} \times \frac{ R \zeta ( 2 ) \zeta ( \frac{3}{2} ) \zeta ( \frac{1}{2} ) } { \zeta ( 4 ) \zeta ( 5 ) \zeta ( \frac{7}{2} ) \zeta ( \frac{9}{2} ) } \times \leadingterm{ 0 }^{\para{P}_{3}} (f^0,0,g).
 $\item 
$\leadingterm{ 0 }^{\para{P}_{3}} (f^0,0,g)=1 \times  \leadingterm{ 0 }^{\para{P}_{5}} (f^0,0,g).
 $\item 
$\leadingterm{ -2 }^{\para{P}_{3}} (f^0,\frac{1}{6},g)=1  \times \leadingterm{ -2 }^{\para{P}_{5}} (f^0,\frac{1}{6},g).
 $\item 
$\leadingterm{ -1 }^{\para{P}_{4}} (f^0,\frac{1}{7},g)=\frac{1}{14} \times \frac{ R \zeta ( 2 ) \zeta ( \frac{3}{2} ) \zeta ( \frac{1}{2} ) } { \zeta ( 4 ) \zeta ( 5 ) \zeta ( \frac{7}{2} ) \zeta ( \frac{9}{2} ) } \times \leadingterm{ 0 }^{\para{P}_{5}} (f^0,0,g).
 $
\end{enumerate} 
 \end{thm}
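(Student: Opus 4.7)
The plan is to obtain each identity by a direct application of Theorem \ref{main_thm} to an appropriate positive admissible datum for $E_6$. The first step is to enumerate the remaining positive admissible data beyond those already handled in Theorem \ref{E6_id}. These split into two sources: (i) chains that are forced by transitivity, such as $(\para{P}_1,\frac{1}{4}) \leftrightarrow (\para{P}_4,\frac{5}{14}) \leftrightarrow (\para{P}_2,\frac{7}{22})$, which by composing the identities of Theorem \ref{E6_id} yield a direct identity $(\para{P}_1,\frac{1}{4}) \leftrightarrow (\para{P}_2,\frac{7}{22})$; (ii) the two non-special admissible data $(\para{P}_4,\frac{1}{7},\para{P}_3,0,w)$ and $(\para{P}_4,\frac{1}{7},\para{P}_5,0,w')$ listed above the theorem statement, plus the pairs involving the vertex $\para{P}_1\leftrightarrow \para{P}_6$ and $\para{P}_3 \leftrightarrow \para{P}_5$ produced by the nontrivial diagram automorphism of $E_6$.

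For each such datum $(\para{P}_i,s_0,\para{P}_j,t_0,w)$, I would then apply the explicit formula in Theorem \ref{main_thm}, whose ingredients are all mechanically computable from the matrix $B$ in Proposition \ref{E6_mod_char}, from the positive root system of $E_6$, and from the specific Weyl element $w$ output by the algorithm in Section \ref{section:anoterway}. The four ingredients are: $d=|N_{-1}(\chi_{\para{P}_j,t_0})|-|N_{-1}(\chi_{\para{P}_i,s_0})|$, the scalar $\epsilon_{t_0}/\epsilon_{s_0}$ (a product over $N_1$ and $N_0$ of integer coefficients from the coroot expansions), the zeta ratio $\prod \zeta(\langle\chi_{\para{P}_i,s_0},\check\alpha\rangle)/\zeta(\langle\chi_{\para{P}_i,s_0},\check\alpha\rangle+1)$ over $\alpha\in\Phi^+\setminus N_{\pm 1,0}(\chi_{\para{P}_i,s_0})$ with $w\alpha<0$, and the quotient $A_{\para{P}_j}/A_{\para{P}_i}$ which has already been tabulated. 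Each computation is organized into a table of the same form as the one preceding Theorem \ref{E6_id}, extended with the new admissible data, and then the constants are read off line by line. This is exactly what the SAGE implementation produces.

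Two subtleties deserve attention. First, the identity $\leadingterm{0}^{\para{P}_1}(f^0,0,g)=\leadingterm{0}^{\para{P}_6}(f^0,0,g)$ and its companion $\leadingterm{-1}^{\para{P}_1}(f^0,\tfrac14,g)=\leadingterm{-1}^{\para{P}_6}(f^0,\tfrac14,g)$ come from the admissible data $(\para{P}_1,s,\para{P}_6,s,w_{\mathrm{diag}})$ where $w_{\mathrm{diag}}$ is the Weyl element realizing the $\mathbb Z/2$-symmetry swapping $\alpha_1\leftrightarrow \alpha_6$ and $\alpha_3\leftrightarrow \alpha_5$; for this symmetry $\chi_{\para{P}_1,s}$ and $\chi_{\para{P}_6,s}$ are conjugate for every $s$, so $d=0$ and the zeta and $\epsilon$ factors all cancel, giving constant $1$. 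The same remark applies to $\para{P}_3\leftrightarrow\para{P}_5$. Second, the non-special pair $(\para{P}_4,\tfrac17,\para{P}_3,0)$ produces half-integer zeta values $\zeta(\tfrac12),\zeta(\tfrac32),\zeta(\tfrac72),\zeta(\tfrac92)$ in the zeta ratio; these are not canceled by any $\epsilon$ factor, which is why they explicitly appear in items (6) and (9) of the theorem statement. I expect the main obstacle to be bookkeeping: correctly enumerating the set $\{\alpha\in\Phi^+\setminus N_{\pm 1,0}(\chi_{\para{P}_i,s_0}) : w\alpha<0\}$ for a Weyl element of length up to $20$ in $E_6$, together with the product of coroot coefficients defining $\epsilon_{s_0}$, so this step is delegated to the SAGE routine whose correctness was established in Chapter \ref{ch4:algo}. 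The remainder of the proof is then an assembly of the tabulated numbers into the identities listed.
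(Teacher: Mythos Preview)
Your approach is correct and matches the paper's: each identity is obtained by plugging an appropriate positive admissible datum into Theorem~\ref{main_thm}, with all the ingredients ($d$, $\epsilon_{s_0}$, $\epsilon_{t_0}$, the zeta ratio, $A_{\para{P}_j}/A_{\para{P}_i}$) read off from a table produced by the SAGE routine, exactly as was done for Theorem~\ref{E6_id}.

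One point to correct in your heuristic: the diagram automorphism of $E_6$ is \emph{not} realized by any single Weyl element, and it is not true that $\chi_{\para{P}_1,s}$ and $\chi_{\para{P}_6,s}$ are Weyl-conjugate for every $s$ (for large $s$ both are dominant and visibly distinct). What actually happens is that for the specific values $s=0$ and $s=\tfrac14$ the dominant representative of $\chi_{\para{P}_1,s}$ happens to be $\sigma$-fixed, so a Weyl element exists --- but it is a different element in each case (the paper's table gives $w_5w_6w_4w_5w_3w_4w_1w_3$ at $s=0$ versus $w_6w_5w_3w_1$ at $s=\tfrac14$). Since you ultimately defer to the algorithm of Section~\ref{section:anoterway} to produce these elements, this does not damage your proof; your symmetry argument that the resulting constant equals $1$ (because $A_{\para{P}_1}=A_{\para{P}_6}$, $\epsilon_{s_0}=\epsilon_{t_0}$, $d=0$, and the $h_3$-ratio is $1$) is valid once the datum is known to exist.
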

\begin{landscape}
 \begin{longtable}{|c|c|c|c|c|c|c|c|c|c|c|} 
\hline$ \para{P}_{i} $ & $ s $ & $ \para{P}_{j} $ &  $ t $ & $w$ & $ \frac{h_3(\chi_t)}{h_3(\chi_s)} $ & $d_{\para{P}_i}(\chi_s)$ & $d_{\para{P}_j}(\chi_t)$ & $d$ & $\epsilon_p$ & $\epsilon_q$ \\ \hline 
$ \para{P}_{1} $ & $ \frac{1}{4} $ & $ \para{P}_{2} $ &  $ \frac{7}{22} $ & $w_{3}w_{2}w_{1}$ & $ \frac{ \zeta ( 3 ) } { \zeta ( 2 ) } $ & $1$ & $1$ & $0$ & $12$ & $11$ \\ \hline 
$ \para{P}_{1} $ & $ 0 $ & $ \para{P}_{6} $ &  $ 0 $ & $w_{5}w_{6}w_{4}w_{5}w_{3}w_{4}w_{1}w_{3}$ & $ 1 $ & $0$ & $0$ & $0$ & $1$ & $1$ \\ \hline 
$ \para{P}_{1} $ & $ \frac{1}{4} $ & $ \para{P}_{6} $ &  $ \frac{1}{4} $ & $w_{6}w_{5}w_{3}w_{1}$ & $ 1 $ & $1$ & $1$ & $0$ & $12$ & $12$ \\ \hline 
$ \para{P}_{4} $ & $ \frac{3}{14} $ & $ \para{P}_{2} $ &  $ \frac{1}{22} $ & $w_{2}w_{4}w_{5}w_{6}w_{3}w_{1}$ & $ \frac{ \zeta ( 2 ) ^{ 2 } } { \zeta ( 3 ) \zeta ( 5 ) } $ & $3$ & $1$ & $2$ & $686$ & $22$ \\ \hline 
$ \para{P}_{2} $ & $ \frac{7}{22} $ & $ \para{P}_{6} $ &  $ \frac{1}{4} $ & $w_{6}w_{5}w_{2}$ & $ \frac{ \zeta ( 2 ) } { \zeta ( 3 ) } $ & $1$ & $1$ & $0$ & $11$ & $12$ \\ \hline 
$ \para{P}_{4} $ & $ \frac{1}{7} $ & $ \para{P}_{3} $ &  $ 0 $ & $w_{3}w_{4}w_{1}w_{3}w_{2}w_{4}w_{5}w_{6}$ & $ \frac{ \zeta ( \frac{3}{2} ) \zeta ( \frac{1}{2} ) \zeta ( 2 ) } { \zeta ( \frac{7}{2} ) \zeta ( 3 ) \zeta ( \frac{9}{2} ) } $ & $1$ & $0$ & $1$ & $14$ & $1$ \\ \hline 
$ \para{P}_{3} $ & $ 0 $ & $ \para{P}_{5} $ &  $ 0 $ & $w_{4}w_{5}w_{6}w_{2}w_{4}w_{5}w_{3}w_{4}w_{1}w_{3}w_{2}w_{4}$ & $ 1 $ & $0$ & $0$ & $0$ & $1$ & $1$ \\ \hline 
$ \para{P}_{3} $ & $ \frac{1}{6} $ & $ \para{P}_{5} $ &  $ \frac{1}{6} $ & $w_{5}w_{6}w_{1}w_{3}$ & $ 1 $ & $2$ & $2$ & $0$ & $162$ & $162$ \\ \hline 
$ \para{P}_{4} $ & $ \frac{1}{7} $ & $ \para{P}_{5} $ &  $ 0 $ & $w_{5}w_{6}w_{4}w_{5}w_{2}w_{4}w_{3}w_{1}$ & $ \frac{ \zeta ( \frac{3}{2} ) \zeta ( 2 ) \zeta ( \frac{1}{2} ) } { \zeta ( \frac{7}{2} ) \zeta ( \frac{9}{2} ) \zeta ( 3 ) } $ & $1$ & $0$ & $1$ & $14$ & $1$ \\ \hline 

 \end{longtable}
\end{landscape}

\newpage
Therefore we get the follows:
\begin{figure}[H]
\begin{center}
\begin{tikzpicture}[thin,scale=2]
\tikzset{
  text style/.style={
    sloped, 
    text=black,
    font=\tiny,
    above
  }
}
\matrix (m) [matrix of math nodes, row sep=1em, column sep=3em]
    { 		
        & |[name=e12]| \text{I}_{\para{P}_1}(0) &    &   |[name=e22]| \text{I}_{\para{P}_1}(\frac{1}{4})
      \\
      |[name=e11]| \text{I}_{\para{P}_2}(\frac{5}{22})&     & |[name=e21]| \text{I}_{\para{P}_4}(\frac{5}{14})&   |[name=e23]| \text{I}_{\para{P}_2}(\frac{7}{22})
     \\
        & |[name=e13]| \text{I}_{\para{P}_6}(0) &  &  |[name=e24]| \text{I}_{\para{P}_6}(\frac{1}{4})
     \\
      |[name=e31]| \text{I}_{\para{P}_3}(\frac{7}{18})&   |[name=e32]| \text{I}_{\para{P}_1}(\frac{1}{3}) &   |[name=e41]| \text{I}_{\para{P}_3}(\frac{5}{18})&   |[name=e42]| \text{I}_{\para{P}_6}(\frac{1}{12})
     \\ 
      |[name=e51]| \text{I}_{\para{P}_5}(\frac{5}{18})&   |[name=e52]| \text{I}_{\para{P}_1}(\frac{1}{12}) &   |[name=e61]| \text{I}_{\para{P}_5}(\frac{7}{18})&   |[name=e62]| \text{I}_{\para{P}_6}(\frac{1}{3})
\\
	  &  &|[name=e72]| \text{I}_{\para{P}_3}(\frac{1}{6})   &  &  
      \\
	 & |[name=e71]| \text{I}_{\para{P}_4}(\frac{3}{14}) &     &  |[name=e74]| \text{I}_{\para{P}_2}(\frac{1}{22}) &  
      \\
    &    &|[name=e73]| \text{I}_{\para{P}_5}(\frac{1}{6}) &  &  &   \\
    &  &  |[name=e82]| \text{I}_{\para{P}_5}(0)\\
    &    |[name=e81]| \text{I}_{\para{P}_4}(\frac{1}{7})\\
    &  &  |[name=e83]| \text{I}_{\para{P}_3}(0)\\
      
    \\};
\draw[->]      
		(e11)  edge [-]
		node[text style,above]{}  (e12)
		(e11)  edge [-]
		node[text style,above]{}  (e13)
		(e21)  edge [-]
		node[text style,above]{}  (e22)
		(e21)  edge [-]
		node[text style,above]{}  (e23)
		(e21)  edge [-]
		node[text style,above]{}  (e24)
		(e31)  edge [-]
		node[text style,above]{}  (e32)
		(e41)  edge [-]
		node[text style,above]{}  (e42)
		(e51)  edge [-]
		node[text style,above]{}  (e52)
		(e61)  edge [-]
		node[text style,above]{}  (e62)
		
		(e71)  edge [-]
		node[text style,above]{}  (e72)
		(e71)  edge [-]
		node[text style,above]{}  (e73)
		(e72)  edge [-]
		node[text style,above]{}  (e74)
		(e73)  edge [-]
		node[text style,above]{}  (e74)
		(e81)  edge [-]
		node[text style,above]{}  (e83)
		(e81)  edge [-]
		node[text style,above]{}  (e82)
            ;
         
\end{tikzpicture}
\label{digram:E6s}
\end{center}
\end{figure} 

\newpage
\lhead{Chapter \ref{ch3:ids}. \emph{Identities- Group of type $E_{7}$}}
\section{Group of type $E_{7}$} \label{section:E7}
\begin{prop}\label{E7_mod_char} 
The matrix $B$ for $G=E_7$ is
 $$B= \left(\begin{array}{rrrrrrr}
17 & 7 & 2 & 3 & 5 & 8 & 12 \\
10 & 14 & 6 & 2 & 5 & 8 & 11 \\
10 & 7 & 11 & 3 & 5 & 7 & 9 \\
7 & 7 & 6 & 8 & 5 & 6 & 7 \\
8 & 7 & 6 & 4 & 10 & 8 & 9 \\
9 & 7 & 6 & 4 & 3 & 13 & 12 \\
10 & 7 & 6 & 4 & 3 & 2 & 18
\end{array}\right) $$ 
 \end{prop}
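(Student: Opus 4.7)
The plan is to compute the entries of $B$ directly from the definition $\delta_{\para{P}}=\sum_{\alpha\in \Phi^{+}\setminus \Phi_{I}}\alpha$ given in Section \ref{Ch1:section1}, using the standard root datum of $E_7$. The key structural input is that for a maximal parabolic $\para{P}_{i}$ with Levi $M_{i}$ corresponding to $\Delta\setminus\{\alpha_{i}\}$, the element $\delta_{\para{P}_{i}}$ is fixed by the Weyl group $\weyl{M_{i}}$, so $\inner{\delta_{\para{P}_{i}},\check{\alpha}_{j}}=0$ for every $j\neq i$; this forces $\delta_{\para{P}_{i}}=b_{ii}\fundamental{i}$ with $b_{ii}=\inner{\delta_{\para{P}_{i}},\check{\alpha}_{i}}$. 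Similarly, for $\para{R}=\para{P}_{i}\cap \para{P}_{j}$ the Levi is $W_{M_{ij}}$-invariant under the subgroup $\weyl{\Delta\setminus\{\alpha_{i},\alpha_{j}\}}$, so $\delta_{\para{R}}$ lies in the two-dimensional span of $\fundamental{i}$ and $\fundamental{j}$.

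First I would record the identity $\delta_{\para{P}}=2(\rho-\rho_{M})$ where $\rho=\tfrac{1}{2}\sum_{\alpha>0}\alpha=\sum_{j}\fundamental{j}$ for the simply laced group $E_7$. Writing $2\rho_{M_{i}}=\sum_{j\neq i}2\fundamental{j}+m_{i}\fundamental{i}$ with $m_{i}=\sum_{\alpha\in\Phi^{+}_{M_{i}}}\inner{\alpha,\check{\alpha}_{i}}$, one obtains $b_{ii}=2-m_{i}$. The quantity $m_{i}$ is computed using only the Cartan pairing: if $\alpha=\sum_{j\neq i}n_{j}\alpha_{j}\in\Phi^{+}_{M_{i}}$, then $\inner{\alpha,\check{\alpha}_{i}}=-\sum_{j\sim i}n_{j}$, where the sum runs over the neighbors $j$ of $i$ in the Dynkin diagram. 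This reduces the computation of each $b_{ii}$ to summing the coefficients $n_{j}$ of certain simple roots across the positive roots of the Levi factor (an $E_6$, $A_1\times A_6$, $A_1\times A_2\times A_4$, $A_7$, $A_1\times A_2\times A_3\times A_1$, $D_6\times A_1$, or $D_6$ root subsystem of $E_7$, depending on $i$).

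Next, for every pair $i<j$ I would compute $\delta_{\para{R}}=2(\rho-\rho_{M_{ij}})$ analogously. Writing $\delta_{\para{R}}=b_{ij}\fundamental{i}+b_{ji}\fundamental{j}$, the coefficients are obtained from $b_{ij}=\inner{\delta_{\para{R}},\check{\alpha}_{i}}$ and $b_{ji}=\inner{\delta_{\para{R}},\check{\alpha}_{j}}$, which again reduce to counting, for each positive root of $M_{ij}$, its coefficient at the neighbors of $\alpha_{i}$ or $\alpha_{j}$ in the Dynkin diagram. Each entry is a small nonnegative integer determined entirely by the combinatorics of the $E_7$ root system.

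The computation is finite but rather tedious (there are $63$ positive roots in $E_7$ and $21$ unordered pairs $\{i,j\}$); the most efficient route is to list the positive roots of $E_7$ in the standard coordinate realization and extract the relevant sums. The main obstacle is purely bookkeeping: organizing the enumeration of positive roots of each $M_{i}$ and $M_{ij}$ so that no root is double-counted and no coefficient is mis-recorded. In practice one verifies the matrix via the same \emph{SAGE} computations used throughout the thesis: the Lie-package command for the sum of positive roots of a parabolic subsystem produces each $b_{ij}$, and comparison with the displayed matrix closes the proof. The same procedure was carried out in Propositions \ref{G2_mod_char}, \ref{F4_mod_char}, \ref{E6_mod_char} for the smaller exceptional groups and gives the matrices listed there.
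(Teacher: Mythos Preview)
Your approach is correct and matches what the paper does implicitly: Proposition~\ref{E7_mod_char} is stated without proof, as a direct computation from the definition of $B$ in Section~\ref{section_suit_data} (carried out in \emph{SAGE}, as for the analogous Propositions~\ref{G2_mod_char}, \ref{F4_mod_char}, \ref{E6_mod_char}). Your reduction via $\delta_{\para{P}}=2\rho-2\rho_{M}$ and the pairing $b_{ij}=\inner{\delta_{\para{R}},\check{\alpha}_{i}}$ is exactly the right hand computation.

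One concrete slip worth fixing: your list of Levi types is wrong --- several of them have semisimple rank $7$, which is impossible for a maximal parabolic of $E_7$. With the Bourbaki labeling used in the thesis, the semisimple parts of the Levi factors $M_i$ are
\[
D_6,\quad A_6,\quad A_1\times A_5,\quad A_1\times A_2\times A_3,\quad A_4\times A_2,\quad D_5\times A_1,\quad E_6
\]
for $i=1,\dots,7$ respectively. This does not affect the method, only the bookkeeping when you enumerate $\Phi^{+}_{M_i}$.
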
 \ 
 \begin{prop} 
 The special admissible  data are as follows: 
 \begin{figure}[H] \begin{center} 
 
\begin{tabular}{|c|c|c|c|c|c|c|c|} \hline
$[s_0,t_0]$ & $ \para{P}_{1}$ & $ \para{P}_{2}$ & $ \para{P}_{3}$ & $ \para{P}_{4}$ & $ \para{P}_{5}$ & $ \para{P}_{6}$ & $ \para{P}_{7}$ \\ \hline
$\para{P}_{1}$ &  & $\left[\frac{3}{34}, \frac{3}{14}\right]$ & $\left[\frac{13}{34}, \frac{9}{22}\right]$ & $\left[\frac{11}{34}, \frac{3}{8}\right]$ & $\left[\frac{7}{34}, \frac{3}{10}\right]$ & $\left[\frac{1}{34}, \frac{5}{26}\right]$ & $\left[\frac{7}{34}, \frac{1}{18}\right]$ \\ \hline
$\para{P}_{2}$ & $\left[\frac{3}{14}, \frac{3}{34}\right]$ &  & $\left[\frac{1}{14}, \frac{3}{22}\right]$ & $\left[\frac{5}{14}, \frac{3}{8}\right]$ & $\left[\frac{1}{7}, \frac{1}{5}\right]$ & $\left[\frac{1}{14}, \frac{1}{26}\right]$ & $\left[\frac{2}{7}, \frac{1}{9}\right]$ \\ \hline
$\para{P}_{3}$ & $\left[\frac{9}{22}, \frac{13}{34}\right]$ & $\left[\frac{3}{22}, \frac{1}{14}\right]$ &  & $\left[\frac{5}{22}, \frac{1}{4}\right]$ & $\left[\frac{1}{22}, \frac{1}{10}\right]$ & $\left[\frac{3}{22}, \frac{1}{26}\right]$ & $\left[\frac{7}{22}, \frac{1}{6}\right]$ \\ \hline
$\para{P}_{4}$ & $\left[\frac{3}{8}, \frac{11}{34}\right]$ & $\left[\frac{3}{8}, \frac{5}{14}\right]$ & $\left[\frac{1}{4}, \frac{5}{22}\right]$ &  & $\left[\frac{1}{8}, \frac{1}{10}\right]$ & $\left[\frac{1}{4}, \frac{5}{26}\right]$ & $\left[\frac{3}{8}, \frac{5}{18}\right]$ \\ \hline
$\para{P}_{5}$ & $\left[\frac{3}{10}, \frac{7}{34}\right]$ & $\left[\frac{1}{5}, \frac{1}{7}\right]$ & $\left[\frac{1}{10}, \frac{1}{22}\right]$ & $\left[\frac{1}{10}, \frac{1}{8}\right]$ &  & $\left[\frac{3}{10}, \frac{7}{26}\right]$ & $\left[\frac{2}{5}, \frac{1}{3}\right]$ \\ \hline
$\para{P}_{6}$ & $\left[\frac{5}{26}, \frac{1}{34}\right]$ & $\left[\frac{1}{26}, \frac{1}{14}\right]$ & $\left[\frac{1}{26}, \frac{3}{22}\right]$ & $\left[\frac{5}{26}, \frac{1}{4}\right]$ & $\left[\frac{7}{26}, \frac{3}{10}\right]$ &  & $\left[\frac{11}{26}, \frac{7}{18}\right]$ \\ \hline
$\para{P}_{7}$ & $\left[\frac{1}{18}, \frac{7}{34}\right]$ & $\left[\frac{1}{9}, \frac{2}{7}\right]$ & $\left[\frac{1}{6}, \frac{7}{22}\right]$ & $\left[\frac{5}{18}, \frac{3}{8}\right]$ & $\left[\frac{1}{3}, \frac{2}{5}\right]$ & $\left[\frac{7}{18}, \frac{11}{26}\right]$ &  \\ \hline
\end{tabular} 
  \end{center} 
 \end{figure} 
 \end{prop}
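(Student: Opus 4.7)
The plan is to verify the tabulated $7\times 7$ array of data entry by entry, reducing the claim to the general construction of Section~\ref{section_suit_data} together with the arithmetic data encoded by the matrix~$B$. First, I would invoke the main theorem of that section (labeled \ref{Weyl} in the excerpt): for every ordered pair of distinct maximal parabolic subgroups $(\para{P}_i,\para{P}_j)$ of $E_7$, the quintuple
$$\bigl(\para{P}_i,\ c_{ij},\ \para{P}_j,\ -c_{ji},\ \weylelement{\para{P}_j}\tilde{\weylelement{}}_{0,j}\weylelement{\para{P}_i}\bigr)$$
is admissible, where $c_{ij}=\tfrac{b_{ij}}{b_{ii}}-\tfrac{1}{2}$ and $c_{ji}=\tfrac{b_{ji}}{b_{jj}}-\tfrac{1}{2}$. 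Since $E_7$ is neither of type $A_n$, $D_{2n+1}$, nor $E_6$, the corollary to that theorem applies and the datum can be promoted, by pre- and post-composing with the shortest representatives of the longest cosets $w_l, w_r$, to a datum with nonnegative coordinates
$$\bigl(\para{P}_i,\ |c_{ij}|,\ \para{P}_j,\ |c_{ji}|,\ w'\bigr).$$
By the definition of \emph{special admissible data}, each ordered pair $(\para{P}_i,\para{P}_j)$ therefore contributes exactly one entry to the table, and conversely every special admissible datum arises this way; hence the $7\cdot 6=42$ off-diagonal cells of the table exhaust the special admissible data.

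Next, the entries themselves reduce to reading off $b_{ij}$ from the matrix $B$ of Proposition~\ref{E7_mod_char} and computing $|b_{ij}/b_{ii}-1/2|$. For illustration, take $(i,j)=(1,2)$: one has $c_{12}=\tfrac{7}{17}-\tfrac{1}{2}=-\tfrac{3}{34}$ and $c_{21}=\tfrac{10}{14}-\tfrac{1}{2}=\tfrac{3}{14}$, producing the $(1,2)$-entry $[\tfrac{3}{34},\tfrac{3}{14}]$; dually, $(2,1)$ gives $[\tfrac{3}{14},\tfrac{3}{34}]$, displaying the manifest transpose symmetry of the array. For $(i,j)=(4,5)$ one finds $c_{45}=\tfrac{5}{8}-\tfrac{1}{2}=\tfrac{1}{8}$ and $c_{54}=\tfrac{4}{10}-\tfrac{1}{2}=-\tfrac{1}{10}$, yielding $[\tfrac{1}{8},\tfrac{1}{10}]$. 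The remaining forty entries are verified by the same one-line computation.

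The only nontrivial input is the matrix $B$ itself, so that is where the main obstacle lies. Justifying Proposition~\ref{E7_mod_char} requires, for every pair $i\neq j$, expressing the sums $\delta_{\para{P}_i}=\sum_{\alpha\in \Phi^{+}\setminus\Phi_{M_i}}\alpha$ and $\delta_{\para{P}_i\cap\para{P}_j}$ of positive roots in the relevant nilpotent radicals in the basis $\{\fundamental{1},\dots,\fundamental{7}\}$ of fundamental weights of $E_7$. Since $E_7$ has $63$ positive roots, this is a finite but tedious linear-algebra calculation; as in earlier chapters, I would delegate it to the SAGE implementation described in Chapter~\ref{ch4:algo}. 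Once $B$ is granted, the proposition follows by the forty-two arithmetic checks above, with no further Weyl-group input needed.
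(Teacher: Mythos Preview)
Your proposal is correct and follows precisely the approach implicit in the paper: the proposition is a direct tabulation of $[|c_{ij}|,|c_{ji}|]$ with $c_{ij}=b_{ij}/b_{ii}-\tfrac12$, using the matrix $B$ of Proposition~\ref{E7_mod_char} and the construction of Section~\ref{section_suit_data} together with its corollary (applicable since $E_7$ is not of type $A_n$, $D_{2n+1}$, or $E_6$). The paper offers no separate proof for this proposition, and your verification via sample entries plus the observation that $B$ is the only substantive input is exactly the intended justification.
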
 \begin{prop} It holds that : 
 \begin{align*} 
 A_{w_{\para{P}_{1} }} = &\frac{R^{6}}{\zeta ( 2 ) \zeta ( 4 ) \zeta ( 6 )^{2} \zeta ( 8 ) \zeta ( 10 )}& w_{\para{P}_{1} } =&  w_{7}w_{6}w_{5}w_{4}w_{3}w_{2}w_{4}w_{5}w_{6}w_{7}w_{6}w_{5}w_{4}w_{3}
 \\
  & & &
 w_{2}w_{4}w_{5}w_{6}w_{5}w_{4}w_{3}w_{2}w_{4}w_{5}w_{4}w_{3}w_{2}w_{4}w_{3}w_{2} \\ 
 A_{w_{\para{P}_{2} }} = &\frac{R^{6}}{\zeta ( 2 ) \zeta ( 3 ) \zeta ( 4 ) \zeta ( 5 ) \zeta ( 6 ) \zeta ( 7 )}& w_{\para{P}_{2} } =&  w_{1}w_{3}w_{4}w_{5}w_{6}w_{7}w_{1}w_{3}w_{4}w_{5}w_{6}w_{1}w_{3}
 w_{4}w_{5}w_{1}w_{3}w_{4}w_{1}
 \\
  & & &
 w_{3}w_{1} \\ 
 A_{w_{\para{P}_{3} }} = &\frac{R^{6}}{\zeta ( 2 )^{2} \zeta ( 3 ) \zeta ( 4 ) \zeta ( 5 ) \zeta ( 6 )}& w_{\para{P}_{3} } =&  w_{2}w_{4}w_{5}w_{6}w_{7}w_{2}w_{4}w_{5}w_{6}w_{2}w_{4}w_{5}w_{2}w_{4}w_{2}w_{1} \\ 
 A_{w_{\para{P}_{4} }} = &\frac{R^{6}}{\zeta ( 2 )^{3} \zeta ( 3 )^{2} \zeta ( 4 )}& w_{\para{P}_{4} } =&  w_{5}w_{6}w_{7}w_{5}w_{6}w_{5}w_{1}w_{3}w_{2}w_{1} \\ 
 A_{w_{\para{P}_{5} }} = &\frac{R^{6}}{\zeta ( 2 )^{2} \zeta ( 3 )^{2} \zeta ( 4 ) \zeta ( 5 )}& w_{\para{P}_{5} } =&  w_{6}w_{7}w_{6}w_{3}w_{4}w_{1}w_{3}w_{2}w_{4}w_{1}w_{3}w_{2}w_{1} \\ 
 A_{w_{\para{P}_{6} }} = &\frac{R^{6}}{\zeta ( 2 )^{2} \zeta ( 4 ) \zeta ( 5 ) \zeta ( 6 ) \zeta ( 8 )}& w_{\para{P}_{6} } =&  w_{7}w_{2}w_{4}w_{5}w_{3}w_{4}w_{1}w_{3}w_{2}w_{4}w_{5}w_{3}w_{4}w_{1}w_{3}w_{2}w_{4}w_{1}w_{3}
 \\
  & & &
 w_{2}w_{1} \\ 
 A_{w_{\para{P}_{7} }} = &\frac{R^{6}}{\zeta ( 2 ) \zeta ( 5 ) \zeta ( 6 ) \zeta ( 8 ) \zeta ( 12 ) \zeta ( 9 )}& w_{\para{P}_{7} } =&  w_{1}w_{3}w_{4}w_{5}w_{6}w_{2}w_{4}w_{5}w_{3}w_{4}w_{1}w_{3}w_{2}w_{4}w_{5}w_{6}w_{2}w_{4}w_{5}w_{3}\\
 & & &
 w_{4}w_{1}w_{3}w_{2}w_{4}w_{5}w_{3}w_{4}w_{1}w_{3}w_{2}w_{4}w_{1}w_{3}w_{2}w_{1} \\ 
  \end{align*} 
 \end{prop}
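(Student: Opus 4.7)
The plan is to combine Proposition \ref{ReducetoParabolic} with the Gindikin--Karpelevich formula, and then exploit the fact that for $\alpha \in \Phi_{M_i}^+$ the pairing $\langle \chi_{\para{P}_i,s}, \check{\alpha}\rangle$ is independent of $s$. Concretely, since $w_{\para{P}_i}$ is the longest element of $\weyl{\para{P}_i}=\weyl{M_i}$, it sends precisely $\Phi_{M_i}^+$ to $\Phi_{M_i}^-$. Hence by Theorem \ref{gid},
\begin{equation*}
C_{w_{\para{P}_i}}(\chi_{\para{P}_i,s})
= \prod_{\alpha\in\Phi_{M_i}^+}
\frac{\zeta(\langle\chi_{\para{P}_i,s},\check{\alpha}\rangle)}
{\zeta(\langle\chi_{\para{P}_i,s},\check{\alpha}\rangle+1)}.
\end{equation*}
Because $\delta_{\para{P}_i}$ is a scalar multiple of $\fundamental{i}$ and $\langle\fundamental{i},\check{\alpha}_j\rangle=0$ for $j\ne i$, we have $\langle\delta_{\para{P}_i},\check{\alpha}\rangle=0$ for every $\alpha\in\Phi_{M_i}$, so $\langle\chi_{\para{P}_i,s},\check{\alpha}\rangle=\langle\delta_{\para{B}}^{1/2},\check{\alpha}\rangle$ equals the sum of the coefficients of $\check{\alpha}$ in the simple-coroot basis of $M_i$.

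Next I would take the iterated residue along $\mathcal{F}_{\hat{i}}=\bigcap_{k\ne i}\mathcal{F}^{-}_{\alpha_k}$. For each simple root $\alpha_k\in\Delta_{M_i}$ one has $\langle\chi_{\para{P}_i,s},\check{\alpha}_k\rangle=1$, so the factor $\zeta(\langle\chi,\check{\alpha}_k\rangle)$ contributes a simple pole with residue $R$, while its denominator contributes $\zeta(2)$; there are exactly $n-1=6$ such factors, producing the $R^{6}$ appearing in every formula. For each non-simple $\alpha\in\Phi_{M_i}^+\setminus\Delta_{M_i}$ we have $\langle\chi,\check{\alpha}\rangle\ge 2$ an integer, so $\zeta(\langle\chi,\check{\alpha}\rangle)/\zeta(\langle\chi,\check{\alpha}\rangle+1)$ is a nonzero holomorphic constant. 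Grouping positive roots of $M_i$ by their height $h(\alpha)=\langle\delta_{\para{B}}^{1/2},\check{\alpha}\rangle$ and letting $n_k=|\{\alpha\in\Phi_{M_i}^+:h(\alpha)=k\}|$, the product telescopes and yields
\begin{equation*}
A_{w_{\para{P}_i}}
=R^{n-1}\,\frac{1}{\zeta(2)^{n-1}}\prod_{k\ge 2}
\frac{\zeta(k)^{n_k}}{\zeta(k+1)^{n_k}}
=\frac{R^{6}}{\prod_{k\ge 1}\zeta(k+1)^{m_k}},
\end{equation*}
where $m_k=n_k-n_{k+1}$ counts positions at which a pole of the product appears; the surviving denominators are exactly those read off from the dual partition of the height statistic of $\Phi_{M_i}^+$.

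I would then carry out this counting Levi by Levi, using the standard description of each $M_i$ for $E_7$ (e.g.\ $M_1\cong D_6$, $M_2\cong A_6$, $M_3\cong A_1\times A_5$, $M_4\cong A_2\times A_1\times A_3$, $M_5\cong A_4\times A_2$, $M_6\cong D_5\times A_1$, $M_7\cong E_6$) and the known height distribution of their positive roots, to match each of the seven displayed denominators. The reduced expression for $w_{\para{P}_i}$ itself can be obtained by taking, in each case, a known reduced word for the longest element of $\weyl{M_i}$ written in terms of the simple reflections $w_1,\dots,w_7$ of $\weyl{G}$; this is what the SAGE implementation in Chapter \ref{ch4:algo} outputs.

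The main obstacle is purely book-keeping: one must correctly tabulate the height multiplicities $n_k$ for each of the seven Levi subgroups of $E_7$ (the largest being $E_6$ with $36$ positive roots of heights up to $11$), then verify that the telescoping yields exactly the claimed list of surviving zeta factors. This is straightforward but error-prone, which is why the computation is most safely done inside the SAGE framework already set up earlier; the reduced expressions listed in the proposition are a direct printout of that computation.
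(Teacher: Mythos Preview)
Your proposal is correct and follows exactly the approach implicit in the paper: the proposition is stated there without proof as a direct SAGE output based on the formula $A_{w_{\para{P}_i}}=\operatorname{Res}_{\mathcal{F}_{\hat{i}}}C_{w_{\para{P}_i}}(\chi_s)$ from Proposition~\ref{ReducetoParabolic}, and your argument simply unpacks that computation via Gindikin--Karpelevich. Your telescoping-by-height observation (so that the surviving $\zeta$-factors are governed by the exponents of each Levi $M_i$) is a clean organizing principle that the paper does not make explicit, but it leads to the same numbers; I spot-checked $M_2\cong A_6$, $M_1\cong D_6$, and $M_7\cong E_6$ and your recipe reproduces the listed denominators exactly.
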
 
 \begin{longtable}{|c|c|c|c|c|c|c|c|c|c|c|} 
\hline$ \para{P}_{i} $ & $ s $ & $ \para{P}_{j} $ &  $ t $ & $w$ & $ \frac{h_3(\chi_t)}{h_3(\chi_s)} $ & $d_{\para{P}_i}(\chi_s)$ & $d_{\para{P}_j}(\chi_t)$ & $d$ & $\epsilon_p$ & $\epsilon_q$ \\ \hline 
$ \para{P}_{1} $ & $ \frac{7}{34} $ & $ \para{P}_{7} $ &  $ \frac{1}{18} $ & $w_{7}w_{6}w_{5}w_{3}w_{4}w_{1}w_{3}w_{2}w_{4}$ & $ \frac{ \zeta ( 2 ) \zeta ( 5 ) } { \zeta ( 4 ) \zeta ( 8 ) } $ & $1$ & $1$ & $0$ & $17$ & $18$ \\ \hline 
$ \para{P}_{2} $ & $ \frac{3}{14} $ & $ \para{P}_{1} $ &  $ \frac{3}{34} $ & $w_{1}w_{3}w_{4}w_{5}w_{6}w_{7}$ & $ \frac{ \zeta ( 2 ) } { \zeta ( 7 ) } $ & $1$ & $0$ & $1$ & $14$ & $1$ \\ \hline 
$ \para{P}_{2} $ & $ \frac{1}{14} $ & $ \para{P}_{6} $ &  $ \frac{1}{26} $ & $w_{5}w_{6}w_{7}w_{4}w_{5}w_{6}w_{2}w_{4}w_{5}$ & $ \frac{ \zeta ( 2 ) } { \zeta ( 5 ) } $ & $1$ & $1$ & $0$ & $14$ & $26$ \\ \hline 
$ \para{P}_{2} $ & $ \frac{2}{7} $ & $ \para{P}_{7} $ &  $ \frac{1}{9} $ & $w_{7}w_{6}w_{5}w_{4}w_{3}w_{1}$ & $ \frac{ \zeta ( 2 ) } { \zeta ( 7 ) } $ & $1$ & $0$ & $1$ & $14$ & $1$ \\ \hline 
$ \para{P}_{3} $ & $ \frac{9}{22} $ & $ \para{P}_{1} $ &  $ \frac{13}{34} $ & $w_{1}$ & $ 1 $ & $1$ & $0$ & $1$ & $11$ & $1$ \\ \hline 
$ \para{P}_{3} $ & $ \frac{3}{22} $ & $ \para{P}_{2} $ &  $ \frac{1}{14} $ & $w_{2}w_{4}w_{5}w_{6}w_{7}$ & $ \frac{ \zeta ( 2 ) } { \zeta ( 6 ) } $ & $2$ & $1$ & $1$ & $242$ & $14$ \\ \hline 
$ \para{P}_{3} $ & $ \frac{3}{22} $ & $ \para{P}_{6} $ &  $ \frac{1}{26} $ & $w_{6}w_{7}w_{5}w_{6}w_{4}w_{5}w_{2}w_{4}$ & $ \frac{ \zeta ( 2 ) ^{ 2 } } { \zeta ( 5 ) \zeta ( 6 ) } $ & $2$ & $1$ & $1$ & $242$ & $26$ \\ \hline 
$ \para{P}_{3} $ & $ \frac{7}{22} $ & $ \para{P}_{7} $ &  $ \frac{1}{6} $ & $w_{7}w_{6}w_{5}w_{4}w_{2}$ & $ \frac{ \zeta ( 2 ) } { \zeta ( 6 ) } $ & $1$ & $0$ & $1$ & $11$ & $1$ \\ \hline 
$ \para{P}_{4} $ & $ \frac{3}{8} $ & $ \para{P}_{1} $ &  $ \frac{11}{34} $ & $w_{1}w_{3}$ & $ \frac{ \zeta ( 2 ) } { \zeta ( 3 ) } $ & $2$ & $1$ & $1$ & $64$ & $17$ \\ \hline 
$ \para{P}_{4} $ & $ \frac{3}{8} $ & $ \para{P}_{2} $ &  $ \frac{5}{14} $ & $w_{2}$ & $ 1 $ & $2$ & $1$ & $1$ & $64$ & $14$ \\ \hline 
$ \para{P}_{4} $ & $ \frac{1}{4} $ & $ \para{P}_{3} $ &  $ \frac{5}{22} $ & $w_{3}w_{1}$ & $ \frac{ \zeta ( 2 ) } { \zeta ( 3 ) } $ & $3$ & $2$ & $1$ & $1024$ & $242$ \\ \hline 
$ \para{P}_{4} $ & $ \frac{1}{8} $ & $ \para{P}_{5} $ &  $ \frac{1}{10} $ & $w_{5}w_{6}w_{7}$ & $ \frac{ \zeta ( 2 ) } { \zeta ( 4 ) } $ & $4$ & $3$ & $1$ & $49152$ & $6000$ \\ \hline 
$ \para{P}_{4} $ & $ \frac{1}{4} $ & $ \para{P}_{6} $ &  $ \frac{5}{26} $ & $w_{6}w_{7}w_{5}w_{6}$ & $ \frac{ \zeta ( 2 ) ^{ 2 } } { \zeta ( 3 ) \zeta ( 4 ) } $ & $3$ & $2$ & $1$ & $1024$ & $338$ \\ \hline 
$ \para{P}_{4} $ & $ \frac{3}{8} $ & $ \para{P}_{7} $ &  $ \frac{5}{18} $ & $w_{7}w_{6}w_{5}$ & $ \frac{ \zeta ( 2 ) } { \zeta ( 4 ) } $ & $2$ & $1$ & $1$ & $64$ & $18$ \\ \hline 
$ \para{P}_{5} $ & $ \frac{3}{10} $ & $ \para{P}_{1} $ &  $ \frac{7}{34} $ & $w_{1}w_{3}w_{4}w_{2}$ & $ \frac{ \zeta ( 2 ) } { \zeta ( 5 ) } $ & $2$ & $1$ & $1$ & $100$ & $17$ \\ \hline 
$ \para{P}_{5} $ & $ \frac{1}{5} $ & $ \para{P}_{2} $ &  $ \frac{1}{7} $ & $w_{2}w_{4}w_{3}w_{1}$ & $ \frac{ \zeta ( 2 ) } { \zeta ( 5 ) } $ & $2$ & $1$ & $1$ & $200$ & $28$ \\ \hline 
$ \para{P}_{5} $ & $ \frac{1}{10} $ & $ \para{P}_{3} $ &  $ \frac{1}{22} $ & $w_{3}w_{4}w_{1}w_{3}w_{2}w_{4}$ & $ \frac{ \zeta ( 2 ) ^{ 2 } } { \zeta ( 4 ) \zeta ( 5 ) } $ & $3$ & $2$ & $1$ & $6000$ & $726$ \\ \hline 
$ \para{P}_{5} $ & $ \frac{3}{10} $ & $ \para{P}_{6} $ &  $ \frac{7}{26} $ & $w_{6}w_{7}$ & $ \frac{ \zeta ( 2 ) } { \zeta ( 3 ) } $ & $2$ & $1$ & $1$ & $100$ & $13$ \\ \hline 
$ \para{P}_{5} $ & $ \frac{2}{5} $ & $ \para{P}_{7} $ &  $ \frac{1}{3} $ & $w_{7}w_{6}$ & $ \frac{ \zeta ( 2 ) } { \zeta ( 3 ) } $ & $1$ & $0$ & $1$ & $10$ & $1$ \\ \hline 
$ \para{P}_{6} $ & $ \frac{5}{26} $ & $ \para{P}_{1} $ &  $ \frac{1}{34} $ & $w_{1}w_{3}w_{4}w_{5}w_{2}w_{4}w_{3}w_{1}$ & $ \frac{ \zeta ( 4 ) \zeta ( 2 ) } { \zeta ( 5 ) \zeta ( 8 ) } $ & $2$ & $1$ & $1$ & $338$ & $34$ \\ \hline 
$ \para{P}_{6} $ & $ \frac{11}{26} $ & $ \para{P}_{7} $ &  $ \frac{7}{18} $ & $w_{7}$ & $ 1 $ & $1$ & $0$ & $1$ & $13$ & $1$ \\ \hline 

 \end{longtable}
 \begin{thm}  \label{E7_id}
 Let $f^{0} \in I_{\para{P}_i}(s)$ be the normalized spherical section then: 
\begin{enumerate} 
\item 
$\leadingterm{ -1 }^{\para{P}_{1}} (f^0,\frac{7}{34},g)=\frac{18}{17} \times \frac{ \zeta ( 2 ) \zeta ( 6 ) \zeta ( 10 ) } { \zeta ( 8 ) \zeta ( 12 ) \zeta ( 9 ) } \times \leadingterm{ -1 }^{\para{P}_{7}} (f^0,\frac{1}{18},g).
 $\item 
$\leadingterm{ -1 }^{\para{P}_{2}} (f^0,\frac{3}{14},g)=\frac{1}{14} \times \frac{ R \zeta ( 3 ) \zeta ( 5 ) } { \zeta ( 6 ) \zeta ( 8 ) \zeta ( 10 ) } \times \leadingterm{ 0 }^{\para{P}_{1}} (f^0,\frac{3}{34},g).
 $\item 
$\leadingterm{ -1 }^{\para{P}_{2}} (f^0,\frac{1}{14},g)=\frac{13}{7} \times \frac{ \zeta ( 3 ) \zeta ( 7 ) } { \zeta ( 5 ) \zeta ( 8 ) } \times \leadingterm{ -1 }^{\para{P}_{6}} (f^0,\frac{1}{26},g).
 $\item 
$\leadingterm{ -1 }^{\para{P}_{2}} (f^0,\frac{2}{7},g)=\frac{1}{14} \times \frac{ R \zeta ( 3 ) \zeta ( 4 ) } { \zeta ( 8 ) \zeta ( 12 ) \zeta ( 9 ) } \times \leadingterm{ 0 }^{\para{P}_{7}} (f^0,\frac{1}{9},g).
 $\item 
$\leadingterm{ -1 }^{\para{P}_{3}} (f^0,\frac{9}{22},g)=\frac{1}{11} \times \frac{ R \zeta ( 3 ) \zeta ( 5 ) } { \zeta ( 6 ) \zeta ( 8 ) \zeta ( 10 ) } \times \leadingterm{ 0 }^{\para{P}_{1}} (f^0,\frac{13}{34},g).
 $\item 
$\leadingterm{ -2 }^{\para{P}_{3}} (f^0,\frac{3}{22},g)=\frac{7}{121} \times \frac{ R \zeta ( 2 ) } { \zeta ( 6 ) \zeta ( 7 ) } \times \leadingterm{ -1 }^{\para{P}_{2}} (f^0,\frac{1}{14},g).
 $\item 
$\leadingterm{ -2 }^{\para{P}_{3}} (f^0,\frac{3}{22},g)=\frac{13}{121} \times \frac{ R \zeta ( 2 ) \zeta ( 3 ) } { \zeta ( 5 ) \zeta ( 6 ) \zeta ( 8 ) } \times \leadingterm{ -1 }^{\para{P}_{6}} (f^0,\frac{1}{26},g).
 $\item 
$\leadingterm{ -1 }^{\para{P}_{3}} (f^0,\frac{7}{22},g)=\frac{1}{11} \times \frac{ R \zeta ( 2 ) \zeta ( 3 ) \zeta ( 4 ) } { \zeta ( 6 ) \zeta ( 8 ) \zeta ( 12 ) \zeta ( 9 ) } \times \leadingterm{ 0 }^{\para{P}_{7}} (f^0,\frac{1}{6},g).
 $\item 
$\leadingterm{ -2 }^{\para{P}_{4}} (f^0,\frac{3}{8},g)=\frac{17}{64} \times \frac{ R \zeta ( 2 ) ^{ 2 } \zeta ( 3 ) } { \zeta ( 6 ) ^{ 2 } \zeta ( 8 ) \zeta ( 10 ) } \times \leadingterm{ -1 }^{\para{P}_{1}} (f^0,\frac{11}{34},g).
 $\item 
$\leadingterm{ -2 }^{\para{P}_{4}} (f^0,\frac{3}{8},g)=\frac{7}{32} \times \frac{ R \zeta ( 2 ) \zeta ( 3 ) } { \zeta ( 5 ) \zeta ( 6 ) \zeta ( 7 ) } \times \leadingterm{ -1 }^{\para{P}_{2}} (f^0,\frac{5}{14},g).
 $\item 
$\leadingterm{ -3 }^{\para{P}_{4}} (f^0,\frac{1}{4},g)=\frac{121}{512} \times \frac{ R \zeta ( 2 ) } { \zeta ( 5 ) \zeta ( 6 ) } \times \leadingterm{ -2 }^{\para{P}_{3}} (f^0,\frac{5}{22},g).
 $\item 
$\leadingterm{ -4 }^{\para{P}_{4}} (f^0,\frac{1}{8},g)=\frac{125}{1024} \times \frac{ R \zeta ( 2 ) } { \zeta ( 4 ) \zeta ( 5 ) } \times \leadingterm{ -3 }^{\para{P}_{5}} (f^0,\frac{1}{10},g).
 $\item 
$\leadingterm{ -3 }^{\para{P}_{4}} (f^0,\frac{1}{4},g)=\frac{169}{512} \times \frac{ R \zeta ( 2 ) ^{ 2 } \zeta ( 3 ) } { \zeta ( 4 ) \zeta ( 5 ) \zeta ( 6 ) \zeta ( 8 ) } \times \leadingterm{ -2 }^{\para{P}_{6}} (f^0,\frac{5}{26},g).
 $\item 
$\leadingterm{ -2 }^{\para{P}_{4}} (f^0,\frac{3}{8},g)=\frac{9}{32} \times \frac{ R \zeta ( 2 ) ^{ 2 } \zeta ( 3 ) ^{ 2 } } { \zeta ( 5 ) \zeta ( 6 ) \zeta ( 8 ) \zeta ( 12 ) \zeta ( 9 ) } \times \leadingterm{ -1 }^{\para{P}_{7}} (f^0,\frac{5}{18},g).
 $\item 
$\leadingterm{ -2 }^{\para{P}_{5}} (f^0,\frac{3}{10},g)=\frac{17}{100} \times \frac{ R \zeta ( 2 ) \zeta ( 3 ) ^{ 2 } } { \zeta ( 6 ) ^{ 2 } \zeta ( 8 ) \zeta ( 10 ) } \times \leadingterm{ -1 }^{\para{P}_{1}} (f^0,\frac{7}{34},g).
 $\item 
$\leadingterm{ -2 }^{\para{P}_{5}} (f^0,\frac{1}{5},g)=\frac{7}{50} \times \frac{ R \zeta ( 2 ) \zeta ( 3 ) } { \zeta ( 5 ) \zeta ( 6 ) \zeta ( 7 ) } \times \leadingterm{ -1 }^{\para{P}_{2}} (f^0,\frac{1}{7},g).
 $\item 
$\leadingterm{ -3 }^{\para{P}_{5}} (f^0,\frac{1}{10},g)=\frac{121}{1000} \times \frac{ R \zeta ( 2 ) \zeta ( 3 ) } { \zeta ( 4 ) \zeta ( 5 ) \zeta ( 6 ) } \times \leadingterm{ -2 }^{\para{P}_{3}} (f^0,\frac{1}{22},g).
 $\item 
$\leadingterm{ -2 }^{\para{P}_{5}} (f^0,\frac{3}{10},g)=\frac{13}{100} \times \frac{ R \zeta ( 3 ) } { \zeta ( 6 ) \zeta ( 8 ) } \times \leadingterm{ -1 }^{\para{P}_{6}} (f^0,\frac{7}{26},g).
 $\item 
$\leadingterm{ -1 }^{\para{P}_{5}} (f^0,\frac{2}{5},g)=\frac{1}{10} \times \frac{ R \zeta ( 2 ) \zeta ( 3 ) \zeta ( 4 ) } { \zeta ( 6 ) \zeta ( 8 ) \zeta ( 12 ) \zeta ( 9 ) } \times \leadingterm{ 0 }^{\para{P}_{7}} (f^0,\frac{1}{3},g).
 $\item 
$\leadingterm{ -2 }^{\para{P}_{6}} (f^0,\frac{5}{26},g)=\frac{17}{169} \times \frac{ R \zeta ( 2 ) \zeta ( 4 ) } { \zeta ( 6 ) \zeta ( 8 ) \zeta ( 10 ) } \times \leadingterm{ -1 }^{\para{P}_{1}} (f^0,\frac{1}{34},g).
 $\item 
$\leadingterm{ -1 }^{\para{P}_{6}} (f^0,\frac{11}{26},g)=\frac{1}{13} \times \frac{ R \zeta ( 4 ) } { \zeta ( 12 ) \zeta ( 9 ) } \times \leadingterm{ 0 }^{\para{P}_{7}} (f^0,\frac{7}{18},g).
 $\end{enumerate} 
 \end{thm}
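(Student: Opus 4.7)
The plan is to obtain each of the twenty-one identities as a direct application of Theorem \ref{main_thm} (the explicit version of the basic identity from Theorem \ref{Weak_Thm}) to the corresponding row of the long admissible-data table that precedes the statement. For each quintuple $(\para{P}_i,s_0,\para{P}_j,t_0,w)$ appearing in the table, the theorem gives the leading-term identity together with an explicit constant
$$A = \frac{\epsilon_{t_0}}{\epsilon_{s_0}}\cdot\frac{A_{\para{P}_j}}{A_{\para{P}_i}}\cdot\left(\frac{R}{\zeta(2)}\right)^{d}\cdot\prod_{\substack{\alpha\in\Phi^{+}\setminus N_{\pm1,0}(\chi_{\para{P}_i,s_0})\\ w\alpha\in\Phi^{-}}}\frac{\zeta(\langle\chi_{\para{P}_i,s_0},\check{\alpha}\rangle)}{\zeta(\langle\chi_{\para{P}_i,s_0},\check{\alpha}\rangle+1)},$$
so the work is purely bookkeeping: read off the six inputs from the table (the quotient $h_3(\chi_t)/h_3(\chi_s)$, the pole orders $d_{\para{P}_i}(\chi_{s_0})$ and $d_{\para{P}_j}(\chi_{t_0})$, the difference $d=|N_{-1}(\chi_{t_0})|-|N_{-1}(\chi_{s_0})|$, and the two factors $\epsilon_{s_0},\epsilon_{t_0}$), combine them with the previously computed residue factors $A_{\para{P}_i},A_{\para{P}_j}$ for the longest Weyl element of each maximal parabolic, and simplify.

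First I would verify that the data in the table are genuinely admissible, i.e.\ that $w\chi_{\para{P}_i,s_0}=\chi_{\para{P}_j,t_0}$ for the listed $w$; this is a direct computation using Proposition~\ref{E7_mod_char} together with the action of the simple reflections on the fundamental weights, and is already built into the algorithm of Section~\ref{section:anoterway}. Second, I would confirm that for $F=\mathbb{Q}$ and real $s_0,t_0$ the hypotheses of Theorem~\ref{main_thm} hold, i.e.\ the factor $h_3$ is nonvanishing at $\chi_{\para{P}_i,s_0}$, so that Corollary~\ref{EiseConst} applies at both points and the normalized Eisenstein series $E^{\#}_{\para{B}}$ produces a genuine nonzero proportionality rather than a $0/0$ situation; this follows once one checks that none of the zeta arguments that appear in the relevant hyperplane structure hit $\tfrac{1}{2}$ in a way that would force a zero (here the assumption that $\zeta(\tfrac{1}{2})\neq 0$ and, in the exceptional case $\para{P}_4$, that $\zeta(\tfrac{2}{3})\neq 0$, suffices, just as in the parallel $F_4,E_6$ sections).

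Next, for each of the twenty-one rows I would substitute the tabulated values into the displayed formula for $A$ and simplify. For instance, in row one $(\para{P}_1,\tfrac{7}{34},\para{P}_7,\tfrac{1}{18},w)$, one has $d=0$, $\epsilon_{s_0}=17$, $\epsilon_{t_0}=18$, $h_3$-quotient $=\zeta(2)\zeta(5)/(\zeta(4)\zeta(8))$, and $A_{\para{P}_7}/A_{\para{P}_1}=\zeta(2)\zeta(4)\zeta(6)\zeta(8)\zeta(10)/(\zeta(2)\zeta(5)\zeta(6)\zeta(8)\zeta(12)\zeta(9))$; multiplying these gives $\tfrac{18}{17}\cdot\zeta(2)\zeta(6)\zeta(10)/(\zeta(8)\zeta(12)\zeta(9))$, matching the first identity. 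The remaining twenty rows are handled identically, with the only structural variation being the power of $R/\zeta(2)$ that appears when $d\ne 0$.

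The computations are entirely mechanical; the only real obstacle is the sheer bulk and the risk of arithmetic errors, especially in computing $\epsilon_{s_0},\epsilon_{t_0}$ (which require tracking, for every $\alpha\in N_0(\chi_{s_0})$ and every $\alpha\in N_1(\chi_{s_0})\setminus\Delta_{\para{P}}$, the coefficient $n_i^{(\check\alpha)}$ in the expansion of $\check\alpha$ in simple coroots of $E_7$) and in enumerating the set $\{\alpha\in\Phi^{+}\setminus N_{\pm1,0}(\chi_{s_0}) : w\alpha<0\}$ for each $w$. These are precisely the quantities the \emph{SAGE} implementation of the algorithm computes automatically, and its tabulated output provides the values listed in the third displayed table; so in practice the proof consists of reading off those values for each row and verifying the final simplification against the stated identity.
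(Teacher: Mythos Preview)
Your proposal is correct and is precisely the argument the paper intends: the theorem is simply the specialization of Theorem~\ref{main_thm} to each row of the $E_7$ admissible-data table, with the residue factors $A_{\para{P}_i}$ supplied by the preceding proposition and the quantities $d,\epsilon_{s_0},\epsilon_{t_0},h_3(\chi_t)/h_3(\chi_s)$ read directly from the table. The paper does not write out a separate proof for this statement because all the ingredients have already been tabulated; your worked verification of the first identity is exactly the sort of check that confirms the table-to-identity translation.
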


 \begin{landscape} 

 \begin{longtable}{|c|c|c|c|c|c|c|c|c|c|c|} 
\hline$ \para{P}_{i} $ & $ s $ & $ \para{P}_{j} $ &  $ t $ & $w$ & $ \frac{h_3(\chi_t)}{h_3(\chi_s)} $ & $d_{\para{P}_i}(\chi_s)$ & $d_{\para{P}_j}(\chi_t)$ & $d$ & $\epsilon_p$ & $\epsilon_q$ \\ \hline 
$ \para{P}_{3} $ & $ \frac{5}{22} $ & $ \para{P}_{1} $ &  $ \frac{1}{34} $ & $w_{1}w_{3}w_{4}w_{5}w_{6}w_{7}w_{2}w_{4}w_{5}w_{6}$ & $ \frac{ \zeta ( 2 ) ^{ 2 } } { \zeta ( 5 ) \zeta ( 8 ) } $ & $2$ & $1$ & $1$ & $242$ & $34$ \\ \hline 
$ \para{P}_{1} $ & $ \frac{11}{34} $ & $ \para{P}_{2} $ &  $ \frac{5}{14} $ & $w_{3}w_{2}w_{1}$ & $ \frac{ \zeta ( 3 ) } { \zeta ( 2 ) } $ & $1$ & $1$ & $0$ & $17$ & $14$ \\ \hline 
$ \para{P}_{4} $ & $ \frac{1}{4} $ & $ \para{P}_{1} $ &  $ \frac{1}{34} $ & $w_{1}w_{3}w_{4}w_{5}w_{6}w_{7}w_{2}w_{4}w_{5}w_{6}w_{3}w_{1}$ & $ \frac{ \zeta ( 2 ) ^{ 3 } } { \zeta ( 3 ) \zeta ( 5 ) \zeta ( 8 ) } $ & $3$ & $1$ & $2$ & $1024$ & $34$ \\ \hline 
$ \para{P}_{1} $ & $ \frac{7}{34} $ & $ \para{P}_{6} $ &  $ \frac{7}{26} $ & $w_{6}w_{7}w_{2}w_{4}w_{3}w_{1}$ & $ \frac{ \zeta ( 5 ) } { \zeta ( 3 ) } $ & $1$ & $1$ & $0$ & $17$ & $13$ \\ \hline 
$ \para{P}_{1} $ & $ \frac{11}{34} $ & $ \para{P}_{7} $ &  $ \frac{5}{18} $ & $w_{7}w_{6}w_{5}w_{3}w_{1}$ & $ \frac{ \zeta ( 3 ) } { \zeta ( 4 ) } $ & $1$ & $1$ & $0$ & $17$ & $18$ \\ \hline 
$ \para{P}_{5} $ & $ \frac{3}{20} $ & $ \para{P}_{2} $ &  $ \frac{1}{28} $ & $w_{2}w_{4}w_{5}w_{3}w_{4}w_{1}w_{3}w_{2}w_{4}w_{5}w_{6}w_{7}$ & $ \frac{ \zeta ( \frac{3}{2} ) ^{ 2 } \zeta ( 2 ) } { \zeta ( \frac{9}{2} ) \zeta ( 3 ) \zeta ( \frac{13}{2} ) } $ & $1$ & $0$ & $1$ & $20$ & $1$ \\ \hline 
$ \para{P}_{2} $ & $ \frac{5}{14} $ & $ \para{P}_{7} $ &  $ \frac{5}{18} $ & $w_{7}w_{6}w_{5}w_{2}$ & $ \frac{ \zeta ( 2 ) } { \zeta ( 4 ) } $ & $1$ & $1$ & $0$ & $14$ & $18$ \\ \hline 
$ \para{P}_{4} $ & $ \frac{1}{8} $ & $ \para{P}_{3} $ &  $ \frac{1}{22} $ & $w_{3}w_{4}w_{1}w_{3}w_{2}w_{4}w_{5}w_{6}w_{7}$ & $ \frac{ \zeta ( 2 ) ^{ 3 } } { \zeta ( 4 ) ^{ 2 } \zeta ( 5 ) } $ & $4$ & $2$ & $2$ & $49152$ & $726$ \\ \hline 
$ \para{P}_{3} $ & $ \frac{5}{22} $ & $ \para{P}_{6} $ &  $ \frac{5}{26} $ & $w_{6}w_{7}w_{5}w_{6}w_{1}w_{3}$ & $ \frac{ \zeta ( 2 ) } { \zeta ( 4 ) } $ & $2$ & $2$ & $0$ & $242$ & $338$ \\ \hline 
$ \para{P}_{4} $ & $ \frac{1}{12} $ & $ \para{P}_{5} $ &  $ \frac{1}{30} $ & $w_{5}w_{6}w_{7}w_{4}w_{5}w_{6}w_{3}w_{4}w_{5}w_{1}w_{3}w_{4}w_{2}$ & $ \frac{ \zeta ( \frac{7}{3} ) \zeta ( \frac{4}{3} ) \zeta ( \frac{2}{3} ) } { \zeta ( \frac{8}{3} ) \zeta ( \frac{11}{3} ) \zeta ( \frac{14}{3} ) } $ & $1$ & $0$ & $1$ & $24$ & $1$ \\ \hline 
$ \para{P}_{4} $ & $ \frac{3}{16} $ & $ \para{P}_{6} $ &  $ \frac{1}{13} $ & $w_{6}w_{7}w_{5}w_{6}w_{4}w_{5}w_{2}w_{4}w_{3}w_{1}$ & $ \frac{ \zeta ( \frac{3}{2} ) \zeta ( 2 ) \zeta ( \frac{1}{2} ) } { \zeta ( \frac{9}{2} ) \zeta ( \frac{11}{2} ) \zeta ( 3 ) } $ & $1$ & $0$ & $1$ & $16$ & $1$ \\ \hline 
$ \para{P}_{5} $ & $ \frac{3}{10} $ & $ \para{P}_{7} $ &  $ \frac{1}{18} $ & $w_{7}w_{6}w_{5}w_{4}w_{3}w_{2}w_{4}w_{1}w_{3}$ & $ \frac{ \zeta ( 2 ) ^{ 2 } } { \zeta ( 4 ) \zeta ( 8 ) } $ & $2$ & $1$ & $1$ & $100$ & $18$ \\ \hline 
$ \para{P}_{6} $ & $ \frac{11}{26} $ & $ \para{P}_{7} $ &  $ \frac{7}{18} $ & $w_{7}$ & $ 1 $ & $1$ & $0$ & $1$ & $13$ & $1$ \\ \hline 

 \end{longtable}
 \end{landscape}
The following Theorem together with Theorem (\ref{E7_id}) give the complete list of the identities corresponds to positive admissible data.
 \begin{thm}  
 Let $f^{0} \in I_{\para{P}_i}(s)$ be the normalized spherical section then: 
\begin{enumerate} 
\item 
$\leadingterm{ -2 }^{\para{P}_{3}} (f^0,\frac{5}{22},g)=\frac{17}{121} \times \frac{ R \zeta ( 2 ) ^{ 2 } \zeta ( 3 ) } { \zeta ( 6 ) \zeta ( 8 ) ^{ 2 } \zeta ( 10 ) } \times \leadingterm{ -1 }^{\para{P}_{1}} (f^0,\frac{1}{34},g).
 $\item 
$\leadingterm{ -1 }^{\para{P}_{1}} (f^0,\frac{11}{34},g)=\frac{14}{17} \times \frac{ \zeta ( 6 ) \zeta ( 8 ) \zeta ( 10 ) } { \zeta ( 2 ) \zeta ( 5 ) \zeta ( 7 ) } \times \leadingterm{ -1 }^{\para{P}_{2}} (f^0,\frac{5}{14},g).
 $\item 
$\leadingterm{ -3 }^{\para{P}_{4}} (f^0,\frac{1}{4},g)=\frac{17}{512} \times \frac{ R^{ 2 } \zeta ( 2 ) ^{ 3 } \zeta ( 3 ) } { \zeta ( 6 ) ^{ 2 } \zeta ( 8 ) ^{ 2 } \zeta ( 10 ) \zeta ( 5 ) } \times \leadingterm{ -1 }^{\para{P}_{1}} (f^0,\frac{1}{34},g).
 $\item 
$\leadingterm{ -1 }^{\para{P}_{1}} (f^0,\frac{7}{34},g)=\frac{13}{17} \times \frac{ \zeta ( 6 ) \zeta ( 10 ) } { \zeta ( 2 ) \zeta ( 3 ) } \times \leadingterm{ -1 }^{\para{P}_{6}} (f^0,\frac{7}{26},g).
 $\item 
$\leadingterm{ -1 }^{\para{P}_{1}} (f^0,\frac{11}{34},g)=\frac{18}{17} \times \frac{ \zeta ( 6 ) \zeta ( 10 ) \zeta ( 3 ) } { \zeta ( 5 ) \zeta ( 12 ) \zeta ( 9 ) } \times \leadingterm{ -1 }^{\para{P}_{7}} (f^0,\frac{5}{18},g).
 $\item 
$\leadingterm{ -1 }^{\para{P}_{5}} (f^0,\frac{3}{20},g)=\frac{1}{20} \times \frac{ R \zeta ( 2 ) \zeta ( \frac{3}{2} ) ^{ 2 } } { \zeta ( 6 ) \zeta ( 7 ) \zeta ( \frac{9}{2} ) \zeta ( \frac{13}{2} ) } \times \leadingterm{ 0 }^{\para{P}_{2}} (f^0,\frac{1}{28},g).
 $\item 
$\leadingterm{ -1 }^{\para{P}_{2}} (f^0,\frac{5}{14},g)=\frac{9}{7} \times \frac{ \zeta ( 2 ) \zeta ( 3 ) \zeta ( 7 ) } { \zeta ( 8 ) \zeta ( 12 ) \zeta ( 9 ) } \times \leadingterm{ -1 }^{\para{P}_{7}} (f^0,\frac{5}{18},g).
 $\item 
$\leadingterm{ -4 }^{\para{P}_{4}} (f^0,\frac{1}{8},g)=\frac{121}{8192} \times \frac{ R^{ 2 } \zeta ( 2 ) ^{ 2 } \zeta ( 3 ) } { \zeta ( 4 ) ^{ 2 } \zeta ( 5 ) ^{ 2 } \zeta ( 6 ) } \times \leadingterm{ -2 }^{\para{P}_{3}} (f^0,\frac{1}{22},g).
 $\item 
$\leadingterm{ -2 }^{\para{P}_{3}} (f^0,\frac{5}{22},g)=\frac{169}{121} \times \frac{ \zeta ( 2 ) \zeta ( 3 ) } { \zeta ( 4 ) \zeta ( 8 ) } \times \leadingterm{ -2 }^{\para{P}_{6}} (f^0,\frac{5}{26},g).
 $\item 
$\leadingterm{ -1 }^{\para{P}_{4}} (f^0,\frac{1}{12},g)=\frac{1}{24} \times \frac{ R \zeta ( \frac{7}{3} ) \zeta ( \frac{4}{3} ) \zeta ( \frac{2}{3} ) } { \zeta ( 5 ) \zeta ( \frac{8}{3} ) \zeta ( \frac{11}{3} ) \zeta ( \frac{14}{3} ) } \times \leadingterm{ 0 }^{\para{P}_{5}} (f^0,\frac{1}{30},g).
 $\item 
$\leadingterm{ -1 }^{\para{P}_{4}} (f^0,\frac{3}{16},g)=\frac{1}{16} \times \frac{ R \zeta ( 2 ) \zeta ( 3 ) \zeta ( \frac{3}{2} ) \zeta ( \frac{1}{2} ) } { \zeta ( 5 ) \zeta ( 6 ) \zeta ( 8 ) \zeta ( \frac{9}{2} ) \zeta ( \frac{11}{2} ) } \times \leadingterm{ 0 }^{\para{P}_{6}} (f^0,\frac{1}{13},g).
 $\item 
$\leadingterm{ -2 }^{\para{P}_{5}} (f^0,\frac{3}{10},g)=\frac{9}{50} \times \frac{ R \zeta ( 2 ) ^{ 2 } \zeta ( 3 ) ^{ 2 } } { \zeta ( 6 ) \zeta ( 8 ) ^{ 2 } \zeta ( 12 ) \zeta ( 9 ) } \times \leadingterm{ -1 }^{\para{P}_{7}} (f^0,\frac{1}{18},g).
 $\item 
$\leadingterm{ -1 }^{\para{P}_{6}} (f^0,\frac{11}{26},g)=\frac{1}{13} \times \frac{ R \zeta ( 4 ) } { \zeta ( 12 ) \zeta ( 9 ) } \times \leadingterm{ 0 }^{\para{P}_{7}} (f^0,\frac{7}{18},g).
 $\end{enumerate} 
 \end{thm}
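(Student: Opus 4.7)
The plan is to verify each of the thirteen identities by a uniform application of Theorem~\ref{main_thm} to the corresponding positive admissible datum $(\para{P}_i,s_0,\para{P}_j,t_0,w)$. All the ingredients have already been assembled in the preceding large table for $E_7$: for each row the entries record the minimal Weyl representative $w$ carrying $\chi_{\para{P}_i,s_0}$ to $\chi_{\para{P}_j,t_0}$, the quotient $h_{3,\para{Q}}(\chi_{t_0})/h_{3,\para{P}}(\chi_{s_0})$ already reduced to a product of $\zeta$'s via Corollary~\ref{corr}, the pole orders $d_{\para{P}_i}(\chi_{s_0})$ and $d_{\para{P}_j}(\chi_{t_0})$ computed via Theorem~\ref{Upper}, the difference $d=|N_{-1}(\chi_{t_0})|-|N_{-1}(\chi_{s_0})|$, and the combinatorial factors $\epsilon_{s_0},\epsilon_{t_0}$ defined in Lemma~\ref{Corollary-1}. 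The constants $A_{\weylelement{\para{P}_k}}$ for $k=1,\dots,7$ have been tabulated explicitly using Proposition~\ref{ReducetoParabolic}.

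First I would, for each of the thirteen lines, read off the quintuple and check admissibility directly from Proposition~\ref{E7_mod_char}: this is a short linear calculation comparing $w(\delta_{\para{P}_i}^{s_0-1/2}\otimes\delta_{\para{B}}^{1/2})$ with $\delta_{\para{P}_j}^{t_0-1/2}\otimes\delta_{\para{B}}^{1/2}$ in the coordinates $\{\fundamental{k}\}$. This also produces the pole orders $d_{\para{P}_i}(\chi_{s_0}),d_{\para{P}_j}(\chi_{t_0})$ appearing in the superscripts of the $\leadingterm{}$'s, and checks that they satisfy the relation $d_{\para{P}_i}(\chi_{s_0})-d=d_{\para{P}_j}(\chi_{t_0})$ that is forced by the end of the proof of Theorem~\ref{main_thm}.

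Next, I would assemble the scalar
\[
A=\frac{\epsilon_{t_0}}{\epsilon_{s_0}}\times
\prod_{\substack{\alpha\in\Phi^{+}\setminus N_{\pm1,0}(\chi_{\para{P}_i,s_0})\\ w\alpha\in\Phi^{-}}}
\frac{\zeta(\langle\chi_{\para{P}_i,s_0},\check\alpha\rangle)}{\zeta(\langle\chi_{\para{P}_i,s_0},\check\alpha\rangle+1)}
\times\left(\frac{R}{\zeta(2)}\right)^{d}\times\frac{A_{\para{P}_j}}{A_{\para{P}_i}}
\]
prescribed by Theorem~\ref{main_thm}, substituting the table entries. The $\zeta$-product collapses to the simple expressions shown in the column $h_3(\chi_t)/h_3(\chi_s)$ after applying the functional equation $\zeta(s)=\zeta(1-s)$ to each root $\alpha$ with $\langle\chi_{s_0},\check\alpha\rangle<1/2$, exactly as in the proof of Corollary~\ref{corr}; the remaining ratios of $A_{\para{P}_j}/A_{\para{P}_i}$ telescope against the $(R/\zeta(2))^d$ factor into the combinations of $\zeta$-values that appear on the right-hand side of each of the thirteen identities.

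The main obstacle is purely organizational: one must be careful about which roots contribute to $N_{1}$, $N_{0}$ and $N_{-1}$ at each $\chi_{s_0}$ (this fixes $\epsilon_{s_0}$ and $d_{\para{P}_i}(\chi_{s_0})$), and one must verify that $h_{3,\para{P}}(\chi_{s_0})$ and $h_{3,\para{Q}}(\chi_{t_0})$ are both non-vanishing and holomorphic, a hypothesis of Corollary~\ref{EiseConst}. For $s_0,t_0\in\mathbb{R}$ with $F=\mathbb{Q}$ this follows as in the proof of Theorem~\ref{Upper}, with two exceptions flagged in Section~\ref{E7_header}: the data $(\para{P}_4,s,\cdot)$ with $s\in\{1/16,3/16\}$, which requires $\zeta(1/2)\neq 0$, and $s=1/12$ which requires $\zeta(2/3)\neq 0$. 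These non-vanishing hypotheses were stated at the beginning of Section~\ref{E7_header} and are inherited here. With this bookkeeping done, the thirteen identities of the theorem then follow from Theorem~\ref{main_thm} by direct substitution, row by row.
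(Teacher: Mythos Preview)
Your proposal is correct and matches the paper's approach exactly: the paper gives no separate proof for this theorem, as each identity is obtained by direct substitution of the tabulated admissible data $(\para{P}_i,s_0,\para{P}_j,t_0,w)$, the quantities $\epsilon_{s_0},\epsilon_{t_0},d$, the ratio $h_3(\chi_t)/h_3(\chi_s)$, and the constants $A_{\weylelement{\para{P}_k}}$ into the formula of Theorem~\ref{main_thm}. Your remark about the hypotheses $\zeta(1/2)\neq 0$ and $\zeta(2/3)\neq 0$ entering identities (10) and (11) is also on point and consistent with the assumptions stated at the start of the $E_7$ section.
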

\begin{remk}
Observe that the following  positive admissible data that are not special. 
\begin{align*}
&(\para{P}_5,\frac{3}{10},\para{P}_7,\frac{1}{18},
w_{7}w_{6}w_{5}w_{4}w_{3}w_{2}w_{4}w_{1}w_{3})
\\
&(\para{P}_5,\frac{3}{20},\para{P}_2,\frac{1}{28},
w_{2}w_{4}w_{5}w_{3}w_{4}w_{1}w_{3}w_{2}w_{4}w_{5}w_{6}w_{7}
)\\
&(\para{P}_4,\frac{3}{16},\para{P}_6,\frac{1}{13},
w_{6}w_{7}w_{5}w_{6}w_{4}w_{5}w_{2}w_{4}w_{3}w_{1}
) \\
&(\para{P}_4,\frac{1}{12},\para{P}_5,\frac{1}{30},
w_{5}w_{6}w_{7}w_{4}w_{5}w_{6}w_{3}w_{4}w_{5}w_{1}w_{3}w_{4}w_{2}
)
\end{align*}
\end{remk}
Therefore we get the following list:

\begin{figure}[H]
\begin{center}
\begin{tikzpicture}[thin,scale=2]
\tikzset{
  text style/.style={
    sloped, 
    text=black,
    font=\tiny,
    above
  }
}
\matrix (m) [matrix of math nodes, row sep=1em, column sep=3em]
    { 		
       |[name=p2314]| \text{I}_{\para{P}_2}(\frac{3}{14})
        &  |[name=p1334]| \text{I}_{\para{P}_1}(\frac{3}{34})
       & &
		|[name=p227]| \text{I}_{\para{P}_2}(\frac{2}{7})
       &  |[name=p719]| \text{I}_{\para{P}_7}(\frac{1}{9})
      \\
      |[name=p515]| \text{I}_{\para{P}_5}(\frac{1}{5})
              &  |[name=p217]| \text{I}_{\para{P}_2}(\frac{1}{7})
             & &
      		|[name=p525]| \text{I}_{\para{P}_5}(\frac{2}{5})
             &  |[name=p713]| \text{I}_{\para{P}_7}(\frac{1}{3})
            \\
      |[name=p3922]| \text{I}_{\para{P}_3}(\frac{9}{22})
              &  |[name=p11334]| \text{I}_{\para{P}_1}(\frac{13}{34})
             & 
|[name=p418]| \text{I}_{\para{P}_4}(\frac{1}{8})
&
|[name=p5110]| \text{I}_{\para{P}_5}(\frac{1}{10})
&
|[name=p3122]| \text{I}_{\para{P}_3}(\frac{1}{22})

            \\
      |[name=p3722]| \text{I}_{\para{P}_3}(\frac{7}{22})
              &  |[name=p716]| \text{I}_{\para{P}_7}(\frac{1}{6})
             & &
      		|[name=p61126]| \text{I}_{\para{P}_6}(\frac{11}{26})
             &  |[name=p7718]| \text{I}_{\para{P}_7}(\frac{7}{18})
            \\
|[name=p2128]| \text{I}_{\para{P}_2}(\frac{1}{28})      &
|[name=p5320]| \text{I}_{\para{P}_5}(\frac{3}{20}) & &
|[name=p4316]| \text{I}_{\para{P}_4}(\frac{3}{16})
&
|[name=p6113]| \text{I}_{\para{P}_6}(\frac{1}{13})
            \\
|[name=p4112]| \text{I}_{\para{P}_4}(\frac{1}{12})
&
|[name=p5130]| \text{I}_{\para{P}_5}(\frac{1}{30})            
\\
      & |[name=p1734]| \text{I}_{\para{P}_1}(\frac{7}{34})
      & & &  |[name=p2114]| \text{I}_{\para{P}_2}(\frac{1}{14})
      \\
	|[name=p5310]| \text{I}_{\para{P}_5}(\frac{3}{10})
                    &  
	|[name=p7118]|                    \text{I}_{\para{P}_7}(\frac{1}{18})
                   & &  
            		|[name=p3322]| \text{I}_{\para{P}_3}(\frac{3}{22})
            		&
\\
      & |[name=p6726]| \text{I}_{\para{P}_6}(\frac{7}{26})
      & & &  |[name=p6126]| \text{I}_{\para{P}_6}(\frac{1}{26})
      \\
& 
|[name=p11134]| \text{I}_{\para{P}_1}(\frac{11}{34})
& & 
|[name=p6526]| \text{I}_{\para{P}_6}(\frac{5}{26})
&
|[name=p1134]| \text{I}_{\para{P}_1}(\frac{1}{34})\\
\\
|[name=p438]| \text{I}_{\para{P}_4}(\frac{3}{8})
      & 
      |[name=p2514]| \text{I}_{\para{P}_2}(\frac{5}{14})
      
      &  
|[name=p414]| \text{I}_{\para{P}_4}(\frac{1}{4})
      \\
&
|[name=p7518]| \text{I}_{\para{P}_7}(\frac{5}{18})
& & 
|[name=p3522]| \text{I}_{\para{P}_3}(\frac{5}{22})
      &
\\
& & & 

    \\};

\draw[->]      
(p2314)  edge [-]
node[text style,above]{}  (p1334)

(p227)  edge [-]
node[text style,above]{}  (p719)

(p3922)  edge [-]
node[text style,above]{}  (p11334)

(p414)  edge [-]
node[text style,above]{}  (p3522)

(p515)  edge [-]
node[text style,above]{}  (p217)

(p525)  edge [-]
node[text style,above]{}  (p713)

(p61126)  edge [-]
node[text style,above]{}  (p7718)

(p3722)  edge [-]
node[text style,above]{}  (p716)

(p3322)  edge [-]
node[text style,above]{}  (p2114)

(p3322)  edge [-]
node[text style,above]{}  (p6126)

(p5310)  edge [-]
node[text style,above]{}  (p1734)

(p5310)  edge [-]
node[text style,above]{}  (p6726)

(p438)  edge [-]
node[text style,above]{}  (p11134)
(p438)  edge [-]
node[text style,above]{}  (p2514)
(p438)  edge [-]
node[text style,above]{}  (p7518)

(p418)  edge [-]
node[text style,above]{}  (p5110)

(p5110)  edge [-]
node[text style,above]{}  (p3122)

(p414)  edge [-]
node[text style,above]{}  (p6526)
(p6526)  edge [-]
node[text style,above]{}  (p1134)

(p5310)  edge [-]
node[text style,above]{}  (p7118)

(p2128)  edge [-]
node[text style,above]{}  (p5320)

(p4316)  edge [-]
node[text style,above]{}  (p6113)
(p4112)  edge [-]
node[text style,above]{}  (p5130)
;
\end{tikzpicture}
\label{digram:E7s}
\end{center}
\end{figure}
\appendix 
\newpage
\chapter{Appendix 1}
\label{Ap1:Code_example}
\lhead{Appendix A. \emph{Standard output}}
\begin{defn}
For every $a \in \mathbb{C} \setminus \{0,1\}$ we write the Taylor series of $\zeta(s)$ around $s=a$ as follows
$$ \zeta(s) = \sum_{j=0}^{\infty} \zeta(a)_{j}(s-a)^{j}.$$ \\
For $a=1$ we write 
$$\zeta(s) = \sum_{j=-1}^{\infty} c_{j}(s-1)^{j}.$$
\end{defn}

\section{Standard output}
 \begin{thm} 
Let $G=F_{4}$, and let $\para{P}=\para{P}_{1}$. Then $E_{\para{P}}(f^{0},g,s)$ admits a pole of order $ 1 $ at $s= \frac{1}{4} $. 
 Moreover, the leading term $\leadingterm{-1 }(f^{0},g, \frac{1}{4} )$ is in $L^{2}$.
 \end{thm}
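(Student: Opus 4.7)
The plan is to apply the algorithm of Chapter \ref{ch4:algo} together with the sharp upper bound of Theorem \ref{Upper} and the Langlands square-integrability criterion of Theorem \ref{square_init}. First I would compute the character $\chi_{s_0}=\delta_{\para{P}_1}^{s_0-\frac{1}{2}}\otimes\delta_{\para{B}}^{\frac{1}{2}}$ at $s_0=\frac{1}{4}$, using $\delta_{\para{P}_1}=8\bar{\omega}_1$ from Proposition \ref{F4_mod_char}. This gives $\chi_{1/4}=-2\bar{\omega}_1+\bar{\omega}_1+\bar{\omega}_2+\bar{\omega}_3+\bar{\omega}_4=-\bar{\omega}_1+\bar{\omega}_2+\bar{\omega}_3+\bar{\omega}_4$. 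From this one enumerates the positive roots $\alpha$ with $\langle\chi_{1/4},\check{\alpha}\rangle\in\{-1,0,1\}$ in order to compute $d_{\para{P}_1}(\chi_{1/4})=|N_1(\chi_{1/4})|-|N_0(\chi_{1/4})|-3$, and the count should yield exactly $1$.

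Next, by Theorem \ref{Upper} this bounds the order of the pole at $s=\frac{1}{4}$ by $1$; to show the bound is attained I would exhibit one equivalence class $[u]_{1/4}\subset W(\para{P}_1,G)$ whose sum $M_u^{\#}(s)$ produces a genuine simple pole. Concretely, I would select $u$ such that $C_u(s)$ contains the factor $\zeta(\langle\chi_s,\check{\alpha}\rangle)$ for the single positive root $\alpha$ responsible for the zero of $G_{\para{P}_1}(\chi_s)$ at $s=\frac{1}{4}$, and Assumption \ref{ASUM} (valid here by Observation \ref{ZETADOM} since $F_4$ has rank $4\le 8$) guarantees that the denominators are non-vanishing. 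The resulting contribution cannot be cancelled by any $M_{u'}^{\#}$ with $u'\not\sim_{1/4}u$, since distinct equivalence classes produce distinct torus characters $u\chi_{1/4}\ne u'\chi_{1/4}$.

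Finally, for square-integrability I would apply Langlands' criterion Theorem \ref{square_init} in the form stated after the remark: for every equivalence class $[w]_{1/4}$ whose $M_w^{\#}$ contributes a pole of order $1$, one must verify that $w\chi_{1/4}=\sum_{\alpha\in\Delta_G}x_\alpha\alpha$ with $x_\alpha<0$. I would enumerate the contributing classes (there is one by the computation above) and write the dominant representative of $w\chi_{1/4}$ as a linear combination of the simple roots of $F_4$; by inspection all coefficients should be strictly negative. The main obstacle is purely bookkeeping: one must systematically list the positive roots of $F_4$ together with their pairings against $\chi_{1/4}$, and correctly identify which shortest representatives in $W(\para{P}_1,G)$ map $\chi_{1/4}$ to which character, in order to both exhibit the surviving pole and rule out non-square-integrable contributors. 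This is the task the \textit{SAGE} implementation performs, and the sample output in Appendix \ref{Ap1:Code_example} is precisely what certifies both statements.
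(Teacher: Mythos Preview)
Your approach is correct and differs from the paper's in how the upper bound on the pole order is established. The paper's proof in Appendix~\ref{Ap1:Code_example} works entirely within the algorithm of Chapter~\ref{ch4:algo}: it identifies the equivalence class with exponent $[-5,-9,-13,-7]$, where two operators each produce an order-$2$ pole in $C_w(s)$, explicitly writes out the Laurent expansions of the seven relevant $\zeta$-factors, and exhibits the cancellation at order $2$. You instead invoke Theorem~\ref{Upper} to obtain the bound $d_{\para{P}_1}(\chi_{1/4})=1$ directly from the count $|N_1|-|N_0|-3$, which bypasses the explicit cancellation computation entirely. Both routes are valid; yours is more conceptual and avoids the detailed Laurent bookkeeping, while the paper's makes the cancellation mechanism visible and is what the SAGE implementation actually outputs.

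Two small corrections are in order. First, you write that ``there is one'' contributing class for the $L^2$ check, but the table in Appendix~\ref{Ap1:Code_example} shows \emph{four} equivalence classes with pole order $\ge 1$ at $s=\tfrac14$ (the exponents $[-5,-9,-13,-7]$, $[-5,-9,-12,-5]$, $[-4,-9,-13,-7]$, $[-5,-9,-12,-7]$); all four must be inspected, and all four indeed have strictly negative coordinates, so Langlands' criterion is satisfied. Second, your description of how to select the witness $u$ (``the factor $\zeta(\langle\chi_s,\check\alpha\rangle)$ for the single positive root $\alpha$ responsible for the zero of $G_{\para{P}_1}$'') does not quite pin down an operator: the link between a root in $N_1(\chi_{s_0})\setminus\Delta_{\para{P}}$ and a specific $C_w$ is indirect. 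What is actually needed, and what the paper exhibits, is a \emph{singleton} equivalence class whose unique $C_w(s)$ has a simple pole at $s=\tfrac14$; the class $[-5,-9,-12,-5]$ (with $w=w_2w_3w_1w_2w_3w_4w_1w_2w_3w_2w_1$) serves this purpose, since a singleton cannot cancel against itself.
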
 
  \begin{proof} 
 As we can see from Table the exponent $ \left[-5, -9, -12, -5\right] $   contributes a pole of order $1$ and it cannot be canceled. Hence it remains to
  show that the exponent $ \left[-5, -9, -13, -7\right] $ does not contribute a pole of order $2$.
 \\
 For the exp. $ \left[-5, -9, -13, -7\right] $ we do the follows:  \\
 First of all, we take only the operators that contribute an order great or equal to $1$. In that case,  all the elements in the exponent contribute a pole of order $2$.
 Let: 
 \begin{align*} 
 y1&=\zeta( 16 s - 3 ) & 
  y2&=\zeta( 8 s - 2 ) &
 y3&=\zeta( 8 s ) &  
  y4&=\zeta( 16 s + 4 )  \\
  y5&=\zeta( 8 s + 1 ) & 
  y6&=\zeta( 8 s + 4 ) &
 y7&=\zeta( 8 s - 1 ) & 
  \end{align*} 
 Therefore after doing a common denominator we get the following expression $$ \frac{y_{1} y_{2} y_{3} + y_{1} y_{3} y_{7}}{y_{4} y_{5} y_{6}} .$$ Since the denominator is holomorphic and non zero we may ignore it. For every $y_i$ we write its Laurent expansion around  $ \frac{1}{4} $ : \begin{align*} 
 y1&= \frac{\frac{1}{16} c_{-1}}{(s- \frac{1}{4} )} + c_{ 0 } + 16 c_{ 1 }(s- \frac{1}{4} ) + 256 c_{ 2 }(s- \frac{1}{4} )^{2} + 4096 c_{ 3 }(s- \frac{1}{4} )^{3}
 + \dots
 \\ 
 y2&= \frac{-\frac{1}{8} c_{-1}}{(s- \frac{1}{4} )} + c_{ 0 } - 8 c_{ 1 }(s- \frac{1}{4} ) + 64 c_{ 2 }(s- \frac{1}{4} )^{2} - 512 c_{ 3 }(s- \frac{1}{4} )^{3}
 + \dots
 \\ 
 y3&= \zeta( 2 )_{ 0 } + 8 \zeta( 2 )_{ 1 }(s- \frac{1}{4} ) + 64 \zeta( 2 )_{ 2 }(s- \frac{1}{4} )^{2} + 512 \zeta( 2 )_{ 3 }(s- \frac{1}{4} )^{3}
+ \dots
 \\ 
 y4&= \zeta( 8 )_{ 0 } + 16 \zeta( 8 )_{ 1 }(s- \frac{1}{4} ) + 256 \zeta( 8 )_{ 2 }(s- \frac{1}{4} )^{2} + 4096 \zeta( 8 )_{ 3 }(s- \frac{1}{4} )^{3}
 + \dots
 \\ 
 y5&= \zeta( 3 )_{ 0 } + 8 \zeta( 3 )_{ 1 }(s- \frac{1}{4} ) + 64 \zeta( 3 )_{ 2 }(s- \frac{1}{4} )^{2} + 512 \zeta( 3 )_{ 3 }(s- \frac{1}{4} )^{3}
 + \dots
 \\ 
 y6&= \zeta( 6 )_{ 0 } + 8 \zeta( 6 )_{ 1 }(s- \frac{1}{4} ) + 64 \zeta( 6 )_{ 2 }(s- \frac{1}{4} )^{2} + 512 \zeta( 6 )_{ 3 }(s- \frac{1}{4} )^{3}
 + \dots
 \\ 
 y7&= \frac{\frac{1}{8} c_{-1}}{(s- \frac{1}{4} )} + c_{ 0 } + 8 c_{ 1 }(s- \frac{1}{4} ) + 64 c_{ 2 }(s- \frac{1}{4} )^{2} + 512 c_{ 3 }(s- \frac{1}{4} )^{3} + \dots
 \end{align*} 
 For the summand: $ y_{1} y_{2} y_{3} $ we get : \\ 
 $$\frac{-\frac{1}{128} c_{-1}^{2} \zeta( 2 )_{ 0 }}{(s- \frac{1}{4} )^{2}} + \frac{-\frac{1}{16} c_{-1} c_{ 0 } \zeta( 2 )_{ 0 } - \frac{1}{16} c_{-1}^{2} \zeta( 2 )_{ 1 }}{(s- \frac{1}{4} )} + O(1)$$  
 For the summand: $ y_{1} y_{3} y_{7} $ we get : 
 $$\frac{\frac{1}{128} c_{-1}^{2} \zeta( 2 )_{ 0 }}{(s- \frac{1}{4} )^{2}} + \frac{\frac{3}{16} c_{-1} c_{ 0 } \zeta( 2 )_{ 0 } + \frac{1}{16} c_{-1}^{2} \zeta( 2 )_{ 1 }}{(s- \frac{1}{4} )} + O(1)$$  
 In conclusion the final sum is: $$\frac{\frac{1}{8} c_{-1} c_{ 0 } \zeta( 2 )_{ 0 }}{(s- \frac{1}{4} )} + O(1).$$
  Hence, this exponent contributes a pole of at most  order $ 1 $. The leading term is in $L^{2}$
 by Langlands' criterion
 .
 \end{proof} 
 
 \begin{landscape} 
 \begin{longtable}{|c|c|c|c|c|} 
\hline pole & order & operator & factor & exp \\ 
 \hline 
 $ \frac{1}{4} $ & $ 2 $ & $ w_{2}w_{3}w_{4}w_{2}w_{3}w_{1}w_{2}w_{3}w_{4}w_{1}w_{2}w_{3}w_{2}w_{1} $ &$\frac{\zeta( 16 s -3 ) \zeta( 8 s -2 ) \zeta( 8 s)}{\zeta( 16 s+ 4 ) \zeta( 8 s+ 1 ) \zeta( 8 s+ 4 )}$&$ \left[-5, -9, -13, -7\right] $ \\ 
 $ \frac{1}{4} $ & $ 2 $ & $ w_{3}w_{4}w_{2}w_{3}w_{1}w_{2}w_{3}w_{4}w_{1}w_{2}w_{3}w_{2}w_{1} $ &$\frac{\zeta( 16 s -3 ) \zeta( 8 s) \zeta( 8 s -1 )}{\zeta( 16 s+ 4 ) \zeta( 8 s+ 1 ) \zeta( 8 s+ 4 )}$&$ \left[-5, -9, -13, -7\right] $ \\ 
 \hline $ \frac{1}{4} $ & $ 1 $ & $ w_{2}w_{3}w_{1}w_{2}w_{3}w_{4}w_{1}w_{2}w_{3}w_{2}w_{1} $ &$\frac{\zeta( 8 s) \zeta( 8 s -1 ) \zeta( 16 s -1 )}{\zeta( 16 s+ 4 ) \zeta( 8 s+ 1 ) \zeta( 8 s+ 4 )}$&$ \left[-5, -9, -12, -5\right] $ \\ 
 \hline $ \frac{1}{4} $ & $ 1 $ & $ w_{1}w_{2}w_{3}w_{4}w_{2}w_{3}w_{1}w_{2}w_{3}w_{4}w_{1}w_{2}w_{3}w_{2}w_{1} $ &$\frac{\zeta( 16 s -3 ) \zeta( 8 s -3 ) \zeta( 8 s)}{\zeta( 16 s+ 4 ) \zeta( 8 s+ 1 ) \zeta( 8 s+ 4 )}$&$ \left[-4, -9, -13, -7\right] $ \\ 
 \hline $ \frac{1}{4} $ & $ 1 $ & $ w_{4}w_{2}w_{3}w_{1}w_{2}w_{3}w_{4}w_{1}w_{2}w_{3}w_{2}w_{1} $ &$\frac{\zeta( 8 s) \zeta( 8 s -1 ) \zeta( 16 s -2 )}{\zeta( 16 s+ 4 ) \zeta( 8 s+ 1 ) \zeta( 8 s+ 4 )}$&$ \left[-5, -9, -12, -7\right] $ \\ 
 \hline $ \frac{1}{4} $ & $ 0 $ & $ 1 $ &$ 1 $ & $ \left[4, 3, 3, 1\right] $ \\ 
 \hline $ \frac{1}{4} $ & $ 0 $ & $ w_{1} $ &$\frac{\zeta( 8 s+ 3 )}{\zeta( 8 s+ 4 )}$&$ \left[-1, 3, 3, 1\right] $ \\ 
 \hline $ \frac{1}{4} $ & $ 0 $ & $ w_{2}w_{1} $ &$\frac{\zeta( 8 s+ 2 )}{\zeta( 8 s+ 4 )}$&$ \left[-1, -1, 3, 1\right] $ \\ 
 \hline $ \frac{1}{4} $ & $ 0 $ & $ w_{3}w_{2}w_{1} $ &$\frac{\zeta( 16 s+ 3 ) \zeta( 8 s+ 2 )}{\zeta( 16 s+ 4 ) \zeta( 8 s+ 4 )}$&$ \left[-1, -1, -4, 1\right] $ \\ 
 \hline $ \frac{1}{4} $ & $ 0 $ & $ w_{2}w_{3}w_{2}w_{1} $ &$\frac{\zeta( 16 s+ 3 ) \zeta( 8 s+ 1 )}{\zeta( 16 s+ 4 ) \zeta( 8 s+ 4 )}$&$ \left[-1, -4, -4, 1\right] $ \\ 
 \hline $ \frac{1}{4} $ & $ 0 $ & $ w_{1}w_{2}w_{3}w_{2}w_{1} $ &$\frac{\zeta( 16 s+ 3 ) \zeta( 8 s)}{\zeta( 16 s+ 4 ) \zeta( 8 s+ 4 )}$&$ \left[-3, -4, -4, 1\right] $ \\ 
 \hline $ \frac{1}{4} $ & $ 0 $ & $ w_{4}w_{3}w_{2}w_{1} $ &$\frac{\zeta( 8 s+ 2 ) \zeta( 16 s+ 2 )}{\zeta( 16 s+ 4 ) \zeta( 8 s+ 4 )}$&$ \left[-1, -1, -4, -5\right] $ \\ 
 \hline $ \frac{1}{4} $ & $ 0 $ & $ w_{4}w_{2}w_{3}w_{2}w_{1} $ &$\frac{\zeta( 8 s+ 1 ) \zeta( 16 s+ 2 )}{\zeta( 16 s+ 4 ) \zeta( 8 s+ 4 )}$&$ \left[-1, -4, -4, -5\right] $ \\ 
 \hline $ \frac{1}{4} $ & $ 0 $ & $ w_{3}w_{4}w_{2}w_{3}w_{2}w_{1} $ &$\frac{\zeta( 8 s+ 1 ) \zeta( 16 s+ 1 )}{\zeta( 16 s+ 4 ) \zeta( 8 s+ 4 )}$&$ \left[-1, -4, -9, -5\right] $ \\ 
 \hline $ \frac{1}{4} $ & $ 0 $ & $ w_{2}w_{3}w_{4}w_{2}w_{3}w_{2}w_{1} $ &$\frac{\zeta( 8 s) \zeta( 16 s+ 1 )}{\zeta( 16 s+ 4 ) \zeta( 8 s+ 4 )}$&$ \left[-1, -6, -9, -5\right] $ \\ 
 \hline $ \frac{1}{4} $ & $ 0 $ & $ w_{4}w_{1}w_{2}w_{3}w_{2}w_{1} $ &$\frac{\zeta( 8 s) \zeta( 16 s+ 2 )}{\zeta( 16 s+ 4 ) \zeta( 8 s+ 4 )}$&$ \left[-3, -4, -4, -5\right] $ \\ 
 \hline $ \frac{1}{4} $ & $ 0 $ & $ w_{3}w_{4}w_{1}w_{2}w_{3}w_{2}w_{1} $ &$\frac{\zeta( 8 s) \zeta( 16 s+ 1 )}{\zeta( 16 s+ 4 ) \zeta( 8 s+ 4 )}$&$ \left[-3, -4, -9, -5\right] $ \\ 
 \hline $ \frac{1}{4} $ & $ 0 $ & $ w_{2}w_{3}w_{4}w_{1}w_{2}w_{3}w_{2}w_{1} $ &$\frac{\zeta( 16 s) \zeta( 8 s)}{\zeta( 16 s+ 4 ) \zeta( 8 s+ 4 )}$&$ \left[-3, -8, -9, -5\right] $ \\ 
 \hline $ \frac{1}{4} $ & $ 0 $ & $ w_{3}w_{2}w_{3}w_{4}w_{1}w_{2}w_{3}w_{2}w_{1} $ &$\frac{\zeta( 8 s) \zeta( 16 s -1 )}{\zeta( 16 s+ 4 ) \zeta( 8 s+ 4 )}$&$ \left[-3, -8, -12, -5\right] $ \\ 
 \hline $ \frac{1}{4} $ & $ 0 $ & $ w_{1}w_{2}w_{3}w_{4}w_{2}w_{3}w_{2}w_{1} $ &$\frac{\zeta( 16 s) \zeta( 8 s)}{\zeta( 16 s+ 4 ) \zeta( 8 s+ 4 )}$&$ \left[-5, -6, -9, -5\right] $ \\ 
 \hline $ \frac{1}{4} $ & $ 0 $ & $ w_{1}w_{2}w_{3}w_{4}w_{1}w_{2}w_{3}w_{2}w_{1} $ &$\frac{\zeta( 16 s) \zeta( 8 s)^{2}}{\zeta( 16 s+ 4 ) \zeta( 8 s+ 1 ) \zeta( 8 s+ 4 )}$&$ \left[-5, -8, -9, -5\right] $ \\ 
 \hline $ \frac{1}{4} $ & $ 0 $ & $ w_{3}w_{1}w_{2}w_{3}w_{4}w_{1}w_{2}w_{3}w_{2}w_{1} $ &$\frac{\zeta( 8 s)^{2} \zeta( 16 s -1 )}{\zeta( 16 s+ 4 ) \zeta( 8 s+ 1 ) \zeta( 8 s+ 4 )}$&$ \left[-5, -8, -12, -5\right] $ \\ 
 \hline $ \frac{1}{4} $ & $ 0 $ & $ w_{4}w_{3}w_{2}w_{3}w_{4}w_{1}w_{2}w_{3}w_{2}w_{1} $ &$\frac{\zeta( 8 s) \zeta( 16 s -2 )}{\zeta( 16 s+ 4 ) \zeta( 8 s+ 4 )}$&$ \left[-3, -8, -12, -7\right] $ \\ 
 \hline $ \frac{1}{4} $ & $ 0 $ & $ w_{4}w_{3}w_{1}w_{2}w_{3}w_{4}w_{1}w_{2}w_{3}w_{2}w_{1} $ &$\frac{\zeta( 8 s)^{2} \zeta( 16 s -2 )}{\zeta( 16 s+ 4 ) \zeta( 8 s+ 1 ) \zeta( 8 s+ 4 )}$&$ \left[-5, -8, -12, -7\right] $ \\ 
 \hline \end{longtable}
 
 \end{landscape}

\backmatter


\label{Bibliography}

\lhead{\emph{Bibliography}} 

\bibliographystyle{unsrtnat} 

\bibliography{Bibliography} 

\begin{thebibliography}{25}
\providecommand{\natexlab}[1]{#1}
\providecommand{\url}[1]{\texttt{#1}}
\expandafter\ifx\csname urlstyle\endcsname\relax
  \providecommand{\doi}[1]{doi: #1}\else
  \providecommand{\doi}{doi: \begingroup \urlstyle{rm}\Url}\fi

\bibitem[Gelbart et~al.(1987)Gelbart, Piatetski-Shapiro, and Rallis]{GPSR}
Stephen Gelbart, Ilya Piatetski-Shapiro, and Stephen Rallis.
\newblock \emph{Explicit constructions of automorphic {$L$}-functions}, volume
  1254 of \emph{Lecture Notes in Mathematics}.
\newblock 1987.

\bibitem[Piatetski-Shapiro and Rallis(1988)]{PSR}
I.~Piatetski-Shapiro and S.~Rallis.
\newblock A new way to get {E}uler products.
\newblock \emph{J. Reine Angew. Math.}, 392:\penalty0 110--124, 1988.

\bibitem[Gurevich and Segal(2015)]{GS}
Nadya Gurevich and Avner Segal.
\newblock The {R}ankin-{S}elberg integral with a non-unique model for the
  standard {$L$}-function of {$G_2$}.
\newblock \emph{J. Inst. Math. Jussieu}, 14\penalty0 (1):\penalty0 149--184,
  2015.
\newblock ISSN 1474-7480.

\bibitem[Ginzburg et~al.(1997)Ginzburg, Rallis, and Soudry]{Ginzburg1997}
David Ginzburg, Stephen Rallis, and David Soudry.
\newblock On the automorphic theta representation for simply laced groups.
\newblock \emph{Israel Journal of Mathematics}, 100\penalty0 (1):\penalty0
  61--116, 1997.

\bibitem[Muić(1997)]{G2_Muic}
Goran Muić.
\newblock The unitary dual of $p$ -adic $g_2$.
\newblock \emph{Duke Math. J.}, 90\penalty0 (3):\penalty0 465--493, 12 1997.

\bibitem[Hanzer and Mui{\'c}(2015)]{GL_n}
Marcela Hanzer and Goran Mui{\'c}.
\newblock On the images and poles of degenerate {E}isenstein series for {${\rm
  GL}(n,\Bbb{A_Q})$} and {${\rm GL}(n,\Bbb R)$}.
\newblock \emph{Amer. J. Math.}, 137\penalty0 (4):\penalty0 907--951, 2015.

\bibitem[Jiang(1998)]{Dih1}
Dihua Jiang.
\newblock The first term identities for {E}isenstein series.
\newblock \emph{J. Number Theory}, 70\penalty0 (1):\penalty0 67--98, 1998.
\newblock ISSN 0022-314X.

\bibitem[Ginzburg and Jiang(2000)]{G2_Ginz}
David Ginzburg and Dihua Jiang.
\newblock {A Siegel–Weil Identity for $G_2$ and Poles of L-Functions }.
\newblock \emph{Journal of Number Theory}, 82\penalty0 (2):\penalty0 256 --
  287, 2000.
\newblock ISSN 0022-314X.

\bibitem[{Hanzer} and {Mui\' c}(2013)]{Sp4}
M.~{Hanzer} and G.~{Mui\' c}.
\newblock { Degenerate Eisenstein Series for $Sp_4$ }.
\newblock 2013.

\bibitem[Martino et~al.(2015)Martino, Heiermann, and Opdam]{op}
Marcelo~De Martino, Volker Heiermann, and Eric Opdam.
\newblock On the unramified spherical automorphic spectrum, 2015.

\bibitem[Gurevich(2003)]{NadiaTheis}
Nadya Gurevich.
\newblock A theta lift for {${\rm Spin}_7$}.
\newblock \emph{Compositio Math.}, 136\penalty0 (1):\penalty0 25--59, 2003.

\bibitem[Ikeda(1992)]{Ike1}
Tamotsu Ikeda.
\newblock On the location of poles of the triple {$L$}-functions.
\newblock \emph{Compositio Math.}, 83\penalty0 (2):\penalty0 187--237, 1992.

\bibitem[Murnaghan(2005)]{ParabolicSturctre}
Fiona Murnaghan.
\newblock Linear algebraic groups.
\newblock In \emph{Harmonic analysis, the trace formula, and {S}himura
  varieties}, volume~4 of \emph{Clay Math. Proc.}, pages 379--391. Amer. Math.
  Soc., Providence, RI, 2005.

\bibitem[Bump(2009)]{Bump}
Daniel Bump.
\newblock \emph{Automorphic Forms and Representations (Cambridge Studies in
  Advanced Mathematics)}.
\newblock Cambridge University Press, 2009.
\newblock ISBN 0511609574.

\bibitem[Casselman()]{Cass}
W.~Casselman.
\newblock Introduction to the theory of admissible repre- sentations of p-adic
  reductive groups.
\newblock URL \url{https://www.math.ubc.ca/~cass/research/pdf/p-adic-book.pdf}.

\bibitem[Arthur(1989)]{IntertwiningProp}
James Arthur.
\newblock Intertwining operators and residues i. weighted characters.
\newblock \emph{Journal of Functional Analysis}, 84\penalty0 (1):\penalty0 19
  -- 84, 1989.
\newblock ISSN 0022-1236.

\bibitem[Ban and Jantzen(2008)]{ban2008}
Dubravka Ban and Chris Jantzen.
\newblock Jacquet modules and the langlands classification.
\newblock \emph{Michigan Math. J.}, 56\penalty0 (3):\penalty0 637--653, 12
  2008.

\bibitem[Konno()]{Konno1}
Kazuko Konno.
\newblock Irreducible representations of {GS}p(4,{F}) over p adic {F}.

\bibitem[Borel and Jacquet()]{Auto_def}
A.~Borel and H.~Jacquet.
\newblock Automorphic forms and automorphic representations.
\newblock In \emph{Automorphic forms, representations and {$L$}-functions
  ({P}roc. {S}ympos. {P}ure {M}ath., {O}regon {S}tate {U}niv., {C}orvallis,
  {O}re., 1977), {P}art 1}, Proc. Sympos. Pure Math., XXXIII, pages 189--207.

\bibitem[Moeglin and Waldspurger(1995)]{EisenstienSeries}
C.~Moeglin and J.~L. Waldspurger.
\newblock \emph{Spectral Decomposition and Eisenstein Series: A Paraphrase of
  the Scriptures (Cambridge Tracts in Mathematics)}.
\newblock Cambridge University Press, 1995.
\newblock ISBN 0521418933.

\bibitem[Gan()]{WeeTeck}
Wee~Teck Gan.
\newblock Eisenstein series.
\newblock URL \url{http://www.math.nus.edu.sg/~matgwt/china4.pdf}.

\bibitem[Collingwood and McGovern(1993)]{Orbits}
David~.H. Collingwood and William~.M. McGovern.
\newblock \emph{Nilpotent Orbits In Semisimple Lie Algebra: An Introduction}.
\newblock Chapman and Hall/CRC, 1993.
\newblock ISBN 0534188346.

\bibitem[Gunning(1990)]{Gunning}
Robert~C. Gunning.
\newblock \emph{Introduction to Holomorphic Functions of Several Variables,
  Volume I: Function Theory}.
\newblock Wadsworth and Brooks/Cole, 1 edition, 1990.

\bibitem[Weil(1965)]{We}
Andr{\'e} Weil.
\newblock Sur la formule de {S}iegel dans la th\'eorie des groupes classiques.
\newblock \emph{Acta Math.}, 113:\penalty0 1--87, 1965.

\bibitem[Kudla and Rallis(1994)]{KR}
Stephen~S. Kudla and Stephen Rallis.
\newblock A regularized {S}iegel-{W}eil formula: the first term identity.
\newblock \emph{Ann. of Math. (2)}, 140\penalty0 (1):\penalty0 1--80, 1994.
\newblock ISSN 0003-486X.

\end{thebibliography}

\end{document}